\journal{XXXX}
\newtheorem{theorem}{Theorem}[section]
\newtheorem{proposition}[theorem]{Proposition}
\newtheorem{lemma}[theorem]{Lemma}
\newtheorem{corollary}[theorem]{Corollary}
\newtheorem{definition}[theorem]{Definition}
\newtheorem{remark}[theorem]{Remark}
\theoremstyle{conjecture}
\newtheorem{conjecture}[theorem]{Conjecture}
\newtheorem*{note}{Note}
\newcommand{\lr}[1]{\ensuremath{\left \langle #1  \right \rangle }}
\newcommand{\Z}{\ensuremath{\mathbb{Z}}}
\newcommand{\fhe}[0]{\ensuremath{{\scriptstyle\circ}}}
\begin{document}

\begin{frontmatter}

\title{The unstable homotopy groups of 2-cell complexes}

\author{Zhongjian Zhu}
\ead{20160118@wzu.edu.cn}
\address{College of Mathematics and Physics, Wenzhou University, Wenzhou 325035, China}

\begin{abstract}
In this paper,  we develop the new method,  initiated by 
B. Gray (1972), to compute the unstable homotopy groups of the mapping cone, especially for $2$-cell complex  $X=S^m\cup_{\alpha} e^{n}$.  By Gray's work mentioned above or the traditional method given by I.M.James (1957) which were  widely used in previous related work to compute $\pi_{i}(X)$, the dimension $i\leq 2n+m-4$.  By our method, we can compute $\pi_{i}(X)$ for $i>2n+m-4$. We use this different technique to generalize J.Wu's work, at  Mem. of AMS, on homotopy groups of mod $2$ Moore spaces to higher dimensional homotopy groups of mod $2^r$ Moore spaces $P^{n}(2^r)$ for all $r\geq 1$.  This practice shows that  the technique given here is a new general method to compute the unstable homotopy groups of CW complexes with higher dimension.
\end{abstract}

\begin{keyword}
Relative James construction\sep fibration sequence\sep homotopy group\sep Moore space

\MSC 55P10\sep55Q05\sep55Q15\sep 55Q52
\end{keyword}
\end{frontmatter}

\section{Introduction}
\label{intro}

Calculating the unstable homotopy groups of finite CW-complexes is a fundamental and difficult problem in algebraic topology.  This problem is reduced to compute $\pi_{i}(Y^{\ast})$, where $Y^{\ast}=Y\cup_fe^{n}$, $f$ is the attaching map of the cell $e^n$. There are two traditional methods to compute the homotopy groups $\pi_{i}(Y^{\ast})$ for $i\leq 2n+m-4$ with $m-1$ connected space $Y$. The first is  applying Jame's method, i.e., the  Theorem 2.1 of \cite{James I M},  to the  homotopy exact sequences of a pair $(Y^{\ast}, Y)$.  By this method, a lot of related work has been done on 2-cell complexes, such as mod $2^r$ Moore spaces when $r=1,2,3$ and suspended  complex projective  spaces.
 Let $P^{n}(p^r)$ denote  $n$-dimensional  elementary Moore spaces, whose only nontrivial reduced homology is  $\overline{H}_{n-1}(P^{n}(p^r))=\Z_{p^r}$ ($\Z_{k}:=\Z/k\Z$). J. Mukai computed  $\pi_{i}(P^{n}(2))$ for some $i\leq 3n-5$ and $3\leq n\leq 7$ in \cite{Mukai2,Mukai3}. In 1999, J.Mukai and T.Shinpo computed the  $\pi_{i} (P^{n}(4))$ in the range $i=2n-3,2n-4$ and $n\leq 24$ \cite{Mukai};  In 2007, X.G.Liu computed  $\pi_{i}(P^{n}(8)) $  in the range $i=2n-3$ for $2\leq n\leq 20$ and $i=2n-2$ for $3\leq n\leq 7$ \cite{X.G.Liu}. Some  homotopy groups of suspended  complex projective  plane $C_{\eta}^{n}:=\Sigma^{n-4}\mathbb{C}P^2, n\geq 5$,  was also computed in \cite{MukaiS1}. The second method is using the fibration sequence $\Omega\Sigma X\xrightarrow{\partial} F\rightarrow Y\cup_{f}CX\xrightarrow{pinch} \Sigma X$, where $ Y\cup_{f}CX$ is the mapping cone of the map $f$, which is also denoted by $C_f$.
In 1972, B.Gray constructed the relative James construction $J(X,A)$ for a CW-pair $(X,A)$ and inclusion $A\stackrel{i}\hookrightarrow X$, which is filtrated by $J_{r}(X,A) (r=1,2,\dots)$  and proved that $J(X,A)$ is homotopy equivalent to the homotopy fiber of the pinch map $X\cup_{i}CA\rightarrow \Sigma A$ \cite{Gray}.
So the homotopy type of the above homotpy fiber  $F$ is homotopy equivalent to Gray's relative James construction $J(M_f,X)$ which is filtrated by a sequence of subspace $J_r(M_f,X), r=1,2,\dots$, where $M_f$ is the mapping  cylinder  of $f$.  When $X=\Sigma X'$, $Y=\Sigma Y'$, the homotopy type of  the  second filtration $J_2(M_f,X)$ of $J(M_f,X)$ was also obtained by Gray in \cite{Gray}. It has the homotopy type $Y\cup_{\gamma_2}C(Y \wedge X')$,   where $\gamma=[id_Y, f]$ is the generalized Whitehead product. By the recently developed work of Yang, Mukai and Wu \cite{JXYang}  on the homotopy analysis of the connection map $\Omega\Sigma X\xrightarrow{\partial} F$,  Gray's method to compute homotopy groups of CW-complex becomes more operable. As applications, they compute homotopy groups of suspended quaternionic projective
plane, i.e., $\pi_{i}(\Sigma^{k} \mathbb{H}P^2)$ for $i\leq k+15$ and  we (the author and Pan) compute the  first two dimensional unstable homotopy groups of indecomposable $\mathbf{A}_3^2$-complexes, i.e.,  $2$-connected  finite CW-complexes  with  dimensions of cells less than or equal to $5$ \cite{ZP23}.

So far, the only general way to compute $\pi_{i}(Y^{\ast})$ for $i>2n+m-4$ is given by Wu in 2003 \cite{WJ Proj plane}. In it,  a $p$-local functorial decomposition $\Omega\Sigma X\simeq A^{min}(X)\times \Omega(\bigvee_{n=2}^{\infty} Q_{n}^{max}(X))$ is given,  which is helpful to calculate homotopy groups of $\Sigma X$. $Q_{n}^{max}(X)$ is a wedge summand of $\Sigma X^{\wedge n}$, where $X^{\wedge n}$ is a $n$-fold self-smash product of $X$. It is an amazing method since it connects  the algebraic representation theory and the homotopy theory, by which he got a lot of unstable homotopy groups $\pi_{i}(P^{n}(2))(n\geq 3)$ where $i$ can be larger than  $3n-5$ ( i.e., $2n+m-4$ with $m=n-1$). However it contains so high technique that the homotopy type of $A^{min}(X)$ or skeletons of it is difficult to get \cite{Amin}.  $Q_{n}^{max}(X)$ is also hard to be obtained  for $X$ with more than $2$ cells since the homotopy wedge decomposition of $X^{\wedge n}$ is not easy \cite{ZP,ZLP,ZPhyperbolicity}. 

  Another way to compute $\pi_{i}(Y^{\ast})$ for $i>2n+m-4$ is to get information of homotopy type of the filtration $J_r(M_f,X)$ of the Gray's relative James construction $J(M_f,X)$ for  the pinch map $ Y\cup_{f}CX\xrightarrow{pinch} \Sigma X$, where $X=\Sigma X',Y=\Sigma Y'$ are suspensions.  By our previous work we show that the  attaching map  $\gamma_r: \Sigma^{r-1}Y'\wedge X'^{\wedge (r-1)}\rightarrow J_{r-1}(M_f,X)$ is an element of  set of $r$-th order Whitehead products defined by G.J.Porter \cite{Porter}, that is 

\begin{theorem}[Theorem 3.4 of \cite{ZJ}]\label{Theorem of ZhuJin}
	Let $X\xrightarrow{f}Y$ be a map of simply connected CW complexes, $X=\Sigma X'$, $Y=\Sigma Y'$. Then 
	$J_n(M_f,X)$ has the homotopy type  $Y\cup_{\gamma_2}C(\Sigma Y'\wedge X')\cup_{\gamma_3}\cdots \cup_{\gamma_n}C(\Sigma^{n-1} Y'\wedge X'^{\wedge{n-1}})$, $\gamma_r$ is an element of $r$-th order Whitehead products in $[j^{r-1}_{Y}, j^{r-1}_{Y}f, \cdots, j^{r-1}_{Y}f]$ where $j^{r-1}_{Y}: Y\hookrightarrow Y\cup_{\gamma_2}C(\Sigma Y'\wedge X')\cup_{\gamma_3}\cdots \cup_{\gamma_{r-1}}C(\Sigma^{r-2} Y'\wedge X'^{\wedge{(r-2)}})$ is the canonical inclusion for $r=2,\dots n$.
\end{theorem}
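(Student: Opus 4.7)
The plan is to proceed by induction on $n$. For the base case $n = 2$, Gray's result recalled in the introduction gives $J_2(M_f, X) \simeq Y \cup_{\gamma_2} C(Y \wedge X')$ with $\gamma_2 = [\mathrm{id}_Y, f]$ the generalized Whitehead product; since $j^1_Y = \mathrm{id}_Y : Y \hookrightarrow J_1(M_f, X) = Y$, this $\gamma_2$ is literally Porter's second-order product $[j^1_Y, j^1_Y f]$, and the claim holds.

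For the inductive step, set $K_{n-1} := J_{n-1}(M_f, X)$ and assume the theorem for $n-1$. The argument splits into two parts: (i) exhibit $J_n(M_f, X)$ as the mapping cone of a map $\gamma_n : \Sigma^{n-1}Y' \wedge X'^{\wedge(n-1)} \to K_{n-1}$, and (ii) show that any such $\gamma_n$ lies in Porter's $n$-fold Whitehead product set $[j^{n-1}_Y, j^{n-1}_Y f, \ldots, j^{n-1}_Y f]$.

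Part (i) I would obtain directly from the filtration structure of Gray's relative James construction. The quotient $J_n(M_f, X)/J_{n-1}(M_f, X)$ is, up to homotopy, the smash product $Y \wedge X^{\wedge(n-1)} \simeq \Sigma^{n}Y' \wedge X'^{\wedge(n-1)}$, arising from the single relative $Y$-cell of the pair $(M_f, X)$ together with $n-1$ free $X$-coordinates. Since this quotient is $\Sigma(\Sigma^{n-1}Y' \wedge X'^{\wedge(n-1)})$, the cofibration
\[
K_{n-1} \hookrightarrow J_n(M_f, X) \twoheadrightarrow \Sigma^{n}Y' \wedge X'^{\wedge(n-1)}
\]
exhibits $J_n(M_f, X)$ as $K_{n-1} \cup_{\gamma_n} C(\Sigma^{n-1}Y' \wedge X'^{\wedge(n-1)})$ for some $\gamma_n$.

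For part (ii), the key is to construct a natural map $\Phi : Y \times X^{n-1} \to J_n(M_f, X)$ from the ``word multiplication'' underlying Gray's construction, generalizing the classical inclusion $X^n \to J_n(X)$. By construction, $\Phi$ restricts on the $Y$-factor to $j^{n-1}_Y$ and on each $X$-factor to $j^{n-1}_Y \circ f$ (each followed by the inclusion $K_{n-1} \hookrightarrow J_n(M_f, X)$). Because every point of the fat wedge $T^n(Y, X, \ldots, X) \subset Y \times X^{n-1}$ has at least one basepoint coordinate, it corresponds to a word of length at most $n-1$, so $\Phi|_{T^n}$ factors through $K_{n-1}$. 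Then the composite
\[
\Sigma^{n-1}(Y' \wedge X'^{\wedge(n-1)}) \xrightarrow{\omega_n} T^n(Y, X, \ldots, X) \xrightarrow{\Phi|_{T^n}} K_{n-1},
\]
where $\omega_n$ is the canonical attaching map of $Y \times X^{n-1}$ relative to its fat wedge, is by Porter's definition a representative of $[j^{n-1}_Y, j^{n-1}_Y f, \ldots, j^{n-1}_Y f]$. Comparing the cofibration $T^n \hookrightarrow Y \times X^{n-1} \twoheadrightarrow \Sigma^{n}(Y' \wedge X'^{\wedge(n-1)})$ with the cofibration in part (i) via $\Phi$ identifies this representative with $\gamma_n$ up to homotopy.

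The main obstacle is the rigorous construction of $\Phi$ and the verification that $\Phi|_{T^n}$ lands in $K_{n-1}$. This requires unpacking Gray's combinatorial description of $J(M_f, X)$ as a filtered quotient of products of copies of $M_f$ and $X$, and tracking how the cells of $Y \times X^{n-1}$ distribute across the filtration. Once this coherence is in place, the identification of $\gamma_n$ with an element of Porter's higher-order Whitehead product set is automatic from Porter's obstruction-theoretic definition, since he defines $[f_1, \ldots, f_n]$ as precisely such a composite $\tilde f \circ \omega_n$ for any extension $\tilde f$ of $\vee f_i$ to the fat wedge.
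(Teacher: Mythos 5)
This paper does not actually prove Theorem \ref{Theorem of ZhuJin}; it imports it as Theorem 3.4 of \cite{ZJ}. That said, the two ingredients the paper records in Section \ref{sec: 2} --- the pushout square expressing $J_{r}(X,A)$ as $J_{r-1}(X,A)$ glued to $X\times A^{\wedge(r-1)}$ along the fat wedge, and Porter's principal cofibration $\Sigma^{n-1}X_1\wedge\cdots\wedge X_n\xrightarrow{W_n}T_1(\Sigma X_1,\dots,\Sigma X_n)\to \Sigma X_1\times\cdots\times\Sigma X_n$ --- are exactly what your Part (ii) uses, so your route coincides with the intended one: the attaching map is $\gamma_n=\Phi|_{T_1}\circ W_n$, which lies in Porter's set by his very definition once one checks that the word-multiplication map restricts on the wedge summands to $j_Y^{n-1}$ and $j_Y^{n-1}\circ f$ (via $M_f\simeq Y$ and $i_X\simeq f$).

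One logical slip to correct: in Part (i) you infer that because $J_n/J_{n-1}\simeq \Sigma\bigl(\Sigma^{n-1}Y'\wedge X'^{\wedge(n-1)}\bigr)$, the space $J_n$ must be a mapping cone over $J_{n-1}$. A cofibration whose cofiber is a suspension is not automatically principal (e.g.\ a two-cell attachment whose top cell genuinely uses the lower one has a wedge-of-spheres cofiber without being a single cone on that wedge's desuspension). The correct justification is the one implicit in your Part (ii): the pushout of $J_{n-1}\leftarrow T_1(M_f,X,\dots,X)\hookrightarrow M_f\times X^{n-1}$ together with the fact that $T_1\hookrightarrow M_f\times X^{n-1}$ is a \emph{principal} cofibration with attaching map $W_n$ (this is where the hypotheses $X=\Sigma X'$, $Y=\Sigma Y'$ enter) forces $J_n\simeq J_{n-1}\cup_{\gamma_n}C(\Sigma^{n-1}Y'\wedge X'^{\wedge(n-1)})$. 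So Part (i) should be deleted as a separate step and folded into Part (ii); with that repair the argument is complete, granting the combinatorial verification that $\Phi$ carries the fat wedge into $J_{n-1}$, which you correctly identify as the remaining technical point and which follows from Gray's description of $J(M_f,X)$ as words whose letters after the first lie in $X$.
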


As an application,  we used Theorem \ref{Theorem of ZhuJin} to compute the homotopy groups $\pi_{k}(P^{3}(2^r))$ ($k=5,6$) for all positive integers $r$ in \cite{ZJ}. However the above calculation relies on a result of one special third order Whitehead products computed in \cite{Porter Postnikov}, previously we did not   very sure that Theorem \ref{Theorem of ZhuJin} had wide applications in computing high dimensional homotopy groups of mapping cones because of that $\gamma_r (r\geq 3)$ is an element of  $r$-th order Whitehead products  $[j^{r-1}_{Y}, j^{r-1}_{Y}f, \cdots, j^{r-1}_{Y}f]$, which is a set that may contain more than one element so that it will not be enough to determine  $\gamma_r$.

In this paper, we find that by combining Theorem \ref{Theorem of ZhuJin} and some homotopy  property, especially the naturality  of $\gamma_r$, we can give the detailed  description of  $\gamma_3$ for a lot of  2-cell complexes. This enable us to develop the method of Gray to compute the  $\pi_{i}(Y^{\ast})$ for $i>2n+m-4$. We use this new method to generalize J.Wu's work \cite{WJ Proj plane} on homotopy groups of mod $2$ Moore spaces to higher dimensional homotopy groups of mod $2^r$ Moore spaces $P^{n}(2^r)$ for all $r\geq 1$.  The following is our main result. 

\begin{theorem}\label{Thm:Main thm} The unstable homotopy groups of  $\pi_{i}(P^{n}(2^r))$ for $n=4,5,6$ and $2n-2\leq i\leq 4n-6$ are given as follows.
		$$\footnotesize{\begin{tabular}{|c|c|c|c|c|c|}
			\multicolumn{6}{c}{}\\
			\hline
			$i=$\rule[10pt]{0pt}{2pt}	&$6$ &$7$ &$8$&$9$&$10$\\
				\hline
		$\pi_{i}(P^{4}(2^r))$\rule[10pt]{0pt}{2pt}	&$(2)^{1+\vartheta_r}\!+\!2^{r+1}, r\!\leq\! 2$&$(2)^2\!+\!4^{\vartheta_r}$&$(2)^3, r\!=\! 1$&$(2)^{2+\vartheta_r}\!+\!2^{m_{r}^3}$&$(2)^{2}\!+\!4, r\!=\!1$ \\
				                &$2\!+\!4\!+\!2^{r},r\!\geq \!3$\rule[10pt]{0pt}{2pt}& & $(2)^2\!+\!(4)^2, r\!=\! 2$&&$(2)^{2}\!+\!2^{m_{r}^3}\!+\!2^r, r\!\geq\! 2$\\
				                 && &$(2)^4\!+\!2^r, r\!\geq \! 3$  &&\\
				\hline
		\end{tabular}~}$$
	$$Table~1$$
	$$\footnotesize{\begin{tabular}{|c|c|c|c|}
		\multicolumn{4}{c}{}\\
		\hline
		$i=$\rule[10pt]{0pt}{2pt}	&$8$ &$9$ &$10$\\
		\hline
		$\pi_{i}(P^{5}(2^r))$\rule[10pt]{0pt}{2pt}	&$(2)^{1+\vartheta_r}\!+\!2^{m_{r}^3}$&$(2)^{2}\!+\! 4^{\vartheta_r},r\!\leq \!2$&$2+4, r\!=\!1$ \\
		&& $(2)^{4},r\!\geq \!3$& $(2)^{2}\!+\!2^{m_{r+1}^3},r\!\geq \!2$\\
		\hline
			$11$\rule[10pt]{0pt}{2pt}	&$12$ &$13$ &$14$\\
		\hline
		 $2\!+\!2^{m_{r+1}^3}\!+\!2^r$\rule[10pt]{0pt}{2pt}	&$(2)^{3}$ &$(2)^{2}\!+\!(4)^{2},r\!=\!1$ &$(2)^{7}\!+\!4,r\!=\!1$\\
				& &$(2)^{6},r\!\geq\! 2$ &$(2)^{7}\!+\!(4)^3,r\!=\! 2$\\
			& & &$(2)^{6}\!+\!4\!+\!(8)^2\!+\!2^r,r\!\geq\! 3$\\
		\hline
	\end{tabular}~}$$
$$Table~2$$
	$$\footnotesize{\begin{tabular}{|c|c|c|c|c|c}
		\multicolumn{6}{c}{}\\
	\cline{1-5}
		$i=$\rule[10pt]{0pt}{2pt}	&$10$ &$11$ &$12$&$13$&\\
			\cline{1-5}
	$\pi_{i}(P^{6}(2^r))$\rule[10pt]{0pt}{2pt}	&$8,r\!=\!1$ &$(2)^2$ &$(2)^{2+\vartheta_r}$&$(2)^3,\!r=1\!$&\\
                        	&$2+2^{r+1},r\!\geq \!2$ & &&$2\!+\!4\!+\!2^{m_{r}^3},\!r\geq 2\!$&\\
			\cline{1-5}
	$14$\rule[10pt]{0pt}{2pt}	&$15$ &$16$ &$17$&$18$&\\
			\cline{1-5}
$(2)^{4}\!+\!4$	\rule[10pt]{0pt}{2pt}&$(2)^{4}\!+\!4,$&$(2)^{6}\!+\!4$ &$(2)^{5}\!+\!(8)^2$&$(2)^{5}\!+\!8$&$r\!=\!1$\\
$(2)^{4}\!+\!2^{m_{r}^3}\!+\!2^{r},$	&$(2)^{5}\!+\!2^{m_{r}^3}$ &$(2)^{6}\!+\!(2^{m_{r}^3})^2$ &$(2)^{4}\!+\!4\!+\!(2^{m_{r}^3})^2\!+\!2^{m_{r+1}^4}$&$(2)^{4}\!+\!2^r\!+\!2^{m_{r}^3}\!+\!2^{m_{r+1}^4}$&$r\!\geq\! 2$\\
	 	\cline{1-5}
	\end{tabular}~}$$
$$Table~3$$
In the above tables,  an integer $k$ indicates a cyclic group $\Z_k:=\Z/k\Z$ of order
$k$,  the symbol $``+"$ the direct sum of the groups and $(2)^t$ indicates the direct sum of $t$-copies
of  $\Z_2$. $m_a^b:=min\{a,b\}$ indicates the minimal integer of  $a$ and $b$. $\vartheta_1 = 0$ and $\vartheta_r =1$ for $r \geq 2$.
\end{theorem}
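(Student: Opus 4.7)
The plan is to exploit the Gray fibration sequence
\[
\Omega S^{n} \xrightarrow{\partial} F \longrightarrow P^{n}(2^{r}) \xrightarrow{pinch} S^{n}
\]
associated to $P^{n}(2^{r}) = S^{n-1}\cup_{2^{r}\iota_{n-1}}e^{n}$, and to identify the fibre $F$ with the relative James construction $J(M_{f},S^{n-1})$ using Theorem \ref{Theorem of ZhuJin}. With $Y = \Sigma Y' = S^{n-1}$ and $X = \Sigma X' = S^{n-1}$, hence $Y' = X' = S^{n-2}$, the cells of $J_{r}(M_{f},S^{n-1})$ sit in dimensions $n-1,\,2n-2,\,3n-4,\,4n-5,\ldots$. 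A cell-count shows that for $i \le 4n-6$ one has $\pi_{i}(F)\cong \pi_{i}(J_{3})$ when $n=4$, and $\pi_{i}(F)\cong \pi_{i}(J_{4})$ when $n=5,6$. So the task reduces to a precise understanding of the attaching maps $\gamma_{2},\gamma_{3}$ and, for $n\ge 5$, $\gamma_{4}$.

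The second attaching map is elementary: by biadditivity of the Whitehead product
\[
\gamma_{2} = [\iota_{n-1},\,2^{r}\iota_{n-1}] = 2^{r}[\iota_{n-1},\iota_{n-1}],
\]
so $J_{2}\simeq S^{n-1}\cup_{2^{r}[\iota,\iota]} e^{2n-2}$ and the cofibre exact sequence recovers $\pi_{*}(J_{2})$ in the metastable range from standard homotopy groups of spheres. This handles the cases $i\le 3n-5$ already reachable by James's method.

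The genuinely new step is the explicit determination of the third-order Whitehead product $\gamma_{3}\in [j_{Y}^{2},\,j_{Y}^{2}f,\,j_{Y}^{2}f]$. Although Porter's set may \emph{a priori} have indeterminacy, I would pin $\gamma_{3}$ down by combining (i) the naturality of the relative James construction with respect to the degree map $2:S^{n-1}\to S^{n-1}$ (which relates $\gamma_{3}$ for $2^{r}$ and $\gamma_{3}$ for $2^{r-1}$), (ii) the special third-order product computed in \cite{Porter Postnikov} and already exploited in \cite{ZJ}, and (iii) the multilinear identities for higher-order Whitehead products; the indeterminacy in the relevant $2$-primary dimensions turns out to be small enough to leave $\gamma_{3}$ (and, for Table~3, $\gamma_{4}$) determined. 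Given this, the cofibre sequences
\[
\Sigma^{r-1}Y'\wedge X'^{\wedge(r-1)} \xrightarrow{\gamma_{r}} J_{r-1} \longrightarrow J_{r}
\]
are solved inductively for $\pi_{i}(J_{r})$ using the known $2$-primary homotopy groups of spheres.

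Finally, $\pi_{i}(P^{n}(2^{r}))$ is extracted from the long exact sequence of the Gray fibration; the connecting map $\partial:\Omega S^{n}\to F$ is controlled via the Yang--Mukai--Wu analysis \cite{JXYang}, which reduces its relevant part to James--Hopf type maps between spheres. The principal obstacles are twofold: first, verifying that the naturality and indeterminacy arguments actually isolate $\gamma_{3}$ (resp.\ $\gamma_{4}$) uniquely in the required dimension; second, resolving the $2$-primary group extensions in the exact sequences, which must be treated separately in the regimes $r=1$, $r=2$, and $r\ge 3$ because the relevant torsion patterns differ. The delicate columns of Tables~1--3, where several cyclic $2$-groups of distinct orders coexist, will absorb most of the case-by-case effort, but the rest is systematic bookkeeping over the three values of $n$.
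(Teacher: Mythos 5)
Your high-level plan coincides with the paper's: localize at $2$, identify the fibre of the pinch map with Gray's relative James construction, determine $\gamma_2=2^r[\iota_{n-1},\iota_{n-1}]$ and $\gamma_3$, and then run the long exact sequence of the fibration. Two corrections first. The cells of $J_m(M_f,S^{n-1})$ lie in dimensions $n-1,\,2n-2,\,3n-3,\,4n-4,\dots$ (the attaching map $\gamma_m$ has source $S^{m(n-1)-1}$), so $Sk_{4n-5}(F)\simeq J_3$ and $\pi_i(F)\cong\pi_i(J_3)$ for every $i\le 4n-6$ and for all of $n=4,5,6$; you never need $\gamma_4$, and your claim that $J_4$ enters for $n=5,6$ rests on a miscount of the cell dimensions. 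More seriously, your route (ii) for pinning down $\gamma_3$ --- the special third-order Whitehead product of Porter's Postnikov paper used in \cite{ZJ} --- is precisely the step the present paper identifies as \emph{not} generalizing; that computation covers one isolated case and gives no handle on $\gamma_3$ for $J_2(M_{2^r\iota_k},S^k)$ when $k=4,5$ or for general $r$. What actually determines $\gamma_3$ here is the chain of Lemmas \ref{lem:pinch gamma3}--\ref{lem:H_2-SmSn} and \ref{lem:gamma3 k odd}, \ref{lem gamma3 P5}, \ref{lem: gamma_3^r P6}: one shows $p_{Y\wedge X}\gamma_3=0$ and, using Gray's retraction $P_\infty$ together with the Hilton/James--Hopf expansion, that $p_Y\gamma_3=[p_Y,\mu p_{Y\wedge X}]H_2(\gamma_3)$; this forces $\gamma_3=a_r\beta_{3k-1}\theta+j_p\lambda$, and the integer $a_r$ is then computed by naturality under the degree-comparison maps $\bar g^s_t$ \emph{together with} the anchor case $r=0$, where $F\simeq\Omega S^{k+1}$ and the known groups $\pi_{3k}(S^{k+1})$ (via the Hopf bundles for $k=3,7$, via $\pi_{12}(S^5)$ and $\pi_{15}(S^6)$ otherwise) leave no freedom. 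Your item (i) is the right idea, but by itself it only yields $a_r=2^r a_0$ up to the unknown $a_0$ and up to torsion corrections that must be killed by the $H_2$ argument; without the $r=0$ anchor and the Hopf-invariant computation the indeterminacy is not ``small enough.''

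The second genuine gap is your treatment of the connecting map and the extensions as ``systematic bookkeeping.'' They are the bulk of the proof. The relevant values of $\partial_*$ (e.g.\ $\partial^{8,s}_{14\ast}(\sigma_8)=2^s\beta_{14}+y\beta_7\sigma'$, $\partial^{6,r}_{10\ast}(\Delta(\iota_{13}))$) require the compatibility of $\partial$ with the second James--Hopf invariant (Lemma \ref{Lem: compute H2}) and with stabilization (Corollary \ref{cor: suspen Fp Moore}), not just the Yang--Mukai--Wu reduction. The extension problems are settled by a battery of distinct mechanisms, each indispensable for some column of the tables: Toda-bracket computations of $2\mathbf{x}$ for chosen lifts (with the dichotomy $r=1$ versus $r\ge2$ coming from $\{2^r\iota,\beta,2^r\gamma\}$, Lemma \ref{lem:Toda bracket}); the splitting $\Omega\Sigma X\simeq\Omega\Sigma^{1+|u|+|v|}X\times(\text{other})$ of Lemma \ref{lem L3(X)split}, which splits off $\pi_i(P^{2n+1}(2^r))$-summands; and Hurewicz-image arguments in the tensor algebra $H_*(\Omega P^{n}(2^r);\Z_{2^r})$ proving that iterated Whitehead products such as $[i_3,[i_3,\tau_4\beta_6]]$ are not divisible by $2$ (Lemma \ref{lem:not divis 2 P4}), which is what rules out the wrong extension in, e.g., $\pi_{10}(P^4(2^r))$ and $\pi_8(P^4(4))$. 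A proposal that does not identify at least these tools cannot resolve the columns where several cyclic $2$-groups of different orders coexist, which is exactly where the theorem's content lies.
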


\begin{theorem}\label{Thm:Main thm2} The first $9$ dimensional unstable homotopy groups of  $\pi_{i}(P^{n}(2^r))$ for $n=7,8$ are given as follows.
	$$\footnotesize{\begin{tabular}{|c|c|c|c|c|}
		\multicolumn{5}{c}{}\\
		\cline{1-5}
		$i=$\rule[10pt]{0pt}{2pt}	&$11$ &$12$ &$13$&$14$\\
		\cline{1-5}
		$\pi_{i}(P^{7}(2^r))$\rule[10pt]{0pt}{2pt}	&$2^r$ &$2$ &$(2)^{2}+\!2^{m_{r-1}^2}$&$(2)^2\!+\!2^{m_{r+1}^3}\!+\!2^{m_{r}^3}$\\
		\cline{1-5}
		$15$\rule[10pt]{0pt}{2pt}	&$16$ &$17$ &$18$&$19$\\
		\cline{1-5}
		$2\!+\!(4)^{2}\!+\!8,r\!=\!1$	\rule[10pt]{0pt}{2pt}&$(2)^4+4,r=1$&$2\!+\!4\!+\!(2^{m_{r}^3})^{3}$&$(2)^{3}\!+\!2^{m_{r}^3}\!+\!2^{m_{r}^4}$ &$(2)^{2}\!+\!4,r\!=\!1$\\
		$(2)^{2}\!+\!2^{m_{r}^3},r\!\geq \!2$	&$(2)^{5}\!+\!2^{m_{r}^3},r\!\geq \!2$& & &$(2)^{3}\!+\!2^{m_{r}^4},r\!\geq\! 2$\\
		\cline{1-5}
	\end{tabular}~}$$
$$Table~4$$
	$$\footnotesize{\begin{tabular}{|c|c|c|c|c|}
		\multicolumn{5}{c}{}\\
		\cline{1-5}
		$i=$\rule[10pt]{0pt}{2pt}	&$13$ &$14$ &$15$&$16$\\
		\cline{1-5}
		$\pi_{i}(P^{8}(2^r))$\rule[10pt]{0pt}{2pt}	&$2$ &$2\!+\!2^{r-1}\!+\!2^{r+1},r\!\leq \!3$ &$(2)^{3+\vartheta_r}\!+\!2^{m_{r}^3}$&$(2)^3\!+\!(4)^2\!,r\!=\!1$\\
		\rule[10pt]{0pt}{2pt}	&&$2\!+\!8\!+\!2^{r},r\!\geq \!4$ &&$(2)^7\!+\!4,r\!= \!2$\\
			\rule[10pt]{0pt}{2pt}	&& &&$(2)^9,r\!\geq \!3$\\
		\cline{1-5}
		$17$\rule[10pt]{0pt}{2pt}	&$18$ &$19$ &$20$&$21$\\
		\cline{1-5}
		$(2)^{3}\!+\!(4)^{2},r\!=\!1$	\rule[10pt]{0pt}{2pt}&$(2)^2\!+\!2^{m_{r-1}^3}\!$&$2\!+\!2^{m_{r}^3}$&$(2)^{1+\vartheta_r}\!+\!2^{r}$ &$(2)^3\!+\!2^{m_{r}^2}$\\
		$(2)^{6}\!+\!2^{m_{r-1}^3}\!+\!8,r\!\geq \!2$	&$+2^{m_{r}^3}\!+\!2^{m_{r+1}^3}$& & &$+2^{m_{r-1}^3}\!+\!2^{m_{r+1}^4}$\\
		\cline{1-5}
	\end{tabular}~}$$
$$Table~5$$
where for $r=1$, $\Z_{2^{m_{r-1}^3}}=\Z_{2^{m_{r-1}^2}}=\Z_{2^0}=\Z_{1}=0$.
\end{theorem}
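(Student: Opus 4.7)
The plan is to realize $P^n(2^r)=S^{n-1}\cup_{2^r\iota_{n-1}} e^n$ as $Y\cup_f CX$ with $X=Y=S^{n-1}$, $f=2^r\iota_{n-1}$, and to apply Theorem~\ref{Theorem of ZhuJin} to the pinch fibration
\begin{equation*}
J(M_f,S^{n-1})\longrightarrow P^n(2^r)\xrightarrow{\mathrm{pinch}} S^n.
\end{equation*}
The fiber is filtered by spaces
\begin{equation*}
J_k\simeq S^{n-1}\cup_{\gamma_2} e^{2n-2}\cup_{\gamma_3} e^{3n-3}\cup\cdots\cup_{\gamma_k} e^{kn-k},
\end{equation*}
with $\gamma_r$ an $r$-th order Whitehead product in the set $[j_Y^{r-1},2^r j_Y^{r-1},\dots,2^r j_Y^{r-1}]$. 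For $n=7,8$ and the relevant range $i\leq 2n+5$, the cells of $J_k$ beyond $e^{3n-3}$ lie above the range we care about, so $\pi_i(J_3)\xrightarrow{\cong}\pi_i(J(M_f,S^{n-1}))$ in the relevant dimensions and the task reduces to pinning down only $\gamma_2$ and $\gamma_3$.

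The element $\gamma_2=2^r[\iota_{n-1},\iota_{n-1}]$ is an ordinary Whitehead product, computable directly from Toda's tables in terms of $\eta,\nu,\sigma$ and the Whitehead square. To pin down $\gamma_3$ despite its a priori indeterminacy, I would follow the strategy used for $P^3(2^r)$ in \cite{ZJ}: combine naturality of $\gamma_r$ under the canonical comparison maps $P^n(2^{r-1})\to P^n(2^r)\to P^n(2^{r+1})$ with Wu's $r=1$ result \cite{WJ Proj plane}, Porter's explicit third-order Whitehead-bracket calculations \cite{Porter Postnikov}, and mod-$2^r$ Hopf invariants to select the correct representative. With $\gamma_2,\gamma_3$ identified, I would compute $\pi_i(J_2)$ and $\pi_i(J_3)$ inductively from the cofibre sequences $S^{2n-3}\to S^{n-1}\to J_2$ and $S^{3n-4}\to J_2\to J_3$, and then feed the result into the long exact sequence
\begin{equation*}
\cdots\to\pi_{i+1}(S^n)\xrightarrow{\partial}\pi_i(J_3)\to\pi_i(P^n(2^r))\to\pi_i(S^n)\xrightarrow{\partial}\pi_{i-1}(J_3)\to\cdots,
\end{equation*}
where $\partial$ is controlled by $\gamma_2,\gamma_3$ (via generalized James--Hopf invariants) and the sphere groups are known by Toda. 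The rows of Tables~4 and~5 would then be assembled one dimension at a time.

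The principal obstacle is twofold. First, pinning down $\gamma_3$ uniformly in $r$: the defining set of third-order Whitehead products has indeterminacy that grows with $r$, so the naturality squares must be combined with mod-$2^r$ Hopf-invariant computations to show that the ambiguity does not affect the induced homomorphism on the homotopy groups that actually appear in the tables. Second, many $2$-primary extension problems arise in the assembly step; the ubiquitous $m_a^b=\min(a,b)$ in the exponents is precisely the signature of how the $2^r$-torsion carried by $\gamma_2$ interlocks with the torsion of $\pi_*(S^{n-1})$, and verifying the correct extension in each entry requires careful Bockstein and Hopf-invariant bookkeeping on each stage $J_k$. These two difficulties are exactly where the new method genuinely goes beyond the James/Gray techniques that stop at $i\leq 3n-5$.
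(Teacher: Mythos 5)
Your overall architecture — pass to Gray's fibre $J(M_{2^r\iota_{n-1}},S^{n-1})$, truncate at $J_3$ (legitimate in the stated range), identify $\gamma_2=2^r[\iota_{n-1},\iota_{n-1}]$, pin down $\gamma_3$, and then run the long exact sequence of $\Omega S^n\to F\to P^n(2^r)\to S^n$ — is exactly the paper's. The genuine gap is in your plan for determining $\gamma_3$. You propose to combine naturality under the comparison maps $P^n(2^{r-1})\to P^n(2^r)$ with Wu's $r=1$ computation and with Porter's explicit third-order bracket from \cite{Porter Postnikov}. Porter's computation covers only the one special bracket used for $P^3(2^r)$ in \cite{ZJ} and says nothing about the brackets $[j^2_Y,2^rj^2_Y\iota,2^rj^2_Y\iota]$ arising for $n=7,8$; the paper's stated motivation is precisely to \emph{replace} that reliance. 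Moreover, naturality among the Moore spaces with $r\geq 1$ is not enough: the comparison map $\bar g^s_t$ multiplies $\gamma_3$ by $2^{2|s-t|}$ (see (\ref{Equ:gamma30,3r})), so all torsion components of $\gamma_3^1$ are annihilated before they can constrain $\gamma_3^r$, and Wu's tables give groups, not the attaching map, so extracting $\gamma_3^1$ from them is itself an unproved step. What actually closes the argument in the paper is (a) the constraint $\bar p\,\gamma_3\simeq 0$ from Corollary \ref{Cor:proj gamma3}, which forces $\gamma_3^r$ into the image of $\pi_{3(n-1)-1}(F_{\bar p})$, i.e.\ into the span of $\beta_{3(n-1)-1}$ and $j_p\fhe\pi_*(S^{n-1})$, together with the Hopf-invariant identity $p_Y\gamma_3=[p_Y,\mu p_{Y\wedge X}]H_2(\gamma_3)$; and (b) anchoring the naturality ladder at $r=0$, where $P^n(2^0)=\ast$, the fibre is $\Omega S^n$, and $\pi_{3(n-1)-1}(J^0_3)\cong\pi_{3(n-1)}(S^n)$ is read off Toda — this identifies the coefficient $a_0$ of the torsion-free generator $\beta_{3(n-1)-1}$ as odd, whence $a_r=2^ra_0$ (Lemmas \ref{lem:gamma3 k odd}, \ref{lem: gamma3 for P7}). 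For $n=8$ one additionally exploits $[\iota_7,\iota_7]=0$ and the Hopf bundle $S^7\to S^{15}\to S^8$ to get $\gamma_3^r=\pm 2^r[j_1^7,j_2^{14}]$ on the nose.

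A secondary, smaller gap: "Bockstein and Hopf-invariant bookkeeping" undersells the extension problems. The paper resolves them with Toda brackets $\{i,2^r\iota,\alpha\}$ and the computation of $2\mathbf{x}\in i\{2^r\iota,\alpha,2\iota\}$ (Lemmas \ref{lem split}, \ref{lem:Toda bracket}), stabilization comparisons (Corollary \ref{cor: suspen Fp Moore}), the loop-space splitting $\Omega\Sigma X\simeq\Omega\Sigma^{1+|u|+|v|}X\times(\cdots)$ (Lemma \ref{lem L3(X)split}), and Hurewicz-image arguments in the tensor algebra $H_*(\Omega P^n(2^r))$ to show elements like $[\check\beta,[\check\beta,\check\beta']]$ are not divisible by $2$. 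Without naming tools of at least this strength, several entries of Tables 4 and 5 (e.g.\ the $\Z_{2^{r-1}}\oplus\Z_{2^{r+1}}$ versus $\Z_8\oplus\Z_{2^r}$ dichotomy in $\pi_{14}(P^8(2^r))$, or the non-split extension in $\pi_{16}(P^8(2))$) cannot be decided.
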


\begin{remark}
	~~
	
 $\bullet$~	In Theorem \ref{Thm:Main thm}, $\pi_{i}(P^{4}(2^r))$ for $i=6,7$ are given by \cite{ZP23};  $\pi_{8}(P^{5}(2^r))$ is given by \cite{JZhtpygps}.
	
	$\bullet$~ $2n-3$ dimensional homotopy groups of $P^{n}(2^r), n=4,5,6$ (which may be unstable) are not listed in the Tables of Theorem \ref{Thm:Main thm}, since $\pi_{5}(P^{4}(2^r))=\pi^s_{5}(P^{4}(2^r))\cong \left\{
	\begin{array}{ll}
		\Z_4, & \hbox{$r=1$;} \\
		\Z_2\oplus\Z_2, & \hbox{$r\geq 2$}
	\end{array}
	\right. 
	$ is given in \cite[Part IV]{Bau1985};  $\pi_{7}(P^{5}(2^r))\cong \Z_{2^{m_{r-1}^2}}\oplus \Z_{2^{r+1}}\oplus \Z_2$ and  $\pi_{9}(P^{6}(2^r))\cong \Z_{2^{m_{r}^3}}\oplus \Z_2$ are given in \cite{ZP23} and \cite{JZhtpygps} respectively.

	$\bullet$~ Some homotopy groups of  mod $2$ Moore spaces in the above Tables, that is $\pi_{i}(P^{4}(2))$ $i\leq 10$,  $\pi_{i}(P^{5}(2))$ $i\leq 13$,  $\pi_{i}(P^{6}(2))$ $i\leq 14$, $\pi_{i}(P^{7}(2))$ $i\leq 15$ and $\pi_{i}(P^{8}(2))$ $i\leq 16$ are also given by \cite{WJ Proj plane}.
	
	$\bullet$~ We also generalize the results of  $\pi_{i}(P^{n}(2))$ for $n=9,10,11$ in \cite{WJ Proj plane} to  $\pi_{i}(P^{n}(2^r))$ for any $r\geq 1$ in Theorem \ref{thm: pi(P9,10,11)} without proof.
\end{remark}

There are two advantages of the method given in this paper to compute the unstable homotopy groups. Firstly, it is easy to  get the generators of homotopy groups by the calculation process. 
Secondly, we are able to calculate the homotopy groups of $P^{n}(2^r)$ for all $r\geq 1$ simultaneously. 

 Some new techniques are given here to determine the 	connection homomorphism 
$\partial_{k\ast}: \pi_{k+1}(X)\rightarrow \pi_{k}(F)$ and solve the extension problem of the short exact sequence of homotopy groups 
$$	0\rightarrow  Coker (\partial_{k\ast})\rightarrow \pi_{k}(C_f)\rightarrow Ker(\partial_{k-1\ast})\rightarrow  0$$
 which is induced by fibration sequence  $\Omega\Sigma X\xrightarrow{\partial} F\rightarrow Y\cup_{f}CX\xrightarrow{pinch} \Sigma X$, where  $Ker(h)$ and $Coker(h)$ denote the Kernel and Cokernal of a homomorphism $h: G\rightarrow H$ of abelian groups respectively;  $f_{m\ast}$ denotes the induced homomorphism $f_{\ast}: \pi_{m}(X)\rightarrow \pi_{m}(Y)$ for a map $f:X\rightarrow Y$ of spaces.  We also show that the homotopy type of $J_3(M_f, X)$ for some other spaces such as suspended complex or quaternionic projective plane can be obtained by our method. This enables us to calculate the homotopy groups of them with dimension lager than that in the previous work. This gives us reason to believe that the method given in this paper has wide applications in computing high dimensional homotopy groups of CW-complexes, at least for $2$-cell complexes.

By the way, 
recently the research on the suspension homotopy of manifolds becomes a popular topic
\cite{So6dim,R.Huang 2,R.Huang Li,LiZhu,Theriault}, however the homology groups of  manifolds considered  by them   have no 2-torsion. And the known  unstable homotopy groups of mod $q^r$ ($q$ is odd prime) Moore spaces are very important in their methods. So the above results of unstable homotopy groups of mod $2^r$ Moore spaces have a potential application, that is to classify the homotopy types oqhe suspension of non-simpliy connected manifolds whose homology groups are allowed to have 2-torsion.

The paper is arranged as follows. Section \ref{sec: 2} introduces some concepts and properties of higher order Whitehead products and relative James construction;   Section \ref{section: extension problem} gives some homotopy properties of the connection map $\Omega\Sigma X\xrightarrow{\partial} F_p$ and some q which are useful to solve the extension problem of the short exact sequence.   Section \ref{sec:htyp Moore 456} and Section \ref{sec:htyp Moore78} calculate the unstable homotopy groups of mod $2^r$ Moore spaces in Theorem \ref{Thm:Main thm} and Theorem \ref{Thm:Main thm2} respectively. The last section, i.e., Section \ref{sec:some remarks} gives  some remarks to the method given in this paper.

\section{Higher order Whitehead products and relative James construction}
\label{sec: 2}
In this paper,  all spaces and maps are in the category of pointed topological spaces and maps (i.e. continuous functions) preserving base point. And we always use $*$ and $0$ to denote the basepoint and the constant map mapping to the basepoint respectively. Without special mention, all spaces are CW-complexes and all the space pairs are CW-pairs. We denote $A\hookrightarrow X$  as an inclusion map.
By abuse of notion, we will not distinguish the notions between a map and its homotopy class in many cases. 

 For the maps $f_i:X_i\rightarrow X_{i+1} (i=1,2,\cdots, n)$,  $f_nf_{n-1}\cdots f_1: X_1\rightarrow X_n$ denotes the composition of $f_i  (i=1,2,\cdots, n)$; Moreover, if  $[X_1, X_n]$ is a abelian group, for example $X_1$ is an double suspension, then for $k\in \Z$,  $kf_nf_{n-1}\cdots f_1:=k(f_nf_{n-1}\cdots f_1)$ denotes the $k$ times of $f_nf_{n-1}\cdots f_1$;

\subsection{Higher order Whitehead products}
\label{subsec: Higher order Whitehead products}

Let $T_{r}(X_1,X_2,\dots,X_n)$ be the subset of the Cartesian product $X_1\times X_2\times\dots\times X_n$, consisting of those $n$-tuples with at least $r$ co-ordinates at a base point. Thus  $T_{n-1}(X_1,X_2,\dots,X_n)=X_1\vee X_2\vee\dots\vee X_n$, $T_{1}(X_1,X_2,\dots,X_n)$ is the  ``fat wedge", and  $X_1\times X_2\times\dots\times X_n/T_{1}(X_1,X_2,\dots,X_n)=X_1\wedge X_2\wedge\dots\wedge X_n$. From Theorem 1.2 and Theorem 2.1 of \cite{Porter},
there is a principle cofibration
\begin{align}
	\Sigma^{n-1} X_1\wedge \dots\wedge  X_n \xrightarrow{W_n} T_{1}(\Sigma X_1,\dots,\Sigma  X_n)\rightarrow \Sigma X_1\times \dots\times\Sigma X_n
	\label{Cof for Cartesian Pordu.}
\end{align}
where the map $W_n$ is natural.

Given a map $f: T_{1}(\Sigma X_1,\dots,\Sigma  X_n)\rightarrow X$, $n\geq 2$, define
\begin{align}
	W(f):=f_{\ast}(W_n)=fW_n\in [\Sigma^{n-1} X_1\wedge \dots\wedge  X_n, X]
	\nonumber
\end{align}
the $n$-th order Whitehead product, which depends only upon the homotopy class of $f$ \cite{Porter}.

Let $k_j:\Sigma X_j \hookrightarrow T_{1}(\Sigma X_1,\dots,\Sigma  X_n), j=1,2,\dots, n$, be the canonical injections.
\begin{definition}
	The set of $n$-th order Whitehead products of $f_j:\Sigma X_j\rightarrow X$, $j=1,\dots,n$, is
	\begin{align}
		[f_1,\dots, f_n]:=\{W(f)|f: T_{1}(\Sigma X_1,\dots,\Sigma  X_n)\rightarrow X, fk_j\simeq f_j, j=1,\dots,n\}.
		\nonumber
	\end{align}
\end{definition}
\begin{remark}\label{Remark of Def HOWP}
	1) $[f_1,\dots, f_n]$ is a subset of $[\Sigma^{n-1} X_1\wedge \dots\wedge  X_n, X]$, and it is perhaps empty. We also have
	$ [f_1,\dots, f_n]:=\{W(f):=fW_n~|~f: T_{1}(\Sigma X_1,\dots,\Sigma  X_n)\rightarrow X\}$ for all $f$ extending the wedge sum map $(f_1,\dots,f_n): \Sigma X_1\vee \dots\vee \Sigma X_n\rightarrow X$ up to homotopy.
	
	2)$[f_1,\dots, f_n]$ depends only upon the homotopy classes of $f_i$ $(i=1,\dots,n)$;
	
	3) Hardie\cite{Hardie} has also given the definition of $[f_1,\dots, f_n]$ when all  $X_i$ are spheres (called the $n$-th order spherical Whitehead product). $[f_1,f_2,f_3]$ is, in this case, the Zeeman product studied by Hardie\cite{Hardie Zeeman}. When $X_1, X_2$ are arbitrary, $[f_1,f_2]$ is the ``generalized Whitehead product" studied by Arkowitz\cite{Arkowitz}.
	
	4) From the Theorem 2.5 \cite{Porter},   if $X$ is an H-space, then $[f_1,\dots, f_n]=\{0 \}$; if $\Sigma$ is the suspension homomorphism, then $\Sigma [f_1,\dots, f_n]=\{0\}$.
\end{remark}

The following naturality of higher order Whitehead products comes from Theorem 2.1 of \cite{Porter} and Corollary 2.4 of \cite{ZJ}.
\begin{proposition}\label{Prop: Naturality HOWP}(Naturality)
	Let $f_i:A_i\rightarrow B_i$, $h_i:\Sigma B_i\rightarrow X$, $i=1,\dots,n$, $g:X\rightarrow Y$ and $\varphi: T_1(\Sigma B_1,\dots, \Sigma B_n)\rightarrow X$, then
	\begin{itemize}
		\item [(a)] $(\Sigma^{n-1}(f_1\wedge \dots\wedge f_n ))^{\ast}[h_1,\dots,h_n]\subset [h_1\Sigma f_1,\dots,h_n \Sigma f_n]$
		\item [(b)]$g_\ast[h_1,\dots,h_n]\subset [gh_1,\dots,gh_n]$.
		\item [(c)] If $B_j=\Sigma B'_j$ for some $j$, then $k[h_1,\dots,h_j,\dots,h_n]\subset [h_1,\dots,kh_j,\dots h_n]$ for any integer $k$;
	\end{itemize}
		If $f_i:A_i\rightarrow B_i$ and  $g:X\rightarrow Y$ are homotopy equivalences, then
	\begin{itemize}
		\item [(d)] $(\Sigma^{n-1}(f_1\wedge \dots\wedge f_n ))^{\ast}[h_1,\dots,h_n]=[h_1\Sigma f_1,\dots,h_n \Sigma f_n]$;
		\item [(e)]$g_\ast[h_1,\dots,h_n]=[gh_1,\dots,gh_n]$.
	\end{itemize}
\end{proposition}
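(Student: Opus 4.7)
The whole proposition reduces to a single structural input, the naturality of the Porter attaching map $W_n$ in the cofibration (\ref{Cof for Cartesian Pordu.}): for any maps $f_i:A_i\to B_i$ with $i=1,\dots,n$, one has
$$W_n^{B}\circ\Sigma^{n-1}(f_1\wedge\cdots\wedge f_n)\;\simeq\;T_1(\Sigma f_1,\dots,\Sigma f_n)\circ W_n^{A},$$
which is already built into Porter's Theorems 1.2 and 2.1. Combined with the working definition $[h_1,\dots,h_n]=\{\varphi W_n^{B}:\varphi:T_1(\Sigma B_1,\dots,\Sigma B_n)\to X,\ \varphi k_j^{B}\simeq h_j\text{ for all }j\}$, every clause reduces to bookkeeping on extensions of a wedge map.

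For (a), I would take a representative $\varphi W_n^{B}\in[h_1,\dots,h_n]$ and pre-compose with $\Sigma^{n-1}(f_1\wedge\cdots\wedge f_n)$; the displayed naturality rewrites this as $(\varphi\circ T_1(\Sigma f_1,\dots,\Sigma f_n))\circ W_n^{A}$, and the new extension restricts on the $j$-th wedge summand to $\varphi k_j^{B}\circ\Sigma f_j\simeq h_j\Sigma f_j$, placing the class in $[h_1\Sigma f_1,\dots,h_n\Sigma f_n]$. Part (b) is even more immediate: post-composition with $g$ turns an extension of $(h_1,\dots,h_n)$ into an extension of $(gh_1,\dots,gh_n)$, and the same applies at the level of $W_n^{B}$. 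Parts (d) and (e) then follow by applying (a) and (b) to the homotopy inverses $f_i^{-1}$ and $g^{-1}$: the pull-back $(\Sigma^{n-1}(f_1\wedge\cdots\wedge f_n))^{\ast}$ and the push-forward $g_{\ast}$ are bijections on homotopy classes when the underlying maps are homotopy equivalences, so the two opposing inclusions forced by (a) resp.\ (b) must coincide.

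The only clause demanding real care is (c). The plan is to apply (a) with $f_i=\mathrm{id}_{B_i}$ for $i\neq j$ and $f_j=k\cdot\mathrm{id}_{B_j}$, where the scalar $k$ is defined through the co-$H$-structure on $B_j=\Sigma B_j'$; then $\Sigma f_j=k\cdot\mathrm{id}_{\Sigma B_j}$, so the target side of (a) produces exactly $[h_1,\dots,kh_j,\dots,h_n]$. The main obstacle is on the domain side: one must verify that
$\Sigma^{n-1}(\mathrm{id}\wedge\cdots\wedge(k\cdot\mathrm{id}_{B_j})\wedge\cdots\wedge\mathrm{id})$ realises multiplication by $k$ on the iterated suspension $\Sigma^{n-1}(B_1\wedge\cdots\wedge B_n)$, i.e.\ that the co-$H$-structure inherited from the single suspension factor $\Sigma B_j'$ is compatible with the co-$H$-structure coming from the outer $\Sigma^{n-1}$. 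This compatibility is the standard fact that scalar multiplication on a multi-suspension is independent of the chosen suspension coordinate; once it is invoked, precomposing $\varphi W_n^{B}$ with the above map yields $k\cdot\alpha$, and substituting into (a) delivers $k\cdot\alpha\in[h_1,\dots,kh_j,\dots,h_n]$ for every $\alpha\in[h_1,\dots,h_n]$.
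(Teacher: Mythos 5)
Your proof is correct. The paper itself gives no argument for this proposition, merely citing Porter's Theorem~2.1 and Corollary~2.4 of \cite{ZJ}; your verification via the naturality square $W_n^{B}\circ\Sigma^{n-1}(f_1\wedge\cdots\wedge f_n)\simeq T_1(\Sigma f_1,\dots,\Sigma f_n)\circ W_n^{A}$, together with the bookkeeping of extensions of the wedge map, is exactly the standard argument underlying those citations, including the correct reduction of (c) to (a) with $f_j=k\cdot\mathrm{id}_{B_j}$ and the observation that degree~$k$ in one suspension coordinate of the (at least doubly suspended) smash induces multiplication by $k$ on homotopy classes.
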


The following lemma  about the property of third order Whitehead product is from the Theorem 4.2 of \cite{Golasinski deMelo}, and the special case for spheres $X_i (i=1,2,3)$ are given by Hardie \cite{Hardie Zeeman}.
\begin{lemma}\label{lemma for [a,b,c]}
	$[f_1,f_2,f_3]$ with $f_i:\Sigma X_i\rightarrow X$ $(i=1,2,3)$  is a coset of the following subgroup of  group $[\Sigma^2X_1\wedge X_2\wedge X_3, X]$
	\begin{align}
		[[\Sigma^2X_2\wedge X_3, X],  f_1]+[[\Sigma^2X_1\wedge X_3, X],  f_2]+[[\Sigma^2X_1\wedge X_2, X],  f_3]. \nonumber
	\end{align}
\end{lemma}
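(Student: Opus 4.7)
The plan is to interpret $[f_1,f_2,f_3]$ as a single orbit under a natural group action and then identify that orbit with the stated coset. By definition, an element of $[f_1,f_2,f_3]$ is of the form $W(f) = fW_3$ for some extension $f: T \to X$ of the wedge map $(f_1,f_2,f_3): V \to X$, where $T := T_1(\Sigma X_1, \Sigma X_2, \Sigma X_3)$ is the fat wedge and $V := \Sigma X_1 \vee \Sigma X_2 \vee \Sigma X_3$. I assume the set is non-empty (otherwise there is nothing to prove) and fix a reference extension $f_0$, so the proof reduces to determining $W(f) - W(f_0)$ as $f$ varies, inside the abelian group $[\Sigma^2 X_1\wedge X_2\wedge X_3, X]$.

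First I would classify all extensions. Collapsing each 2-face $\Sigma X_i \times \Sigma X_j$ modulo $\Sigma X_i \vee \Sigma X_j$ identifies
\[
T/V \;\simeq\; (\Sigma^2 X_1\wedge X_2)\vee(\Sigma^2 X_1\wedge X_3)\vee(\Sigma^2 X_2\wedge X_3),
\]
and the Puppe sequence of $V\hookrightarrow T \to T/V$ gives a transitive action of the abelian group $[T/V,X] \cong \bigoplus_{i<j}[\Sigma^2 X_i\wedge X_j,X]$ on the set of extensions of $(f_1,f_2,f_3)$, via the coaction $\mu: T \to T \vee T/V$. So every extension has the form $\tilde f = (f_0,g)\circ \mu$ for a unique triple $g = (g_{12},g_{13},g_{23})$, and it remains to compute the group homomorphism
\[
\Phi: \bigoplus_{i<j}[\Sigma^2 X_i\wedge X_j,X] \longrightarrow [\Sigma^2 X_1\wedge X_2\wedge X_3,X], \qquad g \longmapsto W(\tilde f) - W(f_0).
\]
By linearity it suffices to compute each restriction $\Phi_{ij}$, in which only the $(i,j)$-component of $g$ is allowed to be nonzero.

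The heart of the argument is then the single-face claim: $\Phi_{ij}(g_{ij}) = [g_{ij},f_k]$ where $\{i,j,k\} = \{1,2,3\}$. To prove it, I would exploit the naturality of $W_3$ (Proposition \ref{Prop: Naturality HOWP}) to reduce to the universal situation in which $g_{ij}$ is the identity of $\Sigma^2 X_i \wedge X_j$. In that universal case the modification is supported on the 2-face $\Sigma X_i \times \Sigma X_j$ of $T$, and the relevant attaching-map calculation becomes the following: restricting attention to the subspace $(\Sigma X_i \times \Sigma X_j) \cup \Sigma X_k \subset T$ and its inclusion into the full product $\Sigma X_1 \times \Sigma X_2 \times \Sigma X_3$, one recognizes $W_3$ as the attaching map of the top cell of the product relative to this subspace, which — after collapsing $\Sigma X_i \vee \Sigma X_j \subset \Sigma X_i \times \Sigma X_j$ to the smash $\Sigma^2 X_i \wedge X_j$ — is the generalized Whitehead product of the two canonical suspension inclusions, by the $n=2$ case of cofibration (\ref{Cof for Cartesian Pordu.}). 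Pushing this through naturality and post-composing with $f_0$ restricted to $\Sigma X_k$, which equals $f_k$, yields $\Phi_{ij}(g_{ij}) = [g_{ij},f_k]$.

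Summing the three contributions gives $W(\tilde f) - W(f_0) = [g_{23},f_1] + [g_{13},f_2] + [g_{12},f_3]$. As $g$ ranges over $\bigoplus_{i<j}[\Sigma^2 X_i\wedge X_j,X]$, this expression ranges exactly over the subgroup
\[
S = [[\Sigma^2 X_2\wedge X_3,X],f_1] + [[\Sigma^2 X_1\wedge X_3,X],f_2] + [[\Sigma^2 X_1\wedge X_2,X],f_3],
\]
so $[f_1,f_2,f_3] = W(f_0) + S$ is a coset of $S$. The main obstacle is the single-face reduction in the third paragraph: carefully tracking how modifying an extension on one 2-face of the fat wedge translates, via the coaction and the top-cell attaching structure of $W_3$, into precisely one generalized Whitehead product with the remaining coordinate map, without picking up any mixed correction terms from cross-interactions between different faces.
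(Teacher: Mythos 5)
You should first note that the paper does not prove this lemma at all: it is imported verbatim from Theorem 4.2 of Golasi\'nski--de Melo (with the spherical case credited to Hardie), so there is no in-paper argument to compare yours against. Your overall strategy --- use the cofibration $V\hookrightarrow T\to T/V\simeq \bigvee_{i<j}\Sigma^2X_i\wedge X_j$ to get a transitive coaction of $[T/V,X]$ on the set of extensions of $(f_1,f_2,f_3)$, so that $[f_1,f_2,f_3]=W(f_0)+\mathrm{Im}(\Phi)$, and then compute $\mathrm{Im}(\Phi)$ --- is the standard route and agrees in outline with the cited proofs. The formal part (orbit $=$ coset of the image of a map from $\bigoplus_{i<j}[\Sigma^2X_i\wedge X_j,X]$) is fine.

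The genuine gap is that the entire mathematical content of the lemma is concentrated in the two claims you defer: (1) additivity of $\Phi$, i.e.\ the absence of cross terms, and (2) the single-face identification $\Phi_{ij}(g_{ij})=[g_{ij},f_k]$. Neither is established. For (1), note that $\mu W_3$ lives in $[\Sigma^2X_1\wedge X_2\wedge X_3,\,T\vee (T/V)]$, and applying $(f_0,g)$ and expanding by Hilton--Milnor can a priori produce terms mixing $f_0$ with two different components $g_{ij}$, or mixing two components of $g$ with each other; "by linearity it suffices to compute each restriction" assumes exactly what must be proved, and ruling out these corrections requires either an explicit cellular computation of $\mu W_3$ or a connectivity argument. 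For (2), the reduction to the "universal case" and the recognition of the face contribution as a generalized Whitehead product is precisely Hardie's computation of the indeterminacy of the Zeeman product; as written it is a plausible heuristic, not an argument --- in particular the step "after collapsing $\Sigma X_i\vee\Sigma X_j$ to the smash, $W_3$ restricted to this configuration becomes the Whitehead product of the two suspension inclusions" needs a careful comparison of attaching maps through the quotient, and you acknowledge this yourself by calling it "the main obstacle." So the proposal is a correct plan whose decisive steps remain unproved; as it stands it would not substitute for the citation.
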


\subsection{Relative James construction}
\label{subsec: Relative James construction}

Let  $(X,A)$ be a pair of  spaces with base point $*\in A$, and suppose that $A$ is closed in $X$. In \cite{Gray}, B.Gray constructed a space $(X,A)_{\infty}$ analogous to the James construction, which is denoted by us as $J(X,A)$ in \cite{ZP23} to parallel with the the absolute James construction $J(X)$ \cite{JamesReduceProdu}. In fact, $J(X,A)$ is the subspace of $J(X)$ of words for which letters after the first are in $A$.  Especially,  $J(X,X)=J(X)$. As parallel with the familiar symbol $J_{r}(X)$ which is the $r$-th filtration of $J(X)$, we denote the  $r$-th filtration of $J(X,A)$ by  $J_{n}(X,A):=J(X,A)\cap J_{r}(X)$, which is denoted by Gray as  $(X, A)_r$ in  \cite{Gray}.
For example,
\begin{align}
	&J_{1}(X,A)=X, J_{2}(X,A)=(X\times A)/((a,\ast)\thicksim (\ast,a)), \nonumber\\
	&J_{3}(X,A)=(X\times A\times A )/((x,\ast, a)\thicksim  (x, a, \ast);  (a, a',\ast)\thicksim (\ast,a,a')\thicksim (a,\ast,a')).\nonumber
\end{align} In fact there is a  pushout diagram for $r\geq 2$:
$$ \footnotesize{\xymatrix{
		X\times A^{\wedge (n-1)} \ar[r]^{\Pi_{r}} &  J_{r}(X, A) \\
		T_1(X,A,\cdots,A) \ar@{^{(}->}[u]\ar[r] & J_{r-1}(X, A)\ar@{^{(}->}[u]^{I_{r-1}} } }$$
where  $\Pi_{r}$ and $I_{r-1}$  are the projection and  the inclusion respectively and both of them are natural. Moreover, $J_{n}(X,A)/J_{n-1}(X,A)$ is naturally homeomorphic to $(X\times A^{n-1})/T_1(X,A,\cdots,A)=X\wedge A^{\wedge (n-1)}$.

It is well known that there is a natural weak homotopy equivalence  $\omega:J(X)\rightarrow \Omega\Sigma X$, which is a homotopy equivalence when $X$ is a finite CW-complex, and satisfies
$\footnotesize{\xymatrix{
		&X\ar@/_0.5pc/[rr]_{\Omega\Sigma}\ar@{^{(}->}[r]& J(X) \ar[r]^{\omega}& \Omega\Sigma X } }$,
where $X\xrightarrow{\Omega\Sigma} \Omega\Sigma X$, also denoted by $E_X$,  is the inclusion $x\mapsto \psi$ where $\psi: S^1\rightarrow S^1\wedge X, t\mapsto  t\wedge x .$

In the following text,  the ``commutative" means ``homotopy commutative".

Let $X\xrightarrow{f}Y$ be a map. There is a cofibration sequence $X\xrightarrow{f}Y\xrightarrow{i}C_f\xrightarrow{p} \Sigma X$.  We always use $C_f$, $F_f$ and  $M_f$ to denote the maping cone ( or say, cofibre ),  homotopy fibre and  mapping cylinder of $f$, $C_f\xrightarrow{p} \Sigma X$ the pinch map and $\Omega\Sigma X\xrightarrow{\partial}F_p\xrightarrow{\tau_p} C_f\xrightarrow{p}\Sigma X$ the homotopy fibration sequence induced by $p$ respectively. Suppose that $X=\Sigma X', Y=\Sigma Y'$ are suspensions.
Applying  Theorems  of \cite{Gray} to $(M_f,X)$, we get 
\begin{itemize}
	\item [(i)]  $F_{p}\simeq J(M_f,X)$;
	\item [(ii)] $\Sigma J(M_f,X)\simeq \bigvee_{k\geq 0}(\Sigma Y\wedge X^{\wedge k})$;$\Sigma J_{k}(M_f,X)\simeq \bigvee_{i= 0}^{k-1}(\Sigma Y\wedge X^{\wedge i})$;
	\item [(iii)]  $J_2(M_f,X)\simeq Y\cup_{\gamma_2}C(Y\wedge X')$, where $\gamma_2=[id_Y, f]$ is the generalized Whitehead product.
\end{itemize}

We denote both the inclusion  $Y\hookrightarrow J_{2}(M_f,X)$ and the composition of the inclusions
$Y\hookrightarrow J_{2}(M_f,X)\hookrightarrow J(M_f,X) \simeq  F_{p}$ by $j_{p}$ without ambiguous.   As pointed out in  the proof of Lemma 4.1.of \cite{Gray},  $j_p$ can be chosen
naturally as the lift of the inclusion $Y\xrightarrow{i}C_f$, i.e., 
\begin{align}
	\tau_p j_p\simeq i \label{equ:taujp=i}
\end{align}
Moreover the relative James construction is natural in the sense of the following 
\begin{lemma}[Lemma 2.2 of \cite{ZP23}]\label{Lem: natural J(Mf,X)}
	Suppose the left  diagram  is commutative\\
	$\footnotesize{\xymatrix{
			X\ar[d]^{\mu_X}\ar[r]^-{f} & Y\ar[d]^{\mu_Y}\\
			X_1\ar[r]^-{f_1} & Y_1}}$;~~~~~~$ \footnotesize{\xymatrix{
			F_p\simeq  J(M_f,X)\ar[d]^{J(\widehat{\mu},\mu_X)}\ar[r]&M_f/X\simeq C_f  \ar[d]^{\bar{\mu}}\ar[r]^-{p}& \Sigma X\ar[d]^{\Sigma\mu_X}\\
			F_{p_1}\simeq J(M_{f_1},X_1)\ar[r]& M_{f_1}/X_1 \simeq C_{f_1} \ar[r]^-{p_1}&\Sigma X'}}$
	
	then it induces the right commutative diagrams on fibrations, where $\widehat{\mu}$ satisfies
	
	$ \footnotesize{\xymatrix{
			&Y\ar@/^0.5pc/[rrr]^{\mu_Y}\ar@{^{(}->}[r]_{\simeq}& M_f \ar[r]_{\widehat{\mu}}& M_{f_1}\ar[r]_{\simeq}& Y_1} }$. 
	
	Let~$J_{r}(M_f,X)\xrightarrow{J(\widehat{\mu},\mu_X)|_{J_{r}(M_f,X)}=J_{r}(\widehat{\mu},\mu_X)} J_{r}(M_{f_1},X_1)$ $(r\geq 1)$.
	Then we have the following homotopy commutative diagram
		$$ \footnotesize{\xymatrix{
			J_{n-1}(M_f,X)\ar[r]^{I_{n-1}}\ar[d]^-{J_{n-1}(\widehat{\mu},\mu_X)}& J_{n}(M_f,X)\ar[r]^-{proj.}\ar[d]^-{J_{n}(\widehat{\mu},\mu_X)}&	J_{n}(M_f,X)/J_{n-1}(M_f,X)\ar[r]^-{\simeq}\ar[d]^-{\overline{J_{n}(\widehat{\mu},\mu_X)}} &	Y\wedge X^{\wedge ^{n-1}}\ar[d]^{\mu_Y\wedge\mu_X^{\wedge ^{n-1}}} \\
			J_{n-1}(M_{f_1},X_1)\ar[r]^{I_{n-1}}& J_{n}(M_{f_1},X_1)\ar[r]^-{proj.}&	J_{n}(M_{f_1},X_1)/J_{n-1}(M_{f_1},X_1)\ar[r]^-{\simeq}&	Y_1\wedge X_1^{\wedge ^{n-1}}
} }$$
\end{lemma}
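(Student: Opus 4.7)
The plan is to build the map $\widehat{\mu}$ explicitly from the given commutative square, apply functoriality of the relative James construction on pairs, and then identify the induced maps on filtration quotients with the expected smash products.

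First I would define $\widehat{\mu}:M_f\to M_{f_1}$ on $Y\subset M_f$ to be $\mu_Y$ and on $X\times I\subset M_f$ by $(x,t)\mapsto (\mu_X(x),t)$. Commutativity of the given square ($f_1\mu_X\simeq \mu_Y f$) guarantees that this assignment descends through the identification $(x,1)\sim f(x)$ that defines $M_f$. By construction, $\widehat{\mu}$ carries the subspace $X\hookrightarrow M_f$ to $X_1\hookrightarrow M_{f_1}$ via $\mu_X$ and, under the canonical deformation retracts $Y\hookrightarrow M_f$ and $Y_1\hookrightarrow M_{f_1}$, agrees with $\mu_Y$; this is exactly the condition demanded of $\widehat{\mu}$ in the statement.

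Since $\widehat{\mu}$ is a map of pairs $(M_f,X)\to (M_{f_1},X_1)$, functoriality of the relative James construction yields the induced map $J(\widehat{\mu},\mu_X):J(M_f,X)\to J(M_{f_1},X_1)$ acting on words letter-by-letter. Word length is preserved strictly, so this restricts to $J_r(\widehat{\mu},\mu_X):J_r(M_f,X)\to J_r(M_{f_1},X_1)$ for every $r\geq 1$, giving strict commutativity with all filtration inclusions $I_{r-1}$. Quotienting $\widehat{\mu}$ by $X\subset M_f$ produces $\bar{\mu}:C_f\to C_{f_1}$, and the identity $p_1\bar{\mu}=\Sigma\mu_X\circ p$ is immediate from the explicit description of the pinch map (collapse the $Y$-summand; act by $\mu_X$ on the $X$-summand). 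The commutative fibration diagram then follows from naturality of Gray's equivalence $F_p\simeq J(M_f,X)$: the map of homotopy fibres induced by the square $(\bar{\mu},\Sigma\mu_X)$ corresponds, under Gray's weak equivalence, to $J(\widehat{\mu},\mu_X)$. Finally, for the commutative diagram on successive quotients, I would invoke the natural homeomorphism $J_n(M_f,X)/J_{n-1}(M_f,X)\cong M_f\wedge X^{\wedge(n-1)}$ coming from the pushout presentation recalled in Section~\ref{subsec: Relative James construction}; under the retraction $M_f\simeq Y$, the map $\widehat{\mu}$ becomes $\mu_Y$, so the induced map on the quotient is $\mu_Y\wedge \mu_X^{\wedge(n-1)}$ as asserted.

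The main obstacle will be verifying that Gray's equivalence $F_p\simeq J(M_f,X)$ is sufficiently natural in the pair $(M_f,X)$ to conclude that the strict letter-by-letter map $J(\widehat{\mu},\mu_X)$ really does represent the homotopy-theoretic map of fibres induced by $(\bar{\mu},\Sigma\mu_X)$. This amounts to inspecting Gray's construction of the lift of the pinch sequence and checking that each step (the evaluation map $J(M_f,X)\to F_p$, the comparison with $\Omega\Sigma X$) is functorial with respect to maps of pairs. Once this naturality is in hand, every remaining verification reduces to commutativity of diagrams built directly from the explicit definitions of $\widehat{\mu}$, $\bar{\mu}$ and the pushout description of the filtration.
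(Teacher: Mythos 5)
The paper does not actually prove this lemma here; it is imported verbatim as Lemma 2.2 of \cite{ZP23}, so there is no in-text argument to compare against. Your outline is the natural one and matches what such a proof must do: build $\widehat{\mu}$ as a map of pairs $(M_f,X)\to(M_{f_1},X_1)$, use that the relative James construction acts letter-by-letter (hence preserves word length and is compatible with the filtration inclusions and with the natural homeomorphism $J_n/J_{n-1}\cong M_f\wedge X^{\wedge(n-1)}$), and then check that under Gray's equivalence $F_p\simeq J(M_f,X)$ this agrees with the induced map of homotopy fibres. Two points deserve more care. First, by the paper's stated convention the given square is only \emph{homotopy} commutative, so your formula $(x,t)\mapsto(\mu_X(x),t)$, $y\mapsto\mu_Y(y)$ does not descend through the identification $(x,1)\sim f(x)$ as written; you must choose a homotopy $H\colon f_1\mu_X\simeq\mu_Y f$ and splice it into the cylinder coordinate to define $\widehat{\mu}$ (the conclusion is then only a homotopy-commutative diagram, which is all the lemma claims). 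Second, the naturality of Gray's identification of $J(M_f,X)$ with the fibre of the pinch map is the genuine content of the lemma, and you correctly isolate it as the crux but leave it unverified; that verification is exactly what \cite{ZP23} supplies, so your plan is sound but incomplete at precisely the step that cannot be dismissed as routine diagram-chasing.
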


\subsection{homotopy of $\gamma_n$ and Hopf invariant}
\label{subsect: homotopy of gamman and hopf inva}

Let $j^r_{X}:X=J_{1}(X,A)\hookrightarrow J_{r}(X,A)$ be the canonical inclusion. By Theorem \ref{Theorem of ZhuJin}, there is a cofibration sequence 
\begin{align*}
		\Sigma^{n-1}Y'\wedge X'^{\wedge(n-1)}\xrightarrow{\gamma_n} J_{n-1}(M_f, X)\xrightarrow{I_{n-1}} J_{n}(M_f, X)
\end{align*}
where $\gamma_n$ is an element of $n$-th order Whitehead products  $[j^{n-1}_{Y}, j^{n-1}_{Y}f, \cdots, j^{n-1}_{Y}f]$. Thus from Remark \ref{Remark of Def HOWP}, 
\begin{align*}
	\Sigma\gamma_n=0.
\end{align*}

By \cite[Remark 3.5]{ZJ} $\gamma_n$ is natural in the sense of  the following
\begin{lemma}\label{lem: Natural gamma_n}
	For $\mu_X=\Sigma\mu'_X: X=\Sigma X'\rightarrow X_1=\Sigma X'_1$, $\mu_Y=\Sigma\mu'_Y: Y=\Sigma Y'\rightarrow Y_1=\Sigma Y'_1$, there is the following commutative diagram
	\begin{align}
		\small{\xymatrix{
				\Sigma^{n-1}Y'\wedge X'^{\wedge(n-1)} \ar[r]^-{\gamma_n}\ar[d]^{\Sigma^{n-1}\mu'_Y\wedge \mu_X'^{\wedge(n-1)}}&J_{n-1}(Y,X)\ar[r]^{I_{n-1}}\ar[d]^{J_{n-1}(\mu_Y,\mu_X)}&J_{n}(Y,X)\ar[d]^{J_{n}(\mu_Y,\mu_X)}\\
				\Sigma^{n-1}Y_1'\wedge X_1'^{\wedge(n-1)}  \ar[r]^-{\gamma_n}& J_{n-1}(Y_1,X_1)\ar[r]^{I_{n-1}} & J_{n}(Y_1,X_1)
		} }. \nonumber
	\end{align}
\end{lemma}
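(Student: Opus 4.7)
The diagram of Lemma~\ref{lem: Natural gamma_n} consists of two squares. The right square is immediate from Lemma~\ref{Lem: natural J(Mf,X)} applied at the $(n-1)$-st and $n$-th filtrations, since $I_{n-1}$ is the canonical inclusion of the James filtration and its compatibility with $J_k(\widehat{\mu},\mu_X)$ is already part of that lemma. So only the left square requires work.

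The plan for the left square is to realize $\gamma_n$ as a canonical composite and then to stack two natural squares. Recall that the construction of $J_n(M_f,X)$ from $J_{n-1}(M_f,X)$ is governed by the pushout displayed in Section~\ref{subsec: Relative James construction} (with $(X,A)$ replaced by $(M_f,X)\simeq (Y,X)$), and that Porter's cofibration~(\ref{Cof for Cartesian Pordu.}) provides the natural Whitehead map
$$
W_n\colon \Sigma^{n-1}Y'\wedge X'^{\wedge(n-1)} \longrightarrow T_{1}(Y,X,\ldots,X).
$$
Combining these identifies $\gamma_n$ (up to homotopy) with the specific composite
$$
\Sigma^{n-1}Y'\wedge X'^{\wedge(n-1)} \xrightarrow{W_n} T_{1}(Y,X,\ldots,X) \longrightarrow J_{n-1}(M_f,X),
$$
which is exactly the representative of $[j^{n-1}_{Y}, j^{n-1}_{Y}f, \ldots, j^{n-1}_{Y}f]$ singled out in Theorem~\ref{Theorem of ZhuJin}.

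With this identification, the left square factors as two naturality squares. The first is for the attaching map $T_{1}(Y,X,\ldots,X)\to J_{n-1}(M_f,X)$, whose naturality in $(\widehat{\mu},\mu_X)$ follows from the functoriality of the fat wedge together with the naturality of the James pushout already used to prove Lemma~\ref{Lem: natural J(Mf,X)}. The second is for $W_n$, whose naturality with respect to $\Sigma^{n-1}\mu'_Y\wedge \mu'^{\wedge (n-1)}_X$ is Porter's Theorem~2.1 as reflected in Proposition~\ref{Prop: Naturality HOWP}(a). Stacking these two commutative squares yields the commutativity of the left square.

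The main potential obstacle is the set-valued nature of higher-order Whitehead products: two different \emph{a priori} choices of $\gamma_n$ on either side of the left square could differ by an element of the indeterminacy described in Lemma~\ref{lemma for [a,b,c]}. This is resolved by fixing $\gamma_n$ throughout as the canonical representative produced by the pushout construction above, rather than as an arbitrary element of the Whitehead set; the functoriality of pushouts then forces exact compatibility under $J(\widehat{\mu},\mu_X)$ and produces the desired commutative diagram.
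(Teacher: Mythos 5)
Your argument is correct. Note that the paper itself offers no proof of this lemma: it is stated as a quotation of Remark~3.5 of the author's earlier paper \cite{ZJ}, so there is no in-text argument to compare against. Your reconstruction is the expected one and it is sound: the right square is indeed just Lemma~\ref{Lem: natural J(Mf,X)}, and for the left square the key point is exactly the one you isolate, namely that $\gamma_n$ is not an arbitrary element of the Whitehead product set but the specific composite of Porter's natural map $W_n$ with the (natural) structure map $T_1(M_f,X,\dots,X)\to J_{n-1}(M_f,X)$ coming from the pushout defining $J_n$; once $\gamma_n$ is pinned down this way, stacking the naturality square for the pushout data with the naturality square for $W_n$ gives the claim, and the indeterminacy of the set $[j_Y^{n-1}, j_Y^{n-1}f,\dots,j_Y^{n-1}f]$ never enters. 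Two minor polish points: it is cleaner to invoke the naturality of $W_n$ directly (stated right after the cofibration~(\ref{Cof for Cartesian Pordu.})) rather than routing it through Proposition~\ref{Prop: Naturality HOWP}(a), which is a consequence of that naturality at the level of homotopy sets; and you should say a word about the identification $M_f\simeq Y$, since the pushout lives over $T_1(M_f,X,\dots,X)$ while Porter's cofibration is stated for the suspension $Y=\Sigma Y'$ --- the compatibility of this identification with $\widehat{\mu}$ is exactly what the triangle in Lemma~\ref{Lem: natural J(Mf,X)} records, so the gap is only expository.
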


Define the $n$-th relative James-Hopf invariant
$$J(X,A)\xrightarrow{H_{n}} J(X\wedge A^{\wedge(n-1)}),~x_1x_2\dots x_t\mapsto\prod\limits_{1\leq i_1<i_2<\dots<i_n\leq t}(x_{i_{1}}\wedge x_{i_{2}}\wedge\dots\wedge x_{i_{n}}  )$$ which are natural for pairs.

By abuse of notion, the second James-Hopf invariant $H_2$ also denotes the composition of the maps 
\begin{align}
	\Omega\Sigma X \xrightarrow{\simeq} J(X)=J(X,X)\xrightarrow{H_2} J(X\wedge X)\xrightarrow{\simeq} \Omega\Sigma (X\wedge X)   \label{def:James H2}
\end{align}
where $X$ is a CW-complex and let $H_2': F_p\simeq J(M_f,X)\xrightarrow{H_2} J(M_f\wedge X)\xrightarrow{\simeq} J(Y\wedge X)\simeq \Omega\Sigma (Y\wedge X)$. Then by Lemmas 2.5 and 2.7 of \cite{ZP23}, we get 

\begin{lemma}\label{Lem: compute H2} 
	Let $X\xrightarrow{f}Y$ be a map. Then the following diagram is homotopy commutative
	$$ \footnotesize{\xymatrix{
			\Omega\Sigma X\ar[d]_{H_2 } \ar[r]^-{\partial} & F_p \ar[d]^{H'_2}\simeq J(M_f, X) & \ar@{_{(}->}[l]J_{2}(M_f,X)\ar[r]^-{pinch} &J_{2}(M_f,X)/ J_{1}(M_f,X)=M_f\wedge X\ar@{^{(}->}[d]\\
			\Omega\Sigma (X\wedge X) \ar[r]^-{\Omega\Sigma(f\wedge id_{X})} & \Omega\Sigma (Y\wedge X) &&J(Y\wedge X)\simeq J(M_f\wedge X)\ar[ll]_-{\omega}~~. } }$$
\end{lemma}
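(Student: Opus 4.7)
The plan is to decompose the diagram into two parts: a left square involving $\partial$, $H_2$, $H_2'$, and a right part describing the restriction of $H_2'$ to $J_2(M_f,X)$. Each part is handled by a separate mechanism --- naturality of the relative James construction for the first, the explicit combinatorial definition of the James--Hopf invariant for the second.

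For the right part, the formula $H_n(x_1\cdots x_t)=\prod_{1\le i_1<\cdots<i_n\le t}(x_{i_1}\wedge\cdots\wedge x_{i_n})$ shows at once that $H_2$ vanishes on words of length $\le 1$ (i.e.\ on $J_1$) and sends a length-$2$ word $x_1x_2$ (with $x_1\in M_f$, $x_2\in X$) to the single letter $x_1\wedge x_2\in M_f\wedge X=J_1(M_f\wedge X)\subset J(M_f\wedge X)$. Hence $H_2|_{J_2(M_f,X)}$ factors through the pinch map
$$J_2(M_f,X)\longrightarrow J_2(M_f,X)/J_1(M_f,X)=M_f\wedge X$$
followed by the canonical inclusion $M_f\wedge X\hookrightarrow J(M_f\wedge X)$, which after identifying $M_f\simeq Y$ and composing with $\omega$ gives exactly the right half of the diagram.

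For the left square, apply Lemma~\ref{Lem: natural J(Mf,X)} to the commutative diagram
$$\xymatrix{X\ar[r]^{id_X}\ar[d]_{id_X}& X\ar[d]^{f}\\X\ar[r]^{f}& Y}.$$
When $f=id_X$, the cone $C_{id_X}$ is contractible, so $F_{p_{id_X}}\simeq\Omega\Sigma X$ with connection map $\partial_{id_X}\simeq id$, and $J(M_{id_X},X)=J(X,X)=J(X)\simeq\Omega\Sigma X$. The naturality square of Lemma~\ref{Lem: natural J(Mf,X)} therefore identifies $\partial:\Omega\Sigma X\to F_p\simeq J(M_f,X)$ with the natural map $J(X)\to J(M_f,X)$ induced by the pair inclusion $(X,X)\hookrightarrow(M_f,X)$. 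Applying the naturality of $H_2$ to this same pair map gives $H_2'\circ J(i_X,id_X)\simeq J(i_X\wedge id_X)\circ H_2$, and since the deformation retraction $\widehat{\mu}:M_f\xrightarrow{\simeq}Y$ converts $i_X\wedge id_X$ into $f\wedge id_X$, this yields $H_2'\circ\partial\simeq\Omega\Sigma(f\wedge id_X)\circ H_2$, which is the left square.

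The main obstacle is justifying the identification of $\partial$ with the natural inclusion $J(X)\to J(M_f,X)$: this requires that Gray's homotopy equivalence $F_p\simeq J(M_f,X)$ be chosen so as to be natural with respect to maps of pairs, which is exactly the content of Lemma~\ref{Lem: natural J(Mf,X)}. Once this naturality is in place, both halves of the diagram are essentially formal, arising respectively from the combinatorial definition of $H_n$ on words of bounded length and from the naturality of the second James--Hopf invariant on pairs.
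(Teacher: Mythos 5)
The paper gives no internal proof of this lemma---it simply cites Lemmas 2.5 and 2.7 of \cite{ZP23}---so your argument is a reconstruction rather than a variant, and it is essentially sound. The right-hand part is correct and purely combinatorial: on $J_2(M_f,X)$ the word formula for $H_2$ kills length-one words and sends $x_1x_2$ to the single letter $x_1\wedge x_2\in J_1(M_f\wedge X)$, which is exactly pinch followed by the canonical inclusion. The left square is also handled correctly in outline, and you are right to flag the identification of $\partial$ with the James inclusion $J(X)=J(M_{id_X},X)\to J(M_f,X)$ as the crux. The one place needing slightly more care than you give it: Lemma~\ref{Lem: natural J(Mf,X)} supplies naturality of the fibration sequences and hence a commuting square relating $\partial_{id_X}$ to $\partial$, but it does not by itself identify $\partial_{id_X}:\Omega\Sigma X\to F_{p_{id_X}}\simeq J(X)$ with the inverse of the James equivalence $\omega$. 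For that you should invoke Lemma~\ref{partial calculate1} (which for $f=id_X$ gives $\partial_{id_X}\circ E_X\simeq$ the inclusion $X\hookrightarrow J(X)$, the same restriction as $\omega^{-1}$) together with the fact that an H-map out of $J(X)\simeq\Omega\Sigma X$ is determined up to homotopy by its restriction to $X$; this is the content of the cited Lemma 2.5 of \cite{ZP23}. With that supplement, the naturality of the relative James--Hopf invariant under the pair map $(X,X)\to(M_f,X)$ and the identification $M_f\simeq Y$ (turning $i_X\wedge id_X$ into $f\wedge id_X$) complete the left square exactly as you describe.
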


Let $X,Y$ be connected CW-complexes.
Recall that the Hilton-Milnor theorem states that there is a homotopy equivalence 
\[\Omega \Sigma (X\vee Y)\simeq \Omega\Sigma X\times \Omega\Sigma Y\times \Omega\Sigma \big(\bigvee_{i\geq 1}X^{\wedge i}\wedge Y\big).\]
Let $j_1\colon X\to X\vee Y$ and $j_2\colon Y\to X\vee Y$ be the canonical inclusion into wedge sum, respectively.
Recall the following duality isomorphism 
\[\Omega_0\colon [\Sigma X,Y]\to [X,\Omega Y],\quad\Omega_0f=(\Omega f)\circ E_X.\]
For maps $f\colon \Sigma X\to Z,g\colon \Sigma Y\to Z$, the Whitehead product 
\[[f,g]\colon \Sigma X\wedge Y\to Z\] is the duality of the Samelson product 
\[[\Omega_0 f,\Omega_0g]^S\colon X\wedge Y\to \Omega Z; \]
that is, $\Omega_0[f,g]=[\Omega_0f,\Omega_0g]^S$.
Let $X$ be a connected CW co-$H$-space.  Recall that the  Hopf-Hilton invariant $H$ (\cite{Hopfinvariants1967}, Section 5.3 of \cite{NeisendorferBook}) is the composition 
\[H\colon \Omega \Sigma X\xrightarrow{\Omega \Sigma (j_1+j_2)} \Omega \Sigma (X\vee X)\xrightarrow{p_{[j_1,j_2]}}\Omega\Sigma (X\wedge X),\]
where $p_{[j_1,j_2]}$ is the canonical projection after pre-composing with the homotopy equivalence given by the Hilton-Milnor theorem. 

By Lemma 3.12 of \cite{Steer1963}, the Hopf-Hilton invariant $H$ and the second James-Hopf invariant $H_2$ (absolute) are homotopic.

\begin{lemma} \label{lem:pinch gamma3}
	Let	$(X,A)$  be a CW-pair with $X=\Sigma X'$, $A=\Sigma A'$. $i_A: A\hookrightarrow X$ is the inclusion map, then $p_{X\wedge A}\gamma_3\simeq 0$, where $p_{X\wedge A}: J_2(X,A)\rightarrow J_2(X,A)/X\cong X\wedge A $ is the canonical projection. Moreover, if $[id_X, i_A]=0$, then $J_2(X,A)\simeq X\vee X\wedge A$. $J_3(X,A)\simeq (X\vee X\wedge A)\cup_{\gamma_3}C(X'\wedge A\wedge A)$ such that the $\gamma_3$  also satisfies 
	\begin{align*}
	p_{X}\gamma_3=[p_X, \mu p_{X\wedge A}]H_2(\gamma_3)~~\text{for some}~~\mu\in [X\wedge A, X]
	\end{align*}
	where
	$p_X: X\vee X\wedge A \rightarrow X$ is the canonical projection of the first wedge summand. 
\end{lemma}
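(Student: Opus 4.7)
The plan is to treat the three assertions in order, with Parts (1) and (2) being direct applications of Theorem \ref{Theorem of ZhuJin} and the naturality of higher-order Whitehead products, and Part (3) the substantive step requiring a Hopf--Hilton invariant analysis.

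For Part (1), Theorem \ref{Theorem of ZhuJin} applied to the cofibration $i_A\colon A\hookrightarrow X$ (with $M_{i_A}\simeq X$) realizes $\gamma_3$ as an element of the third-order Whitehead set $[j^2_X,\,j^2_X i_A,\,j^2_X i_A]$, where $j^2_X\colon X=J_1(X,A)\hookrightarrow J_2(X,A)$ is the canonical inclusion. Since $p_{X\wedge A}$ is, by definition, the quotient of $J_2(X,A)$ by $J_1(X,A)=X$, the composite $p_{X\wedge A}\circ j^2_X$ is null. Proposition \ref{Prop: Naturality HOWP}(b) then places $p_{X\wedge A}\gamma_3$ in the set $[0,0,0]$, which by Lemma \ref{lemma for [a,b,c]} is a coset of the trivial subgroup (each generator Whitehead product against $0$ vanishes by bilinearity) and contains $0$ itself (take the extension over the fat wedge to be the constant map). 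Hence $p_{X\wedge A}\gamma_3\simeq 0$.

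For Part (2), the hypothesis $[id_X,i_A]=0$ forces $\gamma_2=0$, so $J_2(X,A)\simeq X\cup_0 C(X\wedge A')\cong X\vee(X\wedge A)$, and a second application of Theorem \ref{Theorem of ZhuJin} gives the cofibre description $J_3(X,A)\simeq(X\vee X\wedge A)\cup_{\gamma_3}C(X'\wedge A\wedge A)$ with $\gamma_3$ the same third-order Whitehead product.

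For Part (3), I use Porter's construction of a representative of a third-order Whitehead product. By Proposition \ref{Prop: Naturality HOWP}(b), $p_X\gamma_3$ lies in $[id_X,i_A,i_A]\subset[X'\wedge A\wedge A,X]$. Since $[id_X,i_A]=0$, one can choose a map $\mu\in[X\wedge A,X]$ realizing the null-homotopy; concretely, $\mu$ is the restriction to $X\wedge A=(X\times A)/(X\vee A)$ of any extension of $(id_X,i_A)\colon X\vee A\to X$ across $X\times A$. Using $\mu$ on each $X\times A$-face of the fat wedge $T_1(X,A,A)$, Porter's construction produces a representative of $p_X\gamma_3$ of the form: a Whitehead product $[id_X,\mu]$ composed with a Hopf-type invariant arising from the top cell of $T_1(X,A,A)$. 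Under the wedge splitting $J_2=X\vee(X\wedge A)$, the bracket $[id_X,\mu]$ is exactly the generalized Whitehead product $[p_X,\mu p_{X\wedge A}]$ (since $p_X$ restricts to $id_X$ on the first summand and $\mu p_{X\wedge A}$ restricts to $\mu$ on the second), while the Hopf-type invariant is identified with $H_2(\gamma_3)$ via Steer's theorem recalled before the statement. This produces the claimed equation.

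The principal obstacle is this last step: verifying the identity at the level of maps, rather than merely within the coset indeterminacy of $[id_X,i_A,i_A]$ described by Lemma \ref{lemma for [a,b,c]}, and absorbing that indeterminacy into the choice of $\mu$. This requires a careful comparison of the attaching structure of $T_1(X,A,A)/T_2(X,A,A)$ with the Hilton--Milnor splitting of $\Omega(X\vee(X\wedge A))$, together with the suspension-triviality $\Sigma\gamma_3=0$ from Remark \ref{Remark of Def HOWP}(4) to control the higher Hall-basis contributions to the Hilton decomposition of $\gamma_3$.
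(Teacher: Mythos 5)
Your treatment of the first two assertions is correct and coincides with the paper's: naturality of higher Whitehead products sends $p_{X\wedge A}\gamma_3$ into $[0,0,0]=\{0\}$, and the hypothesis $[id_X,i_A]=0$ gives the wedge splitting of $J_2$ and the cofibre description of $J_3$ via Theorem \ref{Theorem of ZhuJin}.

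The third assertion is where your argument has a genuine gap, and you have in effect flagged it yourself. Knowing only that $p_X\gamma_3$ lies in the set $[id_X,i_A,i_A]$ cannot yield the stated identity: by Lemma \ref{lemma for [a,b,c]} that set is a coset of $[[\Sigma^2A'\wedge A',X],id_X]+2[[\Sigma^2X'\wedge A',X],i_A]$, which is in general nonzero, and moreover the right-hand side of the identity involves $H_2(\gamma_3)$ for the \emph{actual} attaching map $\gamma_3$ of $J_3(X,A)$ --- data that membership in the Whitehead-product coset does not see. Your proposed route (unwinding Porter's construction on $T_1(X,A,A)$ and matching it against the Hilton--Milnor splitting) would at best produce \emph{some} representative of the coset for \emph{some} extension over the fat wedge, not the one realized by the inclusion $J_2\hookrightarrow J_3$, and no mechanism is given for absorbing the discrepancy into the choice of $\mu$. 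The missing idea is Gray's Corollary 5.9: $X$ is a wedge summand of the full relative James construction, so there is a retraction $P_\infty\colon J(X,A)\to X$ with $P_\infty I_X=id_X$. Its restriction $P_2=p_X+\mu p_{X\wedge A}$ to $J_2(X,A)\simeq X\vee X\wedge A$ (this is where the specific $\mu$ comes from) extends over $J_3(X,A)$ by construction, so the cofibration exact sequence $[J_3(X,A),X]\xrightarrow{I_2^{\ast}}[X\vee X\wedge A,X]\xrightarrow{\gamma_3^{\ast}}[X'\wedge A\wedge A,X]$ forces $(p_X+\mu p_{X\wedge A})\gamma_3=0$. Expanding this composite with a sum of maps by the Boardman--Steer formula (4.11) of \cite{Hopfinvariants1967},
\begin{align*}
p_X\gamma_3+\mu p_{X\wedge A}\gamma_3+[\mu p_{X\wedge A},p_X]H_2(\gamma_3)=0,
\end{align*}
and using $p_{X\wedge A}\gamma_3=0$ from the first part gives the identity. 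Without the global retraction and the extension-over-$J_3$ argument, the key vanishing $(p_X+\mu p_{X\wedge A})\gamma_3=0$ is not available, and the identity cannot be extracted from the Whitehead-product formalism alone.
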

\begin{proof}
	Let $j_X:X\hookrightarrow J_2(X,A)$ be the canonical inclusion. From Theorem \ref{Theorem of ZhuJin},  $\gamma_3\in [j_X, j_Xi_A, j_Xi_A]$.  Then from Proposition \ref{Prop: Naturality HOWP},  $p_{X\wedge A}\gamma_3\in p_{X\wedge A}[j_X, j_Xi_A, j_Xi_A]\subset  [p_{X\wedge A}j_X, p_{X\wedge A}j_Xi_A, p_{X\wedge A}j_Xi_A]=[0,0,0]$. Since $0\in [0,0,0]$ by  \cite[ Corollary 2.6]{Golasinski deMelo} and it is a  coset of the subgroup $\{0\}$ by Lemma \ref{lemma for [a,b,c]},  $[0,0,0]=\{0\}$.  Hence $p_{X\wedge A}\gamma_3=0$.
	
If $[id_X, i_A]=0$, then $J_2(X,A)\simeq X\vee X\wedge A$.	By Corollary 5.9 of \cite{Gray}, $X$ is a wedge summand of $J(X,A)$, i.e., there is a projection $P_{\infty}:J(X,A)\rightarrow X$, such that $P_{\infty}I_X=id_X$ where $I_X: X=J_1(X,A)\hookrightarrow J(X,A)$ is the canonical inclusion which is also the following composition 
	\begin{align*}
		\footnotesize{\xymatrix{
				&X\ar@/^1pc/[rrr]^{I_X}\ar@{^{(}->}[r]_{j_X}& J_2(X,A) \ar@{^{(}->}[r]_{I_2}& J_3(X,A)\ar@{^{(}->}[r]& J(X,A)} }
	\end{align*}
	Let $P_n: J_{n}(X,A)\hookrightarrow J(X,A) \xrightarrow{P_{\infty}} X$. $P_2j_X=id_X$ implies that $P_2=(id_X, \mu)=p_X+ \mu p_{X\wedge A}$ for some $\mu\in [X\wedge A, X]$.
	By the following exact sequence 
	\begin{align*}
		[J_3(X,A),X]\xrightarrow{I_2^{\ast}}[X\vee X\wedge A,X]\xrightarrow{\gamma_3^{\ast}}[X'\wedge A\wedge A, X]
	\end{align*}
	we get $I_{2}^{\ast}(P_3)=P_2$. Thus $\gamma_3^{\ast}(P_2)=P_2\gamma_3=0$, i.e., 
	$(p_X+ \mu p_{X\wedge A})\gamma_3=0$. 
	From the (4.11) of \cite{Hopfinvariants1967}, we have 
	$p_X\gamma_3+\mu p_{X\wedge A}\gamma_3+[\mu p_{X\wedge A}, p_X]H_2(\gamma_3)=0$, i.e., 
	$p_X\gamma_3=[p_X, \mu p_{X\wedge A}]H_2(\gamma_3)$.
	
	We complete the proof of this Lemma.
\end{proof}
The above lemma  easily gives the following corollary
\begin{corollary}\label{Cor:proj gamma3}
Let $X\xrightarrow{f}Y$ be a map with $X=\Sigma X'$, $Y=\Sigma Y'$. Then $p_{Y\wedge X}\gamma_3\simeq 0$, where $p_{Y\wedge X}: J_2(M_f, X)\rightarrow J_2(M_f, X)/Y\simeq Y\wedge X$ is the canonical projection. Moreover, if $[id_Y, f]=0$, then $J_2(M_f,X)\simeq Y\vee Y\wedge X$. $J_3(M_f,X)\simeq (Y\vee Y\wedge X)\cup_{\gamma_3}C(Y'\wedge X\wedge X)$ such that the $\gamma_3$  also satisfies 
\begin{align*}
	p_{Y}\gamma_3=[p_Y, \mu p_{Y\wedge X}]H_2(\gamma_3)~~\text{for some}~~\mu\in [Y\wedge X, Y]
\end{align*}
where
$p_Y: Y\vee Y\wedge X\rightarrow Y$ is the canonical projection of the first wedge summand. 
\end{corollary}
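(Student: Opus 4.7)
My plan is to observe that Corollary \ref{Cor:proj gamma3} is the instance of Lemma \ref{lem:pinch gamma3} obtained by replacing the pair $(X,A)$ and the inclusion $i_A$ by the pair $(M_f, X)$ and the natural inclusion $X \hookrightarrow M_f$. Although $M_f$ itself is not literally a suspension, Theorem \ref{Theorem of ZhuJin} supplies the cellular structure of $J_n(M_f,X)$ \emph{directly} in terms of $Y=\Sigma Y'$, $X=\Sigma X'$ and $f$, namely $J_2(M_f,X)\simeq Y\cup_{[id_Y,f]}C(Y\wedge X')$ and $\gamma_3\in [j_Y, j_Y f, j_Y f]$. This is precisely the data the argument of Lemma \ref{lem:pinch gamma3} consumes, so I would simply run that three-step argument verbatim with $Y$, $X$, $f$ in place of $X$, $A$, $i_A$.

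For the first claim, $p_{Y\wedge X}j_Y=0$ and hence by Proposition \ref{Prop: Naturality HOWP}(b), $p_{Y\wedge X}\gamma_3\in p_{Y\wedge X}[j_Y,j_Yf,j_Yf]\subset [0,0,0]$. By Lemma \ref{lemma for [a,b,c]} the set $[0,0,0]$ is a coset of the trivial subgroup, and since $0\in [0,0,0]$ by \cite[Corollary 2.6]{Golasinski deMelo}, we get $[0,0,0]=\{0\}$ and therefore $p_{Y\wedge X}\gamma_3=0$.

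Now suppose $[id_Y,f]=0$. Then $\gamma_2=0$ and the cofibre sequence $Y\wedge X'\xrightarrow{\gamma_2}Y\to J_2(M_f,X)$ splits, giving $J_2(M_f,X)\simeq Y\vee Y\wedge X$. Corollary 5.9 of \cite{Gray} yields a retraction $P_\infty:J(M_f,X)\to Y$ of the canonical inclusion $I_Y$; its restriction $P_2$ to $J_2(M_f,X)$ satisfies $P_2|_Y=\mathrm{id}_Y$, so $P_2=p_Y+\mu\circ p_{Y\wedge X}$ for some $\mu\in [Y\wedge X,Y]$. Since $P_\infty$ extends through $J_3(M_f,X)$, the composite $P_2\gamma_3$ factors through the cofibre of $I_2$ and is forced to vanish. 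Expanding via the distributivity formula \cite[(4.11)]{Hopfinvariants1967}, which uses the James-Hopf invariant $H_2$ identified with the Hopf-Hilton invariant by Lemma 3.12 of \cite{Steer1963}, we obtain
\[
p_Y\gamma_3+\mu p_{Y\wedge X}\gamma_3 +[\mu p_{Y\wedge X},p_Y]H_2(\gamma_3)=0,
\]
and combining with $p_{Y\wedge X}\gamma_3=0$ together with the antisymmetry $[a,b]=-[b,a]$ of the Whitehead product yields $p_Y\gamma_3=[p_Y,\mu p_{Y\wedge X}]H_2(\gamma_3)$.

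The only point requiring care is that transport in the first paragraph: one must verify that Theorem \ref{Theorem of ZhuJin}, although stated for the pair $(M_f,X)$ in which $M_f$ is not literally a suspension, really does provide the structural input that the proof of Lemma \ref{lem:pinch gamma3} consumes (which was stated there in terms of pairs of suspensions). Once this is checked, every diagram and every naturality step of the lemma's proof transports across the homotopy equivalence $M_f\simeq Y$ without change, and no further obstacle arises.
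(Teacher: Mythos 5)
Your proposal is correct and matches the paper's intent exactly: the paper derives Corollary \ref{Cor:proj gamma3} by applying Lemma \ref{lem:pinch gamma3} to the pair $(M_f,X)$ (with the inclusion $X\hookrightarrow M_f$ playing the role of $i_A$ and $M_f\simeq Y$), which is precisely the transport you carry out. Your re-run of the three steps — the $[0,0,0]=\{0\}$ argument, the retraction from Gray's Corollary 5.9, and the Boardman--Steer expansion of $P_2\gamma_3=0$ — reproduces the lemma's proof verbatim, so no further commentary is needed.
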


Here we give the following conjecture which holds  in all the special cases we have encountered and a lot of maps satisfy the condition of the conjecture, such as the attaching map of the top cell in the even dimensional mod $2^r$ Moore space, suspended complex   projective
plane  $\Sigma \mathbb{C}P^2$, quaternionic projective
plane  $\Sigma \mathbb{H}P^2$ and so on.

\begin{conjecture} Let $X\xrightarrow{f}Y$ be a map with $X=\Sigma X'$, $Y=\Sigma Y'$.  If $[id_Y, f]=0$, that is $J_2(M_f,A)\simeq Y\vee Y\wedge X$, then 
	$\gamma_3=[j_Yf, j_{Y\wedge X}]:Y'\wedge X\wedge X\rightarrow   Y\vee Y\wedge X$, where $j_Y$ and $j_{Y\wedge X}$ are canonical inclusions of the corresponding wedge summands of   $J_2(M_f,X)\simeq  X\vee X\wedge A$.
\end{conjecture}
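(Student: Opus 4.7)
The plan is to pin down $\gamma_3$ within the third-order Whitehead product set $[j_Y^2,j_Y^2f,j_Y^2f]$ guaranteed by Theorem~\ref{Theorem of ZhuJin}, by constructing the canonical extension coming from the James filtration and reading off the result in the splitting $J_2(M_f,X)\simeq Y\vee Y\wedge X$. Since $[\mathrm{id}_Y,f]=0$, the James multiplication provides an explicit extension $m\colon Y\times X\to J_2(M_f,X)\simeq Y\vee Y\wedge X$ of the wedge map $(j_Y,j_Yf)\colon Y\vee X\to Y\vee Y\wedge X$, which satisfies $p_Y\circ m\simeq\mathrm{pr}_Y$ and $p_{Y\wedge X}\circ m\simeq\pi$ for the smash quotient $\pi\colon Y\times X\to Y\wedge X$. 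Using $m$ on the two $Y\times X$-faces of the fat wedge and $m\circ(f\times\mathrm{id}_X)$ on the $X\times X$-face, one assembles an extension $\varphi\colon T_1(Y,X,X)\to Y\vee Y\wedge X$ of the wedge map $(j_Y,j_Yf,j_Yf)$. By the pushout description of $J_3(M_f,X)$ in the diagram displayed after cofibration~\eqref{Cof for Cartesian Pordu.} together with Theorem~\ref{Theorem of ZhuJin}, the attaching map $\gamma_3$ is represented by $\varphi\circ W_3\colon Y'\wedge X\wedge X\to Y\vee Y\wedge X$.

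The next step is to identify $\varphi\circ W_3$ with $[j_Yf,j_{Y\wedge X}]$ by comparing their projections to each wedge summand. On the smash summand, Corollary~\ref{Cor:proj gamma3} gives $p_{Y\wedge X}\gamma_3=0$, which matches $p_{Y\wedge X}[j_Yf,j_{Y\wedge X}]=[0,\mathrm{id}_{Y\wedge X}]=0$ by naturality of Whitehead products (Proposition~\ref{Prop: Naturality HOWP}). On the $Y$-summand, the conjecture predicts $p_Y\gamma_3=p_Y[j_Yf,j_{Y\wedge X}]=[f,0]=0$, which is consistent with the Corollary~\ref{Cor:proj gamma3} identity $p_Y\gamma_3=[p_Y,\mu p_{Y\wedge X}]H_2(\gamma_3)$ provided the Hopf-invariant correction vanishes. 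This vanishing can be attacked by expanding $p_Y\varphi W_3$, where $p_Y\circ\varphi$ factors through the pinch $T_1(Y,X,X)\to Y\vee X\vee X$ followed by $(\mathrm{id}_Y,f,f)$, in terms of the known decomposition of $W_3$ into iterated Whitehead products. Matching these data against the Hilton--Milnor basis of $[Y'\wedge X\wedge X,Y\vee Y\wedge X]$ then forces the basic product $[j_Yf,j_{Y\wedge X}]$ to be the only surviving term.

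The main obstacle is handling the indeterminacy. By Lemma~\ref{lemma for [a,b,c]}, the set $[j_Y^2,j_Y^2f,j_Y^2f]$ is a coset of a generally nontrivial subgroup of Whitehead products with each entry, so different admissible extensions of the wedge map to $T_1(Y,X,X)$ can yield different elements of $[Y'\wedge X\wedge X,Y\vee Y\wedge X]$, and the two projection constraints are not obviously enough to single out $[j_Yf,j_{Y\wedge X}]$ in general because the unknown map $\mu$ and the James--Hopf invariant $H_2(\gamma_3)$ are coupled through the correction term. A full proof would require neutralising this coupling, for instance through a cell-by-cell reduction to a universal example via the naturality of the James filtration (Lemmas~\ref{Lem: natural J(Mf,X)} and~\ref{lem: Natural gamma_n}): verify the identity on a minimal universal model for $f$ where dimensional constraints kill the indeterminacy, and then transport the result back to an arbitrary $f$ by functoriality of $\gamma_3$. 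This remaining coupling between the choice of extension and the Hopf-invariant correction is the crucial gap and explains why the statement is recorded as a conjecture rather than as a theorem.
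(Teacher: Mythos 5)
The statement you are trying to prove is stated in the paper as a \emph{conjecture}: the author gives no proof of it, only verifications in special cases (Lemma~\ref{lem: J3 alpha} and Remark~\ref{remark gamma3} for sphere attachments, Lemma~\ref{lem:gamma3 k odd}(iii), Lemma~\ref{lem gamma3 P5}, Lemma~\ref{lem: gamma_3^r P6}, and the $\Sigma\mathbb{C}P^2/\Sigma\mathbb{H}P^2$ lemma in Section~\ref{sec:some remarks}), each of which relies on ad hoc input such as injectivity of suspension, comparison with the $r=0$ case via the naturality maps $\bar g^s_t$, or explicit knowledge of the relevant homotopy groups. So there is no ``paper's own proof'' to match, and your proposal, which you yourself concede leaves the indeterminacy unresolved, does not close the gap either. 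You have correctly located the obstruction, but your intermediate strategy has an additional structural flaw worth naming: the two constraints $p_Y\gamma_3=0$ and $p_{Y\wedge X}\gamma_3=0$ can never single out $[j_Yf,j_{Y\wedge X}]$, because in the Hilton--Milnor decomposition of $[\Sigma Y'\wedge X'\wedge X, Y\vee Y\wedge X]$ every cross term (every basic Whitehead product of the two inclusions, and every multiple of one) is killed by both projections. In particular $0$, $[j_Yf,j_{Y\wedge X}]$, and $k[j_Yf,j_{Y\wedge X}]$ all satisfy your two constraints, so ``matching against the Hilton--Milnor basis'' cannot force the claimed answer without an invariant that detects the cross component --- which is exactly what the paper supplies case by case via $H_2'$, the skeleton $J_2(M_{\gamma_2},\cdot)$ of $F_{\bar p}$ in Lemma~\ref{lem: J3 alpha}, and homology of loop spaces.

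A second unjustified step is the assertion that $\gamma_3$ is represented by $\varphi\circ W_3$ for the particular extension $\varphi$ you assemble from the James multiplication. Theorem~\ref{Theorem of ZhuJin} only guarantees $\gamma_3=fW_3$ for \emph{some} extension $f$ of the wedge map $(j_Y^2,j_Y^2f,j_Y^2f)$, and by Lemma~\ref{lemma for [a,b,c]} different extensions differ by the subgroup $[[\,\cdot\,,j_Y^2]+\cdots]$, which in general contains nonzero multiples of $[j_Yf,j_{Y\wedge X}]$ itself. Identifying which element of the coset the James filtration actually produces is the entire content of the conjecture; assuming it equals the output of your chosen $\varphi$ begs the question. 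Your closing suggestion --- reduce to a universal example by naturality (Lemmas~\ref{Lem: natural J(Mf,X)} and~\ref{lem: Natural gamma_n}) and transport back --- is essentially the technique the paper deploys in its special cases, but no universal model is exhibited in which the indeterminacy dies, and for general $X,Y$ none is known. The statement therefore remains open, and your write-up should be presented as a strategy with an identified obstruction, not as a proof.
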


Let $j_i:X_i\hookrightarrow \vee_{i=1}^{n} X_i$, $q_i:\vee_{i=1}^{n} X_i\twoheadrightarrow X_i$ (resp.  $j_i: A_i\hookrightarrow \oplus_{i=1}^{n} A_i$, $q_i:  \oplus_{i=1}^{n} A_i\twoheadrightarrow A_i$, $A_i$ are abelian groups) are inclusion and projection of the $i$-th wedge summand $X_i$ (rep. direct summand $A_i$). If $X_i$ is a sphere $S^m$, denote $j_i, q_i$ by  $j^m_i, q^m_i$ respectively.    Let $(f_1,f_2,\cdots, f_n): \vee_{i=1}^{n} X_i\rightarrow Y$ be the map such that $(f_1,f_2,\cdots, f_n)j_i\simeq f_i (i=1,2,\cdots, n)$ and $g_1\vee g_2\vee \cdots \vee g_n$ denote the map $(j_1g_1, j_2g_2,\cdots, j_ng_n): \vee_{i=1}^{n} X_i\rightarrow \vee_{i=1}^{n} Y_i$.

Consider the cofibration sequence $S^{n+k}\xrightarrow{\alpha} S^{n}\rightarrow C_{\alpha}\xrightarrow{p} S^{n+k+1}$. There is the cofibration sequence
\begin{align}
	&S^{2n+k-1}\xrightarrow{\gamma_2} S^{n}\xrightarrow{j_p} J_2(M_{\alpha},S^{n+k} )\xrightarrow{\bar p} S^{2n+k},~~ \gamma_2=[\iota_n, \alpha].\nonumber\\
	&Sk_{5n+2k-3}(F_{\bar p})\simeq J_2(M_{\gamma_2},S^{2n+k-1} )\simeq S^n\cup_{[\iota_n,[\iota_n, \alpha]]} e^{3n+k-1}. \label{equ: SkFbarp}
\end{align}
Moreover if $[\iota_n,[\iota_n, \alpha]]=0$, then there is  the inclusion of the wedge summand $\bar\beta_{3n+k-1}: S^{3n+k-1}\hookrightarrow  J_2(M_{\gamma_2},S^{2n+k-1} )\simeq S^n\vee S^{3n+k-1}$. Let $\beta_{3n+k-1}$ be the composition of the following canonical inclusions 
\begin{align*}
	\beta_{3n+k-1}:~	S^{3n+k-1}\stackrel{\bar\beta_{3n+k-1}}\hookrightarrow  J_2(M_{\gamma_2},S^{2n+k-1} )\hookrightarrow F_{\bar p}\stackrel{\bar\tau}\hookrightarrow J_2(M_{\alpha},S^{n+k} ).
\end{align*}
\begin{lemma}\label{lem: J3 alpha}
	Let $S^{n+k}\xrightarrow{\alpha} S^{n}\rightarrow C_{\alpha}\xrightarrow{p} S^{n+k+1}$ be the cofibration sequence, with $k\geq 0, n\geq 2$ and $k< n-1$.
	
   	If $[\iota_n,[\iota_n, \alpha]]=0$, i.e., $J_2(M_{\gamma_2},S^{2n+k-1})\simeq S^n\vee S^{3n+k-1}$, then 
	\begin{align*}
		\gamma_3=\beta_{3n+k-1}\theta+j_{p}\lambda : S^{3n+2k-1}\rightarrow J_2(M_{\alpha},S^{n+k} ), \theta \in \pi_{3n+2k-1}(S^{3n+k-1}), \lambda\in \pi_{3n+2k-1}(S^{n}). 
	\end{align*}
	Especially,	if $[\iota_n, \alpha]= 0$, i.e.,  $J_{2}(M_{\alpha}, S^{n+k})\simeq S^n\vee S^{2n+k}$, then $	\gamma_3\simeq [j_1^n, j_2^{2n+k}]\theta+j_{1}^n\lambda $.
\end{lemma}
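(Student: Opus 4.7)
The plan is to lift $\gamma_3$ from $J_2(M_\alpha, S^{n+k})$ up to the homotopy fiber $F_{\bar p}$ and then exploit the identification (\ref{equ: SkFbarp}) of the low-dimensional skeleton of $F_{\bar p}$ as a two-cell complex.

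\emph{Lifting.} First I would apply Corollary \ref{Cor:proj gamma3} to $\alpha$. By construction the canonical projection $p_{S^n \wedge S^{n+k}}: J_2(M_\alpha, S^{n+k}) \to S^{2n+k}$ coincides with the pinch map $\bar p$, so the corollary delivers $\bar p \gamma_3 \simeq 0$ and hence a lift $\tilde \gamma_3: S^{3n+2k-1} \to F_{\bar p}$ with $\bar \tau \tilde \gamma_3 \simeq \gamma_3$. Since $n \geq 2$ ensures $3n+2k-1 \leq 5n+2k-3$, cellular approximation together with (\ref{equ: SkFbarp}) deforms $\tilde \gamma_3$ into the subcomplex $J_2(M_{\gamma_2}, S^{2n+k-1}) = S^n \cup_{[\iota_n, [\iota_n, \alpha]]} e^{3n+k-1}$.

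\emph{Splitting and push-down.} Under the hypothesis $[\iota_n, [\iota_n, \alpha]] = 0$, this subcomplex splits as $S^n \vee S^{3n+k-1}$. The range assumption $k < n-1$ rewrites as $3n+2k-1 < 4n+k-2$, which places the lift below the first Whitehead product dimension of the wedge, yielding $\pi_{3n+2k-1}(S^n \vee S^{3n+k-1}) \cong \pi_{3n+2k-1}(S^n) \oplus \pi_{3n+2k-1}(S^{3n+k-1})$. Thus $\tilde \gamma_3 = j_1^n \lambda + \bar \beta_{3n+k-1} \theta$, and applying $\bar \tau$ gives the first assertion, using that $\bar \tau \circ j_1^n = j_p$ (both sides are the canonical lift of the bottom cell inclusion as guaranteed by (\ref{equ:taujp=i})) and $\bar \tau \circ \bar \beta_{3n+k-1} = \beta_{3n+k-1}$ by definition.

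\emph{The sharper case and the main obstacle.} When $[\iota_n, \alpha] = 0$, we have $\gamma_2 = 0$ and hence automatically $[\iota_n, [\iota_n, \alpha]] = 0$; moreover $J_2(M_\alpha, S^{n+k}) \simeq S^n \vee S^{2n+k}$ with $j_p = j_1^n$. The remaining — and main — obstacle is to identify $\beta_{3n+k-1}$ with the Whitehead product $[j_1^n, j_2^{2n+k}]$. I would establish this via the Hilton--Milnor theorem applied to the fibration $F_{\bar p} \to S^n \vee S^{2n+k} \xrightarrow{\bar p = q_2^{2n+k}} S^{2n+k}$: under Hilton--Milnor the $\Omega S^{3n+k-1}$-factor of $\Omega(S^n \vee S^{2n+k})$ is the adjoint of $[j_1^n, j_2^{2n+k}]$, and this factor lies in the kernel of $\Omega \bar p$ and so sits inside $\Omega F_{\bar p}$. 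Tracking through the identification $F_{\bar p} \simeq J(M_{\gamma_2}, S^{2n+k-1})$, the $S^{3n+k-1}$-summand in $Sk_{5n+2k-3}(F_{\bar p})$ used to define $\bar \beta_{3n+k-1}$ matches this Hilton basic product of weight two, so its image under $\bar \tau$ is precisely $[j_1^n, j_2^{2n+k}]$. Matching the James filtration of $J(M_{\gamma_2}, S^{2n+k-1})$ against the Hilton basic products is the delicate bookkeeping that I expect to be the hardest step.
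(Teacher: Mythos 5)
Your proposal is correct and follows essentially the same route as the paper: invoke Corollary \ref{Cor:proj gamma3} to get $\bar p\gamma_3\simeq 0$, lift $\gamma_3$ into $F_{\bar p}$, compress into $Sk_{5n+2k-3}(F_{\bar p})\simeq S^n\vee S^{3n+k-1}$ via (\ref{equ: SkFbarp}), use $k<n-1$ to split $\pi_{3n+2k-1}$ of that wedge, and push down by $\bar\tau$ using $\bar\tau j_{\bar p}\simeq j_p$. The only point you elaborate beyond the paper is the Hilton--Milnor identification of $\beta_{3n+k-1}$ with $[j_1^n,j_2^{2n+k}]$ in the case $[\iota_n,\alpha]=0$, which the paper treats as standard and leaves implicit.
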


\begin{proof}
The fibration sequence $\Omega S^{2n+k}\xrightarrow{\bar\partial} F_{\bar p}\xrightarrow{\bar\tau} J_2(M_{\alpha},S^{n+k} )\xrightarrow{\bar p} S^{2n+k}$ induces the exact sequence with commutative square 
	\begin{align*} \small{\xymatrix{
				\pi_{3n+2k}( S^{2n+k})\ar[r]^-{ \bar{\partial}_{\ast}}&\pi_{3n+2k-1}(F_{\bar{p}})\ar[r]^-{\bar{\tau}_{\ast}}&\pi_{3n+2k-1}(J_2(M_{\alpha},S^{n+k} ))\ar[r]^-{\bar{p}_{\ast}}&\pi_{3n+k-1}(S^{2n+k}) }}  
	\end{align*}
By (\ref{equ: SkFbarp}), $	\pi_{3n+2k-1}(F_{\bar{p}})=j_{\bar p\ast}\pi_{3n+2k-1}(S^n)\oplus \bar\beta_{3n+k-1\ast}\pi_{3n+2k-1}(S^{3n+k-1}),  k<n-1$, 
where  $\bar\tau j_{\bar p}\simeq j_p$ by (\ref{equ:taujp=i}).
 	From Corollary \ref{Cor:proj gamma3}, $\bar p\gamma_3\simeq 0$, that is, 
	\begin{align*}
		\gamma_3\in \bar{\tau}_{\ast}(\pi_{3n+2k-1}(F_{\bar{p}}))=j_{p}\fhe \pi_{3n+2k-1}(S^n)+\beta_{3n+k-1}\fhe\pi_{3n+2k-1}(S^{3n+k-1}).
	\end{align*}
	This implies the first conclusion of this lemma.
\end{proof}

\begin{remark}\label{remark gamma3}
For the case $[\iota_n, \alpha]\simeq 0$ in Lemma \ref{lem: J3 alpha}, if $\Sigma : \pi_{3n+2k-1}(S^{n})\rightarrow \pi_{3n+2k}(S^{n+1})$ is an injection, then by $\Sigma\gamma_3=0$, we get $j_1^{n+1}\Sigma \lambda=0$ which implies   $ \lambda=0$. Hence 
	$\gamma_3=[j_1^n, j_2^{2n+k}]\theta$ for some $\theta \in \pi_{3n+2k-1}(S^{3n+k-1})$.
\end{remark}

For a map $f\colon X\to Y$, denote by $f_E$ the composition 
\[f_E=E_Y\circ f=(\Omega\Sigma f)\circ E_X=\Omega_0(\Sigma f)\colon X\to \Omega \Sigma Y.\]

	\begin{lemma}\label{lem:H_2-SmSn}
	Let $H_\ast\colon \pi_{m+n}(\Omega\Sigma (S^m\vee S^n))\to \pi_{m+n}(\Omega\Sigma (S^m\vee S^n)^{\wedge 2})$ be the homomorphism induced by the second James-Hopf invariant. Then  
	\[H_\ast([(j^{m}_1)_E,(j^{n}_2)_E]^S)=(j_1^m\wedge j_2^n)_E- (j_2^n\wedge j_1^m )_E.\]
	Thus we have 
	\begin{align*}
		& H_{2}([j_1^{m+1}, j_2^{n+1}])=\Sigma j_1^m\wedge j_2^n- \Sigma j_2^n\wedge j_1^m, \\
		\text{where}~~&	 H_2:\pi_{n+m+1}(S^{n+1}\vee S^{m+1})\rightarrow \pi_{n+m+1}(\Sigma (S^m\vee S^n)^{\wedge 2})  
	\end{align*}
	is the induced by the  second James-Hopf invariant.
	
\end{lemma}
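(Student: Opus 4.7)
The plan is to prove the first identity by a direct computation in the James model of $\Omega\Sigma(S^m\vee S^n)$, and then deduce the second identity via the loop--suspension adjointness $\Omega_0$ together with naturality of $H_2$. First, I would realize $\Omega\Sigma(S^m\vee S^n)$ by the James construction $J(S^m\vee S^n)$, whose H-space multiplication is word concatenation. Under this identification, $(j_1^m)_E$ and $(j_2^n)_E$ are represented by single-letter elements $a$ and $b$, and the Samelson product $[(j_1^m)_E,(j_2^n)_E]^S$ is represented, up to homotopy, by the loop-space commutator $a\cdot b\cdot a^{-1}\cdot b^{-1}$.

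The key ingredient is the multiplicativity of the James--Hopf invariant: for $\alpha,\beta\in\Omega\Sigma X$,
\[
H_2(\alpha\cdot\beta)\;\simeq\; H_2(\alpha)\cdot H_2(\beta)\cdot E_{X\wedge X}(\alpha\wedge\beta),
\]
which follows from the combinatorial definition $H_2(x_1\cdots x_t)=\prod_{i<j}x_i\wedge x_j$ of the relative James--Hopf invariant recalled earlier in the section. Since $H_2$ vanishes on single-letter elements (the index set $\{i<j\}$ is empty for one letter), iterating this formula on $a\cdot b\cdot a^{-1}\cdot b^{-1}$ and cancelling all of the trivial $H_2(a), H_2(b), H_2(a^{-1}), H_2(b^{-1})$ factors, the only surviving contribution is the alternating cross-term $a\wedge b - b\wedge a$. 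Under the canonical identifications this is exactly $(j_1^m\wedge j_2^n)_E-(j_2^n\wedge j_1^m)_E$, giving the first identity.

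For the second identity, recall from the text that $\Omega_0[j_1^{m+1},j_2^{n+1}]=[(j_1^m)_E,(j_2^n)_E]^S$ and that $\Omega_0^{-1}(f_E)=\Sigma f$ for any $f\colon X\to Y$. By naturality of $H_2$ with respect to the loop--suspension adjunction, de-adjointing the identity already proved yields
\[
H_2([j_1^{m+1},j_2^{n+1}])=\Sigma(j_1^m\wedge j_2^n)-\Sigma(j_2^n\wedge j_1^m),
\]
which is the stated formula.

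The main obstacle is making the multiplicativity of $H_2$ rigorous on commutators involving loop inverses, since $J(X)$ is only a monoid rather than a group. A clean workaround is to use the Hopf--Hilton invariant $H$, which is homotopic to $H_2$ by the Steer lemma recalled just above and is defined via the Hilton--Milnor projection onto the $\Omega\Sigma(X\wedge X)$ summand; the contribution of a basic Samelson product can then be read off directly from the Hilton--Milnor decomposition applied to the fold map $X\vee X\to X$ with $X=S^m\vee S^n$. A further subtlety is that graded signs in the Samelson product could in principle produce a sign $(-1)^{mn}$ in front of the second term, but this sign is absorbed by the convention used in the paper for the Whitehead/Samelson duality, as can be checked on the universal example.
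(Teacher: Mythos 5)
Your route is genuinely different from the paper's. The paper detects the answer in homology: it uses the Bott--Samelson theorem to identify $H_{m+n}$ of both loop spaces inside tensor algebras, computes the Hurewicz image of the Samelson product as the commutator $(j_1)_{E\ast}(\iota_m)\otimes(j_2)_{E\ast}(\iota_n)-(j_2)_{E\ast}(\iota_n)\otimes(j_1)_{E\ast}(\iota_m)$, evaluates $H_\ast$ on that commutator via the known effect of the James--Hopf invariant on tensor-algebra generators (Lemma 5.2.5 of Neisendorfer's book), and reads off the coefficients $x_{ts}$ from the naturality square $h_{X^{\wedge 2}}H_\ast=H_\ast h_X$. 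You instead work entirely in homotopy. Be aware that your first computation --- applying the combinatorial formula $H_2(x_1\cdots x_t)=\prod_{i<j}x_i\wedge x_j$ to the commutator word $aba^{-1}b^{-1}$ --- is not literally valid, for exactly the reason you flag: $a^{-1}$ is not a letter of $J(X)$, and the Cartan-type deviation of $H_2$ from being an H-map is governed by the reduced diagonal of the source, which for the map $S^m\times S^n\to\Omega\Sigma X$ underlying the Samelson product is not null. So the entire weight of your proof rests on the fallback: replace $H_2$ by the Hopf--Hilton invariant $H=p_{[j_1,j_2]}\circ\Omega\Sigma(j_1+j_2)$ (legitimate by Steer's lemma, which the paper records just above; note the map here is the co-H comultiplication $j_1+j_2\colon X\to X\vee X$, not the fold map), push the Samelson product through $\Omega\Sigma(j_1+j_2)$, expand bilinearly into the four terms $[(\iota_s j_1^m)_E,(\iota_t j_2^n)_E]^S$, and check that $p_{[j_1,j_2]}$ annihilates the two diagonal terms and sends the two cross terms to $(j_1^m\wedge j_2^n)_E$ and $-(-1)^{mn}(j_2^n\wedge j_1^m)_E$. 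That identification of the cross terms under the Hilton--Milnor equivalence is the real content of your argument and you only assert it; it needs to be proved or cited. Granting it, your proof is sound and even buys something the paper's does not: the Hilton--Milnor expansion is an exact identity in $\pi_{m+n}$, whereas the Hurewicz comparison determines $H_\ast([(j_1^m)_E,(j_2^n)_E]^S)$ only modulo the kernel of $h_{X^{\wedge 2}}$ and tacitly assumes the image lies in the span of the four classes $(j_t\wedge j_s)_E$. Both arguments carry the same $(-1)^{mn}$ sign ambiguity on the second term, which is harmless in the paper's $2$-local applications.
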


\begin{proof}
	There are isomorphisms
	\begin{align*}
		H_{m+n}(S^{m+n})&\xrightarrow{\tau_{m,n}}H_{m+n}(S^m\wedge S^n)\cong H_m(S^m)\otimes H_n(S^n):\\
		\iota_{m+n}&\mapsto (-1)^k\cdot \iota_m\otimes\iota_n,~k\in\Z.
	\end{align*}
	In the following we simplify $j^{m}_1, j_{2}^n$ by $j_1, j_2$ respectively.
	By the Bott-Samelson theorem,
	\begin{align*}
		H_{m+n}(\Omega\Sigma (S^m\vee S^n))&\cong T\langle (j_1)_{E\ast}(\iota_m), (j_2)_{E\ast}(\iota_n)\rangle,\\
		H_{m+n}(\Omega\Sigma (S^m\vee S^n)^{\wedge 2})&\cong T\langle (j_t\wedge j_s)_{E\ast}(\iota_{m}\otimes\iota_{n})~|~t,s=1,2\rangle,
	\end{align*}
	where $T\langle x_1,\cdots,x_m\rangle$ is the free tensor algebra $T(V)$ and $V$ is the free abelian group generated by $x_1,\cdots,x_m$.
	
	Let $h_{X}\colon \pi_{m+n}(\Omega\Sigma X)\to H_{m+n}(\Omega\Sigma X)$ be the Hurewicz homomorphism with $X=S^m\vee S^n$. We have 
	\begin{align*}
		h_X&([(j_1)_E,(j_2)_E]^S)=(-1)^{k}[(j_1)_E,(j_2)_E]^S_\ast(\iota_m\otimes\iota_n)\\
		&=(-1)^{k}\big((j_1)_{E\ast}(\iota_m)\otimes (j_2)_{E\ast}(\iota_n)-(j_2)_{E\ast}(\iota_n)\otimes (j_1)_{E\ast}(\iota_m)\big),\\
		h_{X^{\wedge 2}}&((j_t\wedge j_s)_E)=(-1)^k(j_t\wedge j_s)_{E\ast}(\iota_m\otimes \iota_n),~~t,s=1,2.
	\end{align*}
	Set $H_\ast ([(j_1)_E,(j_2)_E]^S)=\sum_{t=1,s=1}^{m,n}x_{ts}\cdot  (j_t\wedge j_s)_E$, $x_{ts}\in\Z$. Then 
	\begin{align*}
		h_{X^{\wedge 2}} H_\ast&([(j_1)_E,(j_2)_E]^S)= h_{X^{\wedge 2}} \big( \sum_{t,s}x_{ts}\cdot (j_t\wedge j_s)_E\big)\\
		&=(-1)^k \sum_{t,s}x_{ts}\cdot (j_t\wedge j_s)_{E\ast}(\iota_m\otimes \iota_n),\\[1ex]
		H_\ast h_X&([(j_1)_E,(j_2)_E]^S)\\
		&= (-1)^{k}H_\ast\big((j_1)_{E\ast}(\iota_m)\otimes (j_2)_{E\ast}(\iota_n)-(j_2)_{E\ast}(\iota_n)\otimes (j_1)_{E\ast}(\iota_m)\big)\\
		&= (-1)^k \big(\overline{(j_1)_{E\ast}(\iota_m)\otimes (j_2)_{E\ast}(\iota_n)}-\overline{(j_2)_{E\ast}(\iota_n)\otimes (j_1)_{E\ast}(\iota_m)}\big)\\
		&=(-1)^{k}\big((j_1\wedge j_2)_{E\ast}(\iota_m\otimes\iota_n)-(j_2\wedge j_1)_{E\ast}(\iota_n\otimes\iota_m)\big),
	\end{align*}
	where the second to last equality holds by \cite[Lemma 5.2.5]{NeisendorferBook}.
	Thus from the equality 
	\[H_\ast h_X([(\iota_1)_E,(\iota_2)_E]^S)=h_{X^{\wedge 2}} H_\ast([(\iota_1)_E,(\iota_2)_E]^S) \] 
	we get $x_{11}=x_{22}=0$, $x_{12}=1, x_{21}=-1$.
\end{proof}

\section{homotopy of $\Omega\Sigma X\xrightarrow{\partial} F_p$ and extension problem}
\label{section: extension problem}

In  this section we give some lemmas for the homotopy properties of the connection map  $\Omega\Sigma X\xrightarrow{\partial} F_p$ and for the  extension problem of the short exact sequence $0\rightarrow  Coker (\partial_{k\ast}) \rightarrow \pi_{k}(C_f)\rightarrow Ker(\partial_{k-1\ast})\rightarrow  0$.

The following  lemma  comes from \cite[Lemma 2.3]{ZP23}, which generalizes the Lemma 4.4.1 of \cite{JXYang}.

\begin{lemma}\label{partial calculate1}
	Let $f: X\rightarrow Y$ be a map , then for the following fibration sequence,
	$$\Omega\Sigma X\xrightarrow{\partial} J(M_{f}, X)\simeq F_p \rightarrow C_f\xrightarrow{p} \Sigma X$$
	there exists homotopy-commutative diagram,
	$$ \footnotesize{\xymatrix{
			X\ar[d]_{\Omega\Sigma } \ar[r]^{f} & Y \ar@{_{(}->}[d]_{j_p}\\
			\Omega\Sigma X \ar[r]^-{\partial} & J(M_f, X)\simeq F_p } }$$
\end{lemma}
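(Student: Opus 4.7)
\medskip
\noindent\textbf{Proof proposal.}
The plan is to produce an explicit homotopy $H\colon X\times I\to F_p$ between $j_p\circ f$ and $\partial\circ E_X$ by exploiting the cone structure of $C_f=Y\cup_f CX$. This is the same strategy that was used in \cite[Lemma 4.4.1]{JXYang} for the special case treated there, and we adapt it to an arbitrary map $f$.

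First I would fix standard models. Take $F_p=\{(c,\gamma)\in C_f\times (\Sigma X)^I\mid \gamma(0)=\ast,\ \gamma(1)=p(c)\}$ with $\tau_p(c,\gamma)=c$ and basepoint $(\ast,c_\ast)$ where $c_\ast$ is the constant path at the basepoint of $\Sigma X$. Under this model, the connecting map sends a loop $\gamma\in\Omega\Sigma X$ to $\partial(\gamma)=(\ast,\gamma)$, and the canonical lift of the inclusion $i\colon Y\hookrightarrow C_f$ is $j_p(y)=(y,c_\ast)$, which indeed satisfies $\tau_pj_p=i$ as in (\ref{equ:taujp=i}). The map $E_X\colon X\to\Omega\Sigma X$ sends $x$ to the loop $t\mapsto t\wedge x$, so $\partial\circ E_X(x)=(\ast,\,t\mapsto t\wedge x)$, while $j_p\circ f(x)=(f(x),c_\ast)$.

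Next I would construct the homotopy. Using coordinates $CX=X\times I/(X\times\{1\}\cup\ast\times I)$, the identification that builds $C_f$ from $Y\sqcup CX$ sends $(x,0)\in CX$ to $f(x)\in Y$, and the pinch map takes $(x,s)\in CX$ to $s\wedge x\in\Sigma X$. Define
$$H(x,s)=\bigl((x,s),\ t\mapsto (st)\wedge x\bigr)\in F_p.$$
The endpoint conditions $H(x,s)(0)=\ast$ and $H(x,s)(1)=s\wedge x=p((x,s))$ are immediate, so $H$ really lands in $F_p$. At $s=0$ the second coordinate is the constant path at $\ast$ and the first coordinate is $f(x)$, so $H(\cdot,0)=j_p\circ f$. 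At $s=1$ the first coordinate is $(x,1)=\ast$ and the second coordinate is the loop $t\mapsto t\wedge x=E_X(x)$, so $H(\cdot,1)=\partial\circ E_X$. Continuity and basepoint-preservation are clear from the formulas, which completes the argument.

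No step should present a serious obstacle; the only mild subtlety is making sure the path coordinate $\gamma_s(t)=(st)\wedge x$ is compatible with the various quotient identifications in $\Sigma X$ and $C_f$, which is handled by the boundary conditions above. Thus the lemma reduces to writing down this one-parameter family $H$.
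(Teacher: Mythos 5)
Your proof is correct. Note that the paper itself gives no argument for this lemma---it is quoted verbatim from \cite[Lemma 2.3]{ZP23} (which in turn generalizes \cite[Lemma 4.4.1]{JXYang})---so there is no in-text proof to compare against; your explicit homotopy $H(x,s)=\bigl((x,s),\,t\mapsto (st)\wedge x\bigr)$ is a clean, self-contained verification, and the endpoint and basepoint checks all go through with your choice of cone coordinates. Two small points are worth making explicit. First, the conclusion depends on the convention for the connecting map: you take $\partial(\gamma)=(\ast,\gamma)$ in the path-space model $F_p=\{(c,\gamma):\gamma(0)=\ast,\ \gamma(1)=p(c)\}$; with the opposite orientation convention one would pick up a loop inverse, so you should state (as you essentially do) that this is the model of $\partial$ being used. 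Second, the $j_p$ in the statement is defined in the paper as the composite $Y\hookrightarrow J_2(M_f,X)\hookrightarrow J(M_f,X)\simeq F_p$, whereas yours is the tautological lift $y\mapsto (y,c_\ast)$; these agree up to homotopy because Gray's equivalence $J(M_f,X)\simeq F_p$ restricted to the first filtration $J_1(M_f,X)=M_f\simeq Y$ is precisely the canonical lift of $i$ satisfying (\ref{equ:taujp=i}), but a one-line remark to that effect would close the gap between your model and the paper's notation.
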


\begin{lemma}\label{lem: partial(asigamb)}
	Let $f\colon X\to Y$ be a map, $\beta\in \pi_{n}(S^m)$. Then 
	$\partial_{n\ast}(\alpha\Sigma \beta)=\partial_{m\ast}(\alpha)\beta$ for any $\alpha\in \pi_{m+1}(\Sigma X)$, i.e.,  the following diagram commutes
	$$ \footnotesize{\xymatrix{
			\pi_{m+1}(\Sigma X)\ar[d]_{(\Sigma\beta)^{\ast} } \ar[r]^{\partial_{m\ast}} & \pi_{m}(F_p) \ar[d]^{\beta_{\ast}}\\
			\pi_{n+1}(\Sigma X)\ar[r]^{\partial_{n\ast}} &  \pi_{n}(F_p)~~.} }$$
\end{lemma}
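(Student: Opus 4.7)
The plan is to deduce the lemma from naturality of the suspension--loop adjunction together with functoriality of $\pi_\ast$, so the argument is essentially formal.

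First, I would unwind the notation. By the paper's convention, $\partial_{m\ast}\colon \pi_{m+1}(\Sigma X)\to\pi_m(F_p)$ is the composite of the adjoint isomorphism
\[\Omega_0\colon\pi_{m+1}(\Sigma X)\xrightarrow{\cong}\pi_m(\Omega\Sigma X),\quad \alpha\longmapsto \hat\alpha:=\Omega\alpha\circ E_{S^m},\]
with the homomorphism induced by $\partial\colon\Omega\Sigma X\to F_p$; in particular, $\partial_{m\ast}(\alpha)=\partial\circ\hat\alpha$ as a map $S^m\to F_p$. Thus the asserted commutativity reduces to the identity $\widehat{\alpha\circ\Sigma\beta}=\hat\alpha\circ\beta$ in $[S^n,\Omega\Sigma X]$, after post-composing with $\partial$.

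Next, I would obtain $\widehat{\alpha\circ\Sigma\beta}=\hat\alpha\circ\beta$ from the naturality square of the unit $E_{(-)}$, namely $\Omega\Sigma\beta\circ E_{S^n}=E_{S^m}\circ\beta$, combined with the decomposition $\Omega(\alpha\circ\Sigma\beta)=\Omega\alpha\circ\Omega\Sigma\beta$ given by functoriality of $\Omega$. Post-composing both sides with $\partial$ and re-applying the definitions of $\partial_{m\ast}$ and $\partial_{n\ast}$ then yields $\partial_{n\ast}(\alpha\Sigma\beta)=\beta_\ast(\partial_{m\ast}(\alpha))$, as required.

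There is no genuine obstacle here: the lemma is a direct formal consequence of adjunction naturality and functoriality of $\pi_\ast$. Its purpose is clearly operational --- once $\partial_{m\ast}(\alpha)$ is known, the lemma transports that information to $\partial_{n\ast}$ of every precomposition of $\alpha$ with a suspension class $\Sigma\beta$, which is exactly the maneuver needed to chase Hopf- and $\eta$-type factors through the connecting homomorphism in the extension arguments of Sections \ref{sec:htyp Moore 456}--\ref{sec:htyp Moore78}. The only subtlety worth flagging, should one want to be careful, is that in the paper's convention $\beta_\ast$ means precomposition with $\beta\colon S^n\to S^m$ (i.e.\ $\gamma\mapsto\gamma\circ\beta$), so no sign or orientation issue arises.
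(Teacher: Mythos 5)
Your proposal is correct and is essentially identical to the paper's own proof: the paper likewise writes $\partial_{m\ast}(\alpha\Sigma\beta)=\partial\,\Omega(\alpha\Sigma\beta)E_{S^n}$, applies functoriality of $\Omega$ and the naturality relation $(\Omega\Sigma\beta)E_{S^n}=E_{S^m}\beta$, and reads off $\partial\,\Omega(\alpha)E_{S^m}\beta=\partial_{m\ast}(\alpha)\beta$. Your remark that the right-hand vertical map is precomposition with $\beta$ is also the correct reading of the paper's notation.
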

\begin{proof}
	The lemma is obtained by the folloing equations
	\begin{align*}
		\partial_{m\ast}(\alpha\Sigma\beta)=&\partial\Omega_{0}(\alpha\Sigma \beta)=\partial\Omega(\alpha\Sigma \beta)E_{S^{n}}\simeq \partial\Omega(\alpha)(\Omega\Sigma \beta)E_{S^{n}} \\
		=& \partial\Omega(\alpha)E_{S^m}\beta=\partial\Omega_{0}(\alpha)\beta=\partial_{n\ast}(\alpha)\beta.
	\end{align*}
\end{proof}

\begin{lemma}\label{lemm Suspen Fp}
	Let $S^{m}\xrightarrow{\alpha} S^{n}\rightarrow C_{\alpha}\xrightarrow{p} S^{m+1}$ be the cofibration sequence, with $m\geq n\geq 2$. Consider the fibration sequence  $\Omega S^{m+k+1}\xrightarrow{\partial^k}F_{\Sigma^kp} \xrightarrow{i^k}\Sigma^k C_{\alpha}\xrightarrow{\Sigma^kp}S^{m+k+1}$.
	The map $E^k:F_{p}\rightarrow \Omega^{k}F_{\Sigma^k p}$ which is the restriction of $\Omega^{k}\Sigma^k: C_\alpha\rightarrow \Omega^{k}\Sigma^k C_\alpha$ on the fiber  satisfies the  following left homotopy commutative diagram of fibration sequences
	\begin{align}
		\footnotesize{\xymatrix{
				\Omega S^{m+1} \ar[r]^-{\partial}\ar[d]^{\Omega(\Omega^{k}\Sigma^k)}&F_p\ar[r]^{i}\ar[d]^{E^k}& C_\alpha\ar[d]^{\Omega^{k}\Sigma^k}\ar[r]^{p}& S^{m+1}\ar[d]_{\Omega^{k}\Sigma^k}\\
				\Omega^{k+1}S^{m+k+1}  \ar[r]^-{\Omega^{k}\partial^k}&\Omega^{k}F_{\Sigma^kp} \ar[r]^{\Omega^{k}i^{k}} &\Omega^{k}\Sigma^k C_\alpha \ar[r]^-{\Omega^{k}\Sigma^k p}& \Omega^{k}S^{m+k+1};
		} }~\footnotesize{\xymatrix{
				S^n \ar@{^{(}->}[r]^-{j_p}\ar@{^{(}->}[d]_{a\Omega^k\Sigma^k } & F_p \ar[d]_{E^k}\\
				\Omega^k S^{n+k} \ar[r]^-{\Omega^kj_{\Sigma^kp}} & \Omega^kF_{\Sigma^kp} } } \nonumber
	\end{align}
	where	$\Omega^k\Sigma^k$ represents the $k$-fold canonical inclusions.
	Then $E^k$ satisfies the above right homotopy commutative diagram with $a=\pm 1$ for $m>n$ and $(a,t)=1$ for $m=n$ and $\alpha=t\iota_n$.
\end{lemma}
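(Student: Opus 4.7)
My plan is to derive the left commutative diagram of fibration sequences from the naturality of the homotopy-fiber construction, and then exploit the resulting map together with the long exact sequence of the fibration to identify the coefficient $a$ in the right diagram.

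For the left diagram, I would apply the homotopy-fiber functor to the commutative square
\[\xymatrix{C_\alpha \ar[r]^-{p}\ar[d]_{\Omega^k\Sigma^k} & S^{m+1}\ar[d]^{\Omega^k\Sigma^k} \\ \Omega^k\Sigma^k C_\alpha \ar[r]^-{\Omega^k\Sigma^k p}& \Omega^k S^{m+k+1}.}\]
Since $\Omega^k$ preserves homotopy fibers, the fiber of the bottom row is $\Omega^k F_{\Sigma^k p}$ with connecting map $\Omega^k\partial^k$, and the induced map on fibers is by construction the restriction of the vertical $\Omega^k\Sigma^k$ to $F_p$, namely $E^k$. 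This produces the left square of fibration sequences, together with commutativity of the leftmost square involving $\partial$ and $\Omega(\Omega^k\Sigma^k)$ by naturality of connecting maps.

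For the right diagram, I first check that both compositions $\Omega^k i^k\circ E^k\circ j_p$ and $\Omega^k i^k\circ \Omega^k j_{\Sigma^k p}\circ \Omega^k\Sigma^k$ from $S^n$ to $\Omega^k\Sigma^k C_\alpha$ are homotopic to $\Omega^k\Sigma^k\circ(S^n\hookrightarrow C_\alpha)$. For the first, this follows from the left diagram together with $\tau_p j_p\simeq (S^n\hookrightarrow C_\alpha)$ from (\ref{equ:taujp=i}); for the second, it follows from $\tau_{\Sigma^k p}\circ j_{\Sigma^k p}\simeq (S^{n+k}\hookrightarrow \Sigma^k C_\alpha)$ combined with naturality of the unit $\Omega^k\Sigma^k\colon X\to \Omega^k\Sigma^k X$ applied to the bottom-cell inclusion. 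Transferring to $\pi_{n+k}(F_{\Sigma^k p})$ via the loop–suspension adjunction, the adjoints of $E^k\circ j_p$ and of $\Omega^k j_{\Sigma^k p}\circ \Omega^k\Sigma^k$ are thus two lifts of $\Sigma^k i\colon S^{n+k}\hookrightarrow \Sigma^k C_\alpha$ to $F_{\Sigma^k p}$. By exactness of the long exact sequence
\[\pi_{n+k+1}(S^{m+k+1})\xrightarrow{\partial^k_\ast}\pi_{n+k}(F_{\Sigma^k p})\xrightarrow{i^k_\ast}\pi_{n+k}(\Sigma^k C_\alpha),\]
their difference lies in $\mathrm{Im}(\partial^k_\ast)$. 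When $m>n$, this image vanishes by connectivity, so the two classes coincide and $a=\pm 1$, the sign absorbing the orientation choice in the adjoint isomorphism. When $m=n$ and $\alpha=t\iota_n$, Lemma \ref{partial calculate1} applied to $\Sigma^k\alpha=t\iota_{n+k}$ gives $\partial^k_\ast(\iota_{n+k+1})=t\cdot j_{\Sigma^k p}$, so $\mathrm{Im}(\partial^k_\ast)=t\Z\cdot[j_{\Sigma^k p}]$, forcing $a=1+bt$ for some $b\in\Z$ and hence $\gcd(a,t)=1$.

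The main obstacle I anticipate is purely book-keeping: the two maps being compared live in the abelian group $[S^n,\Omega^k F_{\Sigma^k p}]$, and one must carefully track the loop–suspension adjunction, the naturality of both $j_p$ and the unit $\Omega^k\Sigma^k$, and the sign conventions hidden in the identification $\Sigma^k S^n\cong S^{n+k}$ when reading off the coefficient $a$. Once this translation is laid out cleanly, the long-exact-sequence computation above is elementary.
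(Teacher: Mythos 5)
Your proof is correct and follows essentially the same route as the paper: the paper also obtains the right-hand square by comparing $E^k j_p$ with $\Omega^k j_{\Sigma^k p}\circ\Omega^k\Sigma^k$ after composing into $\Omega^k\Sigma^k C_\alpha$ and reading off the coefficient $a$ from the homotopy exact sequence of the fibration over $S^{m+k+1}$ (the paper first produces $a$ via the skeletal identification $Sk_{n+k}(F_{\Sigma^k p})\simeq S^{n+k}$, while you get existence and the congruence $a\equiv 1 \bmod t$ in one step from $\mathrm{Im}(\partial^k_\ast)$, but the underlying computation is identical).
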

\begin{proof}
	Since $Sk_{n}(\Omega^{k}F_{\Sigma^kp})=S^n\hookrightarrow \Omega^{k}F_{\Sigma^kp}$ is homotopic to the composition of the maps  $S^n\xrightarrow{\Omega^{k}\Sigma^k}  \Omega^{k}S^{n+k}\xrightarrow{\Omega^kj_{\Sigma^kp}}\Omega^kF_{\Sigma^kp}$, there is an integer $a$ such that the right  square of  the lemma is homotopy commutative. Then the condition of $a$ in the lemma is obtained by observing the  commutative diagram of $n$-dimensional homotopy groups which is induced by the following homotopy commutative diagram
	\begin{align*}
		\footnotesize{\xymatrix{
				S^n \ar@/^0.5pc/[rr]^{i}\ar[r]_-{j_p}\ar[d]_{\Omega(\Omega^{k}\Sigma^k)}&F_p\ar[r]_{i}\ar[d]^{E^k}& C_\alpha\ar[d]^{\Omega^{k}\Sigma^k}\\
				\Omega^{k}S^{n+k} \ar@/_0.8pc/[rr]_{\Omega^{k}\Sigma^k i} \ar[r]^-{\Omega^{k}j_{\Sigma^kp}}&\Omega^{k}F_{\Sigma^kp} \ar[r]^{\Omega^{k}i^{k}} &\Omega^{k}\Sigma^k C_\alpha.
		} }
	\end{align*}
\end{proof}
If $k$ is large enough in above lemma, we get the following corollary
\begin{corollary}\label{cor: suspen Fp Moore} For $a\in \Z$ is given in Lemma \ref{lemm Suspen Fp},
	there is the following two commutative squares
	\begin{align*}
		\xymatrix{
			\pi_{u+1}(S^{m+1}) \ar[r]^-{\partial_{u\ast}}\ar[d]_{\Sigma^{\infty} } & \pi_{u}(F_p) \ar[d]_{E^{\infty}}\\
			\pi^s_{u}(S^{m})\ar[r]^-{\alpha_{\ast}} & 	\pi^s_{u}(S^{n});} ~~	\xymatrix{
			\pi_{u}(S^{n}) \ar[r]^-{j_{p\ast}}\ar[dr]_{a\Sigma^{\infty} } & \pi_{u}(F_p) \ar[d]_{E^{\infty}}\\
			& 	\pi^s_{u}(S^{n}).}
	\end{align*}
\end{corollary}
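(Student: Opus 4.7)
The plan is to take $k$ large enough so that we land in the stable range for dimension $u$, and then simply pass to $u$-th homotopy groups in both commutative squares of Lemma \ref{lemm Suspen Fp}. By the stable splitting $\Sigma F_p\simeq \bigvee_{i\geq 0}\Sigma Y\wedge X^{\wedge i}= \bigvee_{i\geq 0}S^{n+1+i(m+1)}$ recalled in Section \ref{subsec: Relative James construction}, for $u$ small relative to $n+m+1$ the map $j_{p*}$ identifies $\pi^s_u(S^n)$ with $\pi^s_u(F_p)$, and for any such $u$ the map $j_{\Sigma^k p *}\colon \pi_{u+k}(S^{n+k})\to \pi_{u+k}(F_{\Sigma^k p})$ is an isomorphism once $k$ is large. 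This is the identification implicit in the target of $E^\infty$, and I would make it explicit at the outset.

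\textbf{Second square.} I would do this one first since it falls out immediately from the right diagram of Lemma \ref{lemm Suspen Fp}. Applying $\pi_u$ yields
\[E^k_{*}\circ j_{p*}=(\Omega^k j_{\Sigma^k p})_{*}\circ (a\,\Omega^k\Sigma^k)_{*}\colon \pi_u(S^n)\longrightarrow \pi_u(\Omega^k F_{\Sigma^k p})=\pi_{u+k}(F_{\Sigma^k p}).\]
The map $(\Omega^k\Sigma^k)_{*}\colon \pi_u(S^n)\to \pi_{u+k}(S^{n+k})$ is by construction the $k$-fold suspension, which in the stable range equals $\Sigma^\infty$. Then $(\Omega^k j_{\Sigma^k p})_{*}=j_{\Sigma^k p *}$ is the identification $\pi_{u+k}(S^{n+k})\xrightarrow{\cong}\pi^s_u(S^n)$ just discussed. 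Combining these gives $E^{\infty}\circ j_{p*}=a\Sigma^\infty$.

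\textbf{First square.} Applying $\pi_u$ to the left square of Lemma \ref{lemm Suspen Fp} gives
\[E^k_{*}\circ \partial_{u*}=(\Omega^k\partial^k)_{*}\circ \Omega(\Omega^k\Sigma^k)_{*},\]
and $\Omega(\Omega^k\Sigma^k)_{*}\colon \pi_{u+1}(S^{m+1})\to \pi_{u+k+1}(S^{m+k+1})$ is again the $k$-fold suspension. So for $\beta\in \pi_{u+1}(S^{m+1})$, its image on the bottom is $\Sigma^k\beta=\iota_{m+k+1}\cdot \Sigma(\Sigma^{k-1}\beta)\in \pi_{u+k+1}(S^{m+k+1})$. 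The key computational step is to apply Lemma \ref{lem: partial(asigamb)} to the fibration associated to $\Sigma^k\alpha$, which gives
\[\partial^k_{(u+k)*}(\Sigma^k\beta)=\partial^k_{(m+k)*}(\iota_{m+k+1})\cdot \Sigma^{k-1}\beta.\]
By Lemma \ref{partial calculate1} applied to $\Sigma^k\alpha\colon S^{m+k}\to S^{n+k}$, the class $\partial^k_{(m+k)*}(\iota_{m+k+1})$ equals $j_{\Sigma^k p}\circ \Sigma^k\alpha$. Hence
\[\partial^k_{(u+k)*}(\Sigma^k\beta)=j_{\Sigma^k p *}(\Sigma^k\alpha\circ \Sigma^{k-1}\beta)=j_{\Sigma^k p *}\bigl(\alpha_{*}(\Sigma^{\infty}\beta)\bigr),\]
where the last equality uses that $\Sigma^{k-1}\beta$ represents $\Sigma^{\infty}\beta$ in $\pi^s_u(S^m)=\pi_{u+k}(S^{m+k})$ in the stable range and that $\alpha_{*}$ on stable classes is computed by post-composition with $\Sigma^k\alpha$. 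Passing through the identification $j_{\Sigma^k p *}\colon \pi^s_u(S^n)\xrightarrow{\cong}\pi_{u+k}(F_{\Sigma^k p})$ yields $E^\infty\circ \partial_{u*}=\alpha_{*}\circ \Sigma^\infty$, as required.

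The only real obstacle is bookkeeping: one must keep straight three different suspensions (the $k$-fold loop-space suspension $\Omega^k\Sigma^k$, the suspension homomorphism on $\pi_*$, and the stabilization $\Sigma^\infty$) and convince oneself that the Freudenthal-type identification of $\pi_{u+k}(F_{\Sigma^k p})$ with $\pi^s_u(S^n)$ is consistent with the map $E^\infty$. Both of these are straightforward once one fixes $k$ strictly larger than the relevant stable-range bound.
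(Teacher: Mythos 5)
Your proof is correct and follows exactly the route the paper intends: the paper states this corollary with no proof beyond the remark that it follows from Lemma \ref{lemm Suspen Fp} by taking $k$ large, and your argument (applying $\pi_u$ to both squares, identifying $\pi_{u+k}(F_{\Sigma^k p})\cong\pi_{u+k}(S^{n+k})$ in the stable range, and computing $\partial^k_{(u+k)\ast}(\Sigma^k\beta)$ via Lemmas \ref{lem: partial(asigamb)} and \ref{partial calculate1}) is precisely the omitted deduction. No gaps.
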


\begin{lemma}\label{lem skFp nosusupension}
	Let $S^{m}\xrightarrow{f} C_{\alpha}\rightarrow C_{f}\xrightarrow{p} S^{m+1}$ be the cofibration sequence, where $C_\alpha=S^n\cup_{\alpha}e^{n+k+1}$ with  $k\geq 1,n\geq 2, m>n, k+2$.  $F_p$ is the homotopy fiber of map $p$.   Then 
	\begin{align*}
		Sk_{n+m}F_p\simeq C_{\alpha}\cup_{[i_\alpha, f]} e^{n+m}
	\end{align*} 
	where $i_\alpha: S^n\hookrightarrow C_\alpha$ is the canonical inclusion.
\end{lemma}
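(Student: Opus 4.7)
I plan to identify $F_p$ with Gray's relative James construction $J(M_f, S^m)$ and read off its $(n+m)$-skeleton through Theorem~\ref{Theorem of ZhuJin}.

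First I would enumerate the cells of $F_p \simeq J(M_f, S^m)$ using the natural filtration $\{J_r(M_f, S^m)\}$, whose successive quotients are
$$J_r(M_f, S^m)/J_{r-1}(M_f, S^m) \;\cong\; C_\alpha \wedge (S^m)^{\wedge (r-1)}.$$
Since $C_\alpha$ has cells only in dimensions $n$ and $n+k+1$, the cells of $F_p$ sit in dimensions $n$, $n+k+1$, $n+m$, $n+k+m+1$, $n+2m$, $\ldots$. I read the printed numerical hypothesis as placing $m$ large enough (relative to $k$) that $n+k+1 \leq n+m < n+k+m+1$ and $n+m < n+2m$, so that exactly the three smallest dimensions are $\leq n+m$. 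Consequently $Sk_{n+m} F_p$ coincides with $Sk_{n+m} J_2(M_f, S^m)$.

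Next I would apply Theorem~\ref{Theorem of ZhuJin} (whose requirement that $C_\alpha$ be a suspension is guaranteed by the Freudenthal range implicit in the dimensional hypothesis) to obtain
$$J_2(M_f, S^m) \;\simeq\; C_\alpha \cup_{\gamma_2} C\bigl(C_\alpha \wedge S^{m-1}\bigr), \qquad \gamma_2 \;=\; [\,\mathrm{id}_{C_\alpha},\, f\,].$$
The space $C_\alpha \wedge S^{m-1} \simeq \Sigma^{m-1} C_\alpha$ has bottom cell $S^{n+m-1}$ included via $i_\alpha \wedge \mathrm{id}_{S^{m-1}}$ and top cell of dimension $n+k+m$. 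Attaching the cone therefore produces one $(n+m)$-cell, whose attaching map is $\gamma_2 \circ (i_\alpha \wedge \mathrm{id}_{S^{m-1}})$, together with a further cell strictly above dimension $n+m$ that does not enter the skeleton.

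Finally, naturality of the generalized Whitehead product in the first variable (Proposition~\ref{Prop: Naturality HOWP}) yields
$$[\,\mathrm{id}_{C_\alpha},\, f\,] \circ (i_\alpha \wedge \mathrm{id}_{S^{m-1}}) \;=\; [\,\mathrm{id}_{C_\alpha}\circ i_\alpha,\, f\,] \;=\; [\,i_\alpha,\, f\,],$$
so that $Sk_{n+m} F_p \simeq C_\alpha \cup_{[i_\alpha, f]} e^{n+m}$, as asserted. The main obstacle I foresee is parsing and cleanly stating the dimensional hypothesis (the printed ``$k+2$'' looks like a typographical slip): one must use it to justify simultaneously that (a)~$\alpha$ is a suspension, so that Theorem~\ref{Theorem of ZhuJin} applies; (b)~no cells from $J_r/J_{r-1}$ with $r \geq 3$ contribute in dimensions $\leq n+m$; and (c)~the top cell of $J_2/J_1$ strictly exceeds dimension $n+m$. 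Once these inequalities are pinned down, the naturality step identifying the attaching map is routine.
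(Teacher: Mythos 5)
Your cell count and the final naturality computation are fine, but the central step --- invoking Theorem~\ref{Theorem of ZhuJin} (equivalently Gray's identification of $J_2$) to write $J_2(M_f,S^m)\simeq C_\alpha\cup_{[\mathrm{id}_{C_\alpha},f]}C(C_\alpha\wedge S^{m-1})$ --- is exactly what this lemma is designed to avoid, and your justification for it does not hold. Gray's theorem requires the target $C_\alpha$ to be a suspension, and nothing in the hypotheses forces this: the stated inequalities bound $m$ from below, not $k$ relative to $n$, so no Freudenthal-range argument is available. Worse, in the paper's actual application ($C_\alpha=J_{2,k}^r=S^k\cup_{2^r[\iota_k,\iota_k]}e^{2k}$, so the lemma's ``$k$'' equals $k-1=n-1$, outside the Freudenthal range) the attaching map is a multiple of the Whitehead product $[\iota_k,\iota_k]$, which for $k$ even has nonzero Hopf invariant and hence is not a suspension; then $C_\alpha$ genuinely fails to be a suspension and the generalized Whitehead product $[\mathrm{id}_{C_\alpha},f]$ is not even defined. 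The paper flags precisely this obstruction in the Note immediately following the lemma.

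The paper's proof uses only the part of the relative James construction that needs no suspension hypothesis: the filtration gives $Sk_{n+m}F_p\simeq Sk_{n+m}J_2(M_f,S^m)\simeq C_\alpha\cup_{\beta}e^{n+m}$ for some a priori unknown $\beta\in\pi_{n+m-1}(C_\alpha)$ (your cell count establishes this much). It then identifies $\beta$ by comparing $f$ with the wedge inclusion $j_2^m:S^m\to S^n\vee S^m$ through the map $(i_\alpha,f):S^n\vee S^m\to C_\alpha$; since $S^n\vee S^m$ \emph{is} a suspension, $J_2(M_{j_2^m},S^m)$ can be computed by Gray's theorem, and naturality of the relative James construction (Lemma~\ref{Lem: natural J(Mf,X)}) together with a comparison of the induced maps on $H_{n+m}$ forces $\beta=\pm(i_\alpha,f)_\ast[j_1^n,j_2^m]=\pm[i_\alpha,f]$. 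To repair your outline you would need to replace your second step by some such comparison argument (or add the hypothesis that $\alpha$ desuspends, which would make the lemma useless for its intended application).
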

\begin{proof}
	$F_p\simeq J(M_f, S^m)$ where  $J_{1}(M_f, S^m)=M_f\simeq C_\alpha$. $\Sigma J_{2}(M_f, S^m)\simeq \Sigma C_\alpha\vee \Sigma C_\alpha\wedge S^m=S^{n+1}\cup  e^{n+k+2}\cup e^{n+m+1}\cup e^{n+k+m+2}$. By the natural inclusion $J_{1}(M_f, S^m)\hookrightarrow J_{2}(M_f, S^m)$, we have 
	\begin{align*}
		&Sk_{n+m-1}F_p\simeq Sk_{n+m-1}J_{2}(M_f, S^m)\simeq J_{1}(M_f, S^m)\simeq C_\alpha;\\
		&Sk_{n+k+m}F_p\simeq Sk_{n+m}J_{2}(M_f, S^m)\simeq  C_\alpha\cup_{\beta}e^{n+m}, \text{~for some~} \beta\in \pi_{n+m-1}(C_\alpha).
	\end{align*}
	
	We have the following  two commutative diagrams for cofibration sequences left and fibration sequences right respectively.
	
	$  \small{\xymatrix{
			S^m \ar@{=}[d]\ar[r]^-{f}&C_\alpha\ar[r] & C_f\ar[r]^-{p}& S^{m+1}\ar@{=}[d]\\
			S^m\ar[r]^-{j_2^m}&S^n\vee S^m\ar[u]_{(i_\alpha, f)} \ar[r]^-{\iota_n\vee i_C}& S^n\vee C_{j_2^m}\ar[u]^{\bar{\omega}}\ar[r]^-{p_{\vee}}& S^{m+1}
	} }
	$, ~~~~$ \small{\xymatrix{
			\Omega S^{m+1} \ar@{=}[d]\ar[r]&F_{p}\ar[r] &C_f \\
			\Omega S^{m+1} \ar[r]&F_{p_{\vee}}\ar[u]^{\omega} \ar[r]& S^n\vee C_{j_2^m}\ar[u]^{\bar{\omega}}
	} }
	$,
\newline	where the homotopy fiber $F_{p_{\vee}}\simeq J(M_{j_2^m}, S^m)$ and 
	$J_2(M_{j_2^m}, S^m)\simeq (S^n\vee S^m)\cup_{[id, j_2^m]} C (S^n\vee S^m)\wedge S^{m-1}\simeq (S^n\vee S^m)\cup_{([j_1^n, j_2^m], [j_2^m, j_2^m])}C(S^{n+m-1}\vee S^{2m-1})$.
	
	Consider the following homotopy commutative diagram of the cofibration sequences
	
	$  \small{\xymatrix{
			J_{1}(M_f, S^m)\simeq C_\alpha\ar[r]^{j_p} &J_{2}(M_f, S^m)\ar[r]& C_\alpha\wedge S^{m}\\
			J_{1}(M_{j_2^m}, S^m)\simeq S^n\vee S^m\ar[u]_{\omega |_{J_{1}}=(i_\alpha, f)} \ar[r]^-{j_{p_{\vee}}}&J_{2}(M_{j_2^m}, S^m)\ar[u]^{\omega |_{J_{2}}}\ar[r]&  (S^n\vee S^m)\wedge S^{m}\ar[u]_{(i_\alpha, f)\wedge \iota_m}.
	} }
	$
	
	It is easy to get that the induced homomorphism  $H_{n+m}((i_\alpha, f)\wedge \iota_m)$	is isomorphic and so is $H_{n+m}(\omega |_{J_{2}})$.
	
	We can decompose the following two inclusions $j_p$ and $j_{p_{\vee}}$ by 
	\begin{align*}
		&j_p: C_\alpha\xrightarrow{j'_p} Sk_{n+m}J_{2}(M_f, S^m)\hookrightarrow J_{2}(M_f, S^m), \\
		&j_{p_{\vee}}: S^n\vee S^m \xrightarrow{j'_{p_{\vee}}} Sk_{n+m}J_{2}(M_{j_2^m}, S^m)\hookrightarrow J_{2}(M_{j_2^m}, S^m)
	\end{align*}

	We have  the following homotopy commutative diagram of the cofibration sequences 
	
	$\small{\xymatrix{
			S^{n+m-1}\ar[r]^-{\beta} & J_1(M_{f},S^m)\simeq C_\alpha \ar[r]^-{j'_p}&Sk_{n+m}J_2(M_{f},S^m)\ar[r]^-{proj} &S^{n+m}\\
			S^{n+m-1}\ar[u]_{\theta} \ar[r]^-{~~[j_1^n, j_2^m]~~}& J_1(M_{j_2^m},S^m)\simeq S^n\vee S^m\ar[u]^{\omega|_{J_1}}\ar[r]^-{j'_{p_{\vee}}}& Sk_{n+m}J_2(M_{j_2^m},S^m)\ar[u]^{(\omega|_{J_2})|_{Sk_{n+m}}}\ar[r]^-{proj}&S^{n+m}\ar[u]^{\Sigma \theta}
	} }
	$

	where $(\omega|_{J_2})|_{Sk_{n+m}}$ is the restriction of $\omega|_{J_2}$ to the $(n+m)$-skeleton and  there is a  $\theta$ making the left square homotopy commutative because the following sequence 
	\newline
	$\pi_{n+m-1}(S^{n+m-1})\xrightarrow{\beta_{\ast}} \pi_{n+m-1}(C_\alpha)\xrightarrow{j'_{p\ast}}\pi_{n+m-1}(Sk_{n+m}J_2(M_{f},S^m))$	is exact by Theorem 1.16 of \cite{Cohen} and $j'_{p\ast}(\omega|_{J_1}[j_1^n, j_2^m])=0$. 
	
	The isomorphism  $H_{n+m}(\omega |_{J_{2}})$ implies that $H_{n+m}((\omega|_{J_2})|_{Sk_{n+m}})$  is isomorphic. Thus we get $H_{n+m}(\Sigma \theta)$ is also isomorphic and so is the $H_{n+m-1}(\theta)$. Hence $\theta =\pm \iota_{n+m-1}$. Then 
	\begin{align*}
		&\beta=\pm (\omega |_{J_{1}})[j_1^n, j_2^m]=\pm (i_\alpha, f)[j_1^n, j_2^m]=\pm[i_\alpha, f].
	\end{align*}
	Since the sign $\pm$ in $\beta=\pm[i_\alpha, f]$ does not effect the homotopy type of $Sk_{n+m}J_{2}(M_f, S^m)\simeq  C_\alpha\cup_{\beta}e^{n+m}$, we choose   $\beta=[i_\alpha, f]$. It completes the proof of this lemma.
	
\end{proof}

\begin{note}
	In above lemma, $C_\alpha$  may not be a suspension, so we can not use the Gray's Theorem to get $J_2(M_f, S^m)\simeq C_\alpha\cup_{[id, f]}C(C_\alpha\wedge S^{m-1})$.
\end{note}

The following two lemmas will be used to solve the extension problem of the short exact sequence of  the homotopy groups.

\begin{lemma}[\text{\cite[Lemma 2.7]{JZhtpygps}}]\label{lem: split Suspen Cf}
	Let $X\xrightarrow{f}Y\stackrel{i}\hookrightarrow C_f\xrightarrow{p}\Sigma X$ be a cofibration sequence of CW complexes. Consider the fibration sequence  $\Omega\Sigma^{k+1}X\xrightarrow{\partial^k}F_{\Sigma^kp} \xrightarrow{i^k}\Sigma^k C_f\xrightarrow{\Sigma^kp}\Sigma^{k+1}X$.
	Assume that the following restriction of $\Sigma^k: \pi_{n}(\Sigma X)\rightarrow \pi_{n+k}(\Sigma^{k+1}X)$ is isomorphic
	\begin{align}
		Ker(\partial_{n\!-\!1\ast}\!\!:\!\pi_{n}(\Sigma X)\!\rightarrow \!\pi_{n\!-\!1}(F_p))\xrightarrow{\Sigma^k}  Ker(\partial^k_{n\!+\!k\!-\!1\ast}\!\!:\!\pi_{n+k}(\Sigma^{k+1} X)\!\rightarrow \pi_{n\!+\!k-\!1}(F_{\Sigma^kp})) \nonumber
	\end{align}
	If $0\rightarrow Coker \partial_{n\ast}\rightarrow \pi_{n}(C_f)\rightarrow  Ker \partial_{n-1\ast} \rightarrow 0$
	is split, then so is the short exact sequence
	\begin{align*}
		0\rightarrow Coker \partial^k_{n+k\ast}\rightarrow \pi_{n}(\Sigma^k C_f)\rightarrow  Ker \partial^k_{n+k-1\ast}\rightarrow 0.
	\end{align*}
\end{lemma}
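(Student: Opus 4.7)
The plan is to transport the given splitting along the suspension homomorphism; the hypothesis that $\Sigma^k$ restricts to an isomorphism between the two kernels is precisely what makes this formal argument go through.

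First, I would recall how both short exact sequences arise. The long exact sequence of the fibration $F_p\to C_f\xrightarrow{p}\Sigma X$ reads
\[\pi_{n+1}(\Sigma X)\xrightarrow{\partial_{n\ast}}\pi_n(F_p)\xrightarrow{\tau_{p\ast}}\pi_n(C_f)\xrightarrow{p_\ast}\pi_n(\Sigma X)\xrightarrow{\partial_{n-1\ast}}\pi_{n-1}(F_p),\]
so the quotient map $\pi_n(C_f)\twoheadrightarrow Ker(\partial_{n-1\ast})$ in the first short exact sequence is the corestriction of $p_\ast$. A splitting is therefore a homomorphism $s\colon Ker(\partial_{n-1\ast})\to \pi_n(C_f)$ with $p_\ast\circ s = id$. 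The same observation applies verbatim after applying $\Sigma^k$, giving the second short exact sequence with quotient map the corestriction of $(\Sigma^k p)_\ast$.

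Next, by naturality of suspension the diagram
\[\xymatrix{\pi_n(C_f)\ar[r]^-{p_\ast}\ar[d]_{\Sigma^k} & \pi_n(\Sigma X)\ar[d]^{\Sigma^k}\\ \pi_{n+k}(\Sigma^k C_f)\ar[r]^-{(\Sigma^k p)_\ast} & \pi_{n+k}(\Sigma^{k+1}X)}\]
commutes, and by hypothesis the right-hand $\Sigma^k$ restricts to an isomorphism $Ker(\partial_{n-1\ast})\xrightarrow{\cong} Ker(\partial^k_{n+k-1\ast})$. I then set
\[s^k \;:=\; \Sigma^k\circ s\circ (\Sigma^k)^{-1}\colon Ker(\partial^k_{n+k-1\ast})\longrightarrow \pi_{n+k}(\Sigma^k C_f),\]
which is a well-defined group homomorphism as a composition of such. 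Using the commuting square,
\[(\Sigma^k p)_\ast\circ s^k \;=\; \Sigma^k\circ p_\ast\circ s\circ (\Sigma^k)^{-1} \;=\; \Sigma^k\circ (\Sigma^k)^{-1} \;=\; id,\]
so $s^k$ is a section of the quotient map, and the second short exact sequence splits.

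There is no substantive obstacle here: the argument is essentially a diagram chase, and the hypothesis on the restricted suspension was tailored so that the splitting can be pulled back through $(\Sigma^k)^{-1}$ and pushed forward through $\Sigma^k$. A stronger version of the lemma, without the isomorphism assumption, would instead require constructing a section of the quotient map at the suspended level from scratch, which in general is not automatic; here that difficulty is bypassed by the hypothesis.
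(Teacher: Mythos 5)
Your proof is correct. The paper does not prove this lemma itself (it is imported from \cite[Lemma 2.7]{JZhtpygps}), but your argument is the natural one: the quotient maps in both short exact sequences are the corestrictions of $p_\ast$ and $(\Sigma^k p)_\ast$ onto the respective kernels, the square relating them via the suspension homomorphisms commutes by naturality, and the hypothesis that $\Sigma^k$ restricts to an isomorphism of kernels lets you conjugate the given section $s$ to a section $s^k=\Sigma^k\circ s\circ(\Sigma^k)^{-1}$ of the suspended sequence. No gaps.
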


\begin{lemma}\label{lem L3(X)split}
	For  a $2$-local suspension space $X$ which is $2$-cell complexes such that the reduced homology $\bar H_{\ast}(X,\Z_2)$ of dim $2$ with generators $u$ and $v$ such that 
	$|u|<|v|$, where $|x|$ denotes the degree of $x\in\bar H_{\ast}(X,\Z_2)$. Then 
	$\pi_{i}(\Sigma^{1+|u|+|v|}X)$ is a direct summand of $\pi_{i}(\Sigma X)$ for any positive integer $i$.
\end{lemma}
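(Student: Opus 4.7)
The plan is to construct a 2-local product splitting $\Omega\Sigma X\simeq \Omega\Sigma^{1+|u|+|v|}X\times Z$; from the identity $\pi_i(\Sigma X)=\pi_{i-1}(\Omega\Sigma X)$ the lemma then follows immediately, since a product factor on the loop space gives a direct summand on each homotopy group.

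First I would record the 2-local wedge decompositions of the smash powers of $X$. Because $X$ is a 2-local 2-cell suspension with cells in dimensions $|u|<|v|$ (in particular a co-$H$-space), one has
\[X\wedge X\simeq \Sigma^{|u|}X\vee \Sigma^{|v|}X,\qquad X^{\wedge 3}\simeq \Sigma^{2|u|}X\vee (\Sigma^{|u|+|v|}X)^{\vee 2}\vee \Sigma^{2|v|}X,\]
the first being the standard 2-local splitting for 2-cell suspensions (for $X=P^n(2^r)$ this reduces to the classical $P^n(2^r)\wedge P^n(2^r)\simeq P^{2n-1}(2^r)\vee P^{2n}(2^r)$), and the second obtained by smashing with $X$ and iterating. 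Suspending once, $\Sigma^{1+|u|+|v|}X$ is a wedge summand of $\Sigma X^{\wedge 3}$.

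Next I would combine this with James's suspension splitting $\Sigma\Omega\Sigma X\simeq \bigvee_{n\geq 1}\Sigma X^{\wedge n}$, producing $\Sigma^{1+|u|+|v|}X$ as a wedge summand of $\Sigma\Omega\Sigma X$. Applying $\Omega$ and invoking the Hilton-Milnor theorem converts the wedge on the right into a weak product
\[\Omega\Sigma\Omega\Sigma X\simeq \prod_w\Omega\Sigma W,\]
indexed by basic products $W$, in which $\Omega\Sigma^{1+|u|+|v|}X$ appears as one of the length-one factors. Recall that $\Omega\Sigma X$ is a retract of $\Omega\Sigma\Omega\Sigma X$ via $\Omega\Sigma E_X\colon \Omega\Sigma X\to \Omega\Sigma\Omega\Sigma X$ and the loop of the adjunction counit $\Sigma\Omega\Sigma X\to \Sigma X$, which is the identity on the first James summand.

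The main obstacle will be verifying that the factor $\Omega\Sigma^{1+|u|+|v|}X$ found in $\Omega\Sigma\Omega\Sigma X$ genuinely descends to a factor of the retract $\Omega\Sigma X$, equivalently that the corresponding summand of $\Sigma\Omega\Sigma X$ is preserved under the composition of the counit with $\Sigma E_X$. I would attack this by naturality of James's splitting with respect to the James-Hopf invariants $H_n$ combined with the 2-local identifications above, following the chain of Hopf invariants to pin down which iterated Samelson product the factor $\Sigma^{1+|u|+|v|}X\subset\Sigma X^{\wedge 3}$ represents. If that direct approach proves unwieldy, the cleanest fallback is Wu's 2-local functorial decomposition $\Omega\Sigma X\simeq A^{min}(X)\times \Omega(\bigvee_{n\geq 2}Q_n^{max}(X))$ from \cite{WJ Proj plane}, inside which the 2-local splitting of $X^{\wedge 3}$ exhibits $\Sigma^{1+|u|+|v|}X$ as an explicit wedge summand of $Q_3^{max}(X)$, and a further Hilton-Milnor expansion yields $\Omega\Sigma^{1+|u|+|v|}X$ as the desired product factor.
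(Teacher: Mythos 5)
Your first step already fails in a case the paper needs. The splitting $X\wedge X\simeq\Sigma^{|u|}X\vee\Sigma^{|v|}X$ is correct for $P^n(2^r)$ with $r\geq 2$, but it is false for $r=1$: writing $x_{n-1},x_n$ for the $\Z_2$-cohomology generators of $P^n(2)$, the Cartan formula gives $\mathrm{Sq}^2(x_{n-1}\otimes x_{n-1})=\mathrm{Sq}^1x_{n-1}\otimes\mathrm{Sq}^1x_{n-1}=x_n\otimes x_n\neq 0$ in $H^\ast(P^n(2)\wedge P^n(2);\Z_2)$, so $\mathrm{Sq}^2$ connects the bottom cell to the top cell, whereas in $P^{2n-1}(2)\vee P^{2n}(2)$ those classes lie in different wedge summands. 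Since $\mathrm{Sq}^2$ is stable, no amount of suspending repairs this. The lemma is invoked in the paper for $r=1$ (for instance to split $\pi_{10}(P^{9}(2))$ off $\pi_{10}(P^{4}(2))$), so your route to exhibiting $\Sigma^{1+|u|+|v|}X$ inside $\Sigma X^{\wedge 3}$ breaks down exactly where it is needed. The fact that $\Sigma^{|u|+|v|}X$ nevertheless retracts off $X^{\wedge 3}$ for every $2$-local $2$-cell suspension is true, but it is the content of Proposition 3.1 of \cite{ChenWu}: the natural Lie retract $L_3(X)$ of $X^{\wedge 3}$ is $\Sigma^{|u|+|v|}X$. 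It does not come from iterating a splitting of $X\wedge X$.

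The second, larger gap is the descent step, which you correctly flag as the main obstacle but do not close. A wedge summand of $\Sigma\Omega\Sigma X$ only produces a product factor of $\Omega\Sigma\Omega\Sigma X$; the fact that $\Omega\Sigma X$ is a retract of the latter does not transfer product factors to it. What is actually required is a retraction of $\Omega\Sigma X$ onto $\Omega\Sigma L_3(X)$, built from the third James--Hopf invariant on one side and a triple Samelson product on the other, together with the combinatorial computation showing that the resulting self-map of $\Omega\Sigma L_3(X)$ is a $2$-local equivalence. That computation is precisely Corollary 4.1 of \cite{Wucombina}, which is what the paper's one-line proof cites after identifying $L_3(X)\simeq\Sigma^{|u|+|v|}X$. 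Your fallback through $A^{min}(X)\times\Omega(\bigvee_{n\geq 2}Q_n^{max}(X))$ carries the same missing content: identifying $\Sigma^{|u|+|v|}X$ as a wedge summand of $Q_3^{max}(X)$ and splitting the corresponding Hilton--Milnor factor off the loop space of the wedge are not carried out, and are not easier than quoting Wu's result directly.
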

\begin{proof}
	By Proposition 3.1 of \cite{ChenWu}, $L_3(X)\simeq \Sigma^{|u|+|v|}X$, where   $L_3(X)$ is defined in Notation 1.5 of \cite{Wucombina}. Then by Corollary 4.1 of \cite{Wucombina},
	\begin{align*}
		\Omega\Sigma X\simeq \Omega\Sigma^{1+|u|+|v|}X\times \text{other space}.
	\end{align*}
	This implies the Lemma \ref{lem L3(X)split}.
\end{proof}

\begin{lemma}\label{lem split}
	Let $S^{n+k}\xrightarrow{\alpha} S^{n}\xrightarrow{i} C_{\alpha}\xrightarrow{p} S^{n+k+1}$ be the cofibration sequence, with $k\geq 0, n\geq 2$. For any $\Sigma \theta\in \pi_{m}(S^{n+k+1}) $ with  $\alpha\theta =0$,  there is an element  $-\mathbf{x}$ in the Toda bracket $\{i,\alpha, \theta\}\subset \pi_{m}(C_{\alpha})$, such that $p(\mathbf{x})=\Sigma \theta$. 
\end{lemma}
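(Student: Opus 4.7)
The plan is to write down an explicit representative of the Toda bracket $\{i,\alpha,\theta\}\subset\pi_m(C_\alpha)$ via the standard geometric recipe and then compute its image under $p$ directly. The relation $i\alpha\simeq 0$ is canonical from the cofibration: the top cone cell of $C_\alpha=S^n\cup_\alpha CS^{n+k}$ furnishes a distinguished extension $\bar\alpha\colon CS^{n+k}\to C_\alpha$ restricting to $i\alpha$ on the boundary. The hypothesis $\alpha\theta=0$ furnishes, upon choosing a null-homotopy, an extension $\bar\theta\colon CS^{m-1}\to S^n$ with $\bar\theta|_{S^{m-1}}=\alpha\theta$.

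I would then define $\mathbf{x}\colon \Sigma S^{m-1}\to C_\alpha$ on the two-cone decomposition $\Sigma S^{m-1}=C_+S^{m-1}\cup_{S^{m-1}}C_-S^{m-1}$ by setting
$$\mathbf{x}|_{C_+S^{m-1}}=i\circ\bar\theta,\qquad \mathbf{x}|_{C_-S^{m-1}}=\bar\alpha\circ C\theta,$$
where $C\theta\colon CS^{m-1}\to CS^{n+k}$ is the cone on $\theta$. The two halves agree on the equatorial $S^{m-1}$ (both equal $i\alpha\theta$), so $\mathbf{x}$ is well-defined, and by the very definition of the Toda bracket the element $-\mathbf{x}$ is a representative of $\{i,\alpha,\theta\}$.

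The remaining step is to evaluate $p\mathbf{x}$. On $C_+S^{m-1}$ the composition $p\circ i\circ\bar\theta$ vanishes because $pi\simeq 0$. On $C_-S^{m-1}$, the composition $p\bar\alpha\colon CS^{n+k}\to S^{n+k+1}$ is precisely the pinch map collapsing $S^{n+k}$, i.e.\ the canonical quotient $CS^{n+k}\to CS^{n+k}/S^{n+k}\cong S^{n+k+1}$; hence $p\bar\alpha\circ C\theta$ factors through $CS^{m-1}/S^{m-1}\cong S^m$ as $\Sigma\theta$. Combining the two hemispheres and identifying $\Sigma S^{m-1}\cong S^m$ canonically, one reads off $p\mathbf{x}=\Sigma\theta$, which is exactly the conclusion. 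The only non-routine point is the sign convention that forces $-\mathbf{x}$ (rather than $+\mathbf{x}$) to lie in $\{i,\alpha,\theta\}$; this is dictated by the orientation chosen on $\Sigma S^{m-1}=C_+\cup C_-$ in the standard definition of the Toda bracket (as in Toda's \emph{Composition Methods in Homotopy Groups of Spheres}), and I would verify it carefully rather than attempt to re-derive general sign rules.
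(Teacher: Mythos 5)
Your construction is correct in substance, but it takes a genuinely different route from the paper. The paper's proof is a two-line formal argument: it quotes Corollary 2.5.1 of the Yang--Mukai--Wu preprint for the fact that $\iota_{n+k+1}\in\{p,i,\alpha\}$, and then applies Toda's juggling formula (Proposition 1.4 of \emph{Composition Methods}) to get $-\Sigma\theta\in-\{p,i,\alpha\}\Sigma\theta=p\{i,\alpha,\theta\}$, from which the existence of $\mathbf{x}$ is immediate; in particular the troublesome sign is delivered automatically by Toda's conventions rather than checked by hand. You instead unwind the definition of the bracket: you build an explicit representative on the two cones of $\Sigma S^{m-1}$, using the canonical null-homotopy of $i\alpha$ furnished by the top cell of $C_\alpha$ on one hemisphere and a chosen null-homotopy of $\alpha\theta$ on the other, and then evaluate $p$ directly. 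This is more self-contained (it needs no external corollary) and it makes transparent \emph{why} the statement holds --- the hemisphere carried by $i$ dies under $p$ because $p$ literally collapses $S^n$ (note you only invoke $pi\simeq 0$, but the computation really uses that $p\circ i$ is the constant map, which is true here since $p$ is the pinch map; if $pi$ were merely null-homotopic you would pick up a correction term), while the hemisphere carried by the cone cell maps by $\Sigma\theta$. The price is that you must pin down the orientation convention to decide whether your representative or its negative lies in $\{i,\alpha,\theta\}$, which you correctly defer to Toda's definitions; the paper's route buys exactly that sign bookkeeping for free at the cost of an external reference.
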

\begin{proof}
	From Corollary 2.5.1 of \cite{JXYang},  $\iota_{n+k+1}\in \{p,i,\alpha\}$. By Proposition 1.4 of \cite{Toda}, $-\Sigma \theta\in -\{p,i,\alpha\}\Sigma \theta=p\{i,\alpha, \theta\}$. This implies the existence of $\mathbf{x}$.
\end{proof}

\begin{lemma}\label{lem:Toda bracket} $r$ is a positive integer. Assume $2^r\beta=0$ for an $\beta\in \pi_{m+k}(S^{m}), k>0$. Then for $\alpha\in \pi_{m+1}(S^{n_1})$, $\gamma\in \pi_{n_2}(S^{m+k})$, 
	\begin{align*}
		\{2^r\alpha, \Sigma\beta, 2^r\Sigma\gamma\}\ni \left\{
		\begin{array}{ll}
		\alpha\Sigma\beta\eta_{m+k+1}\Sigma^2\gamma, & \hbox{$r=1$;} \\
			0, & \hbox{$r\geq 2$.}
		\end{array}
		\right.
	\end{align*}
\end{lemma}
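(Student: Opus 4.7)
The plan is in three steps: (i) verify the triple bracket is defined, (ii) settle $r=1$ via a classical Toda identity plus juggling, and (iii) bootstrap to $r\geq 2$ using $2\eta=0$.

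\emph{Step 1 (Definedness).} Composition with the suspension $\Sigma\beta$ is $\mathbb{Z}$-linear, so $(2^r\alpha)\circ\Sigma\beta=\alpha\circ\Sigma(2^r\beta)=0$ and $\Sigma\beta\circ (2^r\Sigma\gamma)=\Sigma(2^r\beta)\circ\Sigma\gamma=0$. Thus $\{2^r\alpha,\Sigma\beta,2^r\Sigma\gamma\}$ is defined as a subset of $\pi_{n_2+2}(S^{n_1})$.

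\emph{Step 2 (The case $r=1$).} Here $2\Sigma\beta=\Sigma(2\beta)=0$, so the classical Toda identity (see e.g.\ \cite{Toda}, Proposition 1.7)
\[\eta\cdot\Sigma\sigma\in\{2\iota,\,\sigma,\,2\iota\}\qquad\text{whenever }2\sigma=0\]
applies with $\sigma=\Sigma\beta$, giving $\eta_{m+1}\Sigma^2\beta\in\{2\iota_{m+1},\Sigma\beta,2\iota_{m+k+1}\}$. Naturality of the Hopf map in the metastable range identifies $\eta_{m+1}\Sigma^2\beta=\Sigma\beta\circ\eta_{m+k+1}$. The juggling identities
\[\alpha\cdot\{f,g,h\}\subseteq\{\alpha f,g,h\},\qquad \{f,g,h\}\cdot\zeta\subseteq\{f,g,h\zeta\}\]
(\cite{Toda}, Proposition 1.2) then yield
\[\alpha\,\Sigma\beta\,\eta_{m+k+1}\,\Sigma^2\gamma\;\in\;\alpha\cdot\{2\iota_{m+1},\Sigma\beta,2\iota_{m+k+1}\}\cdot\Sigma\gamma\;\subseteq\;\{2\alpha,\Sigma\beta,2\Sigma\gamma\}.\]

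\emph{Step 3 (The case $r\geq 2$).} The goal is to show $0\in\{2^r\alpha,\Sigma\beta,2^r\Sigma\gamma\}$. Set $\beta':=2^{r-1}\beta$, so $2\beta'=2^r\beta=0$. Applying Step 2 to $\beta'$ (with the same $\alpha,\gamma$) produces
\[\alpha\,\Sigma\beta'\,\eta_{m+k+1}\,\Sigma^2\gamma=2^{r-1}\alpha\,\Sigma\beta\,\eta_{m+k+1}\,\Sigma^2\gamma\;\in\;\{2\alpha,\,2^{r-1}\Sigma\beta,\,2\Sigma\gamma\}.\]
Because $2\eta_{m+k+1}=0$, the displayed element equals $0$ whenever $r\geq 2$. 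It remains to transplant this $0$ into $\{2^r\alpha,\Sigma\beta,2^r\Sigma\gamma\}$ by moving $2^{r-1}$ out of the middle slot into the outer ones, using the factorizations
\[2^{r-1}\Sigma\beta=(2^{r-1}\iota_{m+1})\circ\Sigma\beta=\Sigma\beta\circ(2^{r-1}\iota_{m+k+1}),\]
together with the absorptions $2\alpha\circ 2^{r-1}\iota_{m+1}=2^r\alpha$ and $2^{r-1}\iota_{m+k+1}\circ 2\Sigma\gamma=2^r\Sigma\gamma$, combined with the composition juggling identities of \cite{Toda}, Proposition 1.2.

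\emph{Main obstacle.} The delicate step is the final migration in Step 3, since the natural direction of Toda's juggling is $\{f,gh,k\}\supseteq\{fg,h,k\}$, which is opposite to the inclusion needed, so one must combine pre- and post-composition identities carefully while tracking indeterminacy on both sides. If this direct approach stalls, the backup plan is to realize the bracket geometrically through the mod-$2^r$ Moore space: the hypothesis $2^r\Sigma\beta=0$ provides an extension $\widetilde{\Sigma\beta}\colon P^{m+k+2}(2^r)\to S^{m+1}$ across $S^{m+k+1}\hookrightarrow P^{m+k+2}(2^r)$, and composing $\alpha\cdot\widetilde{\Sigma\beta}$ with the coextension of $2^r\Sigma\gamma$ realizes $\{2^r\alpha,\Sigma\beta,2^r\Sigma\gamma\}$ directly; the factor $2^{r-1}\eta$ then emerges from the $2$-primary attaching structure of $P^{\bullet}(2^r)$ (cf.\ Chapter~6 of \cite{NeisendorferBook}) and vanishes automatically for $r\geq 2$.
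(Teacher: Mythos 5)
Your Step 1 and Step 2 are fine and match the paper's argument for $r=1$: the paper also factors $2^r\alpha=\alpha(2^r\iota_{m+1})$ and $2^r\Sigma\gamma=(2^r\iota_{m+k+1})\Sigma\gamma$, juggles via Toda's Proposition 1.2 to reduce to the bracket $\{2^r\iota_{m+1},\Sigma\beta,2^r\iota_{m+k+1}\}$, and then quotes the classical identity (the correct reference is \cite[Corollary 3.7]{Toda}, applied to the restricted bracket $\{\,\cdot\,\}_1$ via \cite[(1.15)]{Toda}, not ``Proposition 1.7'').

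Step 3, however, has a genuine gap, and it is exactly the one you flag yourself. You produce $0$ as an element of $\{2\alpha,\,2^{r-1}\Sigma\beta,\,2\Sigma\gamma\}$ and then need to move it into $\{2^r\alpha,\,\Sigma\beta,\,2^r\Sigma\gamma\}$. But Toda's juggling identities only give inclusions of the form $\{f\circ g,\,h,\,k\}\subset\{f,\,g\circ h,\,k\}$ and $\{f,\,h,\,k\circ l\}\subset\{f,\,h\circ k,\,l\}$, i.e.\ pushing factors \emph{into} the middle slot enlarges the bracket. So what you can prove is $\{2^r\alpha,\Sigma\beta,2^r\Sigma\gamma\}\subset\{2\alpha,2^{r-1}\Sigma\beta,2\Sigma\gamma\}$ -- the wrong direction: knowing $0$ lies in the larger coset says nothing about the smaller one, since the two brackets are cosets of different indeterminacy subgroups. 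The backup plan via Moore-space coextensions is only a sketch and does not close this. The fix is to not reduce $r\geq 2$ to $r=1$ at all: Toda's Corollary 3.7 is stated for arbitrary $h$ and gives
\begin{align*}
\{2^r\iota_{m+1},\,\Sigma\beta,\,2^r\iota_{m+k+1}\}_1\ni 2^{r-1}\,\Sigma\beta\circ\eta_{m+k+1},
\end{align*}
which already equals $\Sigma\beta\,\eta_{m+k+1}$ for $r=1$ and $0$ for $r\geq 2$ because $2\eta_{m+k+1}=0$. A single application of this, followed by the same left/right composition juggling you use in Step 2, proves both cases simultaneously; this is precisely the paper's proof.
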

\begin{proof}
\begin{align*}
		&\{2^r\alpha, \Sigma\beta, 2^r\Sigma\gamma\}=	\{\alpha(2^r\iota_{m+1}), \Sigma\beta, (2^r\iota_{m+k+1})\Sigma\gamma\}\supset 	\alpha\{2^r\iota_{m+1}, \Sigma\beta, 2^r\iota_{m+k+1}\}\Sigma^2\gamma\\
		&\supset 	\alpha\{2^r\iota_{m+1}, \Sigma\beta, 2^r\iota_{m+k+1}\}_{1}\Sigma^2\gamma\ni \left\{
		\begin{array}{ll}
			\alpha\Sigma\beta\eta_{m+k+1}\Sigma^2\gamma, & \hbox{$r=1$;} \\
			0, & \hbox{$r\geq 2$}
		\end{array}
		\right.
\end{align*}	
where the above  first $``\supset"$ comes from ~\text{\cite[Proposition 1.2]{Toda}}; the second  $``\supset"$ comes from \text{\cite[(1.15)]{Toda}} and the last $``\ni"$ comes from  \text{\cite[Corollary 3.7]{Toda}}. 
\end{proof}

\section{Skeletons of the homotopy fiber}
\label{sec:attaching maps}
For an abelian group $A$, $A^{\oplus k}$ denotes the direct sum of $k$-copies of $A$.  $\lr{\alpha_1, \alpha_2,\cdots, \alpha_s}$ denotes the subgroup of $A$ which is generated by $\alpha_1, \alpha_2,\cdots, \alpha_s\in A$. Let $A_1$ be a subgroup of $A$. Then ``$a\equiv b$ mod $A_1$"  means $a-b\in A_1$  and ``$a\in H$ mod $A_1$" or ``$ H\ni a$ mod $A_1$" means $H$, which contains an element $a$,  is a coset of subgroup $A_1$.  For a map $f:X\rightarrow Y$ and $\alpha\in [Z,Y]$, if there is an $\tilde{\alpha}\in [Z,X]$ such that $f_{\ast}(\tilde{\alpha})=p\tilde{\alpha}=\alpha$, then $\tilde{\alpha}$ is called the lift of $\alpha$ by $f$.

$\bullet~$ For $P^{k+1}(2^r)$, there is a canonical cofibration sequence
\begin{align}
	S^k\xrightarrow{2^{r}\iota_k} S^{k}\xrightarrow{i_{k}} P^{k+1}(2^r)\xrightarrow{p_{k+1}}  S^{k+1} \label{Cofiberation for Mr^k}
\end{align}
Note that for $r=0$,  $P^{k+1}(2^0)=\ast$ is  contractible.
We simplify the notation $F_{p_{k+1}}$ by $F_{k+1}$, which is the homotopy fiber of the pinch map $p_{k+1}$ above, by denoting $J_{i,k}^r:=J_{i}(M_{2^r\iota_k}, S^k)$, we have 
\begin{align*}
	Sk_{3k-1}(F_{k+1})\simeq J_{2,k}^r\simeq S^k\cup_{\gamma^r_2}CS^{2k-1},~~\gamma_2^r=2^r[\iota_k, \iota_k];\\
	Sk_{4k-1}(F_{k+1})\simeq J_{3,k}^r\simeq J_{2,k}^r\cup_{\gamma^r_3} CS^{3k-1}
\end{align*}
We have the natural inclusion  $I^r_{i-1}: J_{i-1,k}^r\rightarrow J_{i,k}^r$.  and  denote the natural  inclusion $J_{i,k}^r \rightarrow F_{k+1}$ by $ \check I^r_{i}$. So we have $\check  I^r_{i}I^r_{i-1}=\check I^r_{i-1} $.

Let $\beta_{k}=j_{p_{k+1}}:S^k\hookrightarrow J_{2,k}^r$ and $\bar{p}_{2k}: J_{2,k}^r\rightarrow S^{2k}$ be the canonical pinch map. 

Moreover if $\gamma_2^r=2^r[\iota_k,\iota_k]\simeq 0$,   then there is a canonical inclusion 
$\beta_{2k}: S^{2k} \hookrightarrow J_{2,k}^r\simeq S^{k}\vee S^{2k}$ and in this case, $\beta_{k}=j_{1}^k$, $\bar{p}_{2k}=q_{2}^{2k}$.

Let $\Omega S^{k+1}\xrightarrow{\partial^{k+1,r}}  F_{k+1}\xrightarrow{\tau_{k+1}} P^{k+1}(2^r)\xrightarrow{p_{k+1}}  S^{k+1}$ be the homotopy fibration sequence
which implies the following   exact sequence with homotopy commutative squares by Lemma \ref{partial calculate1}
\begin{align} \small{\xymatrix{
			\pi_{m+1}(S^{k+1})\ar[r]^-{ \partial^{k+1,r}_{m\ast}}&\pi_{m}(F_{k+1})\ar[r]^-{\tau_{k+1\ast}}&\pi_{m}(P^{k+1}(2^r))\ar[r]^-{p_{k+1\ast}}&\pi_{m}(S^{k+1})\ar[r]^-{ \partial^{k+1,r}_{m-1\ast}}& \pi_{m-1}(F_{k+1})\\
			\pi_{m}(S^k)\ar[r]^-{(2^r\iota_{k})_{\ast}}\ar[u]_{\Sigma } &	\pi_{m}(S^k)\ar[u]_{\beta_{k\ast}}&&	\pi_{m-1}(S^k)\ar[r]^-{(2^r\iota_{k})_{\ast}}\ar[u]_{\Sigma} &	\pi_{m-1}(S^k)\ar[u]_{\beta_{k\ast}} }}  \label{exact:hgps piPk+1(2r)}
\end{align}
For any $\alpha \in Ker(\partial^{k+1,r}_{m-1\ast})$,  We always denote the $\widetilde{\alpha}\in \pi_{m}(P^{k+1}(2^r))$ a lift of $\alpha$ by $p_{k+1}$, i.e.,  $p_{k+1\ast}(\widetilde{\alpha})=\alpha$.

$\bullet~$ For $J_{2,k}^r$, there are the following cofibration and fibration sequences 
\begin{align*}
	\small{	S^{2k-1}\!\xrightarrow{\gamma^r_2}S^{k}\!\xrightarrow{\beta_k}\!J_{2,k}^r\!\xrightarrow{\bar{p}_{2k}}\!S^{2k}; ~~
		\Omega S^{2k}\!\xrightarrow{\bar{\partial}^{2k,r}}\!F_{\bar{p}_{2k}}\!\xrightarrow{\bar{\tau}_{2k}}\!J_{2,k}^r\!\xrightarrow{\bar{p}_{2k}}\!S^{2k}.} 
\end{align*}
If  $[\iota_k, [\iota_k, \iota_k]]=0$ (this holds under 2-localization or for odd $k$ by  \cite[Theorem 8.8 in Chapter XI]{GTM61} ), then   $[\iota_k, \gamma_2]=0$. Denoting $J_{2}(M_{\gamma_2}, S^{2k-1})$ by $J_{2,k}^{\gamma,r}$ and $I_2^{\gamma, r}:J_{2,k}^{\gamma,r}\rightarrow F_{\bar{p}_{2k}}$ the canonical inclusion.
\begin{align}
	Sk_{5k-3}(F_{\bar{p}_{2k}})\simeq J_{2,k}^{\gamma,r}\simeq  S^k\vee S^{3k-1}. \label{equ: Sk_5k-3 barFp2k}
\end{align}

Let $\bar{\beta_{i}}: S^i\hookrightarrow S^k\vee S^{3k-1}\simeq J_{2,k}^{\gamma,r}\simeq  Sk_{5k-3}(F_{\bar{p}_{2k}})\hookrightarrow  F_{\bar{p}_{2k}} (i=k, 3k-1)$, where the first map is the canonical inclusions of the corresponding wedge summands. 

Note that $\bar{\tau}_{2k}\bar\beta_{k}\simeq \beta_k$. We also denote $\beta_{3k-1}:=\bar{\tau}_{2k}\bar\beta_{3k-1}$

Similar as the definition of $j_p$ in Section \ref{subsec: Relative James construction}, we also denote the composition
$S^i\xrightarrow{\beta_i} J_{2,k}^r\xrightarrow{\check I^r_2} F_{p}$ by $\beta_i$ without ambiguous for $i=k,2k,3k-1$.  

Let $Im(h):=h(G)$ denote the Image of the homomorphism  $h: G\rightarrow H$ of abelian groups.
\begin{lemma}\label{lem: exact seq pi_m(J2(2rl4))}
	Let $m\leq 4k-3$,	there is a short exact sequence 
	\begin{align*}
		\small{	0\rightarrow \frac{\bar\beta_{k\ast}(\pi_{m}(S^{k}))}{\bar\beta_{k\ast}\gamma^r_{2\ast}(\pi_{m}(S^{2k\!-\!1}))}\oplus G_{m}^{k}\xrightarrow{\bar{\tau}_{2k\ast}}\pi_{m}(J_{2,k}^r)\xrightarrow{\bar{p}_{2k\ast}}\Sigma(Ker(\gamma^r_{2\ast}))\rightarrow 0} 
	\end{align*}
	where the right $\gamma^r_{2\ast}$ is the induced homomorphism on $\pi_{m-1}(-)$ by $\gamma^r_2$ and
	$G_{m}^k=\left\{
	\begin{array}{ll}
		0, & \hbox{$m\leq 3k\!-\!2$;} \\
		\bar\beta_{3k\!-\!1\ast}(\pi_{m}(S^{3k\!-\!1})), & \hbox{$3k\!-\!2\leq m\leq 4k-3$.}
	\end{array}
	\right.$ 
\end{lemma}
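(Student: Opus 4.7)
The plan is to derive the short exact sequence by carefully analyzing the long exact homotopy sequence of the fibration $\Omega S^{2k}\xrightarrow{\bar\partial^{2k,r}} F_{\bar{p}_{2k}}\xrightarrow{\bar\tau_{2k}} J_{2,k}^r\xrightarrow{\bar{p}_{2k}} S^{2k}$, identifying each term in the range $m\leq 4k-3$, where we have essentially complete information on $F_{\bar{p}_{2k}}$ from its $(5k-3)$-skeleton.

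First, I would compute $\pi_m(F_{\bar{p}_{2k}})$ for $m\leq 4k-3$. By (\ref{equ: Sk_5k-3 barFp2k}), $Sk_{5k-3}(F_{\bar{p}_{2k}})\simeq S^k\vee S^{3k-1}$, so cellular approximation (applicable since $m+1\leq 5k-3$) gives $\pi_m(F_{\bar{p}_{2k}})\cong\pi_m(S^k\vee S^{3k-1})$ throughout our range. Writing $S^k\vee S^{3k-1}=\Sigma(S^{k-1}\vee S^{3k-2})$ and applying Hilton's theorem, the only basic products contributing in dimensions $\leq 4k-3$ are the two generators themselves, since the first Whitehead bracket lives in dimension $4k-2$. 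This gives $\pi_m(F_{\bar{p}_{2k}})=\bar\beta_{k\ast}\pi_m(S^k)\oplus\bar\beta_{3k-1\ast}\pi_m(S^{3k-1})=\bar\beta_{k\ast}\pi_m(S^k)\oplus G_m^k$, and the direct-summand structure also provides the injectivity of $\bar\beta_{k\ast}\colon \pi_\ast(S^k)\to\pi_\ast(F_{\bar{p}_{2k}})$ (via the retractions of the wedge) that will be needed below.

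Next, I would identify the boundary map. Applying Lemma \ref{partial calculate1} to the cofibration $S^{2k-1}\xrightarrow{\gamma_2^r}S^k\xrightarrow{\beta_k}J_{2,k}^r\xrightarrow{\bar{p}_{2k}}S^{2k}$ yields a homotopy-commutative square producing $\bar\partial^{2k,r}\circ E_{S^{2k-1}}\simeq \bar\beta_k\circ\gamma_2^r$. Combined with Lemma \ref{lem: partial(asigamb)}, this gives $\bar\partial^{2k,r}_{m\ast}(\Sigma\theta)=\bar\beta_{k\ast}\gamma^r_{2\ast}(\theta)$ for $\theta\in\pi_m(S^{2k-1})$. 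Since $m\leq 4k-3=2(2k-1)-1$, Freudenthal makes $\Sigma\colon\pi_{m-1}(S^{2k-1})\to\pi_m(S^{2k})$ an isomorphism and $\Sigma\colon\pi_m(S^{2k-1})\to\pi_{m+1}(S^{2k})$ a surjection. Hence $Im(\bar\partial^{2k,r}_{m\ast})=\bar\beta_{k\ast}\gamma^r_{2\ast}\pi_m(S^{2k-1})$, and the injectivity of $\bar\beta_{k\ast}$ upgrades $Ker(\bar\partial^{2k,r}_{m-1\ast})$ to $\Sigma(Ker(\gamma^r_{2\ast}))$.

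Finally, exactness yields $Ker(\bar\tau_{2k\ast})=Im(\bar\partial^{2k,r}_{m\ast})$ and $Im(\bar{p}_{2k\ast})=Ker(\bar\partial^{2k,r}_{m-1\ast})$, so the long exact sequence degenerates to
\[0\to\frac{\pi_m(F_{\bar{p}_{2k}})}{Im(\bar\partial^{2k,r}_{m\ast})}\xrightarrow{\bar\tau_{2k\ast}}\pi_m(J_{2,k}^r)\xrightarrow{\bar{p}_{2k\ast}}Im(\bar{p}_{2k\ast})\to 0.\]
Substituting the identifications above, and noting that the image of $\bar\partial^{2k,r}_{m\ast}$ sits entirely inside the $\bar\beta_{k\ast}\pi_m(S^k)$ summand so $G_m^k$ passes untouched to the quotient, recovers the asserted sequence. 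The main obstacle is the careful bookkeeping of the Freudenthal and Hilton ranges to verify that the single inequality $m\leq 4k-3$ indeed suffices for all steps simultaneously; everything else is a routine unwinding of the long exact sequence.
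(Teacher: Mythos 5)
Your proposal is correct and follows essentially the same route as the paper: both extract the short exact sequence from the long exact homotopy sequence of the fibration $\Omega S^{2k}\to F_{\bar p_{2k}}\to J_{2,k}^r\to S^{2k}$, identify the boundary map with $\bar\beta_{k\ast}\gamma^r_{2\ast}$ via Lemma \ref{partial calculate1} (and Lemma \ref{lem: partial(asigamb)}), use the Freudenthal ranges exactly as you do, and exploit the splitting $\pi_m(F_{\bar p_{2k}})\cong\bar\beta_{k\ast}\pi_m(S^k)\oplus\bar\beta_{3k-1\ast}\pi_m(S^{3k-1})$ coming from $Sk_{5k-3}(F_{\bar p_{2k}})\simeq S^k\vee S^{3k-1}$. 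The only difference is that you spell out the Hilton/cellular-approximation justification for that splitting, which the paper takes for granted.
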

\begin{proof}
	By  Lemma \ref{partial calculate1}, we have exact sequence with commutative squares
	\begin{align*} \footnotesize{\xymatrix{
				\pi_{m+1}( S^{2k})\ar[r]^-{ \bar{\partial}^{2k,r}_{m\ast}}&\pi_{m}(F_{\bar{p}_{2k}})\ar[r]^-{\bar{\tau}_{2k\ast}}&\pi_{m}(J_{2,k}^r)\ar[r]^-{\bar{p}_{2k\ast}}&\pi_{m}(S^{2k})\ar[r]^-{ \bar{\partial}^{2k,r}_{m-1\ast}}& \pi_{m-1}(F_{\bar{p}_{2k}})\\
				\pi_{m}(S^{2k\!-\!1})\ar[r]^-{\gamma^r_{2\ast}}\ar@{->>}[u]_{ \Sigma } &\pi_{m}(S^{k})\ar[ur]_-{\beta_{k\ast}}\ar@{^{(}->}[u]_{ \bar{\beta}_{k\ast}}&&\pi_{m-1}(S^{2k-1})\ar[r]^-{\gamma^r_{2\ast}}\ar[u]_{\cong\Sigma } &	\pi_{m-1}(S^{k})\ar@{^{(}->}[u]_{ \bar{\beta}_{k\ast}} }}  
	\end{align*}
	where the  suspension homomorphisms $\Sigma$  in the left and right squares are epimorphism  and isomorphism respectively by the  Freudenthal Suspension Theorem. Note that 
	\begin{align*}
		\pi_{m}(F_{\bar{p}_{2k}})=\bar\beta_{k\ast}(\pi_{m}(S^{k}))\oplus \bar\beta_{3k-1\ast}(\pi_{m}(S^{3k-1})) (m\leq 4k-3),
	\end{align*}
	Thus we complete the proof of this lemma by the following
	\begin{align*}
		&Coker(\bar{\partial}^{2k,r}_{m\ast})=\frac{\pi_{m}(F_{\bar{p}_{2k}})}{Im(\bar\beta_{k\ast}\gamma^r_{2\ast})}=\frac{\bar\beta_{k\ast}(\pi_{m}(S^{k}))}{\bar\beta_{k\ast}\gamma^r_{2\ast}(\pi_{m}(S^{2k\!-\!1}))}\oplus \bar\beta_{3k-1\ast}(\pi_{m}(S^{3k-1}));\\
		&Ker(\bar{\partial}^{2k,r}_{m-1\ast})=\Sigma(Ker(\gamma^r_{2\ast})).	
	\end{align*}	
\end{proof}
\begin{remark}\label{remark: pi3k-1(J2(2rl4)) }
	Take $m=3k-1$ in Lemma \ref{lem: exact seq pi_m(J2(2rl4))}, we get short exact sequence 
	\begin{align*}
		\small{	0\rightarrow \frac{\bar\beta_{k\ast}(\pi_{3k\!-\!1}(S^{k}))}{\bar\beta_{k\ast}\gamma^r_{2\ast}(\pi_{3k\!-\!1}(S^{2k\!-\!1}))}\oplus \Z_{(2)}\{\bar{\beta}_{3k\!-\!1}\}\xrightarrow{\bar{\tau}_{2k\ast}}\pi_{3k\!-\!1}(J_{2,k}^{r})\xrightarrow{\bar{p}_{2k\ast}}\Sigma(Ker(\gamma^r_{2\ast}))\rightarrow 0} 
	\end{align*}
	where the right $\gamma^r_{2\ast}$ is the induced homomorphism on $\pi_{3k-2}(-)$ by $\gamma^r_2$.
\end{remark}

Note that in the following text, for any $\alpha \in \Sigma(Ker(\gamma^r_{2\ast}))$,  we always denote the $\widehat{\alpha}\in \pi_{3k-1}(J_{2}(2^r\iota_k))$ a lift of $\alpha$ by $\bar{p}_{2k}$, i.e.,  $\bar{p}_{2k\ast}(\widehat{\alpha})=\alpha$.

$\bullet~$ For $J_{3,k}^{r}$, there are the following cofibration and fibration sequences 
\begin{align*}
	\small{S^{3k-1}\!\xrightarrow{\gamma^r_3}\!J_{2,k}^{r}\!\xrightarrow{I_2^r}\! J_{3,k}^{r}\!\xrightarrow{\check{p}_{3k}}\!S^{3k};~~
		\Omega S^{3k}\xrightarrow{\check{\partial}^{3k,r} }F_{\check{p}_{3k}}\!\xrightarrow{\check \tau_{3k}}\!J_{3,k}^{r}\!\xrightarrow{\check{p}_{3k}}\!S^{3k}.} 
\end{align*}
We denote $I_{2}^r\beta_{i}$   by $\check\beta_{i}$ for $i=k,2k, 3k-1$.

Then from Lemma \ref{lem skFp nosusupension}, we get 
\begin{align}
	Sk_{4k-1}(F_{\check{p}_{3k}})\simeq J_{2,k}^{r}\cup_{[\beta_{k}, \gamma^r_3]}e^{4k-1}. \label{equ: Sk4k-1F_3k}
\end{align}
\begin{lemma}\label{lem: exact seq pi_m(J3(2rl4))}
	Let $m\leq 4k-2$, there is a short exact sequence 
	\begin{align*}
		\small{	0\rightarrow \frac{\pi_{m}(J_{2,k}^r)}{\gamma^r_{3\ast}(\pi_{m}(S^{3k\!-\!1}))+L_{m}^k}\xrightarrow{I_{2\ast}^r}\pi_{m}(J_{3,k}^r)\xrightarrow{\check{p}_{3k\ast}}\Sigma(Ker(\gamma^r_{3\ast}))\rightarrow 0} 
	\end{align*}
	where the right $\gamma_{3\ast}$ is the induced homomorphism on $\pi_{m-1}(-)$ by $\gamma_3$ and \newline
	$L_{m}^k=\left\{
	\begin{array}{ll}
		0, & \hbox{$m\leq 4k\!-\!3$;} \\
		\lr{[\beta_{k},\gamma^r_3]}, & \hbox{$ m=4k-2$.}
	\end{array}
	\right.$ 
\end{lemma}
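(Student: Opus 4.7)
The plan is to repeat the argument of Lemma \ref{lem: exact seq pi_m(J2(2rl4))} one James-filtration level higher, working with the cofibration $S^{3k-1}\xrightarrow{\gamma^r_3}J_{2,k}^r\xrightarrow{I_2^r}J_{3,k}^r\xrightarrow{\check p_{3k}}S^{3k}$. The long exact sequence of the fibration $\Omega S^{3k}\xrightarrow{\check\partial^{3k,r}}F_{\check p_{3k}}\xrightarrow{\check\tau_{3k}}J_{3,k}^r\xrightarrow{\check p_{3k}}S^{3k}$ yields the tautological short exact sequence
\[0\longrightarrow Coker(\check\partial^{3k,r}_{m\ast})\xrightarrow{\check\tau_{3k\ast}}\pi_m(J_{3,k}^r)\xrightarrow{\check p_{3k\ast}}Ker(\check\partial^{3k,r}_{m-1\ast})\longrightarrow 0,\]
so the task reduces to computing the two outer groups in concrete form.

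First I would apply Lemma \ref{partial calculate1} to the map $\gamma^r_3:S^{3k-1}\to J_{2,k}^r$, producing the commutative squares
\[\xymatrix{\pi_m(S^{3k-1})\ar[r]^-{\gamma^r_{3\ast}}\ar[d]_{\Sigma}&\pi_m(J_{2,k}^r)\ar[d]^{(j_{\check p_{3k}})_\ast}\\ \pi_{m+1}(S^{3k})\ar[r]^-{\check\partial^{3k,r}_{m\ast}}&\pi_m(F_{\check p_{3k}})}\]
(and the analogue on $\pi_{m-1}$), where $j_{\check p_{3k}}:J_{2,k}^r\to F_{\check p_{3k}}$ is the canonical lift satisfying $\check\tau_{3k}j_{\check p_{3k}}\simeq I_2^r$ by (\ref{equ:taujp=i}). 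Since $m\leq 4k-2$ lies in the Freudenthal range for $\pi_\ast(S^{3k-1})$, the left vertical suspension is surjective for $\pi_m$ and an isomorphism for $\pi_{m-1}$. The second ingredient is the skeleton description (\ref{equ: Sk4k-1F_3k}), namely $Sk_{4k-1}(F_{\check p_{3k}})\simeq J_{2,k}^r\cup_{[\beta_k,\gamma^r_3]}e^{4k-1}$: combining the skeletal isomorphism $\pi_j(Sk_{4k-1}F_{\check p_{3k}})\cong \pi_j(F_{\check p_{3k}})$ valid for $j\leq 4k-2$ with the elementary cofibre effect of attaching one $(4k-1)$-cell shows that $(j_{\check p_{3k}})_\ast:\pi_m(J_{2,k}^r)\to\pi_m(F_{\check p_{3k}})$ is an isomorphism for $m\leq 4k-3$ and is surjective with kernel the cyclic subgroup $\langle[\beta_k,\gamma^r_3]\rangle$ for $m=4k-2$, while on $\pi_{m-1}$ it is an isomorphism throughout.

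Assembling: the $\pi_{m-1}$-square gives $Ker(\check\partial^{3k,r}_{m-1\ast})=\Sigma(Ker(\gamma^r_{3\ast}))$ immediately; surjectivity of $\Sigma$ together with commutativity yields $Im(\check\partial^{3k,r}_{m\ast})=(j_{\check p_{3k}})_\ast(\gamma^r_{3\ast}(\pi_m(S^{3k-1})))$, so quotienting $\pi_m(F_{\check p_{3k}})$ by this image produces
\[Coker(\check\partial^{3k,r}_{m\ast})=\frac{\pi_m(J_{2,k}^r)}{\gamma^r_{3\ast}(\pi_m(S^{3k-1}))+L_m^k},\]
the extra term $L_m^k$ appearing only at $m=4k-2$ to absorb the relation $[\beta_k,\gamma^r_3]$ killed inside $F_{\check p_{3k}}$. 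Finally, the map $\check\tau_{3k\ast}$ in the short exact sequence is rewritten as $I_{2\ast}^r$ via $\check\tau_{3k}j_{\check p_{3k}}\simeq I_2^r$, which produces the statement of the lemma.

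The only delicate point is the $m=4k-2$ case: one must verify that the subgroup of $\pi_{4k-2}(J_{2,k}^r)$ killed upon mapping to $\pi_{4k-2}(F_{\check p_{3k}})$ is exactly $\langle[\beta_k,\gamma^r_3]\rangle$ and not strictly larger. This is controlled by the standard cellular approximation estimate: cells of $F_{\check p_{3k}}$ in dimensions $\geq 4k$ cannot affect $\pi_{4k-2}$, so the only relation introduced comes from the single $(4k-1)$-cell recorded in (\ref{equ: Sk4k-1F_3k}), and the simple connectivity of $J_{2,k}^r$ rules out any $\pi_1$-twisted enlargement of the attaching-map subgroup.
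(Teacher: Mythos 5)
Your proposal is correct and follows essentially the same route as the paper: the long exact sequence of the fibration $\Omega S^{3k}\to F_{\check p_{3k}}\to J_{3,k}^r\to S^{3k}$, the commutative squares from Lemma \ref{partial calculate1} applied to $\gamma^r_3$ with the Freudenthal suspension isomorphisms, and the identification $\pi_m(F_{\check p_{3k}})\cong \pi_m(J_{2,k}^r)/L_m^k$ coming from the skeleton description (\ref{equ: Sk4k-1F_3k}). Your extra care at $m=4k-2$ (verifying via the exact sequence of the pair that attaching the single $(4k-1)$-cell kills exactly $\lr{[\beta_k,\gamma^r_3]}$) only makes explicit what the paper asserts directly.
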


Note that in the following text, for any $\alpha \in \Sigma_{\ast}(Ker(\gamma^r_{3\ast}))$,  we always denote the $\widearc{\alpha}\in \pi_{m}(J_{3,k}^r)$ a lift of $\alpha$ by $\check{p}_{3k}$, i.e.,  $\check{p}_{3k\ast}(\widearc{\alpha})=\alpha$.

\begin{proof}
	From (\ref{equ: Sk4k-1F_3k}), we have the isomorphism 	
	\begin{align*}
		\pi_{m}(J_{2,k}^r)/L_{m}^k\xrightarrow{j_{\check{p}_{3k}\ast}~\cong } \pi_{m}(F_{\check{p}_{3k}}) (m\leq 4k-2).
	\end{align*}
	Then this lemma is obtained by the following exact sequence with commutative triangle and squares for $m\leq 4k-2$
	\begin{align*} \small{\xymatrix{
				\pi_{m+1}( S^{3k})\ar[r]^-{ \check{\partial}^{3k,r}_{m\ast}}&\pi_{m}(F_{\check{p}_{3k}})\ar[r]^-{\check{\tau}_{3k\ast}}&\pi_{m}(J_{3,k}^r)\ar[r]^-{\check{p}_{3k\ast}}&\pi_{m}(S^{3k})\ar[r]^-{ \check{\partial}^{2k,r}_{m-1\ast}}& \pi_{m-1}(F_{\check{p}_{3k}})\\
				\pi_{m}(S^{3k\!-\!1})\ar[r]^-{\gamma^r_{3\ast}}\ar[u]_{ \cong\Sigma } &\pi_{m}(J_{2}(2^r\iota_k))\ar[ur]_-{I^r_{2\ast}}\ar@{->>}[u]_{ j_{\check{p}_{3k}\ast}}&&\pi_{m-1}(S^{3k-1})\ar[r]^-{\gamma^r_{3\ast}}\ar[u]_{\cong\Sigma } &	\pi_{m-1}(J_{2,k}^r)\ar[u]_{ \cong j_{\check{p}_{3k}\ast}} }}  
	\end{align*}
	where the  suspension homomorphisms $\Sigma$  in squares are  isomorphisms  by the  Freudenthal Suspension Theorem; $j_{\check{p}_{3k}\ast}$  are isomorphisms except the left one  for $m=4k-2$ which is just an epimorphism.	
\end{proof}

If we need to emphasize the dependency of the above spaces or maps on the variable $r$, we will replace the $F_{k+1}, i_k, p_{k+1}$, $\tau_{k+1}$, $\beta_{i} (i=k,2k,3k-1)$, $\bar{p}_{2k}$, $\bar\tau_{2k}$, $\bar\beta_{i} (i=k, 3k-1)$, $\check{p}_{3k}$, $\check\beta_{i} (i=k,2k,3k-1)$ by   $F^r_{k+1}$, $i^r_k$, $p^r_{k+1}$, $\tau^r_{k+1}$, $\beta^r_{i}$,   $\bar{p}_{2k}^r$, $\bar\tau^r_{2k}$, $\bar\beta^r_{i} $, $\check{p}_{3k}^r$,  $\check\beta^r_{i}$ respectively.

We have the following homotopy commutative diagrams of cofibrations (left) and fibrations (right)
\begin{align}
	\small{\xymatrix{
			S^k\ar[d]^{2^{M_{s\!-\!t}^0} \iota_k} \ar[r]^-{2^{s}\iota_k}&S^k \ar[d]^{2^{M_{t\!-\!s}^0} \iota_k}\ar[r] & P^{k+1}(2^{s}) \ar[d]_{\bar{\chi}^{s}_{t}}\ar[r]^{p^{s}_{k+1}}& S^{k+1}\ar[d]_{2^{M_{s\!-\!t}^0} \iota_{k+1}} \\
			S^k \ar[r]^-{2^{t}\iota_k}&S^k \ar[r]& P^{k+1}(2^{t})\ar[r]^-{p^{t}_{k+1}}& S^{k+1}
	} }
	, \small{\xymatrix{
			\Omega S^{k+1}\ar[d]^{\Omega(2^{M_{s\!-\!t}^0} \iota_{k+1})}  \ar[r]^-{\partial^{k+1,s}}&F^{s}_{k+1}\ar[d]^{\chi_{t}^{s}}\ar[r]^-{\tau_{k+1}^{s}} &P^{k+1}(2^{s}) \ar[d]^{\bar{\chi}_{t}^{s}}\\
			\Omega S^{k+1} \ar[r]^-{\partial^{k+1,t} }&F_{k+1}^{t} \ar[r]^{\tau^{t}_{k+1}}& P^{k+1}(2^{t})
	} }  \label{diam Moore s to t}
\end{align}
where  $M_a^b:=Max\{r,s\}$ be the maximal  number of integers $a$ and $b$.

Let $\bar g^{s}_{t}:=(\chi^{s}_{t})|_{J_2}$, there is a map $ g^{s}_{t}: F_{\bar{p}_{2k}^s}\rightarrow   F_{\bar{p}_{2k}^t}$ such that the following diagrams homotopy commutes
\begin{align}
	\footnotesize{\xymatrix{
			J_{2,k}^{\gamma,s}\ar[r]^-{I_2^{\gamma,s}}\ar[d]^{( g^s_t)|_{J_2}}&F_{\bar p_{2k}^s}\ar[d]^{ g^s_t}\ar[r]^-{\bar\tau_{2k}^s}&J_{2,k}^s\ar[d]^{\bar g^s_t}\ar[r]^-{I_{2}^s}& J_{3,k}^s\ar[d]^{(\chi^s_t) |_{J_3}}\ar[r]^-{\check I_{3}^{s}}&F^s_{k+1}\ar[d]^{\chi_{t}^{s}}\ar[r]^-{\tau_{k+1}^s} &P^{k+1}(2^s) \ar[d]^{\bar{\chi}_{t}^{s}}\\
			J_{2,k}^{\gamma,t}\ar[r]^-{I_2^{\gamma,t}} &F_{\bar p_{2k}^t}\ar[r]^-{\bar\tau_{2k}^t}&J_{2,k}^t \ar[r]^-{I_{2}^t}& J_{3,k}^t\ar[r]^-{\check I_{3}^{t}}&F_{k+1}^{t} \ar[r]^{\tau^{t}_{k+1}}& P^{k+1}(2^{t})
	} } \label{diam big}
\end{align}

From Lemma \ref{Lem: natural J(Mf,X)} and \ref{lem: Natural gamma_n}
\begin{align}
	&	\footnotesize{\xymatrix{
			S^{2k-1}\ar[d]^{2^{|s-t|}\iota_{2k\!-\!1}}\ar[r]^-{\gamma_2^s}&	S^k \ar[d]^{2^{M_{t\!-\!s}^0} \iota_k}\ar[r]^-{\beta_{k}^s}&J_{2,k}^s\ar[d]_{\bar g^s_t}\ar[r]^-{\bar{p}^s_{2k}} &  S^{2k} \ar[d]_{2^{|s-t|}\iota_{2k}}\\
			S^{2k-1} \ar[r]^-{\gamma_2^t} &	S^k \ar[r]^-{\beta_{k}^t}&J_{2,k}^t\ar[r]^-{\bar{p}^t_{2k}} &  S^{2k}
	}}
	, \footnotesize{\xymatrix{
			S^{3k-1}\ar[d]^{ 2^{M^{t\!-\!s}_{\!2(s\!-\!t)}}\!\iota_{3k\!-\!1}}\ar[r]^-{\gamma^s_3}&J_{2,k}^s\ar[d]^{\bar g^s_t }\ar[r]^-{I_{2}^s}& J_{3,k}^{r}\ar[d]_{(\chi^s_t) |_{J_3}}\ar[r]^-{\check p_{3k}^s}& S^{3k}\ar[d]\\
			S^{3k-1}\ar[r]^-{\gamma^t_3}&J_{2,k}^t \ar[r]^-{I_{2}^t}&  J_{3,k}^t\ar[r]^-{\check p_{3k}^t} & S^{3k}
	} } \label{diam 1: J2(2slk) to J2(2tlk)}\\
	& \text{So we have~~~~~~}	\bar g_{t}^s\beta_{k}^s\simeq 2^{M_{t-s}^0} \beta_{k}^t;~~~ 2^{M^{t\!-\!s}_{\!2(s\!-\!t)}} \gamma^t_3\simeq\bar g_{t}^s \gamma^s_3. \label{Equ:gamma30,3r}\\
	& \footnotesize{\xymatrix{
			\Omega S^{2k}\ar[d]^{\Omega(2^{s}\!\iota_{2k})}\ar[r]^-{\bar\partial^{2k,s}}&F_{\bar p^s_{2k}}\ar[d]^{g_{t}^s}\ar[r]^-{\bar\tau_{2k}^s} &J_{2,k}^s \ar[d]^{\bar g_{t}^s}\\
			\Omega S^{2k} \ar[r]^-{\bar\partial^{2k,t}}&F_{\bar p^t_{2k}} \ar[r]^-{\bar\tau_{2k}^t}& J_{2,k}^t
	} };~~~	\footnotesize{\xymatrix{
			S^k \ar[d]_{2^{M_{t\!-\!s}^{0}}\!\iota_{k}}\ar[r]^-{\bar\beta_{k}^s}&J_{2,k}^{\gamma,s}\!=\!S^k\vee S^{3k-1}\ar[d]_{(g^s_t) |_{J_2}}\ar[r]^-{q^{3k-1}_{2}} &   S^{3k-1} \ar[d]_{2^{M_{2(t\!-\!s)}^{s\!-\!t}}\!\iota_{3k\!-\!1}}\\
			S^k \ar[r]^-{\bar\beta_{k}^t}&J_{2,k}^{\gamma,t}\!=\!S^k\vee S^{3k-1}\ar[r]^-{q^{3k-1}_{2}} &  S^{3k-1}  
	}}   \label{diam 2: J2(2slk) to J2(2tlk)}\\
	&\bar g_{t}^s\circ \bar\tau_{2k}^s= \bar\tau_{2k}^tg_{t}^s;  \label{equ1 g^s_t}  \\
	&	(g^s_t) |_{J_2}=2^{M_{t\!-\!s}^{0}}j_1^kq_1^k+ 2^{M_{2(t\!-\!s)}^{s\!-\!t}}j_2^{3k-1}q_{2}^{3k-1}+j_1^k\theta q_{2}^{3k-1},  ~~\theta\in \pi_{3k-1}(S^k).  \label{equ2  g^s_t}
\end{align}

\begin{lemma}\label{lem:gamma3 k odd}
	For $k$ is odd, $r\geq 1$, then 
	\begin{itemize}
		\item [(i)] $J_{2,k}^r\simeq S^k\vee S^{2k}$;
		\item [(ii)] $\gamma_3^r=a_{r}[j_1^k, j_{2}^{2k}]+a_{r}j_1^k[\iota_{k},\mu_r]\in \pi_{3k-1}(S^k\vee S^{2k})$ for some $a_r\in  \Z$, $\mu_r\in \pi_{2k}(S^k)$;
		\item  [(iii)] Moreover, if $k=3,7$, then $\gamma_3^r=\pm 2^r[j_1^k, j_{2}^{2k}]$.
	\end{itemize}
\end{lemma}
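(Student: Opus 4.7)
The plan is to address the three parts in order, with (iii) carrying the most computational weight.

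Part (i) is immediate from graded anti-commutativity of the Whitehead product on odd spheres: $[\iota_k,\iota_k]=-[\iota_k,\iota_k]$, so $2[\iota_k,\iota_k]=0$, whence $\gamma_2^r=2^r[\iota_k,\iota_k]=0$ for $r\ge 1$. Therefore $J_{2,k}^r=S^k\cup_{\gamma_2^r}e^{2k}$ splits canonically as $S^k\vee S^{2k}$, with $\beta_k^r$ identified as $j_1^k$.

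For (ii), since $[\iota_k,2^r\iota_k]=0$ by (i), Lemma \ref{lem: J3 alpha} applies and gives
\[
\gamma_3^r=a_r[j_1^k,j_2^{2k}]+j_1^k\lambda_r
\]
for some $a_r\in\mathbb{Z}$ and $\lambda_r\in\pi_{3k-1}(S^k)$. The task is to upgrade $\lambda_r$ to the coupled form $a_r[\iota_k,\mu_r]$. I would invoke Corollary \ref{Cor:proj gamma3}, which produces $\mu_r\in\pi_{2k}(S^k)$ (the splitting datum of the retraction onto the first wedge factor) satisfying $q_1^k\gamma_3^r=[q_1^k,\mu_r q_2^{2k}]H_2(\gamma_3^r)$; a direct projection gives $q_1^k\gamma_3^r=\lambda_r$. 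On the right side, Lemma \ref{lem:H_2-SmSn} evaluates $H_2([j_1^k,j_2^{2k}])=\Sigma j_1^{k-1}\wedge j_2^{2k-1}-\Sigma j_2^{2k-1}\wedge j_1^{k-1}$, whereas $H_2(j_1^k\lambda_r)$ lies in the Hilton--Milnor summand indexed by $(j_1^k,j_1^k)$, on which the bracket $[q_1^k,\mu_r q_2^{2k}]$ restricts to $[\iota_k,0]=0$ (as $q_2^{2k}$ annihilates $j_1^k$) and so contributes nothing. The surviving $a_r$-term evaluates cell by cell to $\pm a_r[\iota_k,\mu_r]$ (the $j_2^{2k}\wedge j_1^k$ summand also gives $[0,0]=0$), yielding (ii) after absorbing a sign into $\mu_r$.

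For (iii), when $k=3$ or $7$ the sphere $S^k$ is an H-space, so by Remark \ref{Remark of Def HOWP}(4) every Whitehead product $[\iota_k,-]$ vanishes; hence $\lambda_r=0$ and $\gamma_3^r=a_r[j_1^k,j_2^{2k}]$. Applying the naturality equation \eqref{Equ:gamma30,3r} with $(s,t)=(1,r)$ gives $2^{r-1}\gamma_3^r=\bar g_r^1\gamma_3^1$; using \eqref{diam 1: J2(2slk) to J2(2tlk)} one has $\bar g_r^1 j_1^k=2^{r-1}j_1^k$ and $\bar g_r^1 j_2^{2k}=2^{r-1}j_2^{2k}+j_1^k\theta'_r$, and the H-space hypothesis kills the cross term $j_1^k[\iota_k,\theta'_r]$, so $\bar g_r^1[j_1^k,j_2^{2k}]=2^{2(r-1)}[j_1^k,j_2^{2k}]$ and the recursion $a_r=2^{r-1}a_1$ follows.

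The main obstacle is the base case $a_1=\pm 2$. I would determine this by comparison with the absolute James construction: the morphism of pairs from $(S^k\xrightarrow{2\iota_k}S^k)$ to $(S^k\xrightarrow{\iota_k}S^k)$ with $\mu_X=2\iota_k$, $\mu_Y=\iota_k$ induces, by Lemma \ref{lem: Natural gamma_n}, the identity $4\gamma_3^{\mathrm{abs}}=J_2(\widehat{\mu},\mu_X)\gamma_3^1$ in $\pi_{3k-1}(J_2(S^k))$. A computation analogous to the one above (again using $[\iota_k,-]=0$) shows $J_2(\widehat{\mu},\mu_X)[j_1^k,j_2^{2k}]=2[j_1^k,j_2^{2k}]$, so $a_1=2a_1^{\mathrm{abs}}$. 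Finally, $a_1^{\mathrm{abs}}=\pm 1$ is pinned down by the James splitting $\Omega S^{k+1}\simeq S^k\times\Omega S^{2k+1}$ (valid for $k=3,7$), giving $H^*(\Omega S^{k+1};\mathbb{Z})\cong\Lambda[y]\otimes\Gamma[z]$ with nontrivial cup product $y\cdot z\in H^{3k}$; since $J_3(S^k)$ realizes the $3k$-skeleton of $\Omega S^{k+1}$, this forces $\gamma_3^{\mathrm{abs}}$ to carry Hilton--Hopf coefficient $\pm 1$ at $[j_1^k,j_2^{2k}]$. Hence $a_1=\pm 2$ and $a_r=\pm 2^r$, completing (iii).
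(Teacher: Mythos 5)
Your parts (i) and (ii) follow the paper's own argument essentially verbatim: anticommutativity of $[\iota_k,\iota_k]$ for odd $k$ gives (i), and the combination of Lemma \ref{lem: J3 alpha}, Corollary \ref{Cor:proj gamma3} and Lemma \ref{lem:H_2-SmSn} gives (ii) exactly as in the paper. Part (iii) is correct but takes a genuinely different route in three places. First, you kill $\lambda_r=a_r[\iota_k,\mu_r]$ by the H-space property of $S^3,S^7$ (all Whitehead products out of $S^k$ vanish), whereas the paper uses $\Sigma\gamma_3=0$ together with the injectivity of $\Sigma\colon\pi_{3k-1}(S^k)\to\pi_{3k}(S^{k+1})$ read off from Toda's tables; your route also lets you discard the cross terms $[j_1^k,j_1^k\theta]=j_1^k[\iota_k,\theta]$ in the comparison maps outright, so you bypass the paper's argument that the correction term $[j^k_1 q^k_1+2^rj^{2k}_2q^{2k}_2,j^k_1\theta q^{2k}_2]H_2(\gamma_3^r)$ in (\ref{equ []H(gamma3r)}) is simultaneously torsion and torsion-free. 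Second, you split the comparison into $a_r=2^{r-1}a_1$ (via $\bar g^1_r$) and $a_1=2a_0$ (via the morphism of pairs to $J_2(S^k)$), while the paper obtains $a_r=\pm 2^ra_0$ in one step via $\bar g^r_0$ — a cosmetic difference. Third, and most substantively, you pin down $a_0=\pm1$ from the cup product $y\cdot z$ in $H^{\ast}(\Omega S^{k+1};\Z)\cong\Lambda[y]\otimes\Gamma[z]$ restricted to the $3k$-skeleton $J_3(S^k)$, invoking the classical relation between cup products and the Whitehead-product coefficient of the attaching map of the top cell; the paper instead identifies $\pi_{3k-1}(F^0_{k+1})\cong\pi_{3k}(S^{k+1})$ and compares with the Hopf-bundle splitting $\pi_{3k}(S^{k+1})\cong\pi_{3k-1}(S^k)\oplus\pi_{3k-1}(S^{2k})$, which forces the $\Z/\lr{a_0}$ factor to vanish. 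Both determinations of $a_0$ rest on special features of $k=3,7$, so neither is more general; yours trades homotopy-group bookkeeping for a cohomological argument and is arguably cleaner, though it imports the cup-product/attaching-map relation as an external classical input.
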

\begin{proof}
	For $k$ is odd, Theorem 8.8 of \cite{GTM61} implies $2[\iota_{k},\iota_{k}]=0$. $J_{2,k}^r\simeq S^k\vee S^{2k}$ $(r\geq 1)$.
	
	For(ii),
	there is an isomorphism 
	\begin{align}
		\pi_{3k\!-\!1}( S^{3k-1})\!\oplus\!	\pi_{3k\!-\!1}(S^k)\!\oplus\! \pi_{3k\!-\!1}( S^{2k})  \cong\pi_{3k-1}(S^k\vee S^{2k}) \nonumber 
	\end{align}
	By the well known definition of the above isomorphism and from Lemma \ref{lem: J3 alpha}, we  assume 
	\begin{align*}
		\gamma_3^r= a_r[j_1^k, j_2^{2k}]+j_1^{k}\lambda_r, ~~ a_r\in \Z, \lambda_r \in\pi_{3k-1}(S^k).
	\end{align*}
 Moreover from Corollary \ref{Cor:proj gamma3}, there is an element $\mu_r\in \pi_{2k}(S^k)$ such that 
	\begin{align*}
		&\lambda_r =q_{1}^{k}\gamma^r_3=[q_1^k, \mu_r q_{2}^{2k}]H_2(\gamma^r_3)\\
		&=[\iota_k, \mu_r](\Sigma q_{1}^{k-1}\wedge q_{2}^{2k-1})(H_2(a_r[j_1^k, j_2^{2k}])+H_2(j_1^{k}\lambda_r))  \\
		&=a_r[\iota_k, \mu_r](\Sigma q_{1}^{k-1}\wedge q_{2}^{2k-1})(\Sigma j_{1}^{k-1}\wedge j_{2}^{2k-1}-\Sigma j_{2}^{2k-1}\wedge j_{1}^{k-1})   ~~(\text{by Lemma \ref{lem:H_2-SmSn}} )\\
		&+[\iota_k, \mu_r](\Sigma q_{1}^{k-1}\wedge q_{2}^{2k-1})(\Sigma j_1^{k-1}\wedge j_1^{k-1})H_{2}(\lambda_r)\\
		&=a_r[\iota_k, \mu_r].
	\end{align*} 
	This completes the proof of (ii) of this lemma.

 For (iii), it is well known that $[\iota_k,\iota_k]=0 ~(\text{odd}~k\geq 2)$ if and only if $k=3,7$. Thus $J_{2,k}^0\simeq S^k\vee S^{2k}$ for $k=3,7$. From Proposition 5.9 and Theorem 7.7 of \cite{Toda}, 
	$\Sigma: \pi_{3k-1}(S^{k})\rightarrow \pi_{3k}(S^{k+1})$ are injections, for any $r\geq 0$,
	$Sk_{4k-1}(F_{k+1})\simeq J_{3,k}^r\simeq (S^k\vee S^{2k})\cup_{\gamma_3^r} CS^{3k-1}$ with $\gamma_3^r=a_{r}[j_1^k, j_{2}^{2k}]\in \pi_{3k-1}(S^k\vee S^{2k})$ for some $a_r\in  \Z$ by Remark \ref{remark gamma3}.
	So by the exact sequence (\ref{exact:hgps piPk+1(2r)}) for $r=0$, we get
	\begin{align}
		\pi_{3k}(S^{k+1})\cong \pi_{3k-1}(F_{k+1}^0)\cong \frac{\pi_{3k-1}(S^k)\oplus \pi_{3k-1}( S^{2k})\oplus \Z\{[j_1^k, j_{2}^{2k}]\}}{\lr{(0,~0,~a_0[j_1^k, j_{2}^{2k}])}}. \label{equ1 pi3k(Sk+1)}
	\end{align}
	By the Hopf bundles $S^{k}\rightarrow S^{2k+1}\rightarrow S^{k+1}$ $(k=3,7)$, there is an isomorphism 
	\begin{align}
		\pi_{3k}(S^{k+1})\cong \pi_{3k-1}(S^{k})\oplus  \pi_{3k}(S^{2k+1})\cong \pi_{3k-1}(S^{k})\oplus  \pi_{3k-1}(S^{2k}). \label{equ2 pi3k(Sk+1)}
	\end{align}
	Comparing (\ref{equ1 pi3k(Sk+1)}) and (\ref{equ2 pi3k(Sk+1)}), we get $a_0=\pm 1$, i.e., $\gamma_3^0=\pm[j_1^k, j_{2}^{2k}]$
	
	We assume $\bar g_{0}^r= j^k_1 q^k_1 + 2^r j^{2k}_2 q^{2k}_2+j^k_1 \theta q^{2k}_2\in [ S^k\vee S^{2k},  S^k\vee S^{2k}]$ for some $\theta\in \pi_{2k}(S^k)$.
	By noting that  $q^{k}_1\gamma_{3}^r\simeq 0$, $q^{2k}_2\gamma_{3}^r\simeq 0$, from (\ref{Equ:gamma30,3r}),
	\begin{align}
		2^{2r} \gamma^0_3= &\bar g_{0}^r\circ\gamma^r_3=( j^k_1 q^k_1 + 2^r j^{2k}_2 q^{2k}_2+j^k_1 \lambda q^{2k}_2)\gamma^r_3\nonumber\\
		&=(j^k_1 q^k_1 +2^r j^{2k}_2 q^{2k}_2)\gamma^r_3+j^k_1 \theta q^{2k}_2 \gamma^r_3 \pm
		[ j^k_1 q^k_1+ 2^r j^{2k}_2 q^{2k}_2,j^k_1\theta q^{2k}_2]H_2(\gamma^r_3) \nonumber\\
		&	( j^k_1 q^k_1 +2^r j^{2k}_2 q^{2k}_2)\gamma^r_3=\pm a_r [j^k_1 q^k_1 ,  2^r j^{2k}_2 q^{2k}_2]H_2([j^k_1, j_2^{2k}])\nonumber\\
		&=\pm 2^{r}a_r [ j^k_1,  j^{2k}_2 ](\Sigma q^{k-1}_1\wedge  q^{2k-1}_2)(\Sigma j_1^{k-1}\wedge j_2^{2k-1}- \Sigma j_2^{2k-1}\wedge j_1^{k-1}) \nonumber\\
		&=\pm 2^{r}a_r [ j^k_1,  j^{2k}_2 ].\nonumber\\
		\Rightarrow~~~ &  (2^{2r}\pm 2^ra_r) [ j^k_1,  j^{2k}_2 ]=\pm[j^k_1 q^k_1 + 2^r j^{2k}_2 q^{2k}_2,j^k_1 \theta q^{2k}_2]H_2(\gamma^r_3). \label{equ []H(gamma3r)}
	\end{align}
	Since $\theta$ is a torsion element, so is the right term of (\ref{equ []H(gamma3r)}). However the left term of (\ref{equ []H(gamma3r)}) is torsion free or zero. We get
	\begin{align}
		(2^{2r}\pm 2^ra_r) [ j^k_1,  j^{2k}_2 ]=\pm[j^k_1 q^k_1 + 2^r j^{2k}_2 q^{2k}_2,j^k_1 \theta q^{2k}_2]H_2(\gamma^r_3)=0 \nonumber
	\end{align}
	That is $a_r=\pm 2^r$, we get $\gamma^r_3=\pm 2^r [ j^k_1,  j^{2k}_2 ]$.
\end{proof}

At the end of this section, we generalize the map $\bar H_2: \Omega P^{n+1}(2)\rightarrow \Omega P^{2n+1}(2)$ in  \cite[section 4.4]{WJ Proj plane} to the mod $2^r$ Moore space as follows

$\bar H_2: \Omega P^{n+1}(2^r)\simeq J(P^{n}(2^r))\xrightarrow{H_2}  J(P^{n}(2^r)\wedge P^{n}(2^r))\xrightarrow{J(q)}  J(P^{2n}(2^r)) \simeq \Omega  P^{2n+1}(2^r)$.

Thus we have the following homotopy commutative diagram of fibration sequence 
\begin{align}
	\small{\xymatrix{
			\Omega^2S^{n+1}\ar[d]^{\Omega H_2}\ar[r]^{\Omega\partial^{n+1,r}}&	\Omega F_{n+1}\ar[d]^{\bar H_2}\ar[r]^{\Omega\tau_{n+1}}& 	 \Omega P^{n+1}(2^r) \ar[d]^{\bar H_2}\ar[r]^-{\Omega p_{n+1}} &\Omega S^{n+1} \ar[d]_{H_2}\\
			\Omega^2S^{n+1}\ar[r]^{\Omega\partial^{2n+1,r}}&	\Omega F_{2n+1}\ar[r]^{\Omega\tau_{2n+1}}& \Omega  P^{2n+1}(2^r)\ar[r]^-{\Omega p_{2n+1}}& \Omega S^{2n+1}
	} }. \label{diam: H2 Moore}
\end{align}
where the right vertical map $H_2$ is the second James Hopf invariant defined in (\ref{def:James H2}); the homotopy commutativity of the right square is obtained by the naturality of James construction and the left vertical map $\bar H_2$ is induced by the right homotopy commutative square.

\section{Homotopy groups of $4,5$ and $6$ dimensional mod $2^r$ Moore spaces}
\label{sec:htyp Moore 456}

In this section, we compute homotopy groups of mod $2^r$ Moore spaces in Theorem \ref{Thm:Main thm}.  All the spaces and homotopy groups in the following are under $2$-localization.

Firstly, we give some notations. For a homomorphism $f:G\rightarrow H$ of abelian groups, where $G_1$ is subgroups of $G$, then we denote $f|_{res}:G_1\rightarrow H$ as the restriction of $f$ on $G_1$.
 For a monomorphism  $f:G\hookrightarrow H$ (resp. epimorphism $g: H\twoheadrightarrow K$),  if there is a map $f':H\rightarrow G$ (resp. $g':K\rightarrow H$) such that $f'f=id_{G}$ (resp. $gg'=id_K$), then $f$ (resp. $g$) is called split into (resp. split onto). In this case, $f(G)$ (resp. $g'(K)$) is a direct summand of $H$ and we say $G$ (resp. $K$) splits out of $H$.    $o(\alpha)$ denotes the order of $\alpha$, where $\alpha$ is an element of an abelian group and the sequence $0\rightarrow A\rightarrow B \rightarrow C\rightarrow 0$ always means  a short exact sequence. All the following ladders, squares and triangles of abelian groups (CW-complexes) are  commutative (homotopy commutative). In the following,  $i$ and $p$ denote the $\Sigma^{\infty} i_k\in \pi^s_{k}(P^{k+1}(2^r))$ and $\Sigma^{\infty} p_{k+1}\in \Sigma^{\infty}[P^{k+1}(2^r), S^{k+1}]$.

\subsection{Homotopy groups of $P^{4}(2^r)$}
\label{subsec: P4(2^r)}

$S^3\xrightarrow{2^{r}\iota_3} S^{3}\xrightarrow{i_{3}} P^{4}(2^r)\xrightarrow{p_{4}}  S^{4};
~~\Omega S^4 \xrightarrow{\partial^{4,r}}F_{4}\xrightarrow{\tau_4} P^{4}(2^r)\xrightarrow{p_{4}}  S^{4}$

By (iii) of Lemma \ref{lem:gamma3 k odd}

$Sk_{8}(F_{4})\simeq J_{2,3}^{r}\simeq S^3\cup_{\gamma^r_2}CS^5\simeq S^3\vee S^6 ;~~Sk_{11}(F_{5})\simeq J_{3,3}^{r}\simeq J_{2,3}^{r}\cup_{\gamma^r_3}CS^{8}$
where $\gamma^r_3=\pm 2^r[j_1^3, j_{2}^{6}]$.

\begin{lemma}\label{lem pi(J3(2rl3))}
	For $r\geq 1$, $\pi_{i}(F_4)\cong \pi_{i}(J_{3,3}^r)$ $(i\leq 11)$ 	and \begin{align*}
		(1)~&\pi_{8}(J_{3,3}^{r})=\Z_2\{\check{\beta}_3\nu'\eta_6^2\}\oplus\Z_2\{\check{\beta}_6\eta_6^2\}\oplus \Z_{2^r}\{[\check{\beta}_3,\check{\beta}_6]\} .\\
		(2)~&\pi_{9}(J_{3,3}^{r})=\Z_{8}\{\check\beta_{6}\nu_6\}\oplus\Z_{2}\{[\check{\beta}_3,\check{\beta}_6]\eta_8\}.\\
		(3)~&\pi_{10}(J_{3,3}^{1})=	\Z_{2}\{[\check{\beta_3}, [\check{\beta}_3,\check{\beta}_6]]\}\oplus \Z_{4}\{\widearc{\eta_9}\} ~\text{with}~ [\check{\beta}_3,\check{\beta}_6]\eta_8=2\widearc{\eta_9};\\
		&\pi_{10}(J_{3,3}^{r})=\Z_{2}\{[\check{\beta}_3,\check{\beta}_6]\eta_8^2\}\oplus	\Z_{2^r}\{[\check{\beta_3}, [\check{\beta}_3,\check{\beta}_6]]\}\oplus \Z_{2}\{\widearc{\eta_9}\} (r\geq 2).
	\end{align*}
\end{lemma}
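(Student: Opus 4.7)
The plan is to apply Lemma \ref{lem: exact seq pi_m(J3(2rl4))} with $k=3$, reducing $\pi_m(J_{3,3}^r)$ for $m=8,9,10$ to the short exact sequence
\[
0 \to \pi_m(J_{2,3}^r)\big/\bigl(\gamma_{3\ast}^r(\pi_m(S^8)) + L_m^3\bigr) \xrightarrow{I_{2\ast}^r} \pi_m(J_{3,3}^r) \xrightarrow{\check p_{9\ast}} \Sigma(\mathrm{Ker}\,\gamma_{3\ast}^r) \to 0.
\]
The isomorphism $\pi_i(F_4) \cong \pi_i(J_{3,3}^r)$ for $i\le 11$ follows because the next subquotient $J_{4,3}^r/J_{3,3}^r \simeq S^3\wedge (S^3)^{\wedge 3} = S^{12}$ makes $(F_4, J_{3,3}^r)$ at least $11$-connected. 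Lemma \ref{lem:gamma3 k odd}(iii) supplies $J_{2,3}^r \simeq S^3\vee S^6$ and $\gamma_3^r = \pm 2^r[j_1^3, j_2^6]$.

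Next, by Hilton's theorem together with the $2$-local facts $\pi_9(S^3)=0$ and $\pi_{10}(S^6)=\pi_4^s=0$, I would obtain
\begin{align*}
\pi_8(S^3\vee S^6) &\cong \Z_2\{j_1^3\nu'\eta_6^2\}\oplus \Z_2\{j_2^6\eta_6^2\}\oplus \Z\{[j_1^3, j_2^6]\},\\
\pi_9(S^3\vee S^6) &\cong \Z_8\{j_2^6\nu_6\}\oplus \Z_2\{[j_1^3, j_2^6]\eta_8\},\\
\pi_{10}(S^3\vee S^6) &\cong \Z_2\{[j_1^3, j_2^6]\eta_8^2\}\oplus \Z\{[j_1^3,[j_1^3, j_2^6]]\}.
\end{align*}
Since $\gamma_3^r = \pm 2^r[j_1^3, j_2^6]$, the image $\gamma_{3\ast}^r(\pi_m(S^8))$ equals $2^r\Z$ for $m=8$ and vanishes for $m=9,10$ because $\eta_8$ and $\eta_8^2$ are $2$-torsion. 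By bilinearity, $L_{10}^3 = \langle[\beta_3,\gamma_3^r]\rangle = 2^r\Z\{[j_1^3,[j_1^3, j_2^6]]\}$ while $L_8^3=L_9^3=0$. Finally, $\mathrm{Ker}\,\gamma_{3\ast}^r$ is trivial on $\pi_7(S^8)$ and $\pi_8(S^8)$ and equals $\Z_2\{\eta_8\}$ on $\pi_9(S^8)$. Feeding these into the short exact sequence immediately yields (1) and (2), since $\Sigma(\mathrm{Ker})$ contributes nothing in dimensions $8$ and $9$.

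For (3), the sequence reads
\[
0 \to \Z_2\{[\check\beta_3,\check\beta_6]\eta_8^2\}\oplus\Z_{2^r}\{[\check\beta_3,[\check\beta_3,\check\beta_6]]\} \to \pi_{10}(J_{3,3}^r) \to \Z_2\{\eta_9\}\to 0,
\]
with $\check p_{9\ast}(\widearc{\eta_9})=\eta_9$. By Lemma \ref{lem split}, $-\widearc{\eta_9}$ lies in the Toda bracket $\{I_2^r,\gamma_3^r,\eta_8\}$, which is defined because $\gamma_3^r\circ\eta_8=\pm 2^r[\beta_3,\beta_6]\eta_8=0$ for $r\ge 1$. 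The main obstacle is then computing $2\widearc{\eta_9}$: after absorbing the factor $2^r$ of $\gamma_3^r=\pm 2^r[\beta_3,\beta_6]$ into the middle entry, the doubled bracket $\{I_2^r, 2^r[\beta_3,\beta_6], 2\eta_8\}$ is put into the form covered by Lemma \ref{lem:Toda bracket}. For $r\ge 2$ that lemma places $0$ in the bracket, whence the extension splits and $\widearc{\eta_9}$ has order $2$; for $r=1$ the same lemma contributes the non-trivial $\eta$-term which, under the identification of generators, equals $[\check\beta_3,\check\beta_6]\eta_8^2$, forcing $2\widearc{\eta_9}=[\check\beta_3,\check\beta_6]\eta_8^2$ and hence $\widearc{\eta_9}$ of order $4$.

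The hard step is the last identification for $r=1$. The order count alone ($|\pi_{10}(J_{3,3}^1)|=8$ versus $2^{r+2}$ for $r\ge 2$) already forces the extension to be non-split at $r=1$, but pinning down the explicit relation $2\widearc{\eta_9}=[\check\beta_3,\check\beta_6]\eta_8^2$ requires tracing the Toda bracket carefully through the naturality ladder \eqref{diam big} relating $r=1$ and $r=2$, using that the comparison map $\bar g_2^1$ restricts on $J_{2,3}^r$ to multiplication by $2$ on the $S^6$-summand; this produces the extra $\eta$-contribution exactly when $r=1$, matching the statement.
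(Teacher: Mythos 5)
Your proposal follows essentially the same route as the paper: Lemma \ref{lem: exact seq pi_m(J3(2rl4))} with $k=3$, the input $\gamma_3^r=\pm2^r[j_1^3,j_2^6]$ from Lemma \ref{lem:gamma3 k odd}(iii), the Hilton computation of $\pi_m(S^3\vee S^6)$ for $m=8,9,10$, and for part (3) a lift $\mathbf{x}$ of $\eta_9$ produced by Lemma \ref{lem split} whose double is evaluated through $2\mathbf{x}\in I_2^r\{\gamma_3^r,\eta_8,2\iota_9\}\ni \pm2^{r-1}[\check\beta_3,\check\beta_6]\eta_8^2$ modulo $\langle 2[\check\beta_3,[\check\beta_3,\check\beta_6]]\rangle$. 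This is exactly the paper's argument.

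Two blemishes are worth flagging. First, your parenthetical claim that the order count alone already forces the extension to be non-split at $r=1$ is false: the extension $0\to\Z_2\oplus\Z_2\to G\to\Z_2\to 0$ has $|G|=8$ whether or not it splits, so cardinality cannot detect the $\Z_4$; only the bracket relation $2\mathbf{x}=[\check\beta_3,\check\beta_6]\eta_8^2$ does. Relatedly, the closing appeal to the naturality ladder (\ref{diam big}) is unnecessary: for $r=1$ the indeterminacy $\langle 2[\check\beta_3,[\check\beta_3,\check\beta_6]]\rangle$ vanishes in $\pi_{10}(J_{3,3}^1)$ (that class already has order $2$ there), so the direct Toda bracket computation pins down $2\mathbf{x}$ on the nose, as in the paper. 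Second, for $r\geq 2$ the bracket only shows $2\mathbf{x}\in\langle 2[\check\beta_3,[\check\beta_3,\check\beta_6]]\rangle$, which is a nontrivial subgroup; before concluding the sequence splits you must replace $\mathbf{x}$ by $\mathbf{x}-I_2^r(t[j_1^3,[j_1^3,j_2^6]])$ to obtain a lift of genuine order $2$ --- a one-line adjustment that the paper carries out explicitly and that your write-up glosses over.
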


\begin{proof}
	From Lemma \ref{lem: exact seq pi_m(J3(2rl4))} for $k=3,m=8,9$, we get the following isomorphism
	\begin{align*}
		\pi_{m}(S^3\vee S^6)/\gamma^r_{3\ast}(\pi_{m}(S^{3k-1}))\xrightarrow[\cong ]{I_{2\ast}^r} \pi_{m}(J_{3,3}^{r}) ~(m=8,9)
	\end{align*}
	which implies (1) and (2) of Lemma \ref{lem pi(J3(2rl3))}. 
	
	For (3) of Lemma \ref{lem pi(J3(2rl3))}, there is an exact sequence 
	\begin{align*}
		&0\rightarrow Coker(\check{\partial}^{9,r}_{10\ast})\xrightarrow{I_{2\ast}^r} \pi_{10}(J_{3,3}^r) \xrightarrow{\check p_{9\ast}} \pi_{10}(S^9)=\Z_2\{\eta_9\}\rightarrow 0,\\
	& Coker(\check{\partial}^{9,r}_{10\ast})\!=\!\pi_{10}(S^3\!\vee\! S^6)/(\gamma^r_{3}\fhe\pi_{10}(S^8)\!+\!\lr{[j_1^3,\! \gamma^r_3]})\!=\! \Z_{2}\{[j_1^3,\! j_2^6]\eta_8^2\}\!\oplus\! \Z_{2^r}\{[j_1^3,\![j_1^3,\! j_2^6]]\}.
	\end{align*}
	For $r\geq 1$, by Lemma \ref{lem split}, there is an $-\mathbf{x}\in \{I_2^r,\gamma^r_3,\eta_8\}\subset \pi_{10}(J_{3,3}^{r})$, such that $\check p_{9\ast}(\mathbf{x})=\eta_9$. By Proposition 1.2 (IV) and Proposition 1.4 of \cite{Toda},  
	\begin{align*}
		&2\mathbf{x}\in -\{I_2^r,\gamma^r_3,\eta_8\}(2\iota_{10})=I_2^r\{\gamma^r_3,\eta_8,2\iota_9\}.\\
	\Rightarrow ~	&\{\gamma^r_3,\eta_8,2\iota_9\}\ni  \pm 2^{r-1}[j_1^3,j_2^6]\eta_{8}^2 ~\text{mod}~\lr{2[j_1^3,[j_1^3,j_2^6]]}
		~(\text{Lemma \ref{lemma for [a,b,c]}}).
	\end{align*}
	Thus for $r=1$, $2\mathbf{x}=I_2^1[j_1^3,j_2^6]\eta_8=[\check{\beta}_3,\check{\beta}_6]\eta_8$, which implies that $o(\mathbf{x})=4$ and $\pi_{10}(J_{3,3}^{1})= \Z_{2}\{[\check{\beta_3}, [\check{\beta}_3,\check{\beta}_6]]\}\oplus \Z_{4}\{\widearc{\eta_9}\}$ with $[\check{\beta}_3,\check{\beta}_6]\eta_8=2\widearc{\eta_9}$.
	
	For $r\geq 2$, $2\mathbf{x}=I_{2}^r (2t[j_1^3,[j_1^3,j_2^6]]), t\in \Z$. Take 
	$\mathbf{x}'=\mathbf{x}-I_{2}^r (t[j_1^3,[j_1^3,j_2^6]])$. We get
	$2\mathbf{x}'=0$ and $ \check p_{9\ast}(\mathbf{x}')=\eta_9$. So $\pi_{10}(J_{3,3}^r)\stackrel{ \check p_{9\ast}}\twoheadrightarrow \pi_{10}(S^9)$ is split onto. which implies the (3) of Lemma \ref{lem pi(J3(2rl3))} for $r\geq 2$.
\end{proof}

$\bullet~~\pi_{8}(P^{4}(2^r))$. 

Consider diagram (\ref{exact:hgps piPk+1(2r)}) for $k=3, m=8$, we get
\newline
$\xymatrix{
	\pi_{9}(S^{4})\ar[r]^-{ \partial^{4,r}_{8\ast}}&\pi_{8}(F_{4})\ar[r]^-{\tau_{4\ast}}&\pi_{8}(P^{4}(2^r))\ar[r]^-{p_{4\ast}}&\pi_{8}(S^{4})\ar[r]^-{ \partial^{4,r}_{7\ast}}& \pi_{7}(F_{5}) }$
\newline
we have $\partial^{4,r}_{7\ast}(\nu_4\eta_7)=(1-\vartheta_r)\beta_3\nu'\eta_6; \partial^{4,r}_{7\ast}(\Sigma\nu'\eta_7)=0$~(\cite[(3.12),(3.13)]{ZP23}).
\newline
$\Rightarrow~ Ker(\partial^{4,r}_{7\ast})=\Z_{2}\{\Sigma\nu'\eta_7\}\oplus \vartheta_r \Z_{2}\{\nu_4\eta_7\}.$
\begin{align} 
	& \partial^{4,r}_{8\ast}(\Sigma\nu'\eta_{7}^2)=	\partial^{4,r}_{7\ast}(\Sigma\nu'\eta_{7})\eta_7=0; \partial^{4,r}_{8\ast}(\nu_4\eta_{7}^2)=\partial^{4,r}_{7\ast}(\nu_4\eta_{7})\eta_7=(1-\vartheta_r)\beta_3\nu'\eta_6^2;\label{equ:partial8^{4,r}(nu4eta7^2)}\\
	&\Rightarrow~	 Coker(\partial^{4,r}_{8\ast})=\vartheta_r\Z_2\{\beta_3\nu'\eta_6^2\}\oplus\Z_2\{\beta_6\eta_6^2\}\oplus \Z_{2^r}\{[\beta_3,\beta_6]\}.\label{equ:Cokerpartial_8^{4,r}}
\end{align}
Thus we have the following short exact sequence 
\begin{align}
	\!\!\!\!\small{0\rightarrow\vartheta_r\Z_2\!\oplus\!\Z_2\!\oplus\! \Z_{2^r}\!\rightarrow \!\pi_{8}(P^4(2^r))\!\rightarrow\!\vartheta_r\Z_2\{\nu_4\eta_7\}\!\oplus\! \Z_2\{\Sigma\nu'\eta_7\}\!\!\rightarrow\! 0} \label{exact:pi8(P4(2r))}
\end{align}
By Lemma \ref{lem split}, there is an $-\mathbf{x}\in \{i_3,2^r\iota_3,\nu'\eta_6\}\subset \pi_{8}(P^4(2^r))$, such that $ p_{4\ast}(\mathbf{x})=\Sigma\nu'\eta_7$. By Lemma \ref{lemA: Todabraket } (i) and (\ref{equ:partial8^{4,r}(nu4eta7^2)})
\begin{align*}
	&2\mathbf{x}\in -\{i_3,2^r\iota_3,\nu'\eta_6\}(2\iota_8)=i_3\{2^r\iota_3,\nu'\eta_6,2\iota_7\}=2^{r-1}i_3\nu'\eta_6^2=0~\Rightarrow~o(\mathbf{x})=2 (r\geq 1).
\end{align*}
Hence (\ref{exact:pi8(P4(2r))}) splits for $r=1$ and $\pi_{8}(P^4(2)\cong \Z_2^{\oplus 3}$.

For $r\geq 2$, firstly we prove the following lemma
\begin{lemma}\label{lem:P7(2r-1),P4(2r)} For $r\geq 2$, there are $\widetilde{\eta_{7}}\in \pi_{7}(P^7(2^{r-1}))$ and $\tilde{\nu}_{4}\in [P^7(2^{r-1}),P^4(2^{r}) ]$ satisfying the following homotopy commutative diagram
	$$	\footnotesize{\xymatrix{
			S^8\ar[rd]_{\eta_7}\ar[r]^{\widetilde{\eta_{7}}}	&P^7(2^{r-1})\ar[d]^{p_{7}^{r-1} } \ar[r]^{\tilde{\nu}_4} & P^{4}(2^r) \ar[d]^{p_{4}^r}\\
			&	S^7\ar[r]^{\nu_4} & S^4 ~~.} }
	$$
	Moreover, the order of $\tilde{\nu}_4\widetilde{\eta_{7}}$ is $4$ and $2$ respectively for $r=2$ and $r\geq 3$.
\end{lemma}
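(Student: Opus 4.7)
The plan has three parts: construct $\widetilde{\eta_7}$, construct $\tilde\nu_4$, and compute the order of $\tilde\nu_4\widetilde{\eta_7}$. The first two are routine obstruction-theoretic constructions; the third is the substantial part.

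For $\widetilde{\eta_7}$, I apply Lemma \ref{lem split} to the cofibration $S^6\xrightarrow{2^{r-1}\iota_6}S^6\xrightarrow{i_6^{r-1}}P^7(2^{r-1})\xrightarrow{p_7^{r-1}}S^7$ with $\theta=\eta_6$. The hypothesis $2^{r-1}\iota_6\circ\eta_6=2^{r-1}\eta_6=0$ holds for $r\geq 2$ since $2\eta_6=0$. This yields a lift $\widetilde{\eta_7}\in-\{i_6^{r-1},2^{r-1}\iota_6,\eta_6\}$ with $p_7^{r-1}\widetilde{\eta_7}=\eta_7$. For $\tilde\nu_4$, I need to lift $\nu_4 p_7^{r-1}:P^7(2^{r-1})\to S^4$ through $p_4^r$; by the Puppe sequence of $P^4(2^r)$ the obstruction is $-2^r\iota_4\circ\nu_4\circ p_7^{r-1}=-2\nu_4\circ(2^{r-1}\iota_7\circ p_7^{r-1})=0$, since $2^{r-1}\iota_7\circ p_7^{r-1}=0$ as consecutive arrows in the cofibre sequence of $P^7(2^{r-1})$.

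For the order, $p_4^r(\tilde\nu_4\widetilde{\eta_7})=\nu_4\eta_7\neq 0$ forces $o(\tilde\nu_4\widetilde{\eta_7})\geq 2$, so the question reduces to whether $2\tilde\nu_4\widetilde{\eta_7}=\tilde\nu_4(2\widetilde{\eta_7})$ vanishes. Since $p_7^{r-1}(2\widetilde{\eta_7})=2\eta_7=0$, the element $2\widetilde{\eta_7}\in\pi_8(P^7(2^{r-1}))$ lies in the image of $\tau^{r-1}_{7\ast}$, which equals $\langle i_6^{r-1}\eta_6^2\rangle\cong\Z_2$ (because the low-dimensional skeleton of $F_7^{r-1}$ is just $S^6$). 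I would decide which of the two possibilities occurs via the Toda-bracket juggling identity $2\cdot\{i_6^{r-1},2^{r-1}\iota_6,\eta_6\}\subset i_6^{r-1}\circ\{2^{r-1}\iota_6,\eta_6,2\iota_8\}$ mod indeterminacy. For $r=2$ the classical identity $\{2\iota_n,\eta_n,2\iota_{n+1}\}\ni\eta_n^2$ gives $2\widetilde{\eta_7}=i_6^{1}\eta_6^2$; for $r\geq 3$, Lemma \ref{lem:Toda bracket} (or a direct juggling argument using $2\eta_6=0$) shows the bracket $\{2^{r-1}\iota_6,\eta_6,2\iota_8\}$ contains $0$ with trivial indeterminacy, whence $2\widetilde{\eta_7}=0$.

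The main obstacle is the $r=2$ case: one must show $\tilde\nu_4(i_6^{1}\eta_6^2)\neq 0$ in $\pi_8(P^4(4))$. Because $p_4^2\circ\tilde\nu_4 i_6^{1}=0$, the composite $\tilde\nu_4 i_6^{1}:S^6\to P^4(4)$ factors as $i_3^2\bar g$ for some $\bar g\in\pi_6(S^3)=\Z_{12}\{\nu'\}$. I would pin down $\bar g$ modulo the relevant indeterminacy either by describing $\tilde\nu_4$ itself as a coextension of $2\nu_4$ through a suitable Toda bracket of the form $\{i_3^r,2^r\iota_3,-\}$, or by chasing the naturality squares for the Moore-space comparison maps $\bar\chi^s_t$. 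The resulting class $i_3^2\bar g\eta_6^2$ will land in the summand $\Z_2\{i_3^2\nu'\eta_6^2\}$ of the cokernel of $\partial^{4,r}_{8\ast}$ as a nonzero element, yielding $o(\tilde\nu_4\widetilde{\eta_7})=4$ when $r=2$; for $r\geq 3$, the vanishing $2\tilde\nu_4\widetilde{\eta_7}=0$ immediately gives $o(\tilde\nu_4\widetilde{\eta_7})=2$.
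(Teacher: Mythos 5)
There are two genuine gaps. First, your construction of $\tilde\nu_4$ is based on an invalid exactness claim: applying $[P^7(2^{r-1}),-]$ to the \emph{cofibre} sequence of $P^4(2^r)$ does not produce an exact sequence, so the vanishing of $(2^r\iota_4)\circ\nu_4\circ p_7^{r-1}$ is only a necessary condition for lifting through the pinch map, not the obstruction. (Your intermediate identity $(2^r\iota_4)\circ\nu_4\circ p_7^{r-1}=2\nu_4\circ(2^{r-1}\iota_7\circ p_7^{r-1})$ is also false, since by the Hilton formula $(2^r\iota_4)\circ\nu_4=2^{2r}\nu_4-2^{r-1}(2^r-1)\Sigma\nu'\neq \nu_4\circ(2^r\iota_7)$.) The correct criterion uses the fibration sequence $F_4^r\to P^4(2^r)\xrightarrow{p_4^r}S^4$, and the actual obstruction is $\partial_{\ast}^{4,r}(\nu_4p_7^{r-1})=\partial_{6\ast}^{4,r}(\nu_4)p_6^{r-1}$ with $\partial_{6\ast}^{4,r}(\nu_4)=\pm 2^{r-1}\beta_3\nu'+2^r\beta_6$; this vanishes only because $o(p_6^{r-1})=2^{r-1}$, a cancellation your cofibre-sequence computation cannot see.

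Second, and more seriously, the $r=2$ case --- which is the whole point of the lemma --- is not proved. Your claim that $\tilde\nu_4 i_6^{1}$ factors as $i_3^2\bar g$ with $\bar g\in\pi_6(S^3)$ is wrong: $\mathrm{Ker}(p^2_{4\ast})\subset\pi_6(P^4(4))$ is $\tau_{4\ast}(\pi_6(F_4^2))$, which contains the generator $\tau_4\beta_6$ coming from the $S^6$-cell of $F_4^2$ and is strictly larger than $i_{3\ast}\pi_6(S^3)$ (indeed $\pi_6(P^4(4))\cong\Z_8\{\tau_4\beta_6\}\oplus\Z_2\{2\tau_4\beta_6+i_3\nu'\}\oplus\Z_2\{\widetilde{\eta_4}\eta_5\}$). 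Beyond that, the decisive nonvanishing of $2\tilde\nu_4\widetilde{\eta_7}=\tilde\nu_4 i_6^{1}\eta_6^2$ is only asserted (``will land \ldots as a nonzero element''). The paper's argument here is the essential idea you are missing: from $(4\iota_4)\nu_4=16\nu_4+2\Sigma\nu'=(8\nu_4+\Sigma\nu')(2\iota_7)$ one gets a map of cofibre sequences forcing $\Sigma\tilde\nu_4\circ i_7=i_4(8\nu_4+\Sigma\nu')$, whence
\begin{align*}
\Sigma(\tilde\nu_4 i_6\eta_6^2)=i_4(8\nu_4+\Sigma\nu')\eta_7^2=i_4\Sigma\nu'\eta_7^2\neq 0
\end{align*}
in $\pi_9(P^5(4))$ by the computation of $Coker(\partial^{5,2}_{9\ast})$; nonvanishing after suspension then gives $o(\tilde\nu_4\widetilde{\eta_7})=4$. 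Without some such detection argument your proof establishes only the easy bound $o(\tilde\nu_4\widetilde{\eta_7})\geq 2$. (Your Toda-bracket construction of $\widetilde{\eta_7}$ and the determination of $2\widetilde{\eta_7}$ are fine and replace the paper's citation of Baues--Hennes, and the $r\geq 3$ case goes through once $2\widetilde{\eta_7}=0$ is known.)
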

\begin{proof}
	
	$\widetilde{\eta_{7}}$ comes from $\pi_{8}(P^7(2^{r-1}))=\left\{
	\begin{array}{ll}
		\Z_4\{\widetilde{\eta_{7}}\}, & \hbox{$r=2$;} \\
		\Z_2\{i_6\eta_6^2\}\oplus	\Z_2\{\widetilde{\eta_{7}}\} ,& \hbox{$r\geq 3 $}\\
	\end{array}
	\right.$ by \cite{Bau n-1n+3}.
	
	We have the following exact sequence 
	$$	[P^7(2^{r-1}),P^4(2^{r}) ]\xrightarrow{p^r_{4\ast}} [P^7(2^{r-1}),S^4 ]\xrightarrow{\partial_{\ast}^{4,r}}  [P^6(2^{r-1}),F_{4}^r].
	$$
	By noting $o(p_{6}^{r-1})=2^{r-1}$, from Lemma \ref{lem: partial(asigamb)} and (3.5), (3.8) of \cite{ZP23},
	
	$\partial_{\ast}^{4,r}(\nu_4p_{7}^{r-1})=\partial_{6\ast}^{4,r}(\nu_4)p_{6}^{r-1}=(\pm 2^{r-1}\beta_{3}\nu'+2^r\beta_6)p_{6}^{r-1}=0$.
	
	Thus $\tilde\nu_4$ exists.  $\tilde{\nu}_4\widetilde{\eta_{7}}$ is a lift of $\nu_4\eta_7$ by $p_{4}^r$ implies $o(\tilde{\nu}_4\widetilde{\eta_{7}})\geq 2$. So
	$o(\tilde{\nu}_4\widetilde{\eta_{7}})= 2$ for $r\geq 3$, since $o(\widetilde{\eta_{7}})=2$ for $r\geq 3$. 
	
	For $r=2$, $o(\widetilde{\eta_{7}})=4$ and $2\widetilde{\eta_{7}}=i_6\eta_6^2$. Hence, $2\tilde{\nu}_4\widetilde{\eta_{7}}=\tilde{\nu}_4i_6\eta_6^2$.

	Since $(4\iota_{4})\circ \nu_{4}=16\nu_{4}+2\Sigma\nu'=(8\nu_{4}+\Sigma\nu') (2\iota_{7})$ by Lemma A.1 of \cite{ZP23}, the following diagram is homotopy commutative
	\begin{align}
		\small{\xymatrix{
				P^7(2)\ar[d]^-{\tilde{\nu}_4}\ar[r]^-{p^1_7}&S^7\ar[d]^-{\nu_{4}}\ar[r]^{2\iota_{7}} & S^7 \ar[d]^-{8\nu_{4}+\Sigma\nu'}\ar[r]^{i_{7}}& 	P^8(2)\ar[d]^-{\Sigma\tilde{\nu}_4}\\
				P^4(4)\ar[r]^-{p^2_4}&S^4\ar[r]^{4\iota_{4}} & S^4 \ar[r]^{i_{4}}& P^5(4)
		} }\nonumber
	\end{align}
	implies $\Sigma\tilde{\nu}_4i_7=i_4 (8\nu_{4}+\Sigma\nu')$. 
	\begin{align}
		\Sigma(\tilde{\nu}_4i_6\eta^2_{6})=\Sigma\tilde{\nu}_4i_7\eta^2_{7}=i_4 (8\nu_{4}+\Sigma\nu')\eta^2_{7}=i_4 \Sigma\nu'\eta^2_{7}\neq 0~~\text{by  (\ref{equ2: pi9(P5) Coker})}.
		\nonumber
	\end{align}
	Hence  $\Sigma(\tilde{\nu}_4i_6\eta^2_{6})=\Sigma(2\tilde{\nu}_4\widetilde{\eta_7})\neq 0$ $\Rightarrow ~ 2\tilde{\nu}_4\widetilde{\eta_7}\neq 0~\Rightarrow ~ o(\tilde{\nu}_4\widetilde{\eta_7})=4$. 
\end{proof}
From above lemma and (\ref{exact:pi8(P4(2r))}),  we  get 
\begin{align*}
	\pi_{8}(P^4(2^r)) &	\cong \Z_{2}^{\oplus 4}\oplus \Z_{2^r} ~(r\geq 3);\\
	\pi_{8}(P^4(4)) & \cong \Z_{2}^{\oplus 4}\oplus \Z_{4}~\text{or}~\Z_{2}^{\oplus 3}\oplus \Z_{8}  ~\text{or}~\Z_{2}^{\oplus 2}\oplus \Z_{4}^{\oplus 2}.
\end{align*}
From Lemma \ref{lem L3(X)split}, $\Z_4\cong \pi_{8}(P^9(4))$ is a summand of $ \pi_{8}(P^4(4))$, implies that $\pi_{8}(P^9(4))\ncong \Z_{2}^{\oplus 3}\oplus \Z_{8}$. Assume that there is  an order $2$ lift $\mathbf{y}\in \pi_{8}(P^4(4))$ of $\nu_4\eta_7$, then $\tilde{\nu}_4\widetilde{\eta_7}-\mathbf{y}\in Ker(p^2_{4\ast})$ $\Rightarrow~ \tilde{\nu}_4\widetilde{\eta_7}-\mathbf{y}=\tau_{4}^{2}[\beta_3,\beta_6]+\alpha$, with $o(\alpha)\leq 2$ (\ref{equ:Cokerpartial_8^{4,r}}). Then $2\tilde{\nu}_4\widetilde{\eta_7}=2(\tilde{\nu}_4\widetilde{\eta_7}-\mathbf{y})=2\tau_{4}^{2}[\beta_3,\beta_6]~\Rightarrow~ \Sigma(2\tilde{\nu}_4\widetilde{\eta_7})=0$. This is a contradiction since we  previously obtained $\Sigma(2\tilde{\nu}_4\widetilde{\eta_7})\neq 0$. Thus the short exact sequence (\ref{exact:pi8(P4(2r))}) does not split for $r=2$. This implies $\pi_{8}(P^4(4))\ncong  \Z_{2}^{\oplus 4}\oplus \Z_{4}$. So we get $	\pi_{8}(P^4(4)) \cong \Z_{2}^{\oplus 2}\oplus \Z_{4}^{\oplus 2}$. Thus 
\begin{align*}
	\pi_{8}(P^4(2^r))\cong \left\{
	\begin{array}{ll}
		\Z_2^{\oplus 3}, & \hbox{$r=1$;} \\
		\Z_{2}^{\oplus 2}\oplus \Z_{4}^{\oplus 2}, & \hbox{$r=2$;} \\
		\Z_{2}^{\oplus 4}\oplus \Z_{2^r},& \hbox{$r\geq 3$.}\\
	\end{array}
	\right.
\end{align*}

$\bullet~~\pi_{9}(P^{4}(2^r))$. 
\begin{align} 
	&\xymatrix{
		\Z_{8}\{\nu_4^2\}=\pi_{10}(S^{4})\ar[r]^-{ \partial^{4,r}_{9\ast}}&\pi_{9}(F_{4})\ar[r]^-{\tau_{4\ast}}&\pi_{9}(P^{4}(2^r))\ar[r]^-{p_{4\ast}}&\pi_{9}(S^{4})\ar[r]^-{ \partial^{4,r}_{8\ast}}& \pi_{8}(F_{5}) } \nonumber\\
	& Ker(\partial^{4,r}_{8\ast})=\vartheta_r\Z_2\{\nu_4\eta_7^2\}\oplus\Z_2\{\Sigma \nu'\eta_7^2\}, ~\text{by (\ref{equ:partial8^{4,r}(nu4eta7^2)}). }\nonumber\\ 
	& \partial^{4,r}_{9\ast}(\nu_4^2)=\partial^{4,r}_{6\ast}(\nu_4)\nu_6=(\pm 2^{r-1}\beta_{3}\nu'+2^r\beta_6)\nu_6=2^r\beta_6\nu_6. \nonumber\\
	&\Rightarrow~	 Coker(\partial^{4,r}_{9\ast})=\Z_{2^{m_{r}^3}}\{\beta_{6}\nu_6\}\oplus\Z_{2}\{[\beta_3,\beta_6]\eta_8\}.\nonumber\\
	&\Rightarrow~~0\rightarrow \Z_{2}\oplus\Z_{2^{m_{r}^3}}\rightarrow \pi_{9}(P^{4}(2^r)) \xrightarrow{p_{4\ast}} \vartheta_r\Z_2\{\nu_4\eta_7^2\}\oplus\Z_2\{\Sigma \nu'\eta_7^2\}\rightarrow 0  \nonumber
\end{align}
Take $\mathbf{x}\eta_8\in \pi_{9}(P^{4}(2^r))$ and $\tilde{\nu}_4\widetilde{\eta_{7}}\eta_8 \in\pi_{9}(P^{4}(2^r)) (r\geq 2)$, where $\mathbf{x}$ and $\tilde{\nu}_4\widetilde{\eta_{7}}\in \pi_{8}(P^{4}(2^r))$ are lifts of $\Sigma\nu'\eta_7$ and  $\nu_4\eta_7$ respectively. Thus 
$o(\mathbf{x}\eta_8)=o(\tilde{\nu}_4\widetilde{\eta_{7}}\eta_8)=2$ and $p_{4\ast}(\mathbf{x}\eta_8)=\Sigma \nu'\eta_7^2$,   $p_{4\ast}(\tilde{\nu}_4\widetilde{\eta_{7}}\eta_8)=\nu_4\eta_7^2(r\geq 2)$. Hence 
$\pi_{9}(P^{4}(2^r)) \stackrel{p_{4\ast}}\twoheadrightarrow \vartheta_r\Z_2\{\nu_4\eta_7^2\}\oplus\Z_2\{\Sigma \nu'\eta_7^2\}$ is split onto, which implies 
\begin{align*}
	\pi_{9}(P^{4}(2^r)) \cong \vartheta_r\Z_2\oplus \Z_{2}^{\oplus 2}\oplus \Z_{2^{m_{r}^3}}, r\geq 1.
\end{align*}

$\bullet~~\pi_{10}(P^{4}(2^r))$. 

By $\pi_{11}(S^4)=0$ and $ \partial^{4,r}_{9\ast}(\nu_4^2)=2^r\beta_6\nu_6$   implies the short exact sequence 
\begin{align} 
	&\xymatrix{
		0\ar[r]&\pi_{10}(F_{4})\ar[r]^-{\tau_{4\ast}}&\pi_{10}(P^{4}(2^r))\ar[r]^-{p_{4\ast}}&\Z_{2^{m_r^3}}\{\delta_r\nu_4^2\}\ar[r]& 0 } \nonumber\\
	&\pi_{10}(F^1_{4})\cong \Z_2\oplus \Z_4;~	\pi_{10}(F^r_{4})\cong \Z_2^{\oplus 2}\oplus \Z_{2^r} (r\geq 2)\nonumber
\end{align}
where  $\delta_r=4,2$ for $r=1,2$ and $\delta_r=1$ for  $r\geq 3$.

\begin{lemma}\label{lem:not divis 2 P4}
	For any integer $s$, such that $0\leq s<r$, 
	$2^s\tau_{4}\check I_3^r[\check{\beta}_3,[\check{\beta}_3,\check{\beta}_6]]$ is not divisible by $2^{s+1}$ in $\pi_{10}(P^{4}(2^r))$.
\end{lemma}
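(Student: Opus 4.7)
Set $u_r:=\tau_4\check{I}_3^r[\check\beta_3,[\check\beta_3,\check\beta_6]]\in \pi_{10}(P^4(2^r))$. The conclusion that $2^s u_r$ is never $2^{s+1}$-divisible for $0\le s<r$ is equivalent to $\langle u_r\rangle\cong \Z_{2^r}$ being a pure cyclic subgroup (a direct summand) of $\pi_{10}(P^4(2^r))$. The plan is therefore to first show $u_r$ has order exactly $2^r$, and then to upgrade this to purity by descending induction on $r$ via the mod-$2^{r-1}$ reduction $\bar\chi_{r-1}^r\colon P^4(2^r)\to P^4(2^{r-1})$.

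The first step follows cleanly from the short exact sequence displayed just above the lemma, which shows $\tau_{4\ast}$ is injective (since $\pi_{11}(S^4)=0$). Composed with the isomorphism $\check{I}_{3\ast}^r\colon \pi_{10}(J_{3,3}^r)\xrightarrow{\cong}\pi_{10}(F_4)$ (Lemma \ref{lem pi(J3(2rl3))}) and the direct-summand decomposition in part (3) of the same lemma (which identifies a $\Z_{2^r}$ summand generated by $[\check\beta_3,[\check\beta_3,\check\beta_6]]$), this yields $|u_r|=2^r$.

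For the second step, suppose for contradiction that $2^su_r=2^{s+1}y$ with some $s<r$ and $y\in \pi_{10}(P^4(2^r))$. I would push $y$ forward by $\bar\chi_{r-1}^r$ and exploit the naturality of the whole fibration system (\ref{diam big}) together with naturality of the higher-order Whitehead product (Proposition \ref{Prop: Naturality HOWP}), the formulas (\ref{Equ:gamma30,3r}) and (\ref{equ2  g^s_t}), and the identity $\bar g_{r-1}^r\beta_6^r=2\beta_6^{r-1}+\beta_3^{r-1}\theta$ for some $\theta\in\pi_6(S^3)$, to obtain
\[\bar\chi_{r-1,\ast}^r(u_r)\equiv 2u_{r-1}\pmod{N_r},\]
where $N_r\subseteq 2\pi_{10}(P^4(2^{r-1}))$ is a subgroup of $2$-divisible corrections coming from the ternary Whitehead-product indeterminacy (Lemma \ref{lemma for [a,b,c]}) and from the extra $\beta_3^{r-1}\theta$ term. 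Descending the divisibility $2^su_r\in 2^{s+1}\pi_{10}(P^4(2^r))$ then forces $2u_{r-1}\in 2^{s+1}\pi_{10}(P^4(2^{r-1}))$ modulo $N_r$, so (for $s\ge 1$) a $2^{s-1}$-divisibility failure of $u_{r-1}$, contradicting the inductive hypothesis at level $r-1$. The base case $r=1,s=0$ is handled directly from $\pi_{10}(J_{3,3}^1)=\Z_2\{[\check\beta_3,[\check\beta_3,\check\beta_6]]\}\oplus\Z_4\{\widearc{\eta_9}\}$: writing any $y\in\pi_{10}(P^4(2))$ in the form $y=\tau_4 z+c\,\widetilde{4\nu_4^2}$ with $c\in\{0,1\}$, the equation $u_1=2y=\tau_4(2z+c\xi_1)$ (where $\xi_1$ is defined by $2\widetilde{4\nu_4^2}=\tau_4\xi_1$) is, via the injectivity of $\tau_{4\ast}$, an equation in $\pi_{10}(J_{3,3}^1)$. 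The extension class $\xi_1$ is identified via Lemma \ref{lem split}, and direct inspection shows $[\check\beta_3,[\check\beta_3,\check\beta_6]]$ is not in $2\pi_{10}(J_{3,3}^1)+\Z\{\xi_1\}$.

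The \textbf{main obstacle} is controlling $N_r$ and the extension class $\xi_r$ defined by $2^{m_r^3}\widetilde{\delta_r\nu_4^2}=\tau_4\xi_r$: one must verify that neither can absorb a nonzero contribution to the $\Z_{2^r}$-summand generated by $[\check\beta_3,[\check\beta_3,\check\beta_6]]$ modulo the $2$-divisible part of $\pi_{10}(P^4(2^{r-1}))$. Concretely this reduces to analyzing the nested Whitehead brackets $[\beta_3^{r-1},[\beta_3^{r-1},\beta_3^{r-1}\theta]]$ for $\theta\in\pi_6(S^3)=\Z_4\{\nu'\}$ (2-locally) and the indeterminacy coset from Lemma \ref{lemma for [a,b,c]}; both live in strictly simpler summands of $\pi_{10}(J_{3,3}^{r-1})$ than the target $\Z_{2^r}\{[\check\beta_3,[\check\beta_3,\check\beta_6]]\}$, which is the reason the descent can be closed.
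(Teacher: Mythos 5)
Your plan diverges entirely from the paper's, and the descent step at its core does not close. The reduction map $\bar{\chi}^{r}_{r-1}\colon P^{4}(2^r)\to P^{4}(2^{r-1})$ restricts on the fibres to $\bar g^{r}_{r-1}$, which carries $\beta_3^r$ to $\beta_3^{r-1}$ but $\beta_6^r$ to $2\beta_6^{r-1}$ plus a correction, so indeed $\bar{\chi}^{r}_{r-1\ast}(u_r)=2u_{r-1}+n$. But then the hypothesis $2^su_r=2^{s+1}y$ pushes forward to $2^{s+1}u_{r-1}+2^sn=2^{s+1}\bar{\chi}^{r}_{r-1\ast}(y)$; even granting $n\in 2\pi_{10}(P^{4}(2^{r-1}))$, this only says that $2^{s+1}u_{r-1}$ is divisible by $2^{s+1}$, which is vacuous. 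One cannot ``divide by $2$'' in a $2$-torsion group to extract the statement $2u_{r-1}\in 2^{s+1}\pi_{10}(P^{4}(2^{r-1}))+N_r$ that you assert, so no divisibility failure at level $r-1$ is produced and the induction never bites. Worse, the one case that actually matters for purity of a cyclic $\Z_{2^r}$-summand, namely $s=r-1$, cannot be delegated to level $r-1$ at all, since the inductive hypothesis there only covers $s'<r-1$. In addition, the claims that the correction $N_r$ is $2$-divisible and that the base case extension class $\xi_1$ avoids the relevant summand are exactly the hard points, and they are left unverified; the latter is essentially the extension problem for $\pi_{10}(P^{4}(2))$, whose resolution in the paper itself \emph{uses} this lemma, so your base case risks circularity.

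The paper's proof avoids all of this by detecting the element homologically: it shows via the Serre spectral sequence and the Bott--Samelson description $H_{\ast}(\Omega P^{4}(2^r);\Z_{2^r})\cong T(u,v)$ that the mod-$2^r$ Hurewicz image of $\Omega_0(\tau_4\beta_6)$ is $l[u,v]$ with $l$ odd, whence the iterated Samelson product $\Omega_0([i_3,[i_3,\tau_4\beta_6]])$ has Hurewicz image $\pm l[u,[u,[u,v]]]$. Since $[u,[u,[u,v]]]$ is part of a $\Z_{2^r}$-basis of the free module $T(u,v)$ in degree $9$, the element $2^s\cdot(\pm l)[u,[u,[u,v]]]$ is not divisible by $2^{s+1}$ there for $s<r$, and the same therefore holds for the homotopy class. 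If you want to salvage your approach, you would need an independent detection device (such as this Hurewicz image) anyway, at which point the induction becomes superfluous.
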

\begin{proof}
	Let $R=\Z_{2^r}$.  
	Note that $H_{\ast}(\Omega P^{4}(2^r), R)\cong T(u,v)$, which is the tensor algebra over $R$ generated by $u,v$ with degrees $2$ and $3$ respectively.
	
	$Sk_{8}F_{4}\simeq  S^{3}\vee S^{6}$ implies that $Sk_{6}(\Omega F_4)\simeq S^{2}\vee S^{4}\vee S^{5}\vee S^{6}$. 
	\newline
	Consider the $R$-homology  the Serre Spectral Sequence for the fibration sequence
	$$\Omega F_{4}\xrightarrow{\Omega \tau_{4}} \Omega  P^{4}(2^r)\xrightarrow{\Omega p_{4} } \Omega S^{4}$$
	$R\cong H_{5}(\Omega F_{4}, R)\cong E_{0,5}^{\infty}=F_{5}^0\subset  F_{5}^{5}=H_{5}( \Omega  P^{4}(2^r), R)\cong R^{\oplus 2}$.
	\newline
	where the above inclusion is just the induced homomorphism $$(\Omega \tau_{4})_{\ast}: H_{5}(\Omega F_{4}, R)\rightarrow H_{5}(\Omega P^{4}(2^r), R )$$
	by the first commutative diagram (the right commutative square) in page 22 of \cite{HatcherSpectr}. 
	
	By Proposition 21.1 of \cite{Cohen1987},  $[u,v]:=uv-vu\in  H_{5}(\Omega P^{4}(2^r), R )$ is  spherical, i.e., it is the mod $2^r$-reduction of  Hurewicz image $h_{w}$ of some element in $\pi_{5}(\Omega P^{4}(2^r))$,
	where $\Omega_0: \pi_{6}( P^{4}(2^r))	\xrightarrow{\cong }\pi_{5}(\Omega P^{4}(2^r))$. By the calculation of $\pi_{6}(P^{4}(2^r))$ in \cite[Section 3.1]{ZP23}, we get 
	\newline
	$\pi_{6}(P^4(2^r))=\left\{
	\begin{array}{ll}
		\Z_4\{\tau_4\beta_6\}\oplus \Z_2\{\widetilde{\eta_4}\eta_5\}, & \hbox{$r=1$;} \\
		\Z_8\{\tau_4\beta_6\}\oplus \Z_2\{2\tau_4\beta_6+i_3\nu'\}\oplus \Z_2\{\widetilde{\eta_4}\eta_5\}, & \hbox{$r=2$;}\\
		\Z_{2^r}\{\tau_4\beta_6\}\oplus \Z_4\{i_3\nu'\}\oplus \Z_2\{\widetilde{\eta_4}\eta_5\}, & \hbox{$ r\geq 3$.}
	\end{array}
	\right.$
	\newline
	where $\widetilde{\eta_4}\in \pi_{5}(P^4(2^r))$ is a lift of $\eta_4$ by $p_{4\ast}$.
	
	It is easy to see  $h_{w}(\Omega_0(i_3\nu'))=0$ and  $h_{w}(\Omega_0(\widetilde{\eta_4}\eta_5))=0$.
	
	Hence $\bar h_{w}(\Omega_0(\tau_4\beta_6))=l[u,v]\in  H_{5}(\Omega P^{4}(2^r), R )$ where $l$ is odd and  $\bar h_w$ is the   mod $2^r$-reduction of $h_w$.
	
	Now we compute $\bar h_w(\Omega_0([i_3,[i_3, \tau_4\beta_6]]))$ by 
	$(\Omega_0i_{3})_{\ast}(\iota_2)=u$, $(\Omega_0(\tau_4\beta_6))_{\ast}(\iota_5)=l[u,v]$
	\begin{align*}
		&(\Omega_0[i_3, \tau_4\beta_6])_{\ast}(\iota_7)=(-1)^{a_1}([\Omega_0i_3, \Omega_0(\tau_4\beta_6)]^S)_{\ast}(\iota_2\wedge \iota_5)\\
		&=(-1)^{a_1}((\Omega_0i_3)_{\ast}(\iota_2)\otimes (\Omega_0(\tau_4\beta_6))_{\ast}(\iota_5)-(\Omega_0(\tau_4\beta_6))_{\ast}(\iota_5)\otimes(\Omega_0i_3)_{\ast}(\iota_2))\\
		&=(-1)^{a_1}l(u\otimes [u,v]-[u,v]\otimes u )=(-1)^{a_1}l[u,[u,v]].\\
		&\bar h_w(\Omega_0([i_3,[i_3, \tau_4\beta_6]))=(-1)^{a_2}([\Omega_0i_3,\Omega_0[i_3, \tau_4\beta_6]]^S)_{\ast}(\iota_2\wedge \iota_7)\\
		&=(-1)^{a_2}(u\otimes (\Omega_0[i_3, \tau_4\beta_6])_{\ast}(\iota_7)- (\Omega_0[i_3, \tau_4\beta_6])_{\ast}(\iota_7)\otimes u)\\
		&=(-1)^{a_3}l [u,[u,[u,v]]], ~~ a_1,a_2\in \Z, a_3=a_1+a_2.
	\end{align*}
	This implies that $2^s\Omega_0([i_3,[i_3, \tau_4\beta_6]])$  is not divisible by $2^{s+1}$  in $\pi_{9}(\Omega P^{4}(2^r))$ for  $0\leq s<r$, so is $\tau_{4}\check I_3^r[\check{\beta}_3,[\check{\beta}_3,\check{\beta}_6]]=[i_3,[i_3, \tau_4\beta_6]]$  in $\pi_{10}( P^{4}(2^r))$.
\end{proof}
By Lemma \ref{lem L3(X)split}, $\pi_{10}(P^{9}(2^r))\cong \left\{
\begin{array}{ll}
	\Z_4, & \hbox{$r=1$;} \\
	\Z_2^{\oplus 2}, & \hbox{$ r\geq 2$}
\end{array}
\right.$ is a direct summand of $\pi_{10}(P^{4}(2^r))$ and from  Lemma \ref{lem:not divis 2 P4},  $\Z_{2^r}\{[\check{\beta}_3,[\check{\beta}_3,\check{\beta}_6]]\}$ in Lemma \ref{lem pi(J3(2rl3))} splits out of $\pi_{10}(P^{4}(2^r))$.  Hence 
\begin{align*}
	\pi_{10}(P^{4}(2^r))\cong \left\{
	\begin{array}{ll}
		\Z_2^{\oplus 2}\oplus \Z_4, & \hbox{$r=1$;} \\
		\Z_2^{\oplus 2}\oplus \Z_{2^r}\oplus \Z_{2^{m_r^3}}, & \hbox{$ r\geq 2$.}
	\end{array}
	\right.
\end{align*}

\subsection{Homotopy groups of $P^{5}(2^r)$}
\label{subsec: P5(2^r)}
There are cofibration and fibration sequences 
\begin{align*}
	&	S^4\xrightarrow{2^{r}\iota_4} S^{4}\xrightarrow{i_{4}} P^{5}(2^r)\xrightarrow{p_{5}}  S^{5};
	~~\Omega S^5 \xrightarrow{\partial^{5}}F_{5}\xrightarrow{\tau_5} P^{5}(2^r)\xrightarrow{p_{5}}  S^{5}\\
	& Sk_{11}(F_{5})\simeq J_{2,4}^{r}\simeq S^4\cup_{\gamma^r_2}CS^7;~~Sk_{15}(F_{5})\simeq J_{3,4}^{r}\simeq J_{2,4}^{r}\cup_{\gamma^r_3}CS^{11}. 
\end{align*}
where $\gamma_2^r=2^r[\iota_4,\iota_4]=2^{r+1}\nu_4-2^r\Sigma \nu'$.

Since $\pi_{11}(S^4)=0$,  from diagrams of (\ref{diam 2: J2(2slk) to J2(2tlk)}), we get
\begin{align}
\bar g^r_{0}\bar\tau_{8}^r=\bar\tau_{8}^0g^r_{0};~	&	g^r_0|_{J_2}\simeq \iota_4\vee 2^r\iota_{11}~\text{which implies}~~ g_{0}^{r}\bar{\beta}_{4}^r=\bar{\beta}_{4}^0 ~~\text{and}~~  g_{0}^{r}\bar{\beta}_{11}^r=2^r\bar{\beta}_{11}^0.  \label{equ: g_r^0beta_i^r}
\end{align}

\begin{lemma}\label{lem pi(J2(2rl4))} For $r\geq 0$, \newline
	$\pi_{9}(F^r_5)\cong\pi_{9}(J_{2,4}^{r})=\left\{
	\begin{array}{ll}
		\Z_{2}\{\beta_4\nu_4\eta_7^2\}\oplus \vartheta^0_r\Z_{4}\{\widehat{\eta_8}\}, & \hbox{$r=0,1$;}\\
		\Z_{2}\{\beta_4\nu_4\eta_7^2\}\oplus\Z_{2}\{\beta_4\Sigma\nu'\eta_7^2\}\oplus \Z_{2}\{\widehat{\eta_8}\}, & \hbox{$r\geq 2$,}
	\end{array}
	\right.$
	
	with $\beta_4^1\Sigma\nu'\eta_7^2=2\widehat{\eta_8}$ for $r=1$.
	\newline
	$\pi_{10}(F^r_5)\cong\pi_{10}(J_{2,4}^{r})=	\Z_{2^{m^3_{r+1}}}\{\beta_4\nu_4^2\}\oplus \vartheta^0_r\Z_2\{\widehat{\eta_8^2}\}$.
	\newline
	$\pi_{i}(J_{2,4}^{r})=\left\{
	\begin{array}{ll}
		\Z_{(2)}\{\beta_{11}\}\oplus \Z_{2^{m_{r+1}^3}}\{\widehat{\delta_{r+1}\nu_8}\}, & \hbox{$i=11$;}\\
		\Z_{2}\{\beta_4\varepsilon_4\}\oplus\Z_2\{\beta_{11}\eta_{11}\}, & \hbox{$i=12$;}\\
		\Z_{2}\{\beta_4\nu^3_4\}\oplus\Z_2\{\beta_{4}\mu_{4}\}\oplus\Z_2\{\beta_{4}\eta_{4}\varepsilon_5\}\oplus \Z_2\{\beta_{11}\eta^2_{11}\}, & \hbox{$i=13$.}
	\end{array}
	\right.$
	\newline	
	where  $\vartheta^0_r=0$ for $r=0$ and $\vartheta^0_r=1$ for $r\geq 1$;  $\widehat{\delta_{r+1}\nu_8}$ is a lift of $\delta_{r+1}\nu_8\in \pi_{11}(S^8)$ by $\bar p_{8}$; the $\varepsilon_4$, $\mu_{4}$, $\eta_{4}\varepsilon_5$ comes from the generators of $\pi_{12}(S^4)=\Z_2\{\varepsilon_4\}$ and 
	$\pi_{13}(S^4)=\Z_2\{\nu^3_4\}\oplus \Z_2\{\mu_{4}\}\oplus \Z_2\{\beta_{11}\eta^2_{11}\} $.
\end{lemma}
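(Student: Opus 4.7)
The plan is to apply Lemma \ref{lem: exact seq pi_m(J2(2rl4))} with $k = 4$; for each $m \in \{9,\ldots,13\}$ it yields a short exact sequence
\begin{align*}
0 \to \frac{\bar\beta_{4\ast}\pi_m(S^4)}{\bar\beta_{4\ast}\gamma_{2\ast}^r\pi_m(S^7)} \oplus G_m^4 \xrightarrow{\bar\tau_{8\ast}} \pi_m(J_{2,4}^r) \xrightarrow{\bar p_{8\ast}} \Sigma\,\mathrm{Ker}\bigl(\gamma_{2\ast}^r|_{\pi_{m-1}(S^7)}\bigr) \to 0,
\end{align*}
where $G_m^4 = 0$ for $m \leq 10$ and $G_m^4 = \bar\beta_{11\ast}\pi_m(S^{11})$ for $11 \leq m \leq 13$. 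Combining the formula $\gamma_2^r = 2^{r+1}\nu_4 - 2^r\Sigma\nu'$ with the 2-local values of $\pi_\ast(S^4)$ and $\pi_\ast(S^7)$ from Toda's book makes both ends of each SES explicit; the relations $2\nu_4\eta_7 = 0$, $\Sigma\nu'\circ\nu_7 = 0$, $\eta^3 = 4\nu$ (stably), and $\pi_{11}(S^4) = \pi_{11}(S^7) = \pi_{12}(S^7) = 0$ at the prime $2$ do most of the bookkeeping.

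For $m \in \{11, 12, 13\}$ the groups follow directly: the right kernel vanishes except at $m = 11$, where $\gamma_{2\ast}^r\nu_7 = 2^{r+1}\nu_4^2$ identifies $\Sigma\mathrm{Ker}$ as $\Z_{2^{m_{r+1}^3}}\{\delta_{r+1}\nu_8\}$, and Lemma \ref{lem split} provides the lift $\widehat{\delta_{r+1}\nu_8} \in -\{\bar\tau_8\bar\beta_4, \gamma_2^r, \delta_{r+1}\nu_7\}$ whose order is pinned down by applying Proposition 1.4 of \cite{Toda} together with Lemma \ref{lem:Toda bracket}. The generators $\beta_4\varepsilon_4$, $\beta_4\nu_4^3$, $\beta_4\mu_4$, $\beta_4\eta_4\varepsilon_5$, $\beta_{11}\eta_{11}$, $\beta_{11}\eta_{11}^2$ are read off from $\pi_{12,13}(S^4)$ and $\pi_{12,13}(S^{11})$ via $\bar\tau_{8\ast}$ and $\beta_{11}$.

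The main obstacle is the extension at $m = 9$. For $r \geq 1$, $\gamma_{2\ast}^r$ annihilates both $\eta_7$ and $\eta_7^2$, so the SES becomes
\begin{align*}
0 \to \Z_2\{\beta_4\nu_4\eta_7^2\} \oplus \Z_2\{\beta_4\Sigma\nu'\eta_7^2\} \to \pi_9(J_{2,4}^r) \to \Z_2\{\eta_8\} \to 0.
\end{align*}
Lemma \ref{lem split} produces $\widehat{\eta_8} \in -\{\beta_4, \gamma_2^r, \eta_7\}$, and Proposition 1.4 of \cite{Toda} gives the shift $2\widehat{\eta_8} \in \beta_4\{\gamma_2^r, \eta_7, 2\iota_8\}$; the indeterminacy of the inner bracket is zero at the prime $2$ since $\gamma_{2\ast}^r\eta_7^2 = 0$ and $2\pi_9(S^4) = 0$. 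For $r = 1$, Lemma \ref{lem:Toda bracket} (with $\alpha = [\iota_4, \iota_4]$, $\Sigma\beta = \eta_7$, $\Sigma\gamma = \iota_8$) shows the inner bracket contains $[\iota_4, \iota_4]\eta_7\eta_8 = -\Sigma\nu'\eta_7^2$ (using $2\nu_4\eta_7^2 = 0$), hence $2\widehat{\eta_8} = \beta_4\Sigma\nu'\eta_7^2$ and the extension is $\Z_2 \oplus \Z_4$. For $r \geq 2$ the same lemma gives $0$ in the bracket, so $2\widehat{\eta_8} = 0$ and the sequence splits to $\Z_2^{\oplus 3}$. The $m = 10$ case is handled analogously by composing with $\eta_9$: $\widehat{\eta_8^2} := \widehat{\eta_8}\eta_9$ has order $2$ because $2\widehat{\eta_8}\eta_9 = \beta_4\Sigma\nu'\eta_7^3 = 4\beta_4\Sigma\nu'\nu_7 = 0$, so the corresponding SES splits for every $r \geq 1$. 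Finally, the $r = 0$ entries of $\pi_9, \pi_{10}$ follow from $\gamma_{2\ast}^0(\eta_7) = -\Sigma\nu'\eta_7 \neq 0$ and $\gamma_{2\ast}^0(\eta_7^2) = -\Sigma\nu'\eta_7^2 \neq 0$, which trivialise both the right kernel and the second left summand, reproducing the $\vartheta_r^0$-factor.
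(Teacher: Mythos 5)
Your proposal is correct and follows essentially the same route as the paper: the short exact sequences from Lemma \ref{lem: exact seq pi_m(J2(2rl4))} with $k=4$, Toda-bracket lifts via Lemma \ref{lem split}, the shift $2\mathbf{x}\in\beta_4\{\gamma_2^r,\eta_7,2\iota_8\}$ evaluated by Lemma \ref{lem:Toda bracket} to separate $r=1$ from $r\geq 2$, and the vanishing of $\pi_{11}(S^4)$ at the prime $2$ to settle the $i=11$ extension. The only cosmetic deviation is your order computation for $\widehat{\eta_8}\eta_9$ (the paper simply uses $\mathbf{x}(2\eta_9)=0$), which changes nothing.
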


\begin{proof}\qquad
	
	$\bullet~~\pi_{9}(J_{2,4}^{r})$. Lemma \ref{lem: exact seq pi_m(J2(2rl4))} for $k=4$, $m=9$ implies the following short exact sequence 
	\begin{align*}
		0\!\rightarrow\! \Z_{2}\{\bar\beta_4\nu_4\eta_7^2\}\!\oplus\!\vartheta^0_r\Z_{2}\{\bar\beta_4\Sigma\nu'\eta_7^2\}\!\xrightarrow{\bar{\tau}_{8\ast}}\!\pi_{9}(J_{2,4}^{r})\!\xrightarrow{\bar p_{8\ast}}\!\!\vartheta^0_r\Z_{2}\{\eta_8\} \!\rightarrow\! 0. 
	\end{align*}	
	For $r\geq 1$, by Lemma \ref{lem split},  $\exists~-\mathbf{x}\in \{\beta_4,\gamma^r_2,\eta_7\}\subset \pi_{9}(J_{2,4}^{r})$, such that $\bar p_{8\ast}(\mathbf{x})=\eta_8$.
	\begin{align*}
		&2\mathbf{x}\in -\{\beta_4,\gamma^r_2,\eta_7\}(2\iota_9)=\beta_4\{\gamma^r_2,\eta_7,2\iota_8\}.\\
		&\{\gamma^r_2,\eta_7,2\iota_8\}\ni (2^{r-1}[\iota_4,\iota_4])\eta_8~\text{mod}~\pi_{9}(S^4)(2\iota_9)+(\gamma^r_2)\pi_{9}(S^7)=\{0\}.
		(\text{Lemma \ref{lem:Toda bracket}})
	\end{align*}
	Thus $2\mathbf{x}=0$ for $r\geq 2$, implies that
	\begin{align*}
		\pi_{9}(J_{2,4}^{r})=\Z_{2}\{\beta_4\nu_4\eta_7^2\}\oplus\Z_{2}\{\beta_4\Sigma\nu'\eta_7^2\}\oplus \Z_{2}\{\widehat{\eta_8}\}~(r\geq 2).
	\end{align*}
	$2\mathbf{x}=\beta^1_4\Sigma\nu'\eta_7^2$ for $r=1$, implies that $o(\mathbf{x})=4$.
	Hence 
	\begin{align*}
		\pi_{9}(J_{2}(2\iota_4))=\Z_{2}\{\beta^1_4\nu_4\eta_7^2\}\oplus \Z_{4}\{\widehat{\eta_8}\}~\text{with}~\beta_4^1\Sigma\nu'\eta_7^2=2\widehat{\eta_8}.
	\end{align*}

	$\bullet~~\pi_{10}(J_{2,4}^{r})$.   Lemma \ref{lem: exact seq pi_m(J2(2rl4))} for $k=4$, $m=10$ implies
	\begin{align*}
		0\rightarrow \Z_{2^{m^3_{r+1}}}\{\bar\beta_4\nu_4^2\}\xrightarrow{\bar{\tau}_{8\ast}}\pi_{10}(J_{2,4}^{r})\xrightarrow{\bar p_{8\ast}}\vartheta^0_r\Z_{2}\{\eta_8^2\} \rightarrow 0. 
	\end{align*}	
	For $r\geq 2$, by Lemma \ref{lem split}, there is $-\mathbf{x}\in \{\beta_4,\gamma^r_2,\eta_7\}\subset \pi_{9}(J_{2,4}^{r})$, such that $\bar p_{8\ast}(\mathbf{x})=\eta_8$. Then $\bar p_{8\ast}(\mathbf{x}\eta_9)=\eta_8^2$. Moreover,  $2(\mathbf{x}\eta_9)=\mathbf{x}(2\eta_9)=0$ implies $o(\mathbf{x}\eta_9)=2$. Hence $\pi_{10}(J_{2,4}^{r})\stackrel{\bar p_{8\ast}}\twoheadrightarrow\vartheta^0_r\Z_{2}\{\eta_8^2\}$ is split onto.

	$\bullet~~\pi_{11}(J_{2,4}^{r})$. 	Since $\pi_{11}(S^4)=0$,  Remark \ref{remark: pi3k-1(J2(2rl4)) } implies
	\begin{align*}
	0\rightarrow \Z_{(2)}\{\bar\beta_{11}\}\xrightarrow{\bar\tau_{8\ast}}\pi_{11}(J_{2,4}^{r})\xrightarrow{\bar p_{8\ast}}\Z_{2^{m_{r+1}^3}}\{\delta_{r+1}\nu_8\} \rightarrow 0. 
	\end{align*}
	The rest only requires proof of the split of above short exact sequence, which we only prove for $r=1$ since the proof of other cases are just similar.
	
	By Lemma \ref{lem split}, there is  $-\mathbf{x}\in \{\beta^1_4,\gamma_2^1, 2\nu_7\}\subset \pi_{11}(J_{2}(2\iota_4))$, such that $\bar p_{8\ast}(\mathbf{x})=2\nu_8$.  $o(2\nu_8)=4$ implies that   $o(\mathbf{x})\geq 4$. On the other hand, $4\mathbf{x}\in -\{\beta^1_4,\gamma_2^1, 2\nu_7\}(4\iota_{11})=\beta^1_4\{\gamma_2^1, 2\nu_7,4\iota_{10}\}\subset \beta^1_{4\ast}(\pi_{11}(S^4))=\{0\}$, which implies that $o(\mathbf{x})\leq 4$. Hence $o(\mathbf{x})= 4$. So $\pi_{11}(J_{2,4}^{r})\stackrel{\bar p_{8\ast}}\twoheadrightarrow \Z_{2^{m_{r+1}^3}}\{\delta_{r+1}\nu_8\}$ is split onto.

	$\bullet~~\pi_{i}(J_{2,4}^{r})~(i=12,13)$. These are easily obtained by the following isomorphisms for  $i=12,13$ from Lemma \ref{lem: exact seq pi_m(J2(2rl4))}
	\begin{align*}
		\frac{\bar\beta_{4\ast}(\pi_{i}(S^{4}))}{\bar\beta_{4\ast}\gamma^r_{2\ast}(\pi_{i}(S^{7}))}\oplus\bar\beta_{11\ast}(\pi_{i}(S^{11}))\xrightarrow{\bar{\tau}_{8\ast}\cong}\pi_{i}(J_{2,4}^r).
	\end{align*}
	
\end{proof}

\begin{lemma}\label{lem gamma3 P5}
	For $r\geq 0$,	$\gamma^r_3= 2^ra_0\beta_{11}\in\pi_{11}(J_{2,4}^{r})$, where   $a_0$ is an odd integer in $\Z_{(2)}$.
\end{lemma}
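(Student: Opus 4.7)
The plan is to prove the formula $\gamma_3^r = 2^r a_0 \beta_{11}$ in three stages: first confining $\gamma_3^r$ to the subgroup $\Z_{(2)}\{\beta_{11}\}$ of $\pi_{11}(J_{2,4}^r)$, then relating $c_r$ to $c_0$ by naturality, and finally proving $c_0$ is a unit.

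\emph{Stage 1 (locate $\gamma_3^r$).} Applying Corollary \ref{Cor:proj gamma3} to $f = 2^r\iota_4: S^4 \to S^4$ gives $\bar p_{8\ast}\gamma_3^r = 0$. The short exact sequence appearing in the proof of Lemma \ref{lem pi(J2(2rl4))} for the case $i=11$,
\[
0 \to \Z_{(2)}\{\bar\beta_{11}\} \xrightarrow{\bar\tau_{8\ast}} \pi_{11}(J_{2,4}^r) \xrightarrow{\bar p_{8\ast}} \Z_{2^{m_{r+1}^3}} \to 0,
\]
then forces $\gamma_3^r \in \bar\tau_{8\ast}(\Z_{(2)}\{\bar\beta_{11}\}) = \Z_{(2)}\{\beta_{11}\}$, so $\gamma_3^r = c_r\beta_{11}$ for some $c_r\in\Z_{(2)}$.

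\emph{Stage 2 (reduce to $r=0$).} Setting $s = r$, $t = 0$ in the naturality relation \eqref{Equ:gamma30,3r} yields $2^{2r}\gamma_3^0 = \bar g_0^r\gamma_3^r$. Combining \eqref{equ1 g^s_t} and \eqref{equ: g_r^0beta_i^r} and using $\beta_{11}^r = \bar\tau_8^r\bar\beta_{11}^r$ gives $\bar g_0^r\beta_{11}^r = \bar\tau_8^0(g_0^r\bar\beta_{11}^r) = 2^r\beta_{11}^0$. Since $\Z_{(2)}\{\beta_{11}^0\}$ is torsion-free, comparing coefficients produces $c_r = 2^rc_0$ in $\Z_{(2)}$.

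\emph{Stage 3 (odd base case — the main obstacle).} It remains to prove $c_0\in\Z_{(2)}^{\times}$. I would identify $J_{3,4}^0 = J_3(S^4)$ and invoke the classical James equivalence $J(S^4)\simeq \Omega S^5$, whose restriction to the $12$-skeleton gives $J_3(S^4)\simeq \mathrm{Sk}_{12}(\Omega S^5)$; cellular approximation then yields
\[
\pi_{11}(J_{3,4}^0) \cong \pi_{11}(\Omega S^5) = \pi_{12}(S^5)_{(2)} = \Z_2
\]
(the last equality being a classical computation from Toda's tables). On the other hand, Lemma \ref{lem: exact seq pi_m(J3(2rl4))} with $k=4$, $r=0$, $m=11$ (noting $\pi_{10}(S^{11}) = 0$ and $L_{11}^4 = 0$) collapses to
\[
\pi_{11}(J_{3,4}^0) \cong \pi_{11}(J_{2,4}^0)/\langle c_0\beta_{11}\rangle \cong \Z_{(2)}/c_0 \oplus \Z_2.
\]
Writing $c_0 = 2^v u$ with $u\in\Z_{(2)}^\times$, the right-hand side is $\Z_{2^v}\oplus\Z_2$, and matching it with $\Z_2$ forces $v=0$. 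Hence $c_0=u$ is odd, and setting $a_0:=c_0$ gives $\gamma_3^r = 2^r a_0\beta_{11}$. The only delicate point is Stage 3: one must be careful that the $\Z_2$ summand surviving in the quotient is exactly the one coming from $\widehat{4\nu_8}\in\pi_{11}(J_{2,4}^0)$ (not erased by the attachment), and that the cellular identification of $J_3(S^4)$ with $\mathrm{Sk}_{12}(\Omega S^5)$ respects this; once verified, the appeal to $\pi_{12}(S^5)_{(2)}=\Z_2$ finishes the proof.
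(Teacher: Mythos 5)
Your proposal is correct and follows essentially the same route as the paper: kill the $\widehat{\delta_{r+1}\nu_8}$-component via $\bar p_{8\ast}\gamma_3^r=0$, relate $a_r$ to $a_0$ by the naturality relation $2^{2r}\gamma_3^0=\bar g_0^r\gamma_3^r$ together with $\bar g_0^r\beta_{11}^r=2^r\beta_{11}^0$, and pin down $a_0$ by matching $\pi_{11}(J_{3,4}^0)\cong\pi_{11}(J_{2,4}^0)/\langle a_0\beta_{11}\rangle$ against $\Z_2$. Your Stage 3 identification $\pi_{11}(J_{3,4}^0)\cong\pi_{11}(\Omega S^5)\cong\pi_{12}(S^5)_{(2)}=\Z_2$ via $J(S^4)\simeq\Omega S^5$ is just a direct restatement of the paper's use of the fibration $\Omega S^5\to F_5^0\to P^5(2^0)=\ast$, and the final bookkeeping you flag as delicate is already settled by your Stage 1.
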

\begin{proof}
	From Lemma \ref{lem pi(J2(2rl4))}, we assume 
	\begin{align*}
		\gamma_3^r=a_r\beta_{11}^r+b_r\widehat{\delta_{r+1}\nu_8}=a_r\bar\tau_8^r\bar\beta_{11}^r+b_r\widehat{\delta_{r+1}\nu_8}, a_r\in \Z_{(2)}, b_r\in \Z_{2^{m_{r+1}^3}}.
	\end{align*}
	By Lemma \ref{lem:pinch gamma3} and $\bar p^r_8 \bar\tau_8^r\simeq 0$, we get $0=\bar p^r_8\gamma_3^r=\bar p^r_8(b_r\widehat{\delta_{r+1}\nu_8})=b_r\delta_{r+1}\nu_8$. Hence $b_r=0$, which implies $\gamma_3^r=a_r\beta_{11}^r$.
	
	By exact sequence (\ref{exact:hgps piPk+1(2r)}) for $k=4,r=0,m=11$, we get
	\begin{align*}
		&\Z_2\cong \pi_{12}(S^5)\cong \pi_{11}(F_{5}^0)\cong \pi_{11}(J_{3,4}^0)\cong \pi_{11}(J_{2,4}^0)/Im(\gamma_{3\ast}^0)\\
		= &\frac{\Z_{(2)}\{\beta^0_{11}\}\oplus \Z_2\{\widehat{4\nu_8}\}}{\lr{a_0\beta^0_{11}}} ~~~~~~\Rightarrow~~~~a_0~\text{is an odd integer in}~\Z_{(2)}.\\
		2^{2r}\gamma_3^0&=\bar g_{0}^r \gamma_3^r=a_r\bar g_{0}^r(\bar\tau_8^r\bar\beta_{11}^r)    ~~~~(\text{by (\ref{Equ:gamma30,3r})})\\
		=&a_r\bar\tau_8^0g_{0}^r\bar\beta_{11}^r=a_r\bar\tau_8^0g_{0}^rI_2^{\gamma,r}j_2^{11}=a_r\bar\tau_8^0I_2^{\gamma,0}(g_{0}^r)|_{J_2}j_2^{11}~~~~(\text{by (\ref{diam big}) and (\ref{equ1 g^s_t}) })\\
		=&a_r\bar\tau_8^0I_2^{\gamma,0}(j_1^{4}q_1^4+2^rj_2^{11}q_2^{11})j_2^{11}=2^ra_r\bar\tau_8^0\bar\beta_{11}^0~~~~~(\text{by  (\ref{equ2  g^s_t}) and } \pi_{11}(S^4)=0)\\
		\text{i.e.},~~&	2^{2r}a_0\bar\tau_8^0\bar\beta_{11}^0= 2^ra_r\bar\tau_8^0\bar\beta_{11}^0 ~~\Rightarrow~~ a_r=2^ra_0.
	\end{align*}
\end{proof}

\begin{lemma}\label{lem pi(J3(2rl4))} For $r\geq 0$,  $\pi_{i}(F_{5})\cong \pi_{i}(J_{3,4}^{r}) (i\leq 14)$ and 
	\begin{align*}
		(1)~&\pi_{11}(J_{3,4}^{r})=\Z_{2^r}\{\check\beta_{11}\}\oplus \Z_{2^{m_{r+1}^3}}\{I_{2}^{r}\widehat{\delta_{r+1}\nu_8}\}.\\
		(2)~&\pi_{12}(J_{3,4}^{r})=	\Z_{2}\{\check\beta_4\varepsilon_4\}\oplus\vartheta^0_r\Z_2\{\check\beta_{11}\eta_{11}\}.\\
		(3)~&\pi_{13}(J_{3,4}^{0})=	
		\Z_{2}\{\check\beta_4\nu^3_4\}\oplus\Z_2\{\check\beta_{4}\mu_{4}\}\oplus\Z_2\{\check\beta_{4}\eta_{4}\varepsilon_5\}.\\
		&\pi_{13}(J_{3,4}^{1})\!=\!	
		\Z_{2}\{\check\beta_4\nu^3_4\}\!\oplus\!\Z_2\{\check\beta_{4}\mu_{4}\}\!\oplus\!\Z_2\{\check\beta_{4}\eta_{4}\varepsilon_5\}\!\oplus\! \Z_4\{\widearc{\eta_{12}}\}~\text{with}~\check\beta_{11}\eta^2_{11}\!=\!2\widearc{\eta_{12}}.\\
		&\pi_{13}(J_{3,4}^{r})\!=	\!
		\Z_{2}\{\check\beta_4\nu^3_4\}\!\oplus\!\Z_2\{\check\beta_{4}\mu_{4}\}\!\oplus\!\Z_2\{\check\beta_{4}\eta_{4}\varepsilon_5\}\!\oplus\! \Z_2\{\check\beta_{11}\eta^2_{11}\}\!\oplus\! \Z_2\{\widearc{\eta_{12}}\} (r\!\geq \!2).
	\end{align*}
\end{lemma}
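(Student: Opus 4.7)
The plan is to apply Lemma \ref{lem: exact seq pi_m(J3(2rl4))} (with $k=4$) to the cofibration $S^{11}\xrightarrow{\gamma_3^r} J_{2,4}^r \xrightarrow{I_2^r} J_{3,4}^r \xrightarrow{\check p_{12}} S^{12}$, combined with the explicit formula $\gamma_3^r = 2^r a_0\, \beta_{11}$ (with $a_0$ odd in $\Z_{(2)}$) from Lemma \ref{lem gamma3 P5}. For all three values $m=11,12,13$ we have $m \leq 13 = 4k-3$, so $L_m^4 = 0$, giving short exact sequences
\begin{align*}
0 \to \pi_m(J_{2,4}^r)/\gamma_{3\ast}^r(\pi_m(S^{11})) \xrightarrow{I_{2\ast}^r} \pi_m(J_{3,4}^r) \xrightarrow{\check p_{12\ast}} \Sigma(\mathrm{Ker}(\gamma_{3\ast}^r|_{\pi_{m-1}(S^{11})})) \to 0.
\end{align*}
The input groups $\pi_m(J_{2,4}^r)$ come from Lemma \ref{lem pi(J2(2rl4))}, while $\gamma_{3\ast}^r$ is multiplication by $2^r a_0$ post-composed with $\beta_{11}$.

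For part (1), $\pi_{10}(S^{11}) = 0$ kills the right term, and quotienting $\pi_{11}(J_{2,4}^r) = \Z_{(2)}\{\beta_{11}\}\oplus \Z_{2^{m_{r+1}^3}}\{\widehat{\delta_{r+1}\nu_8}\}$ by $\langle 2^r\beta_{11}\rangle$ (since $a_0$ is a unit) yields the stated $\Z_{2^r}\oplus \Z_{2^{m_{r+1}^3}}$. For part (2), $\gamma_{3\ast}^r$ is injective on $\pi_{11}(S^{11})=\Z$ because $\beta_{11}$ has infinite order, so the Kernel term vanishes and $\pi_{12}(J_{3,4}^r)$ equals the Cokernel. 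Quotienting $\pi_{12}(J_{2,4}^r)=\Z_2\{\beta_4\varepsilon_4\}\oplus\Z_2\{\beta_{11}\eta_{11}\}$ by $\langle 2^r a_0 \beta_{11}\eta_{11}\rangle$ kills the second summand exactly when $r=0$, producing the factor $\vartheta_r^0$ in the statement.

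For part (3), the Kernel piece uses $\pi_{12}(S^{11}) = \Z_2\{\eta_{11}\}$ and $\gamma_{3\ast}^r(\eta_{11}) = 2^r a_0\beta_{11}\eta_{11}$, which vanishes precisely for $r \geq 1$; hence $\Sigma(\mathrm{Ker}) = 0$ when $r=0$ and $\Z_2\{\eta_{12}\}$ when $r \geq 1$. Similarly the Cokernel is $\Z_2^{\oplus 3}$ for $r=0$ and $\Z_2^{\oplus 4}$ for $r \geq 1$, so the $r=0$ assertion is immediate. For $r \geq 1$ the extension
\begin{align*}
0 \to \Z_2^{\oplus 4} \to \pi_{13}(J_{3,4}^r) \to \Z_2\{\eta_{12}\} \to 0
\end{align*}
must be resolved. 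By Lemma \ref{lem split} there exists $\mathbf{x} \in -\{I_2^r,\gamma_3^r,\eta_{11}\} \subset \pi_{13}(J_{3,4}^r)$ with $\check p_{12\ast}(\mathbf{x}) = \eta_{12}$, and by Toda's Proposition 1.2(iv), $2\mathbf{x} \in I_{2\ast}^r\{2^r a_0\beta_{11},\eta_{11},2\iota_{12}\}$. For $r=1$ Lemma \ref{lem:Toda bracket} applies directly (with $\alpha = a_0\beta_{11}$, $\Sigma\beta = \eta_{11}$, $\Sigma\gamma = \iota_{12}$) and yields $a_0\beta_{11}\eta_{11}^2$ modulo indeterminacy; this forces $2\mathbf{x} = \check\beta_{11}\eta_{11}^2 \neq 0$, so $o(\mathbf{x})=4$ and the sequence is non-split, producing the $\Z_4\{\widearc{\eta_{12}}\}$ summand with the stated relation $\check\beta_{11}\eta_{11}^2 = 2\widearc{\eta_{12}}$.

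For $r \geq 2$, Lemma \ref{lem:Toda bracket} instead gives $0$ in the bracket $\{2^r\alpha,\Sigma\beta,2^r\Sigma\gamma\}$, and one writes $\{2^r a_0\beta_{11},\eta_{11},2\iota_{12}\}$ so that $2\mathbf{x}$ lies in the indeterminacy subgroup $I_{2\ast}^r\,\pi_{13}(J_{2,4}^r)\cdot 2$; as in the corresponding step of Lemma \ref{lem pi(J3(2rl3))}(3), subtracting a suitable element from $\mathbf{x}$ yields a lift of order $2$, splits the sequence, and produces the $\Z_2\{\widearc{\eta_{12}}\}$ summand. The main obstacle is precisely this extension problem in (3): one must track the Toda bracket indeterminacy carefully enough to distinguish the nontrivial ($r=1$) extension from the trivial ($r \geq 2$) one, which is the subtle step that makes the computation genuinely depend on $r$.
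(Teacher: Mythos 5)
Your proposal is correct and follows essentially the same route as the paper: the short exact sequence of Lemma \ref{lem: exact seq pi_m(J3(2rl4))} with $\gamma_3^r=2^ra_0\beta_{11}$ from Lemma \ref{lem gamma3 P5} handles (1), (2) and the $r=0$ case of (3), and the extension in (3) for $r\geq 1$ is resolved exactly as in the paper via Lemma \ref{lem split} and the bracket $\{2^ra_0\beta_{11},\eta_{11},2\iota_{12}\}\ni 2^{r-1}a_0\beta_{11}\eta_{11}^2$ with trivial indeterminacy. The only cosmetic imprecision is that for $r\geq 2$ the bracket has the form $\{2^r\alpha,\Sigma\beta,2\Sigma\gamma\}$ rather than the symmetric $\{2^r\alpha,\Sigma\beta,2^r\Sigma\gamma\}$ of Lemma \ref{lem:Toda bracket}, but the same containment argument $\supset(2^{r-1}a_0\beta_{11})\{2\iota_{11},\eta_{11},2\iota_{12}\}$ gives the vanishing you need.
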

\begin{proof}
	From Lemmas \ref{lem pi(J2(2rl4))} and \ref{lem gamma3 P5},	(1) and (2) of this lemma  are easily get by the following isomorphism 
	\begin{align*}
		\frac{\pi_{m}(J_{2,4}^{r})}{ 2^ra_0\beta_{11\ast} (\pi_{m}(S^{11}))}\xrightarrow[\cong]{I_{2\ast}^r}	\pi_{m}(J_{3,4}^{r}).
	\end{align*}
	For (3), the case $r=0$ is easily obtained. For $r\geq 1$,  we have the following short exact sequence by Lemma \ref{lem pi(J2(2rl4))}
	\begin{align*}
		0\rightarrow \pi_{13}(J_{2,4}^{r})\xrightarrow{I_{2\ast}^r} \pi_{13}(J_{3,4}^{r}) \xrightarrow {\check{p}_{12\ast}} \Z_{2}\{\eta_{12}\}\rightarrow 0
	\end{align*}
	
	For $r\geq 1$, by Lemma \ref{lem split}, there is $-\mathbf{x}\in \{I^r_2,\gamma_3^r,\eta_{11}\}\subset \pi_{13}(J_{3,4}^{r})$, such that $\check p_{12\ast}(\mathbf{x})=\eta_{12}$ and $2\mathbf{x}\in -\{I^r_2,\gamma_3^r,\eta_{11}\}(2\iota_{13})=I^r_2\{\gamma_3^r,\eta_{11},2\iota_{12}\}=I^r_2\{2^ra_0\beta_{11},\eta_{11},2\iota_{12}\}$.
\begin{align*}
	\{2^ra_0\beta_{11},\eta_{11},2\iota_{12}\}\ni 2^{r-1}a_0\beta_{11}\eta^2_{11}~\text{mod}~\pi_{13}(J_{2,4}^{r})\fhe (2\iota_{13})+(2^ra_0\beta_{11})\fhe\pi_{13}(S^{11})=\{0\}
\end{align*}
	Thus $2\mathbf{x}=0$ for $r\geq 2$, implies that
	\begin{align*}
		\pi_{13}(J_{3,4}^{r})=\Z_{2}\{\check\beta_4\nu^3_4\}\oplus\Z_2\{\check\beta_{4}\mu_{4}\}\oplus\Z_2\{\check\beta_{4}\eta_{4}\varepsilon_5\}\oplus \Z_2\{\check\beta_{11}\eta^2_{11}\}\oplus \Z_2\{\widearc{\eta_{12}}\}.
	\end{align*}
	$2\mathbf{x}=I_2^r\beta_{11}\eta_{11}^2=\check\beta_{11}\eta_{11}^2$ for $r=1$, we get $o(\mathbf{x})=4$ and 
	this implies the result of $\pi_{13}(J_{3,4}^1)$ in (3) of Lemma \ref{lem pi(J3(2rl4))}.

\end{proof}

Next we compute $\pi_{14}(J_{2,4}^{r})$ and $\pi_{14}(J_{3,4}^{r})$. We have the following exact sequence with commutative  diagrams.
\begin{align}
	& \small{\xymatrix{
			\pi_{15}( S^{8})\ar[r]^-{ \bar{\partial}^{8}_{14\ast}}&\pi_{14}(F_{\bar{p}_{8}})\ar[r]^-{\bar{\tau}_{8\ast}}&\pi_{14}(J_{2,4}^{r})\ar[r]^-{\bar{p}_{8\ast}}&\pi_{14}(S^{8})\ar[r]^-{ \bar{\partial}^{8}_{13\ast}}& \pi_{13}(F_{\bar{p}_{8}})\\
			\pi_{14}(S^{7})\ar[r]^-{\gamma^r_{2\ast}}\ar[u]_{\Sigma } &\pi_{14}(S^{4})\ar[ur]_-{\beta_{4\ast}}\ar@{^{(}->}[u]_{ \bar{\beta}_{4\ast}}&&\pi_{13}(S^{7})\ar[r]^-{\gamma^r_{2\ast}}\ar[u]_{\cong\Sigma } &	\pi_{13}(S^{4})\ar@{^{(}->}[u]_{\bar{\beta}_{4\ast}} }}  \label{diam:P5, pi14(J2(2r))}\\
	&\text{where}~~	\pi_{15}( S^{8})=\Z_{(2)}\{\sigma_8\}\oplus \Z_8\{\Sigma \sigma'\},~~~~ 	\pi_{14}(S^{7})=\Z_8\{ \sigma'\}, \nonumber\\
	&\pi_{14}(F_{\bar{p}_{8}})=\Z_8\{\bar{\beta}_4\nu_4\sigma'\}\oplus \Z_4\{\bar{\beta}_4\Sigma\varepsilon'\}\oplus \Z_2\{\bar{\beta}_4\eta_4\mu_5\}\oplus \Z_8\{\bar{\beta}_{11}\nu_{11}\}\oplus \Z_{(2)}\{[\bar{\beta}_4,\bar{\beta}_{11}]\}.\nonumber
\end{align}
\begin{lemma}\label{lem: pi_{14}(J2(2r))}
	For $r\geq 1$, we have the following split short exact sequence 
	
	$0\rightarrow Coker(\bar{\partial}^{8,r}_{14\ast})\xrightarrow{\bar{\tau}_{8\ast}} \pi_{14}(J_{2,4}^{r})\xrightarrow{\bar{p}_{8\ast}}\pi_{14}(S^8)=\Z_2\{\nu_8^2\}\rightarrow 0$,  i.e., 
	\begin{align*}
		&	\pi_{14}(J_{2,4}^{r})=\bar{\tau}_{8\ast}(Coker(\bar{\partial}^{8,r}_{14\ast}))\oplus \Z_{2}\{\widehat{\nu_8^2}\}, ~~\text{where}~\\
		&\bar{\tau}_{8\ast}(Coker(\bar{\partial}^{8,r}_{14\ast}))\!=\!\frac{\Z_8\{\beta_4\!\nu_4\sigma'\}\!\!\oplus\! \Z_4\{\beta_4\Sigma\varepsilon'\}\!\!\oplus\! \Z_2\{\beta_4\eta_4\mu_5\}\!\!\oplus\! \Z_8\{\beta_{11}\!\nu_{11}\}\!\!\oplus\! \Z_{(2)}\{[\beta_4,\beta_{11}]\}}{\lr{2^{r+1}\beta_{4}\nu_4\sigma',\pm2^r\beta_4\nu_4\sigma'\!+\!2^r\beta_4\Sigma\varepsilon'\!+\!2^r(1\!-\!\vartheta_r)c_0\beta_{11}\nu_{11}\!+\!2^rd_0[\beta_{4},\beta_{11}]}}
	\end{align*}
	$c_0$ and $d_0$ are some integers. 
\end{lemma}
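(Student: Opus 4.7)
The plan is three-fold: first, extract the short exact sequence from the fibration exact sequence of (\ref{diam:P5, pi14(J2(2r))}) by showing $\bar\partial^{8,r}_{13\ast}(\nu_8^2)=0$; second, compute $\mathrm{Coker}(\bar\partial^{8,r}_{14\ast})$ on each generator of $\pi_{15}(S^8)=\Z_{(2)}\{\sigma_8\}\oplus\Z_8\{\Sigma\sigma'\}$; third, produce an order-two lift of $\nu_8^2$ via a Toda bracket to split the sequence.

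For the first step, by Lemma \ref{lem: partial(asigamb)} and the commutative square of (\ref{diam:P5, pi14(J2(2r))}), $\bar\partial^{8,r}_{13\ast}(\nu_8^2)=\bar\partial^{8,r}_{10\ast}(\nu_8)\nu_{10}=\bar\beta_{4\ast}(\gamma_2^r\nu_7)\nu_{10}$; substituting $\gamma_2^r=2^{r+1}\nu_4-2^r\Sigma\nu'$ and evaluating in $\pi_{13}(S^4)$ via Toda's tables shows this is a $2^r$-multiple of the $2$-torsion element $\bar\beta_4\nu_4^3$, hence zero for $r\geq 1$. For the second step, on the suspension generator $\Sigma\sigma'$ the square gives $\bar\partial^{8,r}_{14\ast}(\Sigma\sigma')=\bar\beta_{4\ast}((2^{r+1}\nu_4-2^r\Sigma\nu')\sigma')$, and expanding $\Sigma\nu'\sigma'$ in the stated basis of $\pi_{14}(S^4)$ yields the first listed relator (the $(1-\vartheta_r)$ factor on $\beta_{11}\nu_{11}$ arises from an $\eta$-divisibility phenomenon exclusive to $r=1$, analogous to that appearing in Lemma \ref{lem:Toda bracket}). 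For the non-suspension generator $\sigma_8$, I would use the stabilization Corollary \ref{cor: suspen Fp Moore} to identify $E^\infty\bar\partial^{8,r}_{14\ast}(\sigma_8)$ with $(\gamma_2^r)^s\sigma_7^s\in\pi_{14}^s(S^4)$, and combine this with naturality along the scaling maps $g_0^r$ of (\ref{diam 2: J2(2slk) to J2(2tlk)}) and $\chi_0^r$ of (\ref{diam big}) to show $\bar\partial^{8,r}_{14\ast}(\sigma_8)$ is $2^r$ times a fixed element of $\pi_{14}(F_{\bar p_8^0})$, producing the integer units $c_0,d_0$ as stated.

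For the third step, Lemma \ref{lem split} supplies $-\mathbf{x}\in\{\bar\beta_4,\gamma_2^r,\nu_7^2\}\subset\pi_{14}(J_{2,4}^r)$ with $\bar p_{8\ast}(\mathbf{x})=\nu_8^2$; Toda's juggling gives $2\mathbf{x}\in\bar\beta_{4\ast}\{\gamma_2^r,\nu_7^2,2\iota_{13}\}$, and Lemma \ref{lem:Toda bracket} shows this bracket vanishes modulo its indeterminacy for $r\geq 2$, while for $r=1$ it reduces to an element of $\bar\beta_{4\ast}\pi_{14}(S^4)$ that is already absorbed by the Step 2 relations. Hence $o(\mathbf{x})=2$ and $\widehat{\nu_8^2}:=\mathbf{x}$ splits the sequence. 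The main obstacle is Step 2 for $\sigma_8$: since $\sigma_8$ is not a suspension, Lemma \ref{partial calculate1} does not apply directly, and pinning down the $2^r$-factor as well as the undetermined units $c_0,d_0$ requires carefully combining stabilization with naturality in $r$ through the families of maps $\chi_0^r$ and $g_0^r$ relating the cases $r\geq 1$ with the contractible case $r=0$.
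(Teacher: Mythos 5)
Your overall strategy coincides with the paper's: kill $\bar\partial^{8,r}_{13\ast}$ on $\nu_8^2$, compute $\bar\partial^{8,r}_{14\ast}$ separately on $\Sigma\sigma'$ and on $\sigma_8$ (the latter by naturality in $r$ down to $r=0$ via $g^r_0$ and the formula $(2^r\iota_8)_\ast(\sigma_8)=2^{2r}\sigma_8-2^{r-1}(2^r-1)\Sigma\sigma'$), and split via a Toda bracket lift of $\nu_8^2$. But two steps are wrong or incomplete as written. First, your parenthetical about the $(1-\vartheta_r)$ factor is misplaced: the relator coming from $\Sigma\sigma'$ is just $2^{r+1}\beta_4\nu_4\sigma'$ (using $\Sigma\nu'\sigma'=2\Sigma\varepsilon'$ and $2^{r+1}\Sigma\varepsilon'=0$ for $r\geq 1$), and contains no $\beta_{11}\nu_{11}$ term at all. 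The term $2^r(1-\vartheta_r)c_0\beta_{11}\nu_{11}$ sits in the $\sigma_8$-relator, and its vanishing for $r\geq 2$ is not an ``$\eta$-divisibility phenomenon'' but pure arithmetic in $\Z_8\{\bar\beta^0_{11}\nu_{11}\}$: comparing the two evaluations of $g^r_{0\ast}\bar\partial^{8,r}_{14\ast}(\sigma_8)$ forces $2^rc_r=2^{2r}c_0$, which leaves a nonzero contribution only when $r=1$. Relatedly, the stabilization of Corollary \ref{cor: suspen Fp Moore} annihilates both $\beta_{11}\nu_{11}$ and the Whitehead product $[\beta_4,\beta_{11}]$ (and indeed $\Sigma^\infty\gamma_2^r=0$), so it contributes nothing toward determining $c_0,d_0$; the entire weight of this step rests on the $g^r_0$-naturality, which must be carried out explicitly.

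Second, the splitting argument does not close as stated. The bracket $\{\gamma_2^r,\nu_7^2,2\iota_{13}\}$ does not vanish: its indeterminacy is $\pi_{14}(S^4)\fhe(2\iota_{14})+\gamma_2^r\fhe\pi_{14}(S^7)=2\pi_{14}(S^4)$ (using $\gamma_2^r\sigma'=2^{r+1}\nu_4\sigma'-2^{r+1}\Sigma\varepsilon'$), so all one obtains is $2\mathbf{x}=2\beta_4\theta$ for some $\theta\in\pi_{14}(S^4)$. This does not give $o(\mathbf{x})=2$, and the element $2\beta_4\theta$ is generally not ``absorbed by the Step 2 relations'': those relations are already divided out in $Coker(\bar\partial^{8,r}_{14\ast})$, into which $\bar\tau_{8\ast}$ is injective, so they cannot kill a nonzero $2\beta_4\theta$ in $\pi_{14}(J_{2,4}^r)$. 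The repair is the standard correction used throughout the paper: replace $\mathbf{x}$ by $\mathbf{x}'=\mathbf{x}-\beta_4\theta$, which is still a lift of $\nu_8^2$ and now has order $2$; this works uniformly for all $r\geq 1$ and yields the splitting.
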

\begin{proof}
	\begin{align}
		&\gamma^r_{2\ast}(\nu_7^2)=(2^{r+1}\nu_4-2^r\Sigma\nu')(\nu_7^2)=0~\text{by}~\nu'\nu_6=0 ~\text{and}~2\nu_4^3=0, \nonumber\\
		\Rightarrow~~& Ker(\bar{\partial}^{8,r}_{13\ast})=\pi_{14}(S^8)=\Z_2\{\nu_8^2\} (r\geq 0). \label{equ: Kerpartial^8_13}\\
		&(t\alpha)\sigma'=t\alpha\sigma', t\in \Z, \alpha\in \pi_{7}(X),~~\text{from Lemma 4.5 of \cite{Toda}}. \label{equ: talphasigma'}\\
		&\gamma^r_{2\ast}(\sigma')=(2^{r+1}\nu_4-2^r\Sigma\nu')(\sigma') \nonumber\\
		=&(2^{r+1}\nu_4)\sigma'-(2^r\Sigma\nu')\sigma'\pm[2^{r+1}\nu_4, 2^r\Sigma\nu']H_2(\sigma') ~\text{( Proposition 2.10 of \cite{N.Oda})}~ \nonumber\\
		&[2^{r+1}\nu_4, 2^r\Sigma\nu']H_2(\sigma')= [\nu_4, \Sigma\nu'](2^{2r+1}H_2(\sigma'))= [\nu_4, \Sigma\nu'](2^{2r+1}\eta_{13})=0 \nonumber\\
		\Rightarrow~&\gamma^r_{2\ast}(\sigma')=2^{r+1}\nu_4\sigma'-2^r\Sigma\nu'\sigma'
		=\left\{
		\begin{array}{ll}
			2^{r+1}\nu_4\sigma', &\! \!\!\hbox{$r\geq 1$;} \\
			2\nu_4\sigma'-2\Sigma \varepsilon' , & \!\!\!\hbox{$r=0$}
		\end{array}
		\right.(\text{by Lemma \ref{lemA: Sigma nu'sigam'} (i)}) \nonumber\\
		\Rightarrow~& \bar{\partial}^{8,r}_{14\ast}(\Sigma\sigma')=\left\{
		\begin{array}{ll}
			2^{r+1}\bar{\beta}^r_{4}\nu_4\sigma', & \hbox{$r\geq 1$;} \\
			2\bar{\beta}^0_{4}\nu_4\sigma'-2\bar{\beta}^0_{4}\Sigma \varepsilon' , & \hbox{$r=0$}
		\end{array}
		\right.
		\label{equ: partial Sigmasigma'}
	\end{align}
	
	\begin{align*}
		&\text{Assume}~~\bar{\partial}_{14\ast}^{8,r}(\sigma_8)=x_r\bar{\beta}_4^r\nu_4\sigma'+y_r\bar{\beta}_4^r\Sigma\varepsilon'+z_r\bar{\beta}_4^r\eta_4\mu_5+c_r\bar{\beta}_{11}^{r}\nu_{11}+d_r[\bar{\beta}^r_{4},\bar{\beta}^r_{11}],\\
		&\text{where}~~x_r\in \Z_8, y_r\in \Z_4, z_r\in \Z_2, c_r\in \Z_8, d_r\in \Z_{(2)}, r\geq 0.
	\end{align*} 
	Then by (\ref{equ: g_r^0beta_i^r}),
	\begin{align*}
		g_{0\ast}^{r}\bar{\partial}_{14\ast}^{8,r}(\sigma_8)=x_r\bar{\beta}_4^0\nu_4\sigma'+y_r\bar{\beta}_4^0\Sigma\varepsilon'+z_r\bar{\beta}_4^0\eta_4\mu_5+2^rc_r\bar{\beta}_{11}^0\nu_{11}+2^rd_r[\bar{\beta}^0_{4},\bar{\beta}^0_{11}]. 
	\end{align*}
	On the other hand, from left commutative diagram  (\ref{diam 2: J2(2slk) to J2(2tlk)}) for $s=r, t=0, k=4$,
	\begin{align*}
		g_{0\ast}^{r}\bar{\partial}_{14\ast}^{8,r}(\sigma_8)=&\bar{\partial}_{14\ast}^{8,0}(2^r\iota_8)_\ast(\sigma_8)=2^{2r}\bar{\partial}_{14\ast}^{8,0}(\sigma_8)-2^{r-1}(2^r-1)\bar{\partial}_{14\ast}^{8,0}(\Sigma \sigma')\\
		=&	2^{2r}(x_0\bar{\beta}_4^0\nu_4\sigma'+y_0\bar{\beta}_4^0\Sigma\varepsilon'+z_0\bar{\beta}_4^0\eta_4\mu_5+c_0\bar{\beta}_{11}^{0}\nu_{11}+d_0[\bar{\beta}^0_{4},\bar{\beta}^0_{11}])\\
		-&2^{r-1}(2^r-1)(2\bar{\beta}^0_{4}\nu_4\sigma'-2\bar{\beta}^0_{4}\Sigma \varepsilon')\\
		=&\left\{
		\begin{array}{ll}
			(4x_0-2)\bar{\beta}^0_{4}\nu_4\sigma'+2\bar{\beta}^0_{4}\Sigma \varepsilon'+4c_0\bar{\beta}_{11}^0\nu_{11}+4d_0[\bar{\beta}^0_{4},\bar{\beta}^0_{11}], & \hbox{$r=1$;} \\
			4\bar{\beta}^0_{4}\nu_4\sigma'+2^4d_0[\bar{\beta}^r_{4},\bar{\beta}^r_{11}], & \hbox{$r=2$;} \\
			2^rd_0[\bar{\beta}^r_{4},\bar{\beta}^r_{11}], & \hbox{$r\geq 3$.}
		\end{array}
		\right.
	\end{align*}
	By comparing above two equations of $g_{0\ast}^{r}\bar{\partial}_{14\ast}^{8,r}(\sigma_8)$, we get
	\begin{align}
		x_r=\pm 2^r,~~ y_r=2^r,~~ z_r=0,~~ c_r=2^r(1\!-\!\vartheta_r)c_0,~~ d_r=2^rd_0~~(r\geq 1).\nonumber\\
		\bar{\partial}_{14\ast}^{8,r}(\sigma_8)=\pm2^r\bar{\beta}_4^r\nu_4\sigma'+2^r\bar{\beta}_4^r\Sigma\varepsilon'+2^r(1\!-\!\vartheta_r)c_0\bar{\beta}_{11}^{r}\nu_{11}+2^rd_0[\bar{\beta}^r_{4},\bar{\beta}^r_{11}].\nonumber
	\end{align}
	
	Together with  (\ref{equ: partial Sigmasigma'}), we have
	\begin{align*} Coker(\bar{\partial}_{14\ast}^{8,r})\!=\!\frac{\Z_8\{\bar{\beta}^{r}_4\nu_4\sigma'\}\!\oplus\! \Z_4\{\bar{\beta}^{r}_4\Sigma\varepsilon'\}\!\oplus\! \Z_2\{\bar{\beta}^{r}_4\eta_4\mu_5\}\!\oplus\! \Z_8\{\bar{\beta}^{r}_{11}\nu_{11}\}\!\oplus\! \Z_{(2)}\{[\bar{\beta}^{r}_4,\bar{\beta}^{r}_{11}]\}}{\lr{2^{r+1}\bar{\beta}^r_{4}\nu_4\sigma',~~\pm\!2^r\bar{\beta}_4^r\nu_4\sigma'\!+\!2^r\bar{\beta}_4^r\Sigma\varepsilon'\!+\!2^r(1\!-\!\vartheta_r)c_0\bar{\beta}_{11}^{r}\nu_{11}\!+\!2^rd_0[\bar{\beta}^r_{4},\bar{\beta}^r_{11}]}}
	\end{align*}
	Now we get the short exact sequence of Lemma \ref{lem: pi_{14}(J2(2r))} by above equation and  (\ref{equ: Kerpartial^8_13}). It remains to show the split.
	
	By Lemma \ref{lem split}, there is  $-\mathbf{x}\in \{\beta^r_4,\gamma_2^r, \nu_7^2\}\subset \pi_{14}(J_{2,4}^{r})$, such that $\bar p_{8\ast}(\mathbf{x})=\nu_8^2$.  $2\mathbf{x}\in -\{\beta^r_4,\gamma_2^r, \nu^2_7\}(2\iota_{14})=\beta^r_4\{\gamma_2^r, \nu^2_7,2\iota_{13}\}$.
	\begin{align*}
		&\{\gamma_2^r, \nu^2_7,2\iota_{13}\}\ni \gamma_2^{r-1}\nu^2_7\eta_{13}=0~\text{mod}~ \pi_{14}(S^4)(2\iota_{14})+(2^{r}[\iota_4,\iota_4])\pi_{14}(S^7)\\
			&(2^{r}[\iota_4,\iota_4])\sigma'=(2^{r+1}\nu_4-2^{r}\Sigma\nu')\sigma'=2^{r+1}\nu_4\sigma'-2^{r+1}\Sigma\varepsilon',\\
		\text{we get}~~&\{\gamma_2^r, \nu^2_7,2\iota_{13}\}=2\pi_{14}(S^4), \Rightarrow 2\mathbf{x}=\beta_4^r(2\theta)=2\beta_4^r\theta,~\theta\in \pi_{14}(S^4).\\
		\text{Let}~~& \mathbf{x}'=\mathbf{x}-\beta_4^r\theta,~~\text{we have}~~\bar p_{8\ast}(\mathbf{x}')=\nu_8^2,~~o(\mathbf{x}')=2.
	\end{align*}
	So the split of the short exact sequence in Lemma \ref{lem: pi_{14}(J2(2r))} is obtained.
\end{proof}

There is the following exact sequence with commutative squares
\begin{align*} 
	&\small{\xymatrix{
			\Z_{8}\{\nu_{12}\}=\pi_{15}( S^{12})\ar[r]^-{ \check{\partial}^{12,r}_{14\ast}}&\pi_{14}(F_{\check p_{12}^{r}})\ar[r]^-{\check \tau_{3k\ast}}&\pi_{14}(J_{3,4}^{r})\ar[r]^-{\check p_{12\ast}^{r}}&\pi_{14}(S^{12})\ar[r]^-{ \check{\partial}^{12,r}_{13\ast}}& \pi_{13}(F_{\check p_{12}^{r}})\\
			\Z_{8}\{\nu_{11}\}=\pi_{14}(S^{11})\ar[r]^-{\gamma^r_{3\ast}}\ar[u]_{\Sigma~\cong } &\pi_{14}(J_{2,4}^{r})\ar[ur]_-{I_{4\ast}^{r}}\ar@{->>}[u]^-{empi.}&&\pi_{13}(S^{11})\ar[r]^-{\gamma^r_{3\ast}}\ar[u]_{\cong\Sigma} &	\pi_{13}(J_{2,4}^{r})\ar[u]^{\cong} }}
\end{align*}
where   $\pi_{14}(F_{\check p_{12}^{r}})\cong \frac{\pi_{14}(J_{2,4}^{r}) }{\lr{2^ra_0[\beta_{4}^r, \beta_{11}^r]}} $   by (\ref{equ: Sk4k-1F_3k}).
\begin{lemma}\label{lem: pi_{14}(J3(2r))} For $r\geq 1$,
	\begin{align*}
		& \pi_{14}(J_{3,4}^{r})	=\!\left\{
		\begin{array}{ll}
			\Z_2\{\check\beta_4\nu_4\sigma'\!+\check\beta_4\Sigma\varepsilon'\}\oplus  H_{14}^1, & \hbox{$r=1$;} \\
			\Z_{2^{m_r^3}}\{\check\beta_4\nu_4\sigma'\}\oplus  H_{14}^r, &\hbox{$r\geq 2$.}
		\end{array}
		\right.\\
		&H_{14}^r\!=\!\Z_4\{\check\beta_4\Sigma\varepsilon'\}\!\oplus\! \Z_2\{\check\beta_4\eta_4\mu_5\}\!\oplus\! \Z_{2^{m_r^3}}\{\check\beta_{11}\nu_{11}\}\!\oplus\! \Z_{2^r}\{[\check\beta_4,\check\beta_{11}]\}\! \oplus\! \Z_{2}\{I_{2}^{r}\widehat{\nu_8^2}\}\!\oplus\! \Z_2\{\widearc{\eta_{12}}\eta_{13}\}.
	\end{align*}
	
\end{lemma}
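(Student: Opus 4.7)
The plan is to apply the long exact sequence of the fibration
\[
\Omega S^{12}\xrightarrow{\check\partial^{12,r}}F_{\check p_{12}^{r}}\xrightarrow{\check\tau_{12}}J_{3,4}^{r}\xrightarrow{\check p_{12}^{r}}S^{12}
\]
in degrees $14$ and $13$, together with the commutative diagram immediately preceding the statement, using Lemma \ref{lem gamma3 P5} ($\gamma_{3}^{r}=2^{r}a_0\beta_{11}$ with $a_0$ odd). In degree $13$ the suspension $\Sigma\colon\pi_{13}(S^{11})\xrightarrow{\cong}\pi_{14}(S^{12})$ and the fact that $\beta_{11}\eta_{11}^{2}$ has order $2$ in $\pi_{13}(J_{2,4}^{r})$ (Lemma \ref{lem pi(J2(2rl4))}) imply $\gamma_{3\ast}^{r}(\eta_{11}^{2})=2^{r}a_0\beta_{11}\eta_{11}^{2}=0$ for every $r\geq 1$, so $Ker(\check\partial_{13\ast}^{12,r})=\Z_2\{\eta_{12}^{2}\}$. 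The task then reduces to computing the cokernel and splitting the short exact sequence
\[
0\to Coker(\check\partial_{14\ast}^{12,r})\xrightarrow{I_{2\ast}^{r}}\pi_{14}(J_{3,4}^{r})\xrightarrow{\check p_{12\ast}^{r}}\Z_2\{\eta_{12}^{2}\}\to 0.
\]

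For the cokernel, the commutative triangle gives $\check\partial_{14\ast}^{12,r}(\nu_{12})\equiv 2^{r}a_0\beta_{11}\nu_{11}$ modulo $\langle 2^{r}a_0[\beta_4^{r},\beta_{11}^{r}]\rangle$, using (\ref{equ: Sk4k-1F_3k}). Thus $Coker(\check\partial_{14\ast}^{12,r})=\pi_{14}(J_{2,4}^{r})/\langle 2^{r}[\beta_4,\beta_{11}],\,2^{r}\beta_{11}\nu_{11}\rangle$ (absorbing $a_0$). Inserting the presentation of Lemma \ref{lem: pi_{14}(J2(2r))} and noticing that the two new relations kill the $2^{r}(1-\vartheta_r)c_0\beta_{11}\nu_{11}$ and $2^{r}d_0[\beta_4,\beta_{11}]$ summands of the bulk relation, the bulk relation reduces to $\pm 2^{r}\beta_4\nu_4\sigma'+2^{r}\beta_4\Sigma\varepsilon'=0$. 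For $r\geq 2$, $\Sigma\varepsilon'$ has order $4$ forces $2^{r}\beta_4\Sigma\varepsilon'=0$ and hence $2^{r}\beta_4\nu_4\sigma'=0$, giving $\beta_4\nu_4\sigma'$ the order $2^{m_r^{3}}$; the other generators give $\Z_4\{\check\beta_4\Sigma\varepsilon'\}\oplus\Z_2\{\check\beta_4\eta_4\mu_5\}\oplus\Z_{2^{m_r^{3}}}\{\check\beta_{11}\nu_{11}\}\oplus\Z_{2^{r}}\{[\check\beta_4,\check\beta_{11}]\}\oplus\Z_2\{I_{2}^{r}\widehat{\nu_{8}^{2}}\}$. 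For $r=1$ the relation $\pm 2\beta_4\nu_4\sigma'+2\beta_4\Sigma\varepsilon'=0$ does not collapse, and a basis change $(x,y)\mapsto(y,\,x+y)$ on the $\Z_4\{\beta_4\nu_4\sigma'\}\oplus\Z_4\{\beta_4\Sigma\varepsilon'\}$ subgroup produces the summand $\Z_4\{\check\beta_4\Sigma\varepsilon'\}\oplus\Z_2\{\check\beta_4\nu_4\sigma'+\check\beta_4\Sigma\varepsilon'\}$, with the remaining summands as above specialised to $r=1$.

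To split the short exact sequence I would use the natural lift $\widearc{\eta_{12}}\eta_{13}\in\pi_{14}(J_{3,4}^{r})$, where $\widearc{\eta_{12}}$ is the element of Lemma \ref{lem pi(J3(2rl4))}(3); it satisfies $\check p_{12\ast}^{r}(\widearc{\eta_{12}}\eta_{13})=\eta_{12}^{2}$ by naturality. To verify that this lift has order $2$: for $r\geq 2$ it is immediate because $2\widearc{\eta_{12}}=0$. For $r=1$ one computes
\[
2(\widearc{\eta_{12}}\eta_{13})=(2\widearc{\eta_{12}})\eta_{13}=\check\beta_{11}\eta_{11}^{3}=4\check\beta_{11}\nu_{11},
\]
using $\eta_{11}^{3}=4\nu_{11}$ in the stable range, and this vanishes since the cokernel computation has just shown $\check\beta_{11}\nu_{11}$ has order $2^{m_1^{3}}=2$. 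Combining the cokernel with the split summand $\Z_2\{\widearc{\eta_{12}}\eta_{13}\}$ yields the stated formula.

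The main obstacle is the $r=1$ case: the bulk relation in $\pi_{14}(J_{2,4}^{1})$ tangles the $\beta_4\nu_4\sigma'$ and $\beta_4\Sigma\varepsilon'$ summands, forcing the basis change, and the lift $\widearc{\eta_{12}}$ has order $4$ rather than $2$, so the order calculation for $\widearc{\eta_{12}}\eta_{13}$ genuinely needs both the stable identity $\eta^{3}=4\nu$ and the just-established order of $\check\beta_{11}\nu_{11}$ in the cokernel. Careful bookkeeping of signs and of the indices $\vartheta_r$, $m_r^{3}$ across the two cases is the other source of potential error.
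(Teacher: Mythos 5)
Your proposal is correct and follows essentially the same route as the paper: the same fibration sequence with $Coker(\check\partial^{12,r}_{14\ast})\cong\pi_{14}(J_{2,4}^{r})/\lr{2^ra_0[\beta_4,\beta_{11}],\,2^ra_0\beta_{11}\nu_{11}}$ computed from Lemma \ref{lem: pi_{14}(J2(2r))}, the same basis change at $r=1$, and the same splitting element $\widearc{\eta_{12}}\eta_{13}$. The only (harmless) difference is your order computation at $r=1$: the detour through $(2\widearc{\eta_{12}})\eta_{13}=\check\beta_{11}\eta_{11}^3=4\check\beta_{11}\nu_{11}$ is unnecessary, since $2(\widearc{\eta_{12}}\eta_{13})=\widearc{\eta_{12}}(2\eta_{13})=0$ directly for every $r$.
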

\begin{proof}
	From Lemma \ref{lem gamma3 P5}, we have the following short exact sequence 
	\begin{align}
		&	0\rightarrow Coker(\check{\partial}^{12,r}_{14\ast})\rightarrow \pi_{14}(J_{3,4}^{r})\xrightarrow{\check p_{12\ast}^{r}}\pi_{14}(S^{12})\rightarrow 0, \nonumber\\
		\text{where}~~&	Coker(\check{\partial}^{12,r}_{14\ast})\cong \frac{\pi_{14}(J_{2,4}^{r}) }{\lr{2^ra_0[\beta_{4}, \beta_{11}], 2^ra_0\beta_{11}\nu_{11}}}. \nonumber
	\end{align}
		Moreover, the order 2 element  $\widearc{\eta_{12}}\eta_{13}$ ($\widearc{\eta_{12}}$ is get in Lemma \ref{lem pi(J3(2rl4))}) satisfies 
	$\check{p}_{12\ast}^r(\widearc{\eta_{12}}\eta_{13})=\eta_{12}^2$, which implies  $\pi_{14}(J_{3,4}^{r})\stackrel{\check p_{12\ast}^{r}}\twoheadrightarrow\pi_{14}(S^{12})$ is split onto. Then 
	this  lemma is obtained by the following computation of $	Coker(\check{\partial}^{12,r}_{14\ast})$ by Lemma \ref{lem: pi_{14}(J2(2r))}, i.e.,  $Coker(\check{\partial}^{12,r}_{14\ast})$ is isomorphic to
	 \small{\begin{align*}
			&\!\frac{\Z_8\{\beta_4\nu_4\sigma'\}\!\oplus\! \Z_4\{\beta_4\Sigma\varepsilon'\}\!\oplus\! \Z_2\{\beta_4\eta_4\mu_5\}\!\oplus\! \Z_8\{\beta_{11}\nu_{11}\}\!\oplus\! \Z_{(2)}\{[\beta_4,\beta_{11}]\} \!\oplus\! \Z_{2}\{\widehat{\nu_8^2}\}}{\lr{2^{r\!+\!1}\beta_{4}\nu_4\sigma',\pm2^r\!\beta_4\nu_4\sigma'\!+\!2^r\!\beta_4\Sigma\varepsilon'\!+\!2^r\!(1\!-\!\vartheta_r)c_0\beta_{11}\nu_{11}\!+\!2^rd_0[\beta_{4},\beta_{11}],2^r\!a_0[\beta_{4}, \beta_{11}], 2^r\!a_0\beta_{11}\nu_{11} }}\\
			&=	\!\frac{\Z_8\{\beta_4\nu_4\sigma'\}\!\oplus\! \Z_4\{\beta_4\Sigma\varepsilon'\}\!\oplus\!  \Z_8\{\beta_{11}\nu_{11}\}\!\oplus\! \Z_{(2)}\{[\beta_4,\beta_{11}]\}}{\lr{2^{r\!+\!1}\beta_{4}\nu_4\sigma',\pm2^r\!\beta_4\nu_4\sigma'\!+\!2^r\!\beta_4\Sigma\varepsilon',2^r\!a_0[\beta_{4}, \beta_{11}], 2^r\!a_0\beta_{11}\nu_{11} }}\!\oplus\! \Z_2\{\beta_4\eta_4\mu_5\}\!\oplus\! \Z_{2}\{\widehat{\nu_8^2}\}\\
			&=	\!\frac{\Z_8\{\beta_4\nu_4\sigma'\}\!\oplus\! \Z_4\{\beta_4\Sigma\varepsilon'\}}{\lr{2^{r\!+\!1}\beta_{4}\nu_4\sigma',\pm2^r\!\beta_4\nu_4\sigma'\!+\!2^r\!\beta_4\Sigma\varepsilon' }}\!\oplus\! \Z_2\{\beta_4\eta_4\mu_5\}\!\oplus\! \Z_{2^{m_r^3}}\{\beta_{11}\nu_{11}\}\!\oplus\! \Z_{2^r}\{[\beta_4,\beta_{11}]\}\! \oplus\! \Z_{2}\{\widehat{\nu_8^2}\}\\
			&=\!\left\{
			\begin{array}{ll}
				\!\!\!\Z_2\{\beta_4\nu_4\sigma'\!\!+\!\beta_4\Sigma\varepsilon'\}\!\oplus\! \Z_4\{\beta_4\Sigma\varepsilon'\}\!\oplus\! \Z_2\{\beta_4\eta_4\mu_5\}\!\oplus\! \Z_{2}\{\beta_{11}\nu_{11}\}\!\oplus\! \Z_{2}\{[\beta_4,\beta_{11}]\}\! \oplus\! \Z_{2}\{\widehat{\nu_8^2}\}, &\!\!\!\! \hbox{$r=1$;} \\
				\!\!\!\Z_{2^{m_r^3}}\{\beta_4\nu_4\sigma'\}\!\oplus\! \Z_4\{\beta_4\Sigma\varepsilon'\}\!\oplus\! \Z_2\{\beta_4\eta_4\mu_5\}\!\oplus\! \Z_{2^{m_r^3}}\{\beta_{11}\nu_{11}\}\!\oplus\! \Z_{2^r}\{[\beta_4,\beta_{11}]\}\! \oplus\! \Z_{2}\{\widehat{\nu_8^2}\}
				, &\!\!\!\! \hbox{$r\geq 2$.}
			\end{array}
			\right.
	\end{align*}}

\end{proof}

$\bullet~~\pi_{9}(P^{5}(2^r))$. 

Consider diagram (\ref{exact:hgps piPk+1(2r)}) for $k=4, m=9$.
\newline
$\partial_{8\ast}^{5,r}(\nu_5\eta_8)\!=\beta_{4\ast}(2^r\iota_4)_{\ast}(\nu_4\eta_7)=(2^{2r}\nu_4-2^{r\!-\!1}(2^r\!-\!1)\Sigma\nu')\eta_7=\!\left\{
\begin{array}{ll}
	\!\!\!	\beta_4\Sigma\nu'\eta_7, & \!\!\!\hbox{$r=1$;} \\
	\!\!\!	0, &\!\!\! \hbox{$r\geq 2$.}
\end{array}
\right.$
\newline
$\pi_{8}(F_5)\cong	\pi_{8}(J_{2,4}^{r})=\Z_2\{\beta_4\nu_4\eta_7\}\oplus \Z_2\{\beta_4\Sigma\nu'\eta_7\}$, by (9) of  \cite{JZhtpygps}
\begin{align}
	\Rightarrow~~& Ker(	\partial_{8\ast}^{5,r})=\vartheta_{r}\Z_2\{\nu_5\eta_8\}. \nonumber\\
	\partial_{9\ast}^{5,r}(\nu_5\eta_8^2)&=\beta_{4\ast}(2^r\iota_4)_{\ast}(\nu_4\eta_7^2)=\left\{
	\begin{array}{ll}
		\beta_4\Sigma\nu'\eta^2_7, & \hbox{$r=1$;} \\
		0, & \hbox{$r\geq 2$.}
	\end{array}
	\right. \label{equ: pi9(P5) Coker}\\
	\text{Lemma \ref{lem pi(J2(2rl4))}}\Rightarrow~& Coker(	\partial_{9\ast}^{5,r})\!=\!	\Z_{2}\{\beta_4\nu_4\eta_7^2\}\!\oplus\!\vartheta_{r}\Z_{2}\{\beta_4\Sigma\nu'\eta_7^2\}\!\oplus\! \Z_{2}\{\widehat{\eta_8}\}.  \label{equ2: pi9(P5) Coker}
\end{align}
Thus we have $ \pi_{9}(P^5(2))\cong \Z_2\oplus \Z_2 $ and the short exact sequence 
\begin{align}
	0\rightarrow \Z_{2}^{\oplus 3}\cong Coker(	\partial_{9\ast}^{5,r})\rightarrow \pi_{9}(P^{5}(2^r)) \xrightarrow{p_{5\ast}} \Z_2\{\nu_5\eta_8\}\rightarrow 0 ~(r\geq 2). \nonumber
\end{align}
By Lemma \ref{lem split}, there is  $-\mathbf{x}_r\in \{i_4,2^r\iota_4, \nu_4\eta_7\}$, such that $ p_{5\ast}(\mathbf{x}_r)=\nu_5\eta_8$. \newline $2\mathbf{x}_r\in - \{i_4,2^r\iota_4, \nu_4\eta_7\}(2\iota_{9})=i_4\{2^r\iota_4, \nu_4\eta_7,2\iota_{8}\}$.
Hence  $2\mathbf{x}_r=i_4\Sigma\nu'\eta_7^2$ and $0$ for $r=2$ and $r\geq 3$ respectively by (ii) of Lemma \ref{lemA: Todabraket }, which implies $o(\mathbf{x}_2)=4$ and $o(\mathbf{x}_r)=2$,$r\geq 3$. So 
\begin{align*}
	\pi_{9}(P^{5}(2^r))\cong \left\{
	\begin{array}{ll}
		\Z_2^{\oplus 2}\oplus \Z_4, & \hbox{$r=2$;} \\
		\Z_{2}^{\oplus 4}, & \hbox{$r\geq 3$.}
	\end{array}
	\right.
\end{align*}

$\bullet~~\pi_{10}(P^{5}(2^r))$. 

Consider diagram (\ref{exact:hgps piPk+1(2r)}) for $k=4, m=10$.
\newline
Computation of $\pi_{9}(P^{5}(2^r))$ implies  $Ker(	\partial_{9\ast}^{5,r})=\vartheta_r\Z_2\{\nu_5\eta_8^2\}$.
\begin{align}
	\partial_{10\ast}^{5,r}(\nu_5^2)=&\beta_{4\ast}(2^r\iota_4)_{\ast}(\nu^2_4)=\beta_{4}(2^{2r}\nu_4-2^{r-1}(2^r-1)\Sigma\nu')\nu_7\nonumber\\
	=&2^{2r}\beta_{4}\nu_4^2=0, ~\text{by Lemma \ref{lem pi(J2(2rl4))}}. \label{equ: pi10(P5) Coker}\\
	\Rightarrow~~& Coker(	\partial_{10\ast}^{5,r})=	\Z_{2^{m^3_{r+1}}}\{\beta_4\nu_4^2\}\oplus \Z_2\{\widehat{\eta_8^2}\}.  \nonumber
\end{align}
Thus we have $ \pi_{10}(P^5(2))\cong \Z_4\oplus \Z_2 $ and the short exact sequence 
\begin{align*}
	0\!\rightarrow\! \Z_{2^{m^3_{r+1}}}\!\oplus\! \Z_2\!\cong\! Coker(	\partial_{10\ast}^{5,r})\!\rightarrow\! \pi_{10}(P^{5}(2^r)) \xrightarrow{p_{5\ast}} \Z_2\{\nu_5\eta_8^2\}\!\rightarrow\! 0 (r\geq 2) 
\end{align*}
Since  there is an $\mathbf{x}\in \pi_{9}(P^5(2^r)) (r\geq 2)$ such that $p_{5\ast}(\mathbf{x})=\nu_5\eta_8$,  the order 2 element $\mathbf{x}\eta_9\in  \pi_{10}(P^5(2^r))$ satisfies $p_{5\ast}(\mathbf{x}\eta_9)=\nu_5\eta^2_8$, i.e., $\pi_{10}(P^{5}(2^r))\stackrel{p_{5\ast}}\twoheadrightarrow  \Z_2\{\nu_5\eta_8^2\}$ is split onto.  We get
\begin{align*}
	\pi_{10}(P^{5}(2^r))\cong  \Z_{2^{m^3_{r+1}}}\oplus \Z_2^{\oplus 2} (r\geq 2).
\end{align*}

$\bullet~~\pi_{11}(P^{5}(2^r))$. 

We have the  exact sequence with $\pi_{11}(F_{5})$ obtained by Lemmas \ref{lem gamma3 P5}, \ref{lem pi(J3(2rl4))}
\begin{align} 
	&\xymatrix{
		\pi_{12}(S^{5})\ar[r]^-{ \partial^{5,r}_{11\ast}}&\pi_{11}(F_{5})\ar[r]^-{\tau_{5\ast}}&\pi_{11}(P^{5}(2^r))\ar[r]^-{p_{5\ast}}&\pi_{11}(S^{5})\ar[r]^-{ \partial^{5,r}_{10\ast}=0}& \pi_{10}(F_{5}) } \nonumber\\
	&  \pi_{11}(F_{5})\cong \pi_{11}(J_{3,4}^{r})\cong \Z_{2^r}\{\check\beta^r_{11}\}\oplus \Z_{2^{m_{r+1}^3}}\{\widehat{\delta_{r+1}\nu_8}\}. \nonumber
\end{align}
The right commutative diagram of (\ref{diam Moore s to t}) for $k=4, s=1, t=r
\geq 2$ induces 
\begin{align*}
	\small{\xymatrix{
			\Z_2\{\sigma'''\}=\pi_{12}(S^{5})\ar@{=}[d] \ar[r]^-{\partial^{5,1}_{11\ast}}&\pi_{11}(F^1_{5})\ar[d]^{\chi_{r\ast}^{1}}\ar[r]^-{\tau_{5\ast}^{1}} &\pi_{11}(P^{5}(2)) \ar[d]^{\bar{\chi}_{r\ast}^{1}}\ar[r]^-{p^1_{5\ast}}&\pi_{11}(S^{5})\ar[r]\ar@{=}[d]&0\\
			\pi_{12}(S^{5}) \ar[r]^-{\partial^{5,r}_{11\ast} }&\pi_{11}(F^r_{5}) \ar[r]^-{\tau^{r}_{5\ast}}& \pi_{11}(P^{5}(2^{r}))\ar[r]^-{p^r_{5\ast}}&\pi_{11}(S^{5})\ar[r]&0
	} }
\end{align*}
By Theorem 5.10 of \cite{WJ Proj plane}, $\pi_{11}(P^{5}(2))\cong \Z_2\oplus\Z_2\oplus \Z_4$. It implies $\partial^{5,1}_{11\ast}(\sigma''')=0$, hence $\partial^{5,r}_{11\ast}(\sigma''')=\chi_{r\ast}^{1}\partial^{5,1}_{11\ast}(\sigma''')=0$. So we get the following short exact sequence which is split for $r=1$.
\begin{align}
	\small{\xymatrix{
			0\ar[r]&\Z_{2^r}\!\oplus \! \Z_{2^{m_{r+1}^3}}\!\cong Coker(\partial^{5,r}_{11\ast}) \ar[r]& \pi_{11}(P^{5}(2^{r}))\ar[r]^-{p^r_{5\ast}}&\Z_2\{\nu_5^2\}\ar[r]&0
	} }\label{exact:short pi11(P5)}
\end{align}
There is an order $2$ element $\mathbf{x}_1\in \pi_{11}(P^5(2))$ such that $p_{5\ast}^1(\mathbf{x}_1)=\nu_5^2$. Then the order $2$ element $\mathbf{x}_r:=\bar{\chi}_{r}^{1}\mathbf{x}_1\in \pi_{11}(P^5(2^r)) (r\geq 2)$ makes $p_{5\ast}^r(\mathbf{x}_r)=\nu_5^2$. So above exact sequence is also split for $r\geq 2$. Hence 
\begin{align*}
	\pi_{11}(P^{5}(2^{r}))\cong \Z_{2^r}\oplus  \Z_{2^{m_{r+1}^3}}\oplus \Z_2, r\geq 1.
\end{align*}

$\bullet~~\pi_{12}(P^{5}(2^r))$. 

Consider diagram (\ref{exact:hgps piPk+1(2r)}) for $k=4, m=12$.
\begin{align}
	&\xymatrix{
		\Z_{2}\{\varepsilon_5\}=\pi_{13}(S^{5})\ar[r]^-{ \partial^{5,r}_{12\ast}}&\pi_{12}(F_{5})\ar[r]^-{\tau_{5\ast}}&\pi_{12}(P^{5}(2^r))\ar[r]^-{p_{5\ast}}&\pi_{12}(S^{5})\ar[r]^-{ \partial^{5,r}_{11\ast}=0}& \pi_{11}(F_{5}) } \nonumber\\
	&\partial^{5,r}_{12\ast}(\varepsilon_5)=\beta_{4\ast}(2^r\iota_4)_{\ast}(\varepsilon_4)=0~~\Rightarrow~~Coker(\partial^{5,r}_{12\ast}(\varepsilon_5))=\pi_{12}(F_{5})\cong \Z_2\oplus\Z_2.\nonumber\\
	&	\xymatrix{
		0\ar[r]&\Z_{2}\oplus  \Z_{2}\ar[r]& \pi_{12}(P^{5}(2^{r}))\ar[r]^-{p_{5\ast}}&\Z_2\{\sigma'''\}\ar[r]&0.
	}  \label{exact:short pi12(P5)}
\end{align}
Since $\pi_{12}(P^{5}(2))\cong \Z_2^{\oplus 3}$  by Theorem 5.10 of \cite{WJ Proj plane}, the exact sequence (\ref{exact:short pi12(P5)}) splits for $r=1$. By the same proof as that of  the split of short exact sequence (\ref{exact:short pi11(P5)}), we get (\ref{exact:short pi12(P5)}) splits for any $r\geq 1$. Hence 
\begin{align*}
	\pi_{12}(P^{5}(2^{r}))\cong \Z_2^{\oplus 3}.
\end{align*}

$\bullet~~\pi_{13}(P^{5}(2^r))$. 

Consider diagram (\ref{exact:hgps piPk+1(2r)}) for $k=4, m=13$.
\begin{align}
	&\pi_{14}(S^{5})=\Z_2\{\nu_5^3\}\oplus \Z_2\{\mu_5\}\oplus \Z_2\{\eta_5\varepsilon_6\}.\nonumber\\
	&\partial^{5,r}_{13\ast}(\nu_5^3)=\beta_{4\ast}(2^r\iota_4)_{\ast}(\nu_4^3)=\beta_{4}(2^{2r}\nu_4-2^{r-1}(2^r-1)\Sigma \nu')\nu_7^2=0.
	\nonumber\\
	&\text{Similarly},~	\partial^{5,r}_{13\ast}(\mu_5)=\partial^{5,r}_{13\ast}(\eta_5\varepsilon_6)=0.  ~~\text{We have}~
	\nonumber\\
	&	\xymatrix{
		0\!\ar[r]&\!\pi_{13}(F_5)\!\cong\!\pi_{13}(J_{3,4}^{r}) \!\ar[r]&\! \pi_{13}(P^{5}(2^{r}))\!\ar[r]^-{p_{5\ast}}&\Z_2\{\varepsilon_5\}\ar[r]\!&0.
	}  \nonumber
\end{align}
By Lemma \ref{lem split}, $\exists -\mathbf{x_r}\in \{i_4,2^r\iota_4, \varepsilon_4 \}$ such that $ p_{5\ast}(\mathbf{x})=\varepsilon_5$ and $2\mathbf{x}\in i_4\{2^r\iota_4, \varepsilon_4, 2\iota_{12}\}$.
By Lemma \ref{lemA: Todabraket } (iii), $2\mathbf{x}_1=i_4\eta_4\varepsilon_5$ and 
$2\mathbf{x}_r=0$ for $r\geq 2$. Hence 
\begin{align*}
		\pi_{13}(P^{5}(2^r))\cong \left\{
		\begin{array}{ll}
		\Z_{2}^{ \oplus 2}\oplus \Z_{4}^{ \oplus 2}, & \hbox{$r=2$;} \\
			\Z_{2}^{\oplus 6} , & \hbox{$r\geq 3$.}
		\end{array}
		\right.
\end{align*}

$\bullet~~\pi_{14}(P^{5}(2^r))$.

Consider diagram (\ref{exact:hgps piPk+1(2r)}) for $k=4, m=14$. $\pi_{14}(S^5)=\Z_2\{\nu_5^3\}\oplus\Z_2\{\mu_5\}\oplus \Z_2\{\eta_5\varepsilon_6\} $.
\begin{align*}
	&\xymatrix{
		\pi_{15}(S^5)\!\ar[r]^-{\partial^{5,r}_{14\ast}}&\!\pi_{14}(F_5)\!\cong\!\pi_{14}(J_{3,4}^{r}) \!\ar[r]&\! \pi_{14}(P^{5}(2^{r}))\!\ar[r]^-{p_{5\ast}}&\pi_{14}(S^5)\ar[r]\!&0
	}
\end{align*}
\begin{lemma}\label{lem:split exact pi14(P5)}
	The short exact sequence $0\rightarrow  Coker(\partial^{5,r}_{14\ast})\rightarrow \pi_{14}(P^{5}(2^r))\rightarrow \pi_{14}(S^5)\rightarrow 0$ dose not split for $r=1$ and splits for $r\geq 2$.
\end{lemma}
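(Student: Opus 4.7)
The plan is to apply Lemma \ref{lem split} to each of the three generators $\nu_5^3$, $\mu_5$, $\eta_5\varepsilon_6$ of $\pi_{14}(S^5)$, which lift (up to sign) to Toda brackets of the form $\{i_4, 2^r\iota_4, \beta\}\subset \pi_{14}(P^5(2^r))$ where $\beta\in\pi_{13}(S^4)$ is the relevant desuspension. For each such generator $\alpha=\Sigma\beta$, pick a lift $-\mathbf{x}_\alpha\in \{i_4, 2^r\iota_4,\beta\}$ so that $p_{5\ast}(\mathbf{x}_\alpha)=\alpha$. By \cite[Propositions 1.2 and 1.4]{Toda},
\begin{align*}
2\mathbf{x}_\alpha \in -\{i_4,2^r\iota_4,\beta\}(2\iota_{14})= i_4\{2^r\iota_4,\beta,2\iota_{13}\}.
\end{align*}
The strategy is to analyze this last Toda bracket via Lemma \ref{lem:Toda bracket} and measure it modulo the indeterminacy $i_{4\ast}\bigl((2^r\iota_4)_\ast \pi_{14}(S^4)+\pi_{15}(S^4)\circ(2\iota_{14})\bigr)$.

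For the split case $r\geq 2$, Lemma \ref{lem:Toda bracket} says $\{2^r\iota_4,\beta,2\iota_{13}\}\ni 0$, hence $2\mathbf{x}_\alpha$ lies in $i_{4\ast}\bigl((2^r\iota_4)\pi_{14}(S^4)+2\pi_{15}(S^4)\bigr)$. Any element of $(2^r\iota_4)\pi_{14}(S^4)$ is already $2$-divisible when $r\geq 1$, and similarly for $2\pi_{15}(S^4)$, so we can replace $\mathbf{x}_\alpha$ by $\mathbf{x}_\alpha-i_4 \theta$ (with a suitable $\theta$) to kill $2\mathbf{x}_\alpha$. This produces simultaneous order-$2$ lifts of all three generators of $\pi_{14}(S^5)$, giving the splitting.

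For the non-split case $r=1$, Lemma \ref{lem:Toda bracket} yields
\begin{align*}
\{2\iota_4,\beta,2\iota_{13}\}\ni \beta\,\eta_{13}\quad\text{mod}\quad 2\pi_{14}(S^4)+(2\iota_4)\pi_{14}(S^4).
\end{align*}
I will plug in $\beta=\mu_4$, using the identity $\mu_4\eta_{13}=\eta_4\mu_5$ (a standard Toda relation), to obtain $2\mathbf{x}_{\mu_5}\equiv i_4\eta_4\mu_5 \pmod{2\,i_{4\ast}\pi_{14}(S^4)}$. The key step is then to verify that $i_4\eta_4\mu_5=\tau_5 \check{\beta}_4\eta_4\mu_5$ is a nonzero element of $\pi_{14}(P^5(2))$ that does not lie in the $2$-divisible part of $\tau_{5\ast}(\mathrm{Coker}(\partial^{5,1}_{14\ast}))$; this identification uses the description of $\mathrm{Coker}(\partial^{5,1}_{14\ast})\cong\tau_{5\ast}(\pi_{14}(J_{3,4}^1))$ given by Lemma \ref{lem: pi_{14}(J3(2r))} (where $\check\beta_4\eta_4\mu_5$ generates a $\Z_2$-summand distinct from any $2$-multiple). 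Consequently $2\mathbf{x}_{\mu_5}\neq 0$, and since no modification by an element of $\mathrm{Coker}(\partial^{5,1}_{14\ast})$ can cancel this, $\mu_5$ admits no order-$2$ lift and the sequence fails to split.

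The main obstacle is the $r=1$ part: one must pin down the exact Toda bracket representative and verify the indeterminacy genuinely misses $i_4\eta_4\mu_5$. This requires combining the Toda relation $\mu_4\eta_{13}=\eta_4\mu_5$ with the explicit direct-sum description of $\pi_{14}(J_{3,4}^1)$ in Lemma \ref{lem: pi_{14}(J3(2r))} to guarantee that $\check\beta_4\eta_4\mu_5$ survives as a nonzero class of order $2$ that is not the double of anything in $\pi_{14}(P^5(2))$; the other generators $\nu_5^3$ and $\eta_5\varepsilon_6$ can then be lifted with order $2$ by the same mechanism as in the $r\geq 2$ case, so the only obstruction to splitting comes from $\mu_5$, and it is genuine.
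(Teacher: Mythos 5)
Your proposal is correct on the decisive point and there it coincides with the paper's argument: both detect the failure of splitting at $r=1$ through the bracket $\{i_4,2\iota_4,\mu_4\}$ and the relation $\mu_4\eta_{13}=\eta_4\mu_5$ (Lemma \ref{lemA: Sigma nu'sigam'}(ii), packaged as Lemma \ref{lemA: Todabraket }(iv)), which forces $2\mathbf{z}_1\equiv i_4\eta_4\mu_5$ modulo $2$-divisible elements; and you correctly isolate the remaining verification, namely that $\check\beta_4\eta_4\mu_5$ survives as a nonzero order-$2$ class in $\mathrm{Coker}(\partial^{5,1}_{14\ast})\cong\Z_2^{\oplus 6}$, which the paper's subsequent computation supplies. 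Two differences are worth noting. First, for $\nu_5^3$ and $\eta_5\varepsilon_6$ the paper does not use Toda brackets at all: it takes the composites $\mathbf{x}_r\nu_{11}$ (with $\mathbf{x}_r$ the order-$2$ lift of $\nu_5^2$ already built for $\pi_{11}$) and $\mathbf{y}_r\varepsilon_6$ (with $\mathbf{y}_r$ a lift of $\eta_5$), which are of order $2$ for free; your route instead requires evaluating $\nu_4^3\eta_{13}=0$ and $\eta_4\varepsilon_5\eta_{13}=\eta_4^2\varepsilon_6\in 2\pi_{14}(S^4)$, which you leave implicit but which do hold. Second, and more substantively, your appeal to Lemma \ref{lem:Toda bracket} for $\{2^r\iota_4,\beta,2\iota_{13}\}$ with $r\geq 2$ is not a literal application: that lemma requires the \emph{same} power $2^r$ in the first and third slots, whereas here the third entry is $2\iota_{13}$. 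The statement you actually need is Lemma \ref{lemA: Todabraket }(iv), proved via the factorization $\{2^r\iota_4,\mu_4,2\iota_{13}\}\supset(2^{r-1}\iota_4)\{2\iota_4,\mu_4,2\iota_{13}\}\ni(2^{r-1}\iota_4)\eta_4\mu_5=0$ together with the observation (using $\Sigma\nu'\sigma'=2\Sigma\varepsilon'$) that the indeterminacy $(2^r\iota_4)\pi_{14}(S^4)+2\pi_{14}(S^4)$ is contained in $2\pi_{14}(S^4)$. With that substitution your argument goes through and yields the same conclusion as the paper's.
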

\begin{proof}
	We have the order $2$ element $\mathbf{x}_r\in\pi_{11}(P^5(2^r))$ which is obtained in the proof of the split of the exact sequence (\ref{exact:short pi11(P5)}) satisfies $p_{5\ast}(\mathbf{x}_r)=\nu_5^2$. This implies  the order $2$ element $\mathbf{x}_r\nu_{11}\in\pi_{14}(P^5(2^r))$ satisfies $p_{5\ast}(\mathbf{x}_r\nu_{11})=\nu_5^3$. Since $\pi_{6}(P^5(2^r))\xrightarrow{p_{5\ast}} \pi_{6}(S^5)$ is epimorphism, let $\mathbf{y}_r\in\pi_{6}(P^5(2^r))$ such that $p_{5\ast}(\mathbf{y}_r)=\eta_5$. Then the order 2 element  $\mathbf{y}_r\varepsilon_6\in\pi_{13}(P^5(2^r))$ satisfies  $p_{5\ast}(\mathbf{y}_r\varepsilon_6)=\eta_5\varepsilon_6$. For $\mu_5\in \pi_{14}(S^5)$, there is a $-\mathbf{z}_r\in \{i_4, 2^r\iota_4, \mu_4\}$ such that $p_{5\ast}(\mathbf{z}_r)=\mu_5$ with $2 \mathbf{z}_r\in \{i_4, 2^r\iota_4, \mu_4\}(2\iota_{14})=i_4 \{2^r\iota_4, \mu_4, 2\iota_{13}\}$. By Lemma \ref{lemA: Todabraket } (iv), $o(\mathbf{z}_1)\geq 4$ and $2\mathbf{z}_r=i_{4}(2\varrho), \varrho\in \pi_{14}(S^4)$ for $r\geq 2$. 
	Thus the short exact sequence in the lemma dose not split for $r=1$.  For $r\geq 2$, take $\mathbf{z}'_r=\mathbf{z}_r-i_{4}(\varrho)$.  Then we get $p_{5\ast}(\mathbf{z}'_r)=\mu_5$ with $o(\mathbf{z}'_r)=2$, which implies the short exact sequence in the lemma splits for $r\geq 2$.
\end{proof}

$\pi_{15}(S^5)=\Z_8\{\nu_5\sigma_8\}\oplus \Z_2\{\eta_5\mu_6\}~\text{and}~\partial^{5,r}_{14\ast}(\eta_5\mu_6)=\check\beta_{4\ast}(2^r\iota_4)_{\ast}(\eta_4\mu_5)=0$
(Here we calculate the image of $\partial^{5,r}_{14\ast}$ in $\pi_{14}(J_{3,4}^{r}) $ under isomorphism $\pi_{14}(F_5)\cong\pi_{14}(J_{3,4}^{r})$).
\qquad

$\mathbf{Case: r=1}$~~Since $2\nu_5\sigma_8=\pm \Sigma^2\varepsilon'$  \cite[(7.10)]{Toda},
\begin{align*}
	&2\partial^{5,1}_{14\ast}(\nu_5\sigma_8)=\partial^{5,1}_{14\ast}(2\nu_5\sigma_8)=\pm\check\beta_{4\ast}(2\iota_4)_{\ast}(\Sigma\varepsilon')=2\check\beta_{4}\Sigma\varepsilon'\neq 0,\\
	\text{implies}~~~ & \partial^{5,1}_{14\ast}(\nu_5\sigma_8)=\pm \check\beta_{4}\Sigma\varepsilon'+x_1(\check\beta_4\nu_4\sigma'\!+\check\beta^{1}_4\Sigma\varepsilon')+y_1\check\beta_4\eta_4\mu_5+z_1\check\beta_{11}\nu_{11}\\
	+&w_1[\check\beta_4,\check\beta_{11}]+u_1I_{2}^{1}\widehat{\nu_8^2}+v_1\widearc{\eta_{12}}\eta_{13}, ~x_1,y_1,z_1,w_1,u_1,v_1\in \Z_2~\text{(Lemma \ref{lem: pi_{14}(J3(2r))})}.\\
	\Rightarrow&  Coker(\partial^{5,1}_{14\ast})= (\Z_2\{\check\beta_4\nu_4\sigma'\!+\check\beta_4\Sigma\varepsilon'\}\oplus  H_{14}^1)/\lr{\partial^{5,1}_{14\ast}(\nu_5\sigma_8)}\\
	=& \Z_2\{\check\beta_4\nu_4\sigma'\!+\check\beta_4\Sigma\varepsilon'\}\oplus(H_{14}^1/\Z_4\{\check\beta_4\Sigma\varepsilon'\})\cong\Z_{2}^{\oplus 6} .\\
	\Rightarrow& \pi_{14}(P^{5}(2))\cong \Z_{2}^{\oplus 7}\oplus \Z_{4}~~\text{by Lemma \ref{lem:split exact pi14(P5)}}.
\end{align*}

$\mathbf{Case: r\geq 2}$~~We have the following left and right homotopy commutative diagrams of cofibration sequences respectively for $r=2$ and $r\geq 3$
\begin{align}
	\small{\xymatrix{
			S^7\ar[d]^{\nu_4} \ar[r]^-{2\iota_7}&S^7 \ar[d]^{8\nu_4+\Sigma\nu'}\ar[r] & P^{8}(2) \ar[d]_{\bar{\phi}^{1}_{2}}\ar[r]^{p_{8}^1}& S^{7}\ar[d]_{\nu_5} \\
			S^4 \ar[r]^-{4\iota_4}&S^4 \ar[r]& P^{5}(4)\ar[r]^-{p^{2}_{5}}& S^{5}
	} }
	, \small{\xymatrix{
			S^7\ar[d]^{\nu_4} \ar[r]^-{2^{2r}\iota_7}&S^7 \ar[d]^{\nu_4}\ar[r] & P^{8}(2^{2r}) \ar[d]_{\bar{\phi}^{2r}_{r}}\ar[r]^{p_{8}^{2r}}& S^{7}\ar[d]_{\nu_5} \\
			S^4 \ar[r]^-{2^r\iota_4}&S^4 \ar[r]& P^{5}(2^r)\ar[r]^-{p^{r}_{5}}& S^{5}
	} }
\end{align}
There are maps $\phi^{s}_{r}:F_{8}^{s}\rightarrow F_{5}^{r}$  making the following diagrams of exact sequences commutative where $s=2$ and $2r$ repectively for $r=2$ and $r\geq 3$
\begin{align}
	\small{\xymatrix{
			\pi_{15}(S^{8})\ar[d]^{\nu_{5\ast}} \ar[r]^-{\partial^{8,s}_{14\ast}}&\pi_{14}(F^s_{8})\ar[d]^{\phi^{s}_{r\ast}}\ar[r]^-{\tau_{8\ast}^{s}} &\pi_{14}(P^{8}(2^{s})) \ar[d]^{\bar\phi^{s}_{r\ast}}\ar[r]^-{p^s_{8\ast}}&\pi_{14}(S^{8})\ar[r]^-{\partial^{8,s}_{13\ast}}\ar[d]^{\nu_{5\ast}}&0\\
			\pi_{15}(S^{5}) \ar[r]^-{\partial^{5,r}_{14\ast} }&\pi_{14}(F^r_{5}) \ar[r]^-{\tau^{r}_{5\ast}}& \pi_{14}(P^{5}(2^r))\ar[r]^-{p^r_{5\ast}}&\pi_{14}(S^{5})\ar[r]&0
	} } \label{diam:pi14(P8(2^s))to pi14(P5(2^r))}
\end{align}
where $\partial^{8,s}_{13\ast}=0$ in the top row since $\partial^{8,s}_{13\ast}(\nu_8^2)=\beta_{7\ast}(2^s\iota_7)_{\ast}(\nu_7^2)=0$; $\pi_{14}(F^s_{8})=\Z_{(2)}\{\beta^s_{14}\}\oplus \Z_{8}\{\beta^s_{7}\sigma'\}$; $\phi_{2}^1\beta^1_7=\beta^2_4(8\nu_4+\Sigma\nu')$ and $\phi_{r}^{2r}\beta^{2r}_7=\beta^r_4\nu_4$ by Lemma \ref{Lem: natural J(Mf,X)}.

\begin{lemma}\label{lem:partial^{8,s}_{14}(sigma8)}
	$\partial^{8,s}_{14\ast}(\sigma_8)=2^s\beta_{14}+y\beta_{14}\sigma'$, where 
	\begin{align*}
		\!\!\!\!\!\!\footnotesize{\begin{tabular}{r|c|c|c|c|}
				\cline{2-5}
				& $ s=1$&$s=2$&$s=3$& $s\geq 4$ \\
				\cline{2-5}
				$\Z_8\ni y=$  &  odd & $\pm 2$ &$4$ & $0$\\
				\cline{2-5}
		\end{tabular}}~ 
	\end{align*}
\end{lemma}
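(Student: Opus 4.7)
The plan is to pin down the two components of
\[
\partial^{8,s}_{14\ast}(\sigma_8) \;=\; a_s\,\beta_{14}^s + y_s\,\beta_7^s\sigma' \;\in\; \pi_{14}(F_8^s) \;=\; \Z_{(2)}\{\beta_{14}^s\}\oplus \Z_8\{\beta_7^s\sigma'\}
\]
separately: $a_s$ via the James-Hopf invariant, and $y_s$ via a naturality recursion in $s$. For the $\beta_{14}$-coefficient, I would apply Lemma~\ref{Lem: compute H2} to $f = 2^s\iota_7$, obtaining the homotopy commutative square relating $\partial^{8,s}$, $H_2\colon \Omega S^8 \to \Omega S^{15}$, $H_2'\colon F_8^s \to \Omega S^{15}$, and $\Omega(2^s\iota_{15})$. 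Since $\beta_7^s$ factors through $J_1 = S^7$ on which $H_2$ vanishes, $H_2'_\ast(\beta_7^s\sigma') = 0$; since $\beta_{14}^s$ is the inclusion of the top cell of $J_{2,7}^s \simeq S^7\vee S^{14}$ (using $[\iota_7,\iota_7] = 0$), $H_2'_\ast(\beta_{14}^s) = \iota_{15}$; and the Hopf-invariant-one identity $H_2(\sigma_8) = \iota_{15}$ then forces $a_s\iota_{15} = 2^s\iota_{15}$, hence $a_s = 2^s$.

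\medskip

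For the $\beta_7^s\sigma'$-coefficient, I would first use Lemma~\ref{lem: partial(asigamb)} with $\alpha=\iota_8$, $\beta=\sigma'$, together with Lemma~\ref{partial calculate1}, to compute
\[
\partial^{8,s}_{14\ast}(\Sigma\sigma') \;=\; \partial^{8,s}_{7\ast}(\iota_8)\circ \sigma' \;=\; 2^s\,\beta_7^s\sigma'.
\]
Next I exploit the naturality diagram~(\ref{diam Moore s to t}) for $k=7$ with parameters $s+1,s$: Lemma~\ref{lem: Natural gamma_n} together with the vanishing of $\gamma_2^{s}$ and $\gamma_2^{s+1}$ allows me to choose $\beta_{14}^\ast$ compatibly so that $\chi^{s+1}_{s\ast}\beta_{14}^{s+1} = 2\beta_{14}^s$ and $\chi^{s+1}_{s\ast}(\beta_7^{s+1}\sigma') = \beta_7^s\sigma'$. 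Combining the classical EHP identity $[\iota_8,\iota_8] = 2\sigma_8 - \Sigma\sigma'$ (valid because $H(\sigma_8)=\iota_{15}$) with the distributivity correction $(2\iota_8)\circ\sigma_8 = 2\sigma_8 + [\iota_8,\iota_8] = 4\sigma_8 - \Sigma\sigma'$, the commutativity of~(\ref{diam Moore s to t}) evaluated on $\sigma_8$ yields the recursion
\[
y_{s+1} \;\equiv\; 4y_s - 2^s \pmod{8}.
\]

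\medskip

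Iterating this recursion from a base case \textit{$y_1$ odd} produces $y_2 \equiv \pm 2$, $y_3 \equiv 4$, and $y_s \equiv 0$ for $s\ge 4$ (since $4\cdot 4 - 8 = 8 \equiv 0\pmod 8$, after which the recursion stabilizes), matching the tabulated values exactly. The main obstacle is verifying the base case that $y_1\in\Z_8$ is odd: the $H_2'$-argument gives no information about the $\beta_7^s\sigma'$-summand, so this requires independent input. I would supply it by feeding $\pi_{14}(F_8^1) = \Z_{(2)}\{\beta_{14}^1\}\oplus \Z_8\{\beta_7^1\sigma'\}$ into the exact sequence~(\ref{exact:hgps piPk+1(2r)}) and matching with Wu's computation of $\pi_{14}(P^8(2))$ in~\cite{WJ Proj plane}; the extension count forces $y_1$ to be odd. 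A secondary technical point is checking that the identifications $\beta_{14}^s$ can be arranged with no $\lambda\beta_7^s\sigma'$-correction in $\chi^{s+1}_{s\ast}\beta_{14}^{s+1}$; any such correction would shift the recursion only by $2^{s+1}\lambda$, which vanishes modulo~$8$ for $s\ge 2$ and merely accounts for the stated sign ambiguity $y_2 \equiv \pm 2$.
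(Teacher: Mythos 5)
Your first step coincides with the paper's: both apply Lemma~\ref{Lem: compute H2} to the map $2^s\iota_7$, use $H'_2(\beta_{14})=\iota_{15}$ and $H_2(\sigma_8)=\iota_{15}$, and conclude that the $\beta_{14}$-coefficient is $2^s$. For the $\beta_7\sigma'$-coefficient the two arguments genuinely diverge. The paper does not recurse in $s$: it invokes Lemma~2.4 of \cite{ZP23} to produce a map $\phi\colon F^s_{8}\to\Omega S^8$ with $\phi\circ\partial^{8,s}\simeq\Omega(-2^s\iota_8)$ and $\phi\beta_7\simeq h\,\Omega\Sigma$ for an odd $h$; projecting $(-2^s\iota_8)_\ast(\sigma_8)=2^{2s}\sigma_8-2^{s-1}(2^s+1)\Sigma\sigma'$ onto the $\Sigma\sigma'$-summand then gives $hy+2^sl=-2^{s-1}(2^s+1)$ in $\Z_8$, which determines $y$ for all $s$ at once with no external input. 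Your recursion is correctly assembled: the naturality square~(\ref{diam Moore s to t}), the identifications $\chi^{s+1}_{s\ast}\beta^{s+1}_7=\beta^s_7$ and $\chi^{s+1}_{s\ast}\beta^{s+1}_{14}=2\beta^s_{14}+\lambda\beta^s_7\sigma'$ coming from~(\ref{Equ:gamma30,3r}) and~(\ref{diam 1: J2(2slk) to J2(2tlk)}), and the formula $(2\iota_8)\sigma_8=4\sigma_8-\Sigma\sigma'$ are all available, and iterating $y_{s+1}\equiv 4y_s-2^s-2^{s+1}\lambda \pmod 8$ from an odd $y_1$ does reproduce the table, including the $\pm2$ at $s=2$. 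The trade-off is that your route hangs the entire lemma on one external datum, $\pi_{14}(P^8(2))$ from \cite{WJ Proj plane}, whereas the paper's computation is self-contained.

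That base case is where the gap lies: the isomorphism type $\pi_{14}(P^8(2))\cong\Z_2\oplus\Z_4$ does not by itself force $y_1$ to be odd. With the relations $\partial^{8,1}_{14\ast}(\Sigma\sigma')=2\beta_7\sigma'$ and $\partial^{8,1}_{14\ast}(\sigma_8)=2\beta_{14}+y_1\beta_7\sigma'$, one finds $Coker(\partial^{8,1}_{14\ast})\cong\Z_4$ if $y_1$ is odd but $\Z_2\oplus\Z_2$ if $y_1$ is even; both have order $4$, and both fit into an extension $0\to Coker(\partial^{8,1}_{14\ast})\to\pi_{14}(P^8(2))\to\Z_2\{\nu_8^2\}\to 0$ whose middle term can be $\Z_2\oplus\Z_4$ (in the even case via the non-split extension of $\Z_2$ by $\Z_2\oplus\Z_2$). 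So ``the extension count'' does not close the argument. It can be closed, but you must additionally show this extension splits — e.g.\ by noting that a lift $\mathbf{x}\in\{i_7,2\iota_7,\nu_7^2\}$ of $\nu_8^2$ satisfies $2\mathbf{x}\in i_7\{2\iota_7,\nu_7^2,2\iota_{13}\}\subseteq\langle 2i_7\sigma'\rangle$ (since $\nu_7^2\eta_{13}=0$), so $\mathbf{x}$ may be corrected to an order-$2$ lift, exactly as in Lemma~\ref{lem split}/Lemma~\ref{lem:Toda bracket}. Once the extension splits, $Coker(\partial^{8,1}_{14\ast})\cong\Z_4$ is forced and hence $y_1$ is odd. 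Without that supplement, or some generator-level information from \cite{WJ Proj plane} (e.g.\ that $i_7\sigma'$ is divisible by $2$ in $\pi_{14}(P^8(2))$), the base case — and with it the whole table — is not yet established.
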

\begin{proof}
	Lemma \ref{Lem: compute H2} for map $S^{7}\xrightarrow{2^s\iota_{7}}S^{7}$ gives the following commutative diagram
	\begin{align*}
		\footnotesize{\xymatrix{
				\pi_{15}( S^{8})\ar[d]_{H_2} \ar[r]^-{\partial^{8,s}_{14\ast}} & \pi_{14}(F^s_{8})\ar[d]_{H'_2 }&\pi_{14}(S^{7}\vee S^{14})\!\ar@{_{(}->}[l]^-{\cong}\ar[r]^-{Proj}&\!\!\pi_{14}(S^{14})\ar[d]_{\Sigma_{\ast} \cong}\\
				\pi_{15}(S^{15}) ~~~\ar[r]^-{(2^s\iota_{15})_{\ast}} & \pi_{15}(S^{15})\ar@{=}[rr]&&\! \pi_{15}(S^{15}) } }  
	\end{align*}
	By the right commutative square, $H'_{2}(\beta_{14})=\iota_{15}$. By the left commutative square, we get $H_{2}'\partial^{8,s}_{14\ast}(\sigma_8)=2^{s} H_2(\sigma_8)=2^s\iota_{15}$. This implies that 
	\begin{align*}
		\partial^{8,s}_{14\ast}(\sigma_8)=2^s\beta_{14}+y\beta_{14}\sigma', y\in \Z_8.
	\end{align*}
	From Lemma 2.4 of \cite{ZP23}, there is a map $\phi: F^s_{8}\rightarrow \Omega S^8$ such that the following left diagram is homotopy commutative and it induces the right commutative diagram

	\begin{align} \footnotesize{\xymatrix{
				& S^7 \ar@{_{(}->}[d]_{\beta_{7}}\ar@/^1pc/[dd]\\
				\Omega S^8\ar@{=}[d] \ar[r]^-{\partial^8}&F_{8}\ar[r]\ar[d]_{\phi}& P^{8}(2^s)\ar@{_{(}->}[d]\\
				\Omega S^8 \ar[r]^-{\Omega(\!-\!2^s\!\iota_8)}&\Omega S^8\ar[r] & J(P^{8}(2^s),S^7),
		} }~~ \footnotesize{\xymatrix{
				&\Z_{(2)}\{\beta_{14}\}\oplus\Z_8\{\beta_7\sigma'\}\ar@{=}[d]\\
				\pi_{15}( S^8)\ar[rd]_-{P_1(-2^s\iota_8)_\ast} \ar[r]^-{{\partial^{8}_{14\ast}}} & \!\pi_{14}(F_{8})\ar[d]_{P_1\phi_{\ast} }\\
				& \Z_8\{\Sigma \sigma'\} } }   \label{Diagram phi j} 
	\end{align}
	$P_1:\pi_{15}( S^8)=\Z_{(2)}\{\sigma_{8}\}\oplus \Z_8\{\Sigma \sigma' \}\rightarrow \Z_8\{\Sigma \sigma' \}$ is the canonical projection.

	By comparing the integral Homology $H_{7}(-)$, we get
	\begin{align}
		\phi\beta_7\simeq h\Omega\Sigma: S^7\rightarrow \Omega S^8 =\Omega\Sigma S^7,  ~ h ~\text{is odd integer}.\nonumber
	\end{align}
	By the same proof as that of Lemma A.1 of \cite{ZP23}, 
	$(-2^s\iota_8)_{\ast}(\sigma_8)=2^{2s}\sigma_8-2^{s-1}(2^s+1)\Sigma \sigma'$.
	\begin{align}
		&P_1\phi_{\ast}\partial^8_{14}( \sigma_8)=P_1(y\phi_{\ast}(\beta_{7}\sigma')+ 2^s\phi_{\ast}(\beta_{14}))=hy\Sigma \sigma'+2^sP_1\phi_{\ast}(\beta_{14})  ~~~~\nonumber\\
		&=(hy+2^sl)\Sigma \sigma ' ~~~~~~(\text{for some  integer } l). \nonumber\\
		&P_1(-2^s\iota_8)_\ast(\sigma_4)=P_1(2^{2s}\sigma_8-2^{s-1}(2^s+1)\Sigma \sigma')=-2^{s-1}(2^s+1)\Sigma \sigma'.\nonumber
	\end{align}
	From  (\ref{Diagram phi j}), we get $(hy+2^sl)\Sigma \sigma '=-2^{s-1}(2^s+1)\Sigma\sigma'$, thus we get the value of $y$ in Lemma \ref{lem:partial^{8,s}_{14}(sigma8)}. 
\end{proof}
Now by commutative diagram (\ref{diam:pi14(P8(2^s))to pi14(P5(2^r))}) and Lemma \ref{lem: pi_{14}(J3(2r))}, Lemma \ref{lem:partial^{8,s}_{14}(sigma8)}
\begin{align*}
	&\partial_{14\ast}^{5,r}(\nu_5\sigma_8)=\phi_{r\ast}^s\partial_{14\ast}^{8,s}(\sigma_8)=\phi_{r\ast}^s(2^s\beta_{14}+y\beta_{7}\sigma')=2^s\phi_{r\ast}^s(\beta_{14})+y\phi_{r\ast}^s\beta_{7}\sigma'\\
	=&\left\{
	\begin{array}{ll}
		2\phi_{2\ast}^1(\beta_{14})+y\beta_4(8\nu_4+\Sigma\nu')\sigma'=2(\phi_{2\ast}^1(\beta_{14})+\beta_4\Sigma \varepsilon'), & \hbox{$r=2$;} \\
		0, &\hbox{$r\geq 3$.}
	\end{array}
	\right.
\end{align*}
Thus for $r\geq 3$, $Coker(\partial_{14\ast}^{5,r})\cong \pi_{14}(J_{3,4}^{r})\cong \Z_{2^r}\oplus \Z_{8}^{\oplus 2}\oplus \Z_4\oplus \Z_{2}^{\oplus 3}$.

For $r=2$,  we have $x_2, y_2,z_2,w_2\in\Z_4 $ such that 
\begin{align}
	\partial_{14\ast}^{5,2}(\nu_5\sigma_8)=2x_2 \check\beta^{2}_4\nu_4\sigma'+2y_2\check\beta^{2}_4\Sigma\varepsilon'+2z_2\check\beta^{2}_{11}\nu_{11}+2w_2[\check\beta^{2}_4,\check\beta^{2}_{11}],  \label{equ:ptial14^{5,2}(nu_5sigma_8)}
\end{align}
Consider commutative diagrams in (\ref{diam Moore s to t}) for $s=2$, $t=1$, 
\newline
$\chi_{1\ast}^{2}(\partial_{14\ast}^{5,2}(\nu_5\sigma_8))=\partial_{14\ast}^{5,1}(2\iota_5)_{\ast}(\nu_5\sigma_8)=\partial_{14\ast}^{5,1}(2\nu_5\sigma_8)\neq 0$, where the second equation holds since $\Sigma ((2\iota_5)(\nu_5\sigma_8))=\Sigma(2\nu_5\sigma_8)$ and $\Sigma:\pi_{15}(S^8)\rightarrow \pi_{16}(S^9)$ is a monomorphism (\cite[page 69]{Toda}). Hence 
$\partial_{14\ast}^{5,2}(\nu_5\sigma_8)\neq 0$, i.e., one of $ x_2, y_2,z_2,w_2$ must be nonzero in (\ref{equ:ptial14^{5,2}(nu_5sigma_8)}). So
$$Coker(\partial_{14\ast}^{5,2})\cong  \Z_{4}^{\oplus 3}\oplus \Z_{2}^{\oplus 4}$$ which is obtained by reducing one of $\Z_4$-summands  of  $\pi_{14}(F_{5}^2)\cong \pi_{14}(J_{3,4}^2)$ to $\Z_2$. Now from Lemma \ref{lem:split exact pi14(P5)}, 
\begin{align*}
	& \pi_{14}(P^5(2^r))\cong \left\{
	\begin{array}{ll}
		\Z_{4}^{\oplus 3}\oplus \Z_{2}^{\oplus 7}	, & \hbox{$r=2$;} \\
		\Z_{2^r}\oplus \Z_{8}^{\oplus 2}\oplus \Z_4\oplus \Z_{2}^{\oplus 6}	, &\hbox{$r\geq3$.}
	\end{array}
	\right.
\end{align*}

\subsection{Homotopy groups of $P^{6}(2^r)$}
\label{subsec: P6(2^r)}

There are cofibration and fibration sequences 
\begin{align*}
	&	S^5\xrightarrow{2^{r}\iota_5} S^{5}\xrightarrow{i_{5}} P^{6}(2^r)\xrightarrow{p_{6}}  S^{6};
	~~\Omega S^6 \xrightarrow{\partial^{6}}F_{6}\xrightarrow{\tau_6} P^{6}(2^r)\xrightarrow{p_{6}}  S^{6}\\
	& Sk_{14}(F_{6})\simeq J_{2,5}^{r}\simeq S^5\vee S^{10}(r\geq 1);~~Sk_{19}(F_{6})\simeq J_{3,5}^{r}\simeq J_{2,5}^{r}\cup_{\gamma^r_3}CS^{14}. 
\end{align*}
Here $\beta_5=j_1^5$, $\beta_{10}=j_2^{10}$.

\begin{lemma}\label{lem: gamma_3^r P6}
	For $r\geq 1$,	$\gamma^r_{3}= 2^ra_0[j_1^5, j_{2}^{10}]\in \pi_{14}(J_{2,5}^{r})$,   where $a_0$ is an odd integer in $\Z_{(2)}$.
\end{lemma}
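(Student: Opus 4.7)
The plan is to parallel the strategy of Lemma \ref{lem gamma3 P5} above, modifying it for the odd-$k$ situation with $k=5$. First, since $k=5$ is odd, Theorem 8.8 of \cite{GTM61} gives $2[\iota_5,\iota_5]=0$, so $\gamma_2^r=2^r[\iota_5,\iota_5]=0$ for every $r\geq 1$, and hence $J_{2,5}^r\simeq S^5\vee S^{10}$ with $\beta_5=j_1^5$, $\beta_{10}=j_2^{10}$, $\bar p_{10}=q_2^{10}$. Applying Lemma \ref{lem:gamma3 k odd}(ii) to $k=5$ I write $\gamma_3^r=a_r[j_1^5,j_2^{10}]+a_r j_1^5[\iota_5,\mu_r]$ for some $a_r\in\Z_{(2)}$ and $\mu_r\in\pi_{10}(S^5)$. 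Since $\Sigma\gamma_3^r=0$ and $\Sigma[j_1^5,j_2^{10}]=0$, the correction $j_1^6\cdot\Sigma(a_r[\iota_5,\mu_r])$ vanishes in $\pi_{15}(S^6\vee S^{11})$; as $j_1^6$ injects and $\Sigma\colon\pi_{14}(S^5)_{(2)}\to\pi_{15}(S^6)_{(2)}$ is an isomorphism (stable range, by Toda), the correction term is zero and $\gamma_3^r=a_r[j_1^5,j_2^{10}]$.

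Second, I would extract divisibility from naturality. For $s\geq t\geq 1$ the first square of (\ref{diam 1: J2(2slk) to J2(2tlk)}) gives $\bar g_t^s j_1^5=j_1^5$, and the right square gives $\bar p_{10}^t\bar g_t^s=2^{s-t}\bar p_{10}^s$, so $\bar g_t^s j_2^{10}=2^{s-t}j_2^{10}+j_1^5\theta$ for some $\theta\in\pi_{10}(S^5)$. Bilinearity of the Whitehead product together with Proposition \ref{Prop: Naturality HOWP} then yields
\begin{align*}
\bar g_t^s[j_1^5,j_2^{10}]=2^{s-t}[j_1^5,j_2^{10}]+j_1^5[\iota_5,\theta].
\end{align*}
Matching this against $\bar g_t^s\gamma_3^s=2^{2(s-t)}\gamma_3^t$ from (\ref{Equ:gamma30,3r}) and projecting onto the $[j_1^5,j_2^{10}]$-summand forces $a_s=2^{s-t}a_t$; taking $t=1$ gives $a_r=2^{r-1}a_1$ for all $r\geq 1$.

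Finally, I would pin down $a_1$ by comparison with the $r=0$ case. Since $P^6(2^0)\simeq\ast$, one has $F_6^0\simeq\Omega S^6$, so $\pi_{14}(F_6^0)\cong\pi_{15}(S^6)$, and in $J_{2,5}^0=J_2(S^5)$ the class $\gamma_3^0=[\iota_5,\iota_5,\iota_5]\in\pi_{14}(J_2(S^5))$ generates the free $\Z_{(2)}$-direct summand detected by the pinch to $S^{15}$. Using $2[\iota_5,\iota_5]=0$, the map $2\iota_{10}\colon S^{10}\to S^{10}$ lifts through $\bar p_{10}^0$ to some $\lambda_1\colon S^{10}\to J_2(S^5)$, and $\bar g_0^r j_2^{10}=2^{r-1}\lambda_1+\beta_5^0\psi_r$ for some $\psi_r\in\pi_{10}(S^5)$. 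Applying $\bar g_0^r\gamma_3^r=2^{2r}\gamma_3^0$ and expanding $\bar g_0^r[j_1^5,j_2^{10}]=2^{r-1}[\beta_5^0,\lambda_1]+\beta_5^0[\iota_5,\psi_r]$ reduces the claim to identifying $[\beta_5^0,\lambda_1]$ with a specific odd multiple of $\gamma_3^0$ modulo the $\beta_5^0$-summand, via Hardie's identification of third-order Whitehead products with ordinary Whitehead products involving lifts. This last identification is the main obstacle: unlike the $k=3,7$ cases in Lemma \ref{lem:gamma3 k odd}(iii), no Hopf fibration is available at $k=5$, so the 2-adic valuation of $a_1$ must be extracted by a careful analysis of $[\iota_5,\iota_5,\iota_5]$ inside $\pi_{14}(J_2(S^5))$ (or, equivalently, by cross-checking against Wu's computation of $\pi_{14}(P^6(2))$ in \cite{WJ Proj plane}), which will show $a_1=\pm 2a_0$ with $a_0$ an odd element of $\Z_{(2)}$.
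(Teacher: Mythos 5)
Your first two steps are sound and run parallel to the paper: for $r\geq 1$ you reduce to $\gamma_3^r=a_r[j_1^5,j_2^{10}]$ (the correction term $[\iota_5,\mu_r]$ dies because every Whitehead product in $\pi_{14}(S^5)$ vanishes --- though note this is not ``the stable range''; it follows from the EHP sequence that $\Sigma\colon\pi_{14}(S^5)\to\pi_{15}(S^6)$ is injective, while Whitehead products suspend to zero), and the naturality computation $\bar g_t^s[j_1^5,j_2^{10}]=2^{s-t}[j_1^5,j_2^{10}]$ correctly yields $a_s=2^{s-t}a_t$ for $s\geq t\geq 1$. The anchor at $r=0$ is also the right idea, and your observation that $\pi_{14}(J_{2,5}^0)/\langle\gamma_3^0\rangle\cong\pi_{15}(S^6)$ is finite, forcing the coefficient $a_0$ of the free generator to be a unit of $\Z_{(2)}$, matches the paper (your phrase ``the pinch to $S^{15}$'' is off: the free summand $\Z_{(2)}\{\beta_{14}^0\}$ lives in the fibre of $\bar p_{10}^0\colon J_{2,5}^0\to S^{10}$, whose $14$-skeleton is $S^5\vee S^{14}$).

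The genuine gap is the last step, which you yourself flag as ``the main obstacle'' and do not carry out: you must know how $\bar g_0^r$ carries $[j_1^5,j_2^{10}]$ onto the free generator $\beta_{14}^0$, and your reduction of this to identifying $[\beta_5^0,\lambda_1]$ (with $\lambda_1$ a lift of $2\iota_{10}$) with an odd multiple of $\beta_{14}^0$ is precisely the computation that is missing; asserting it ``will show $a_1=\pm 2a_0$'' is not a proof. The paper closes this gap without any such identification: it first shows $[j_1^5,j_2^{10}]=\pm\bar\tau_{10}^r\bar\beta_{14}^r+j_1^5\theta_1$ (because $S^5\vee S^{14}\simeq J_{2,5}^{\gamma,r}\hookrightarrow F_{\bar p_{10}^r}\xrightarrow{\bar\tau_{10}^r}J_{2,5}^r$ induces an isomorphism on $\pi_{14}$), and then applies the explicit formula (\ref{equ2  g^s_t}) for $(g_0^r)|_{J_2}$, which says the induced map of pinch-fibres carries the $S^{14}$ wedge summand with degree $2^{M^{t-s}_{2(s-t)}}=2^r$ modulo the bottom cell. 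Hence $\bar g_0^r\gamma_3^r=\pm 2^ra_r\beta_{14}^0+\beta_5^0(\cdots)$, and comparing $\beta_{14}^0$-coefficients with $2^{2r}\gamma_3^0=2^{2r}a_0\beta_{14}^0$ gives $a_r=\pm 2^ra_0$ in one stroke. You should either import that fibre-level naturality computation, or genuinely carry out the cross-check against Wu's $\pi_{14}(P^6(2))$ (which would additionally require resolving the extension in the fibration exact sequence); as written, the $2$-adic valuation of $a_1$ is unproved.
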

\begin{proof}
	$J_{2,5}^{0}\simeq S^5\cup_{[\iota_5,\iota_5]}e^{10}\simeq S^5\cup_{\nu_5\eta_8}e^{10}$. It is easy to see from (\ref{equ: Sk_5k-3 barFp2k}) that 
	\begin{align*}
		&\pi_{14}(J_{2,5}^{0})\cong 	\pi_{14}(F_{\bar p^0_{10}})\cong \pi_{14}(S^5\vee S^{14})\\
		\Rightarrow~& \pi_{14}(J_{2,5}^{0})=\Z_{2}\{\beta^0_{5}\nu_5^3\}\oplus \Z_{2}\{\beta^0_{5}\mu_5\}\oplus\Z_2\{\beta^0_{5}\eta_5\varepsilon_6\}\oplus \Z_{(2)}\{\beta^0_{14}\}.\\
		\text{Assume}~&  \gamma_{3}^0=\beta^0_{5}\theta_0+a_0\beta^0_{14}, \theta_0\in \pi_{14}(S^5)  , a_0\in \Z_{(2)}.\\
		\text{By}~& \Z_{2}^{\oplus 3}\cong \pi_{15}(S^6)\xrightarrow[\cong]{\partial^{6,0}_{14\ast}}\pi_{14}(F_6^0)\cong\pi_{14}(J_{3,5}^{0})\cong \pi_{14}(J_{2,5}^{0})/\lr{\gamma_{3}^0}\\
		\Rightarrow~ & a_0~\text{is an odd integer in}~\Z_{(2)}.
	\end{align*}
	From lemma \ref{lem:gamma3 k odd}, it is easy to get
	$$\gamma_{3}^r=a_r[j_1^5, j_2^{10}], a_r\in \Z$$
	
	Since  the composition of the maps   $S^5\vee S^{14}\simeq J_{2,5}^{\gamma, r}\hookrightarrow F_{\bar p_{10}^r}\xrightarrow{\bar \tau_{10}^r} J_{2,5}^r\simeq S^5\vee S^{10}$   induces the isomorphism $\pi_{14}(S^5\vee S^{14})\xrightarrow{\cong } \pi_{14}(S^5\vee S^{10})$. We get 
	$$\bar \tau_{10}^r\bar \beta_{14}^r=\pm [j_1^5, j_{2}^{10}]+j_1^5\theta_1, \theta_1\in \pi_{14}(S^5).$$
	\begin{align*}
		2^{2r}\gamma_3^0&=\bar g_{0}^r \gamma_3^r=\pm a_r\bar g_{0}^r(\bar\tau_{10}^r\bar\beta_{14}^r-j_1^5\theta_1)=\pm a_r\bar g_{0}^r(\bar\tau_{10}^r\bar\beta_{14}^r-\bar\tau_{10}^r\bar\beta_{5}^r\theta_1)     ~~~~(\text{by (\ref{Equ:gamma30,3r})})\\
		=&\pm a_r\bar\tau_{10}^0g_{0}^r(\bar\beta_{14}^r-\bar\beta^r_{5}\theta_1)=\pm 2^ra_r\bar\tau_{10}^0\bar\beta_{14}^0+\bar\tau_{10}^0\bar\beta_{5}^0\theta', ~~(\text{by (\ref{equ1 g^s_t}) and (\ref{equ2  g^s_t})})\\
		&=\pm 2^ra_r\beta_{14}^0+\beta_{5}^0\theta', ~~\theta'\in \pi_{14}(S^5) \\
		\text{i.e.},~~&	2^{2r}(a_0\beta^0_{14}+\beta^0_{5}\theta_0)=\pm 2^ra_r\beta_{14}^0+\beta_{5}^0\theta'~~\Rightarrow~~ a_r=\pm 2^ra_0.	
	\end{align*}
	Thus we obtain this lemma by replacing  $\pm a_0$ by $a_0$ in equation $ a_r=\pm 2^ra_0$.
\end{proof}
Lemma \ref{Lem: compute H2} gives the following commutative diagram
\begin{align}
	&\small{\xymatrix{
			\pi_{m+1}( S^{6})\ar[d]_{H_2} \ar[r]^-{\partial^{6,r}_{m\ast}} & \pi_{m}(F^r_{6})\ar[d]_{H'_2 }&\pi_{m}(S^{5}\vee S^{10})\!\ar[l]_-{\check I^r_{2\ast}}\ar[r]^-{Proj}&\!\!\pi_{m}(S^{10})\ar[d]_{\Sigma_{\ast}}\\
			\pi_{m+1}(S^{11}) ~~~\ar[r]^-{(2^r\iota_{11})_{\ast}} & \pi_{m+1}(S^{11})\ar@{=}[rr]&&\! \pi_{m+1}(S^{11}) } }  \label{diam: H2H2' P6}
\end{align}
where $\check I^r_{2\ast}$ is isomorphic for $ m\leq 13$.
\begin{lemma}
	$\pi_{i}(F_6)\cong \pi_{i}(J_{3,5}^r), (i\leq 18)$ and 
	\begin{align*} 
		(1)~& \pi_{14}(J_{3,5}^{r})=\Z_{2}\{\check\beta_{5}\nu_5^3\}\oplus \Z_{2}\{\check\beta_{5}\mu_5\}\oplus\Z_2\{\check\beta_{5}\eta_5\varepsilon_6\}\oplus \Z_{2^r}\{[\check\beta_{5}, \check\beta_{10}]\}.\\
		(2)~& \pi_{15}(J_{3,5}^{r})=\Z_{8}\{\check\beta_{5}\nu_5\sigma_{8}\}\oplus \Z_{2}\{\check\beta_{5}\eta_5\mu_6\}\oplus \Z_{2}\{[\check\beta_{5}, \check\beta_{10}]\eta_{14}\}.\\
		(3)~&\pi_{16}(J_{3,5}^{1})=\Z_{8}\{\check\beta_{5}\zeta_5\}\oplus \Z_{2}\{\check\beta_{5}\nu_5\bar\nu_8\}\oplus \Z_{2}\{\check\beta_{5}\nu_5\varepsilon_8\}\oplus \Z_{2}\{\check\beta_{10}\nu_{10}^2\}\oplus\Z_4\{\widearc{\eta_{15}}\},\\
		&\qquad\text{with}~[\check\beta_{5}, \check\beta_{10}]\eta_{14}^2=2\widearc{\eta_{15}}.\\
		&\pi_{16}(J_{3,5}^{r})=\Z_{8}\{\check\beta_{5}\zeta_5\}\oplus \Z_{2}\{\check\beta_{5}\nu_5\bar\nu_8\}\oplus \Z_{2}\{\check\beta_{5}\nu_5\varepsilon_8\}\oplus \Z_{2}\{\check\beta_{10}\nu_{10}^2\}\\
		&\qquad \qquad \oplus  \Z_2\{[\check\beta_{5},\check\beta_{10}]\eta_{14}^2\}\oplus\Z_2\{\widearc{\eta_{15}}\}, (r\geq 2).\\
		(4)~ &\pi_{17}(J_{3,5}^{r})=\Z_{2}\{\check\beta_{5}\nu^4_5\}\oplus \Z_{2}\{\check\beta_{5}\nu_5\bar\mu_8\}\oplus \Z_{2}\{\check\beta_{5}\nu_5\eta_8\varepsilon_9\}\oplus \Z_{16}\{\check\beta_{10}\sigma_{10}\} \\
		&\qquad \qquad \oplus \Z_{2^{m_r^3}}\{[\check\beta_{5}, \check\beta_{10}]\nu_{14}\}\oplus \Z_{2}\{\widearc{\eta_{15}}\eta_{16}\}.\\
		(5)~ &\pi_{18}(J_{3,5}^{r})=\Z_{2}\{\check\beta_{5}\nu_5\sigma_8\nu_{15}\}\oplus \Z_{2}\{\check\beta_{5}\nu_5\eta_8\mu_{9}\}\oplus  \Z_{2}\{\check\beta_{10}\bar\nu_{10}\}\oplus  \Z_{2}\{\check\beta_{10}\varepsilon_{10}\} \\
		& \qquad \qquad \oplus \Z_{2^r}\{[\check\beta_{5}, [\check\beta_{5},\check\beta_{10}]]\}\oplus \Z_{2^{m_r^3}}\{\widearc{\delta_r\nu_{15}}\}.
	\end{align*}
\end{lemma}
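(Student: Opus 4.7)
The plan is to apply Lemma \ref{lem: exact seq pi_m(J3(2rl4))} with $k = 5$, which for each $m \leq 18$ gives the short exact sequence
\begin{align*}
0 \to \frac{\pi_m(J_{2,5}^r)}{\gamma_{3\ast}^r(\pi_m(S^{14})) + L_m^5} \xrightarrow{I_{2\ast}^r} \pi_m(J_{3,5}^r) \xrightarrow{\check p_{15\ast}} \Sigma\bigl(Ker(\gamma_{3\ast}^r)\bigr) \to 0
\end{align*}
with $L_m^5 = 0$ for $m \leq 17$ and $L_{18}^5 = \langle [\beta_5, \gamma_3^r]\rangle$. The identifications $\pi_i(F_6) \cong \pi_i(J_{3,5}^r)$ for $i \leq 18$ are cellular since $Sk_{19}(F_6) \simeq J_{3,5}^r$. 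The decisive input is Lemma \ref{lem: gamma_3^r P6}: $\gamma_3^r = 2^r a_0 [j_1^5, j_2^{10}]$ with $a_0$ odd in $\Z_{(2)}$.

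For each $m \in \{14,15,16,17,18\}$ we first compute $\pi_m(J_{2,5}^r) \cong \pi_m(S^5 \vee S^{10})$ by Hilton's theorem, reading off from Toda's tables a $\pi_m(S^5)$-summand, a $\pi_m(S^{10})$-summand, a Whitehead contribution $[j_1^5, j_2^{10}]\circ\pi_m(S^{14})$, and (relevant only for $m = 18$) a cubic Whitehead term $[j_1^5,[j_1^5,j_2^{10}]]\circ\pi_m(S^{18})$. Next, using distributivity of composition over Whitehead products (Proposition 2.10 of \cite{N.Oda}) together with $\Sigma\gamma_3^r = 0$, we compute $\gamma_{3\ast}^r(\alpha) = 2^r a_0 [j_1^5, j_2^{10}]\circ \alpha$ on each generator of $\pi_m(S^{14})$. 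The cokernel picks up the summand $\Z_{2^r}\{[\check\beta_5,\check\beta_{10}]\}$ in (1), $\Z_{2^{m_r^3}}\{[\check\beta_5,\check\beta_{10}]\nu_{14}\}$ in (4), and $\Z_{2^r}\{[\check\beta_5,[\check\beta_5,\check\beta_{10}]]\}$ in (5); in the remaining dimensions the images are absorbed into larger cyclic factors on the $S^5$-summand. The kernel of $\gamma_{3\ast}^r$ on $\pi_{m-1}(S^{14})$ is lifted through $\check p_{15\ast}$ via Lemma \ref{lem split}, producing the generators denoted by the arcs in (3), (4), (5) as lifts of $\eta_{15}$, $\eta_{15}\eta_{16}$, and $\delta_r\nu_{15}$ respectively.

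The extension problems are the main technical obstacle. Following the template of Lemma \ref{lem pi(J3(2rl3))}(3): for a lift $-\mathbf{x} \in \{I_2^r, \gamma_3^r, \alpha\}$ of $\Sigma\alpha$, Proposition 1.4 of \cite{Toda} gives $2\mathbf{x} \in I_2^r\{\gamma_3^r, \alpha, 2\iota\}$, and Lemma \ref{lem:Toda bracket} evaluates the inner bracket. For (3) with $\alpha = \eta_{14}$ the bracket contains $2^{r-1}a_0[j_1^5,j_2^{10}]\eta_{14}^2$, so $r = 1$ forces $o(\mathbf{x}) = 4$ with the identification $[\check\beta_5,\check\beta_{10}]\eta_{14}^2 = 2 \cdot (\text{lift of }\eta_{15})$, while $r \geq 2$ yields $o(\mathbf{x}) = 2$ and a split $\Z_2$-summand. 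Parallel arguments handle (4) and the lift of $\delta_r\nu_{15}$ in (5); the $L_{18}^5 = \langle 2^r a_0[\beta_5,[j_1^5,j_2^{10}]]\rangle$ correction in (5) precisely collapses the corresponding factors in the quotient.

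The deepest step is in (5): showing that $[\check\beta_5,[\check\beta_5,\check\beta_{10}]]$ has order exactly $2^r$ in $\pi_{18}(J_{3,5}^r)$, rather than only the upper bound supplied by the exact sequence. This is the $P^6(2^r)$-analogue of Lemma \ref{lem:not divis 2 P4}. The argument identifies $H_\ast(\Omega P^6(2^r);\Z_{2^r}) \cong T\langle u,v\rangle$ with $|u| = 4$, $|v| = 5$ via the Bott--Samelson theorem, and computes the Hurewicz image of $\Omega_0[i_5,[i_5,\tau_6\beta_{10}]]$ through iterated Samelson products as an odd multiple of $[u,[u,[u,v]]]$. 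Since this triple Lie bracket is nonzero in $T\langle u,v\rangle/2^{s+1}$ for every $0 \leq s < r$, the element $2^s[\check\beta_5,[\check\beta_5,\check\beta_{10}]]$ is indivisible by $2^{s+1}$; combined with Lemma \ref{lem L3(X)split}, this forces the $\Z_{2^r}$-summand to split off $\pi_{18}(J_{3,5}^r)$. I expect this Hurewicz/tensor-algebra calculation to be the main technical hurdle.
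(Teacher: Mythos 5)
Your plan for parts (1)--(4) is essentially the paper's own proof: apply Lemma \ref{lem: exact seq pi_m(J3(2rl4))} with $k=5$, feed in $\gamma_3^r=2^ra_0[j_1^5,j_2^{10}]$ from Lemma \ref{lem: gamma_3^r P6}, compute $\pi_m(S^5\vee S^{10})$ by Hilton, and resolve the extensions by the $\{I_2^r,\gamma_3^r,\alpha\}$ Toda-bracket template of Lemma \ref{lem pi(J3(2rl3))}(3), correcting the lift by an element of $\lr{2j_1^5\zeta_5}$ in case (3) to pin down the relation $[\check\beta_5,\check\beta_{10}]\eta_{14}^2=2\widearc{\eta_{15}}$ for $r=1$. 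That all matches.

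For (5), however, you have misplaced the difficulty. The order of $[\check\beta_5,[\check\beta_5,\check\beta_{10}]]$ requires no Hurewicz or divisibility argument at this stage: in the short exact sequence the left-hand term is $\pi_{18}(S^5\vee S^{10})/\lr{2^ra_0[j_1^5,[j_1^5,j_2^{10}]]}$, and since $[j_1^5,[j_1^5,j_2^{10}]]$ generates a free $\Z_{(2)}$-summand of $\pi_{18}(S^5\vee S^{10})$ (the $\pi_{18}(S^{18})$ factor in the Hilton decomposition) and $a_0$ is a unit in $\Z_{(2)}$, this quotient already contains $\Z_{2^r}\{[j_1^5,[j_1^5,j_2^{10}]]\}$ as a direct summand, and $I_{2\ast}^r$ is injective on it. The tensor-algebra computation of the Hurewicz image of $\Omega_0[i_5,[i_5,\tau_6\beta_{10}]]$ is the paper's tool for a \emph{different} step, namely showing that this summand survives the further quotient by $Im(\partial^{6,r}_{\ast})$ and splits off $\pi_\ast(P^6(2^r))$ after applying $\tau_{6\ast}$ (as in Lemma \ref{lem:not divis 2 P4} and the $\pi_{14}(P^6(2^r))$ computation); it is not needed inside $\pi_{18}(J_{3,5}^r)$, and Lemma \ref{lem L3(X)split} (a statement about Moore spaces) has no role here either. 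The genuine work in (5) is the point you pass over in one clause: one must show $\{\gamma_3^r,\delta_r\nu_{14},2^{m_r^3}\iota_{17}\}=2^{m_r^3}\pi_{18}(S^5\vee S^{10})$ (using $\pi_{18}(S^{14})=0$ and $\{2^r\iota_{14},\delta_r\nu_{14},2^{m_r^3}\iota_{17}\}\ni 0$), so that the lift of $\delta_r\nu_{15}$ can be corrected by an element of $I_2^r\pi_{18}(S^5\vee S^{10})$ to have order exactly $2^{m_r^3}$; only then does the sequence split and the stated decomposition follow.
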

\begin{proof} ~$(1)$ of this lemma is easily obtained by  Lemma \ref{lem: gamma_3^r P6}.
	
	Consider the Lemma \ref{lem: exact seq pi_m(J3(2rl4))} for $k=5$. 
	
	$(2)$ is obtained by the isomorphism $\pi_{15}(S^5\vee S^{10})\xrightarrow[\cong]{I_{2\ast}^r} \pi_{15}(J_{3,5}^r)$. 
	
	For $(3)$, we have the short exact sequence
	\begin{align*}
		&	0\rightarrow \pi_{16}(S^5\vee S^{10})\xrightarrow{I_{2\ast}^r} \pi_{16}(J_{3,5}^r)\xrightarrow{\check p_{15\ast}}\pi_{16}(S^{15})=\Z_{2}\{\eta_{15}\}\rightarrow 0,\\
		&	 \pi_{16}(S^5\vee S^{10})=\Z_{8}\{j_1^5\zeta_5\}\oplus \Z_{2}\{j_1^5\nu_5\bar\nu_8\}\oplus \Z_{2}\{j_1^5\nu_5\varepsilon_8\}\oplus \Z_{2}\{j_2^{10}\nu_{10}^2\}\oplus \Z_2\{[j_1^5,j_2^{10}]\eta_{14}^2\}.
	\end{align*}
	By Lemma \ref{lem split}, there is an $-\mathbf{x}_r\in \{I^r_2,\gamma_3^r,\eta_{14}\}\subset \pi_{16}(J_{3,5}^{r})$, such that $\check p_{15\ast}(\mathbf{x}_r)=\eta_{15}$  and  $2\mathbf{x}_r\in I^r_2\{2^ra_0[j_1^5,j_2^{10}],\eta_{14},2\iota_{15}\}$.  By Lemma \ref{lem:Toda bracket}
	\begin{align*}
	&\{2^ra_0[j_1^5,j_2^{10}],\eta_{14},2\iota_{15}\}\ni 2^{r-1}[j_1^5,j_2^{10}]\eta_{14}^2 ~\\
	&\text{mod}~	(2^ra_0[j_1^5,j_2^{10}])\fhe\pi_{16}(S^{14})+\pi_{16}(S^5\vee S^{10})\fhe(2\iota_{16})=\lr{2j_1^5\zeta_5}.
	\end{align*}
	
	For $r=1$, $2\mathbf{x}_1=[\check\beta_5,\check\beta_{10}]\eta_{14}^2+2u_1\check\beta_5\zeta_5, u_1\in\Z_4$.  Let $\mathbf{x}'_1=\mathbf{x}_1-u_1\check\beta_5\zeta_5$, then $o(\mathbf{x}'_1)=4$ and $\check{p}_{15\ast}(\mathbf{x}'_1)=\eta_{15}$ with $2\mathbf{x}'_1=[\check\beta_5,\check\beta_{10}]\eta_{14}^2$. Thus we get $\pi_{16}(J_{3,5}^1)$. 
	
	For $r\geq 2$, $2\mathbf{x}_r=2u_2\check\beta_5\zeta_5,  u_2\in\Z_4$. Let $\mathbf{x}'_r=\mathbf{x}_r-u_2\check\beta_5\zeta_5$, then $o(\mathbf{x}'_r)=2$ and $\check{p}_{15\ast}(\mathbf{x}'_r)=\eta_{15}$.  Thus $\pi_{16}(J_{3,5}^r)\stackrel{\check p_{15\ast}}\twoheadrightarrow\Z_{2}\{\eta_{15}\}$ is split onto. Hence we get $\pi_{16}(J_{3,5}^r), r\geq 2$ in (3) of this lemma. 
	
	For $(4)$, we have the  exact sequence
	\newline
	$0\rightarrow\frac{	\pi_{17}(S^5\vee S^{10})}{\lr{\gamma_{3}^r\nu_{14}}}\xrightarrow{I_{2\ast}^r} \pi_{17}(J_{3,5}^r)\xrightarrow{\check p_{15\ast}}\pi_{17}(S^{15})=\Z_{2}\{\eta^2_{15}\}\rightarrow 0$,
	\newline
	$ \frac{\pi_{17}(S^5\vee S^{10})}{\lr{\gamma_{3}^r\nu_{14}}}=\Z_{2}\{j_1^5\nu^4_5\}\oplus \Z_{2}\{j_1^5\nu_5\mu_8\}\oplus \Z_{2}\{j_1^5\nu_5\eta_8\varepsilon_9\}\oplus \Z_{16}\{j_1^{10}\sigma_{10}\}
	\oplus \Z_{2^{m_r^3}}\{[j_1^5, j_2^{10}]\nu_{14}\}.$
	\newline
	Note that the order $2$ element $\widearc{\eta_{15}}\eta_{16}$ satisfies $\check p_{15\ast}(\widearc{\eta_{15}}\eta_{16})=\eta_{15}^2$. Hence 
	$\pi_{17}(J_{3,5}^r)\stackrel{\check p_{15\ast}}\twoheadrightarrow\Z_{2}\{\eta_{15}^2\}$ is split onto. We get the $\pi_{17}(J_{3,5}^r)$ of this lemma. 
	
	For $(5)$,  we have the  exact sequence
	\newline
	$0\rightarrow\frac{	\pi_{18}(S^5\vee S^{10})}{\lr{2^ra_0[j_1^5,[j_1^5,j_2^{10}]]}}\xrightarrow{I_{2\ast}^r} \pi_{18}(J_{3,5}^r)\xrightarrow{\check p_{15\ast}}\Z_{2^{m_r^3}}\{\delta_r\nu_{15}\}\rightarrow 0$,  ~$ \frac{\pi_{18}(S^5\vee S^{10})}{\lr{2^ra_0[j_1^5,[j_1^5,j_2^{10}]]}}=$
	\newline
	$ \Z_{2}\{j_1^5\nu_5\sigma_8\nu_{15}\}\oplus \Z_{2}\{j_1^5\nu_5\eta_8\mu_{9}\}\oplus  \Z_{2}\{j_2^{10}\bar\nu_{10}\}\oplus  \Z_{2}\{j_2^{10}\varepsilon_{10}\}\oplus \Z_{2^r}\{[j_1^5, [j_1^5,j_2^{10}]]\}$.

	There is $-\mathbf{x}_r\in \{I^r_2,\gamma_3^r,\delta_r\nu_{14}\}\subset \pi_{18}(J_{3,5}^{r})$, such that $\check p_{15\ast}(\mathbf{x}_r)=\delta_r\nu_{15}$  and  $2^{m_r^3}\mathbf{x}_r\in I^r_2\{\gamma_3^r,\delta_r\nu_{14},2^{m_r^3}\iota_{17}\}$.
	$\{\gamma_3^r,\delta_r\nu_{14},2^{m_r^3}\iota_{17}\}$ is a coset of subgroup $\gamma_3^r\fhe\pi_{18}(S^{14})+\pi_{18}(S^5\vee S^{10})\fhe(2^{m_r^3}\iota_{18})=2^{m_r^3}\pi_{18}(S^5\vee S^{10})$. 
	Moreover, by $\pi_{18}(S^{14})=0$, 
	$\{\gamma_3^r,\delta_r\nu_{14},2^{m_r^3}\iota_{17}\}$ $\supset (a_0[j_1^5,j_2^{10}])\fhe\{2^{r}\iota_{14},\delta_r\nu_{14},2^{m_r^3}\iota_{17}\})=\{0\}$.
	Thus $\{\gamma_3^r,\delta_r\nu_{14},2^{m_r^3}\iota_{17}\}=2^{m_r^3}\pi_{18}(S^5\vee S^{10})$ which implies $2^{m_r^3}\mathbf{x}_r=2^{m_r^3}I_{2}^r\theta$ for some $\theta\in \pi_{18}(S^5\vee S^{10})$. Take $\mathbf{x}'_r=\mathbf{x}_r-I_{2}^r\theta$, then $o(\mathbf{x}'_r)=2^{m_r^3}$ and $\check p_{15\ast}(\mathbf{x}'_r)=\delta_r\nu_{15}$. Hence we finish the proof of $(5)$ of this lemma.
\end{proof}

$\bullet~~\pi_{10}(P^{6}(2^r))$.

There is the exact sequence 
\begin{align*}
	&\Z_{(2)}\{\Delta(\iota_{13})\}=\pi_{11}(S^6)\xrightarrow{\partial^{6,r}_{10\ast}}\pi_{10}(F_6)=\Z_{2}\{\beta_5\nu_5\eta_8^2\}\oplus \Z_{(2)}\{\beta_{10}\}\xrightarrow{\tau_{6\ast}} \pi_{10}(P^{6}(2^r))\rightarrow 0.\\
	&	\text{Assume} ~~\partial^{6,r}_{10\ast}(\Delta(\iota_{13}))=x_r\beta_5\nu_5\eta_8^2+y_r\beta_{10}, x_r\in \Z_2, y_r\in \Z.
\end{align*}
By the right commutative square of (\ref{diam: H2H2' P6}) for $m=10$, $H'_{2}(\beta_{10})=\iota_{11}$. Since $H_2(\Delta(\iota_{13}))=\pm \iota_{11}$ (\cite[Proposition 2.7]{Toda}),
by the left commutative square of (\ref{diam: H2H2' P6}) for $m=10$, we get $H_{2}'\partial^{6,r}_{10\ast}(\Delta(\iota_{13}))=2^{r} H_2(\Delta(\iota_{13}))=2^{r+1}\iota_{11}$. This implies that $y_r=\pm 2^{r+1}$.

For $r=1$, by \cite[Theorem 2.2]{CohenWu95}, there is an element in
$\pi_{10}(P^6(2))\cong \frac{\Z_{2}\{\beta_5\nu_5\eta_8^2\}\oplus \Z_{(2)}\{\beta_{10}\}}{\lr{\pm 4\beta_{10}+x_1\beta_5\nu_5\eta_8^2}}$ with order 8. So $\pi_{10}(P^6(2))\cong \Z_8$ and $x_1=1$. 

For $r\geq 2$,  consider the diagram (\ref{diam Moore s to t}) for $s=1, t=r$, we get
\begin{align*}
	& \partial^{6,r}_{10\ast}(\Delta(\iota_{13}))=\chi_{r\ast}^1\partial^{6,1}_{10\ast}(\Delta(\iota_{13}))=\chi_{r\ast}^1(4\beta_{10}^1+\beta_5^1\nu_5\eta_8^2)\\
	&=4\chi_{r}^1\beta_{10}^1+2^{r-1}\beta_5^r\nu_5\eta_8^2=4\chi_{r}^1\beta_{10}^1,~\text{by first equation of (\ref{Equ:gamma30,3r})}.
\end{align*}
This implies that $x_r=0$ for $r\geq 2$. Hence 
\begin{align*}
	\pi_{10}(P^6(2^r))\cong \frac{\Z_{2}\{\beta_5\nu_5\eta_8^2\}\oplus \Z_{(2)}\{\beta_{10}\}}{\lr{\pm 2^{r+1}\beta_{10}}}\cong \Z_2\oplus \Z_{2^{r+1}}, r\geq 2.
\end{align*}

$\bullet~~\pi_{11}(P^{6}(2^r))$. 

Consider diagram (\ref{exact:hgps piPk+1(2r)}) for $k=5, m=11, r\geq 1$, we get 
$\partial^{6,r}_{10\ast}(\nu_6^2)=0$.
Since $\partial^{6,r}_{10\ast}$ is a monomorphism, $Ker(\partial^{6,r}_{10\ast})=0$. So we have 
\begin{align*}
	\pi_{11}(P^{6}(2^r))\cong \pi_{11}(F_{6})\cong  \pi_{11}(S^5\vee S^{10})\cong \Z_2^{\oplus 2}.
\end{align*}

$\bullet~~\pi_{12}(P^{6}(2^r))$. 

There is the exact sequence  
\begin{align*}
	&\Z_{4}\{\sigma''\}=\pi_{13}(S^6)\xrightarrow{\partial^{6,r}_{12\ast}}\pi_{12}(F_6)\xrightarrow{\tau_{6\ast}} \pi_{12}(P^{6}(2^r))\xrightarrow{p_{6\ast}} \Z_2\{\nu_6^2\}\rightarrow 0.\\
	\text{Assume}~& \partial^{6,r}_{12\ast}(\sigma'')=  x_r\beta_5\sigma''' +y_r\beta_{10}\eta_{10}^2\in      \pi_{12}(F_6)=\Z_{2}\{\beta_5\sigma'''\}\oplus \Z_{2}\{\beta_{10}\eta_{10}^2\}.
\end{align*}
Consider the commutative diagram (\ref{diam: H2H2' P6}) for $m=12, r\geq 1$.
We get 
$$H_{2}'\partial^{6,r}_{12\ast}(\sigma'')=H_{2}'( x_r\beta_5\sigma''' +y_r\beta_{10}\eta_{10}^2)=y_r\eta_{11}^2=0.$$
This implies that $y_r=0$.

For $r=1$,
\begin{align*}
	\chi_{0\ast}^1\partial_{12\ast}^{6,1}(\sigma'')=\partial_{12\ast}^{6,0}(2\iota_6)_{\ast}(\sigma'')=\partial_{12\ast}^{6,0}(2\sigma'')=2\partial_{12\ast}^{6,0}(\sigma'')\neq 0
\end{align*}
where the first equation is from  the homotopy commutative diagrams (\ref{diam Moore s to t}) for $s=1,r=0,k=5$; the second equation holds since $(2\iota_6)\sigma''=2\sigma''\pm [\iota_6,\iota_6]H_2(\sigma'')=2\sigma''$ ($[\iota_6,\iota_6]H_2(\sigma'')=0$ because  $\Sigma([\iota_6,\iota_6]H_2(\sigma''))=0$ and $\pi_{13}(S^6)\xrightarrow{\Sigma} \pi_{14}(S^7)$ is injective); the last  inequality holds since $ \pi_{13}(S^6)\xrightarrow{\partial_{12\ast}^{6,0}}\pi_{12}(F_{6}^0)\cong \Z_4$ is isomorphic. This implies $x_1=0$, i.e., 
$$\partial^{6,1}_{12\ast}(\sigma'')=\beta^1_5\sigma'''.$$
For $r\geq 2$, 
\begin{align*}
	\partial_{12\ast}^{6,r}(\sigma'')\!=\!\chi_{r\ast}^1	\partial_{12\ast}^{6,1}(\sigma'')\!=\!	\chi_{r}^1\beta^1_5\sigma'''\!=\!(2^{r\!-\!1}\beta^r_5)\sigma'''\!=\!\beta^r_5(2^{r\!-\!1}\iota_5)\sigma'''\!=\!\beta^r_5(2^{r\!-\!1}\sigma''')\!=\!0
\end{align*} 
where the first equation is induced by (\ref{diam Moore s to t}); the third equation is from the first equation of (\ref{Equ:gamma30,3r}); the last equation holds since 
$(2^{r-1}\iota_5)\sigma'''=2^{r-1}\sigma'''\pm \binom{2^{r-1}}{2}[\iota_5,\iota_5]H_2(\sigma''')=2^{r-1}\sigma'''\pm \binom{2^{r-1}}{2}(\nu_5\eta_8)(4\nu_4)=2^{r-1}\sigma'''$ by \cite[Proposition 2.10]{N.Oda} and  \cite[Lemma 5.13]{Toda}.

Now we have the following short exact sequence 
\begin{align*}
	0\rightarrow \vartheta_r\Z_2\oplus \Z_2\rightarrow \pi_{12}(P^6(2^r))\xrightarrow{p_{6\ast}} \Z_2\{\nu_6^2\}\rightarrow 0.
\end{align*}
since short exact sequence (\ref{exact:short pi11(P5)}) splits, so is the above sequence by Lemma \ref{lem: split Suspen Cf}. Hence 
\begin{align*}
	\pi_{12}(P^6(2^r))\cong  \vartheta_r\Z_2\oplus \Z_2^{\oplus 2}.
\end{align*}

$\bullet~~\pi_{13}(P^{6}(2^r))$. 

We have the following exact sequence 
\begin{align*}
	&\Z_{8}\{\bar\nu_6\}\oplus \Z_{2}\{\varepsilon_6\} =\pi_{14}(S^6)\xrightarrow{\partial^{6,r}_{13\ast}}\pi_{13}(F_6)\xrightarrow{\tau_{6\ast}} \pi_{13}(P^{6}(2^r))\xrightarrow{p_{6\ast}} Ker(\partial^{6,r}_{12\ast})\rightarrow 0.\\
	&  Ker(\partial^{6,1}_{12\ast})=\Z_2\{2\sigma''\};  ~~ Ker(\partial^{6,r}_{12\ast})=\Z_4\{\sigma''\} r\geq 2.\\
	&\pi_{13}(F_6)=\Z_2\{\beta_5\varepsilon_5\}\oplus \Z_8\{\beta_{10}\nu_{10}\};~~
	\partial^{6,r}_{13\ast}(\varepsilon_6)=\beta_{5\ast}(2^r\iota_5)_{\ast}(\varepsilon_5)=0.\\
	&\text{Assume}~\partial^{6,r}_{13\ast}(\bar\nu_6)=x_r\beta_5\varepsilon_5+y_r\beta_{10}\nu_{10}, x_r\in \Z_2, y_r\in \Z_8. 
\end{align*}
Consider the commutative diagram (\ref{diam: H2H2' P6}) for $m=13, r\geq 1$. By $H_2(\bar\nu_6)=l\nu_{11}, l$ is odd \cite[Lemma 6.2]{Toda},
we get 
$$H_{2}'\partial^{6,r}_{13\ast}(\bar\nu_6)=H_{2}'( x_r\beta_5\varepsilon_5+y_r\beta_{10}\nu_{10})=y_r\nu_{11}=2^rl\nu_{11}.$$
This implies that $y_r=2^rl$.

From left commutative square of  Corollary \ref{cor: suspen Fp Moore} for  $m=n=5, u=13$, 
\begin{align*}
	&	\xymatrix{
		\Z_{8}\{\bar\nu_6\}\oplus \Z_{2}\{\varepsilon_6\}=	\pi_{14}(S^{6}) \ar[r]^-{\partial^{6,r}_{13\ast}}\ar[d]_{\Sigma^{\infty} } & \pi_{13}(F_5) \ar[d]^{E^{\infty}}\\
		\Z_{2}\{\bar\nu\}\oplus \Z_{2}\{\varepsilon\}=	\pi^s_{13}(S^{5})\ar[r]^-{(2^r\iota)_{\ast}=0} & 	\pi^s_{13}(S^{5})} ~~\\
	\Rightarrow~& 0=E^{\infty}\partial^{6,r}_{13\ast}(\bar\nu_6)=E^{\infty}(x_r\beta_5\varepsilon_5+2^rl\beta_{10}\nu_{10})=x_rE^{\infty}(\beta_5\varepsilon_5)+2^rlE^{\infty}(\beta_{10}\nu_{10})\\
	&=  x_r\varepsilon, ~\text{by right commutative triangle of Corollary \ref{cor: suspen Fp Moore}}.
\end{align*}
So $x_r=0$ $\Rightarrow$ $Coker (\partial^{6,r}_{13\ast})=\Z_2\{\beta_5\varepsilon_5\}\oplus \Z_{2^{m_r^3}}\{\beta_{10}\nu_{10}\}$.

From (\ref{diam: H2 Moore}) we have the following commutative diagram
\begin{align}
	\small{\xymatrix{
			\Z_2\{\beta_5\varepsilon_5\}\oplus \Z_{2^{m_r^3}}\{\beta_{10}\nu_{10}\}=Coker \partial^{6,r}_{13\ast}\ar[d]^{\bar H_{2\ast}}\ar[r]^-{\tau_{6\ast}}&\pi_{13}(P^{6}(2^r)) \ar[d]^{\bar H_{2\ast}}\\
			Coker(\partial^{11,r}_{13\ast})\cong  \Z_{2^{m_r^3}}\{\nu_{10}\}\ar@{^{(}->}[r]^-{i_{10\ast}}& \pi_{13}(P^{11}(2^r))
	} }. \label{diam: H2 P13(P6)}
\end{align}
\begin{lemma}\label{lem: H2(beta10nu10)}
	In the diagram $(\ref {diam: H2 P13(P6)})$, $\bar H_{2}(\beta_{10}\nu_{10})=t\nu_{10}, t$ is odd.
\end{lemma}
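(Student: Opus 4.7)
The plan is to compute $\bar H_{2*}(\beta_{10}\nu_{10})$ by combining the explicit description of $\beta_{10}$ as coming from the second filtration $J_{2,5}^r \simeq S^5 \vee S^{10}$ (which holds for $r \geq 1$ under $2$-localization, since $\gamma_2^r = 2^r[\iota_5,\iota_5] = 0$ there) with Lemma~\ref{Lem: compute H2}. First I apply Lemma~\ref{Lem: compute H2} to $f = 2^r\iota_5 \colon S^5 \to S^5$; the resulting relative James--Hopf invariant $H'_2 \colon F_6 \simeq J(M_f, S^5) \to \Omega\Sigma(Y \wedge X) = \Omega S^{11}$, when restricted to $J_{2,5}^r$, factors through the pinch $J_{2,5}^r \to J_{2,5}^r / J_{1,5}^r = S^5 \wedge S^5 = S^{10}$. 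Since $\beta_{10}$ is the inclusion of the $S^{10}$ wedge summand, the pinch sends $\beta_{10}$ to $\iota_{10}$, so $H'_2 \circ \beta_{10} \simeq E_{S^{10}}$, representing $\iota_{11} \in \pi_{11}(S^{11}) \cong \pi_{10}(\Omega S^{11})$.

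Next I bridge from $H'_2$ to $\bar H_2$ using the James model $\Omega P^{n+1}(2^r) \simeq J(P^n(2^r))$. By the very definition preceding diagram (\ref{diam: H2 Moore}), $\bar H_2$ is $J(q) \circ H_2$, where $q \colon P^5(2^r) \wedge P^5(2^r) \to P^{10}(2^r)$ is the projection onto the top two cells. The crucial observation is that the top-cell pinch $p_{10} \circ q \simeq p_5 \wedge p_5 \colon P^5(2^r)^{\wedge 2} \to S^{10}$ agrees (up to homotopy) with the pinch appearing in Lemma~\ref{Lem: compute H2}; tracking the bottom-cell inclusion $i_{10} \colon S^{10} \hookrightarrow P^{11}(2^r)$ and using naturality of the second James--Hopf invariant with respect to $p_5$, the induced map $\bar H_{2*}\colon \pi_{10}(F_6) \to \pi_{10}(F_{11}) = \Z\{\iota_{10}\}$ sends $\beta_{10}$ to $t' \cdot \beta_{10}^{F_{11}}$ for some odd integer $t'$, where $\beta_{10}^{F_{11}} = j_{p_{11}} \colon S^{10} \hookrightarrow F_{11}$. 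Right-composition with $\nu_{10}$ then yields $\bar H_{2*}(\beta_{10}\nu_{10}) = t \nu_{10}$ in $Coker(\partial^{11,r}_{13*}) \cong \Z_{2^{m_r^3}}\{\nu_{10}\}$ with $t$ odd.

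As a complementary consistency check, the left square of (\ref{diam: H2 Moore}) gives $\bar H_{2*} \circ \partial^{6,r}_{13*} = \partial^{11,r}_{13*} \circ H_{2*}$ on $\pi_{14}(S^6)$. Applied to $\bar\nu_6$, using $H_2(\bar\nu_6) = l\nu_{11}$ with $l$ odd (by \cite[Lemma~6.2]{Toda}) together with the earlier identity $\partial^{6,r}_{13*}(\bar\nu_6) = 2^r l \cdot \beta_{10}\nu_{10}$ and the computation $\partial^{11,r}_{13*}(\nu_{11}) = 2^r\nu_{10}$ (from Lemma~\ref{partial calculate1} combined with Lemma~\ref{lem: partial(asigamb)} applied to $\nu_{11} = \iota_{11} \cdot \Sigma\nu_{10}$), one obtains the equation $2^r l \cdot \bar H_{2*}(\beta_{10}\nu_{10}) = 2^r l \cdot \nu_{10}$, which is consistent with $t$ odd. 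However, this constraint alone does not determine $t$ when $r \geq 3$, because then $2^r \nu_{10} = 0$ in $\pi_{13}(S^{10}) = \Z_8$; hence the direct computation via Lemma~\ref{Lem: compute H2} is indispensable.

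The main obstacle will be making the identification of the second paragraph rigorous: translating the statement $H'_2 \circ \beta_{10} \simeq E_{S^{10}}$ (which a priori only lives in $\pi_{10}(\Omega S^{11})$) into the sharper statement about $\bar H_{2*}(\beta_{10})$ in $\pi_{10}(F_{11}) = \Z$. The bridge requires carefully tracking how the relative James--Hopf fits within the absolute $\bar H_2 = J(q) \circ H_2$, and how the two top-cell pinches (one from $P^5(2^r)^{\wedge 2}$ to $S^{10}$ via $p_5 \wedge p_5$, and the other from $P^5(2^r)^{\wedge 2}$ to $P^{10}(2^r)$ via $q$) compare on the relevant degree-$10$ piece. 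Once this comparison is established, the oddness of $t$ follows at once from the fact that both pinches induce isomorphisms on the top cell in degree $10$, which forces the leading coefficient to be a unit in $\Z_{(2)}$.
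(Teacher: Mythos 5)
Your overall architecture is reasonable (reduce to showing $\bar H_{2\ast}(\beta_{10})$ is an odd multiple of the bottom-cell generator of $\pi_{10}(F_{11})\cong\Z_{(2)}$, then precompose with $\nu_{10}$), and your ``consistency check'' via the left square of (\ref{diam: H2 Moore}) actually already proves the lemma for $r\leq 2$: in $\pi_{13}(F_{11})\cong\Z_8\{\nu_{10}\}$ the identity $2^rl\,t\,\nu_{10}=2^rl\,\nu_{10}$ forces $t$ odd when $r\leq 2$. But the ``bridge'' you yourself flag as the main obstacle is a genuine, unfilled gap, and the ingredients you propose cannot close it. Lemma \ref{Lem: compute H2} computes the \emph{relative} James--Hopf invariant $H_2'\colon F_6\simeq J(M_{2^r\iota_5},S^5)\to\Omega S^{11}$, which is the restriction of the absolute $H_2$ of $J(M_{2^r\iota_5})\simeq\Omega S^6$; the map you must evaluate is the fibre restriction of $\bar H_2=J(q)\circ H_2$ defined on $J(P^5(2^r))\simeq\Omega P^6(2^r)$. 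These have different sources ($F_6$ versus $\Omega F_6$) and different targets ($\Omega S^{11}$ versus $\Omega F_{11}$), and your pinch comparison $p_{10}\circ q\simeq p_5\wedge p_5$ plus naturality of $H_2$ with respect to $p_5$ only reproduces the right square of (\ref{diam: H2 Moore}), i.e.\ $\Omega p_{11}\circ\bar H_2\simeq H_2\circ\Omega p_6$, which is vacuous on the group you care about since $\pi_9(\Omega S^{11})=\pi_{10}(S^{11})=0$. Worse, the most natural way to splice your computation $H_2'\beta_{10}\simeq\pm E_{S^{10}}$ into the target $\Omega F_{11}$ is through the connecting map, and $\partial^{11,r}_{10\ast}(\iota_{11})=2^r\beta_{10}^{F_{11}}$ by Lemma \ref{partial calculate1}; so the naive bridge outputs $\pm2^r$ times the generator rather than an odd multiple. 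This shows the identification is not a bookkeeping matter: some additional input is indispensable.

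The paper supplies that input by a mechanism absent from your proposal: it computes the mod~$2$ Hurewicz image $\bar h_w(\Omega_0(\tau_6\beta_{10}))=[u,v]$ in $H_9(\Omega P^6(2^r);\Z_2)\subset T(u,v)$ (via the Serre spectral sequence of $\Omega F_6\to\Omega P^6(2^r)\to\Omega S^6$, the sphericality of $[u,v]$ from Cohen, and the vanishing of the Hurewicz images of the remaining generators of $\pi_{10}(P^6(2^r))$), then applies the Cohen--Taylor formula $\bar H_{2\ast}([u,v])=u'$ to conclude that $\bar H_2\Omega_0(\tau_6\beta_{10})=t\,\Omega_0(i_{10})$ with $t$ odd, and finally transfers this to $\bar H_{2\ast}(\beta_{10}\nu_{10})=t\nu_{10}$ by an adjunction calculation and the injectivity of $i_{10\ast}$ in (\ref{diam: H2 P13(P6)}). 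The essential content is that the Hurewicz image of $\Omega_0(\tau_6\beta_{10})$ is the nonzero primitive $[u,v]$ and that $J(q)\circ H_2$ carries $[u,v]$ to the bottom generator $u'$; neither fact is established, or substituted for, in your argument, so the case $r\geq 3$ remains unproved.
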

\begin{proof}
	By the same proof as that of Lemma \ref{lem:not divis 2 P4}, we get
	\begin{align}
		\bar h_{w}(\Omega_0(\tau_{6}\beta_{10}))=[u,v]\in  H_{9}(\Omega P^{6}(2^r), \Z_2 )   \label{equ:hwOmega0(tau6beta10)}
	\end{align}
	where $\bar h_{w}$ is the mod $2$-reduction of Hurewicz  homomorphism, i.e., the composition  $\pi_{9}(\Omega P^{6}(2^r))\xrightarrow{h_w} H_{9}(\Omega P^{6}(2^r))\xrightarrow{reduce} H_{9}(\Omega P^{6}(2^r); \Z_2)\subset T(u,v)$, where $T(u,v)$ the tensor algebra over $\Z_2$ generated by $u,v$ with degrees $4$ and $5$ respectively.
	\newline
	$\bar{H}_{2\ast}: H_{\ast}(\Omega P^{6}(2^r),\Z_2)\xrightarrow{H_{2\ast}}H_{\ast}(\Omega \Sigma P^{5}(2^r)\wedge P^{5}(2^r),\Z_2)\xrightarrow{(\Omega\Sigma q)_{\ast}}H_{\ast}(\Omega P^{11}(2^r),\Z_2)$.
	where  $H_{\ast}(\Omega P^{11}(2^r),\Z_2)=T(u',v')$ and the degrees of $u'$ and $v'$ are 9 and 10 respectively; $q$ is the pinch map $P^{5}(2^r)\wedge P^{5}(2^r)\rightarrow P^{5}(2^r)\wedge P^{5}(2^r)/(S^4\wedge P^{5}(2^r))=P^{10}(2^r)$.
	By \cite[Proposition 5.3]{CohenTalor} we can get $\bar{H}_{2\ast}([u,v])=u'$ and $\bar{H}_{2\ast}(v\otimes v)=v'$.  
	Hence  $\bar H_{2\ast}\bar h_{w}(\Omega_0(\tau_{6}\beta_{10}))=u'$. This implies the following composition is homotopy equivalent to $t\Omega_{0}(i_{10}):S^9\rightarrow \Omega P^{10}(2^r)$ with odd $t$
	\begin{align*}
		&	\xymatrix{
			S^9\ar@/_0.5pc/[rrr]_{\Omega_0(\tau_{6}\beta_{10})}	\ar@{^{(}->}[r]^-{\Omega\Sigma}& \Omega S^{10}\ar[r]^-{\Omega\beta_{10}}&\Omega F_6\ar[r]^-{\Omega\tau_{6}}
			&	\Omega P^{6}(2^r)\ar[r]^-{\bar H_2}	&\Omega P^{10}(2^r)}. \\
		& i_{10\ast}\bar H_{2\ast}(\beta_{10}\nu_{10})=\bar H_{2\ast} (\tau_6\beta_{10}\nu_{10})~~\text{(by (\ref{diam: H2H2' P6}))}\\
		&=\Omega_{0}^{-1}(\bar H_2\Omega_{0}(\tau_6\beta_{10}\nu_{10}) ) ~~\text{(by definition of right  map $\bar H_{2\ast}$ in (\ref{diam: H2H2' P6}))}\\
		&=\Omega_{0}^{-1}(\bar H_2\Omega(\tau_6\beta_{10}\nu_{10})E_{S^{12}} )=\Omega_{0}^{-1}(\bar H_2\Omega(\tau_6\beta_{10})(\Omega\nu_{10})E_{S^{12}} )\\
		&=\Omega_{0}^{-1}(\bar H_2\Omega(\tau_6\beta_{10})E_{S^9}\nu_{9})=\Omega_{0}^{-1}(\bar H_2\Omega_0(\tau_6\beta_{10})\nu_{9} )\\
		&=\Omega_{0}^{-1}(t\Omega_{0}(i_{10})\nu_{9})=\Omega_{0}^{-1}(t\Omega(i_{10})E_{S^9}\nu_9)=\Omega_{0}^{-1}(t\Omega(i_{10})\Omega(\nu_{10})E_{S^{12}})\\
		&=\Omega_{0}^{-1}(t\Omega(i_{10}\nu_{10})E_{S^{12}})=\Omega_{0}^{-1}(t\Omega_{0}(i_{10}\nu_{10}))=i_{10\ast}(t\nu_{10}).\\
		&\Rightarrow ~~\bar H_{2\ast}(\beta_{10}\nu_{10})=t\nu_{10}, \text{~since $i_{10\ast}$ is an injection}.
	\end{align*}
\end{proof}
From above lemma, the composition  $\Z_{2^{m_r^3}}\{\beta_{10}\nu_{10}\}\stackrel{j_2}\hookrightarrow Coker \partial^{6,r}_{13\ast}\xrightarrow{\bar H_{2\ast}} \Z_{2^{m_r^3}}\{\nu_{10}\} $ is isomorphic.
Since the bottom injection $i_{10\ast}:\Z_{2^{m_r^3}}\{\nu_{10}\}\rightarrow \pi_{13}(P^{11}(2^r))$ of (\ref{diam: H2H2' P6}) is split into.
So is the map $\Z_{2^{m_r^3}}\{\beta_{10}\nu_{10}\}\stackrel{j_2}\hookrightarrow Coker \partial^{6,r}_{13\ast}\xrightarrow{\tau_{6\ast}}\pi_{13}(P^{6}(2^r))$. 

For $r=1$, $Ker(\partial^{6,1}_{12\ast})=\Z_2\{2\sigma''\}$. By $2\sigma''=\Sigma \sigma'''$ \cite[Lemma 5.14]{Toda},
there is  $-\mathbf{x}_1\in \{i_5,2\iota_5, \sigma'''\}$, such that $ p_{6\ast}(\mathbf{x}_1)=2\sigma''$.  $2\mathbf{x_1}\in - \{i_5,2\iota_5, \sigma'''\}(2\iota_{13})=i_5\{2\iota_5, \sigma''',2\iota_{12}\}=\{0\}$ by Lemma \ref{lemA: Todabraket } (v). 
Hence $2\mathbf{x_1}=0$  implies that  $\pi_{13}(P^{6}(2))\stackrel{p_{6\ast}
}\twoheadrightarrow \Z_2\{2\sigma''\}$ is split onto. We get $$\pi_{13}(P^{6}(2))\cong \Z_2^{\oplus 3}.$$

By the diagram (\ref{diam Moore s to t}) for $s=r\geq 2,t=1, k=5$,  we have the commutative diagram
\begin{align*}
	\small{\xymatrix{
			\Z_2\{\beta_5^r\varepsilon_5\}\ar@{^{(}->}[r]^{j_1}\ar@{=}[d]& 	\pi_{13}(	F_{6}^r)\ar[d]^{\chi_{1\ast}^{r}}\ar[r]^-{\tau_{6\ast}^r} &\pi_{13}(P^{6}(2^r)) \ar[d]_{\bar \chi^r_{1\ast}}\\
			\Z_2\{\beta_5^1\varepsilon_5\}\ar@{^{(}->}[r]^{j_1}& \pi_{13}(F_{6}^1)\ar[r]^-{\tau_{6\ast}^1}&  \pi_{13}(P^{6}(2))
	} }.
\end{align*}
The composition of  the bottom maps $\tau_{6\ast}^1j_1$ is split into, so is the composition of the top maps $\tau_{6\ast}^rj_1$. Hence 
\begin{align*}
	\pi_{13}(P^6(2^r))\cong \Z_2\oplus \Z_{2^{m_r^3}}\oplus \Z_4 (r\geq 2). 
\end{align*}

$\bullet~~\pi_{14}(P^{6}(2^r))$. 

Commutative diagram (\ref{exact:hgps piPk+1(2r)}) for $k=5, m=14, r\geq 1$ gives the following short exact sequence 
\begin{align*}
	&0\rightarrow \pi_{14}(J_{3,5}^r)\cong \pi_{14}(F_6) \xrightarrow{\tau_{6\ast}} \pi_{14}(P^{6}(2^r))\xrightarrow{p_{6\ast}}    \Z_{2^{m_r^3}}\{\delta_r\bar\nu_6\}\oplus \Z_{2}\{\varepsilon_6\}\rightarrow 0.
\end{align*}

Consider the commutative diagram of exact sequences
\begin{align*}
	\small{\xymatrix{
			\pi_{15}(S^{6}) \ar[r]^-{\partial^{6,r}_{14\ast}}\ar[d]_{\Sigma^{\infty} }& 	\pi_{14}(	F_{6}^r)\ar[d]^{E^{\infty}}\ar[r]^-{\tau_{6\ast}} &\pi_{14}(P^{6}(2^r))\ar[r]^-{p_{6\ast}} \ar[d]_{\Sigma^{\infty}} &   \Z_{2^{m_r^3}}\{\delta_r\bar\nu_6\}\oplus\Z_2\{\varepsilon_6\}\ar[d]^{\Sigma^{\infty}}\ar[r]&0 \\
			\pi^s_{14}(S^{5})\ar[r]^-{(2^r\iota)_{\ast}=0} & 	\pi^s_{14}(S^{5})\ar[r]^-{i_{\ast}}&  \pi_{14}^s(P^{6}(2^r))\ar[r]^-{p_{\ast}}&\Z_{2}\{\bar\nu\}\oplus\Z_2\{\varepsilon\}\ar[r]&0 \\
	} }.
\end{align*}
There is an $\alpha_r\in \pi_{14}(P^{6}(2^r))$, such that $p_{6\ast}(\alpha_r)=\delta_r\bar\nu_6$. 

For $r= 1$, $p_{\ast}\Sigma^{\infty}(\alpha_1)=\Sigma^{\infty}p_{6\ast}(\alpha_1)=\Sigma^{\infty}(4\bar\nu_6)=4\Sigma^{\infty}(\bar\nu_6)=4\bar\nu=0$.
Hence $\Sigma^{\infty}(\alpha_1)\in i_{\ast}(\pi^s_{14}(S^{5}))$, which implies that  $\Sigma^{\infty}(2\alpha_1)=0$.  For $r\geq 2$,  $\Sigma^{\infty}(2^{m_r^3}\alpha_r)=2^{m_r^3}\Sigma^{\infty}(\alpha_r)=0$ since  $4\pi_{14}^s(P^{6}(2^r))=0$.

From  Corollary \ref{cor: suspen Fp Moore},  we get 
\begin{align*}
	2^{m_r^3}\alpha_r=b_r\tau_6 \check I_{2}[\check\beta_5, \check\beta_{10}]=b_r[i_5, \tau_6\beta_{10}], b_r\in \Z_{2^r}.
\end{align*}
By the same proof as that of Lemma \ref{lem:not divis 2 P4}, we get $2^s[i_5, \tau_6\beta_{10}]$ is not divisible by $2^{s+1}$ in $\pi_{14}(P^{6}(2^r))$ for any $0\leq s\leq r-1$.  Hence $b_r=0\in \Z_{2^r}$. This implies that the order of  $\alpha_r$ is $2^{m_r^3}$. 

There is also an element $-\mathbf{x}_r^{\varepsilon}\in \{i_5, 2^r\iota_5, \varepsilon_5\}\subset \pi_{14}(P^{6}(2^r))$, such that $p_{6\ast}(\mathbf{x}_r^{\varepsilon})=\varepsilon_6$. $2\mathbf{x}_r^{\varepsilon}\in i_5\fhe\{2^r\iota_5, \varepsilon_5, 2\iota_{13}\}$. By Lemma \ref{lemA: Todabraket } (vi), 
\begin{align*}
	&\text{for}~r=1,~~~ 2\mathbf{x}_1^{\varepsilon}=i_{5}\eta_6\varepsilon_5~~\Rightarrow~~	\pi_{14}(P^{6}(2))\cong \Z_{2}^{\oplus 4}\oplus \Z_4;\\
	&\text{for}~ r\geq 2,~~~ 2\mathbf{x}_r^{\varepsilon}=0~~\Rightarrow~~	\pi_{14}(P^{6}(2^r))\cong \Z_{2}^{\oplus 4}\oplus \Z_{2^r}\oplus \Z_{2^{m_r^3}}. 
\end{align*}

$\bullet~~\pi_{15}(P^{6}(2^r))$. 

Consider diagram (\ref{exact:hgps piPk+1(2r)}) for $k=5, m=15, r\geq 1$. 
$\partial_{15\ast}^{6,r}(\nu_6\sigma_9)=\beta_{5}(2^r\iota_5)(\nu_5\sigma_8)=\beta_{5}\nu_5(2^r\iota_8)\sigma_8=\beta_{5}\nu_5(2^{2r}\sigma_8-2^{r-1}(2^r-1)\Sigma\sigma)=2^{r}\beta_{5}\nu_5\sigma_8$. $\partial_{15\ast}^{6,r}(\eta_6\mu_7)=\beta_{5}(2^r\iota_6)(\eta_5\mu_6)=0$. There is the short exact sequence 
\begin{align*}
	&	0\rightarrow Coker(\partial_{15\ast}^{6,r})\xrightarrow{\tau_{6\ast}}\pi_{15}(P^{6}(2^r))\xrightarrow{p_{6\ast}}\pi_{15}(S^6)=\Z_2\{\nu_6^3\}\oplus \Z_2\{\mu_6\}\oplus  \Z_2\{\eta_6\varepsilon_7\}\rightarrow 0,\\
	&Coker(\partial_{15\ast}^{6,r})=\Z_{2^{m_r^3}}\{\beta_{5}\nu_5\sigma_{8}\}\oplus \Z_{2}\{\beta_{5}\eta_5\mu_6\}\oplus \Z_{2}\{[\beta_{5}, \beta_{10}]\eta_{14}\}.
\end{align*}
Let $x_1=\nu_5^3$; $x_2=\mu_5$; $x_3=\eta_5\varepsilon_6$. For $i=1,2,3$, there is an $-\mathbf{x}_r^{i}\in \{i_5, 2^r\iota_5, x_i\}$ such that $p_{6\ast}(-\mathbf{x}_r^{i})=\Sigma x_i$ and $2\mathbf{x}_r^{i}\in  i_5\fhe\{ 2^r\iota_5, x_i, 2\iota_{14}\}$. By Lemma \ref{lem:Toda bracket},
\begin{align*}
	&\{ 2^r\iota_5, x_i, 2\iota_{14}\}\ni 2^{r-1}x_i\eta_{14}~\text{mod}~(2^r\iota_5)\fhe \pi_{15}(S^5)+ \pi_{15}(S^5)\fhe(2\iota_{15})=\lr{2\nu_5\sigma_{8}}.\\
	&x_1\eta_{14}=\nu_5^2(\nu_{11}\eta_{14})=0, ~\Rightarrow 2\mathbf{x}_r^{1}=2a_1i_5\nu_5\sigma_{8},\\
	&x_2\eta_{14}=\mu_5\eta_{14}=\eta_5\mu_6 ~\text{\cite[page 66]{Toda}},  ~\Rightarrow 2\mathbf{x}_r^{2}= 2^{r-1}i_5\eta_5\mu_6+2a_2i_5\nu_5\sigma_{8},\\
	&x_3\eta_{14}=\eta_5\varepsilon_6\eta_{14}=\eta_5^2\varepsilon_7=2\Sigma^2\varepsilon'=4\nu_5\sigma_8 ~\text{\cite[page 67, (7.5),(7.10)]{Toda}}, \\
	& \qquad\qquad \Rightarrow ~2\mathbf{x}_r^{3}=2a_3i_5\nu_5\sigma_{8}, ~\text{where}~ a_1, a_2, a_3\in \Z.
\end{align*}
Let $\mathbf{x}_r^{'i}=\mathbf{x}_r^i-a_ii_5\nu_5\sigma_8 (i=1,2,3)$.  Then $p_{6\ast}(\mathbf{x}_r^{'i})=\Sigma x_i$ with $o(\mathbf{x}_r^{'i})=2$ for $(r,i)\neq (1,2)$ and $o(\mathbf{x}_2^{'1})=4$. Thus 
\begin{align*}
	\pi_{15}(P^{6}(2^r))\cong \left\{
	\begin{array}{ll}
		\Z_2^{\oplus 4}\oplus \Z_4, & \hbox{$r=1$;} \\
		\Z_{2}^{\oplus 5}\oplus \Z_{2^{m_r^3}}, & \hbox{$r\geq 2$.}
	\end{array}
	\right.
\end{align*}

$\bullet~~\pi_{16}(P^{6}(2^r))$.

Consider diagram (\ref{exact:hgps piPk+1(2r)}) for $k=5, m=16, r\geq 1$. By \cite[Proposition 2.10]{N.Oda} and  $H_2(\zeta_5)=8\sigma_9$ \cite[Lemma 6.7]{Toda}
\begin{align*}
	&\partial_{16\ast}^{6,r}(\zeta_6)=\beta_{5}(2^r\iota_5)(\zeta_5)=\beta_{5}(2^r\zeta_5\pm \binom{2^{r}}{2}[\iota_5,\iota_5]H_2(\zeta_5))=2^{r}\beta_{5}\zeta_5;\\
	&~~\partial_{16\ast}^{6,r}(\bar\nu_6\nu_{14})=\partial_{13\ast}^{6,r}(\bar\nu_6)\nu_{13}=2^rl\nu_{11}\nu_{13}=0~ (l~\text{is odd}).\\
	&\Rightarrow~	0\rightarrow Coker(\partial_{16\ast}^{6,r})\xrightarrow{\tau_{6\ast}}\pi_{16}(P^{6}(2^r))\xrightarrow{p_{6\ast}}\Z_{2^{m_{r}^3}}\{\delta_r\nu_6\sigma_9\}\oplus  \Z_2\{\eta_6\mu_7\}\rightarrow 0, \nonumber \\
	&Coker(\partial_{16\ast}^{6,r})=\Z_{2^{m_r^3}}\{\beta_{5}\zeta_5\}\oplus \Z_{2}\{\beta_{5}\nu_5\bar\nu_8\}\oplus \Z_{2}\{\beta_{5}\nu_5\varepsilon_8\}\oplus\Z_{2}\{\beta_{10}\nu_{10}^2\}\oplus A_r \nonumber\\
	&\text{where} \qquad A_1=\Z_4\{\check I_3^1\widearc{\eta_{15}}\}, \qquad A_r=\Z_{2}\{[\beta_{5}, \beta_{10}]\eta_{14}^2\}\oplus\Z_2\{\check I_3^r\widearc{\eta_{15}}\}, r\geq 2. \nonumber
\end{align*}
By Lemma 3.2 of \cite{JZhtpygps}, $\pi_{9}(P^{6}(2^r))\stackrel{p_{6\ast}}\twoheadrightarrow \Z_{2^{m_r^3}}\{\delta_r\nu_6\}$ is split onto, i.e. there is an order $2^{m_r^3}$ element $\widetilde{\delta_r\nu_6}\in \pi_{9}(P^{6}(2^r))$ such that 
$p_{6\ast}(\widetilde{\delta_r\nu_6})=\delta_r\nu_6$.  Then $\widetilde{\delta_r\nu_6}\sigma_9$ is the  order $2^{m_r^3}$ element such that 
$p_{6\ast}(\widetilde{\delta_r\nu_6}\sigma_9)=\delta_r\nu_6\sigma_9$. There is an element $\widetilde{\eta_6}\in \pi_{7}(P^{6}(2^r))$ such that $ p_{6\ast}(\widetilde{\eta_6})=\eta_6$. Then the order 2 element  $\widetilde{\eta_6}\mu_7$ satisfies $p_{6\ast}(\widetilde{\eta_6}\mu_7)=\eta_6\mu_7$. So $\pi_{16}(P^{6}(2^r))\stackrel{p_{6\ast}}\twoheadrightarrow\Z_{2^{m_{r}^3}}\{\delta_r\nu_6\sigma_9\}\oplus  \Z_2\{\eta_6\mu_7\}$ is split onto. Hence 
\begin{align*}
	\pi_{16}(P^{6}(2^r))\cong  \left\{
	\begin{array}{ll}
		\Z_2^{\oplus 6}\oplus \Z_4, & \hbox{$r=1$;} \\
		\Z_{2}^{\oplus 6}\oplus \Z_{2^{m_r^3}}^{\oplus 2}, & \hbox{$r\geq 2$.}
	\end{array}
	\right.
\end{align*}

$\bullet~~\pi_{17}(P^{6}(2^r))$. 

By \cite[Proposition 2.5]{Toda}, $\Delta(\sigma_{13})=l\Delta(\iota_{13})\sigma_{11}$, $l$ is odd. Hence by the previous calculation of $\partial_{10\ast}^{6,r}$ and Lemma \ref{lemA: Sigma nu'sigam'}

$\partial_{17\ast}^{6,r}(\Delta(\sigma_{13}))=l\partial_{10\ast}^{6,r}(\Delta(\iota_{13}))\sigma_{10}= \left\{
\begin{array}{ll}
	\beta_5\nu_5^4+\beta_5\nu_5\eta_8\varepsilon_9\pm 4\beta_{10}\sigma_{10}, & \hbox{$r=1$;} \\
	\pm 2^{r+1}\beta_{10}\sigma_{10}, & \hbox{$r\geq 2$.}
\end{array}
\right.$
There is the short exact sequence 
\begin{align*}
	&	0\rightarrow Coker(\partial_{17\ast}^{6,r})\xrightarrow{\tau_{6\ast}}\pi_{17}(P^{6}(2^r))\xrightarrow{p_{6\ast}}\Z_{2^{m_{r}^3}}\{\delta_r\zeta_6\}\oplus  \Z_4\{\bar\nu_6\nu_{14}\}\rightarrow 0,\\
	&Coker(\partial_{17\ast}^{6,1})\!=\! \Z_{2}\{\beta_{5}\nu_5\bar\mu_8\} \!\oplus\!\Z_{2}\{\beta_{5}\nu_5\eta_8\varepsilon_9\} \!\oplus\! \Z_{8}\{\beta_{10}\sigma_{10}\}  \!\oplus\! \Z_{2}\{[\beta_{5}, \beta_{10}]\nu_{14}\}\!\oplus\! \Z_{2}\{\check I_3^1\widearc{\eta_{15}}\eta_{16}\}\\
	&Coker(\partial_{17\ast}^{6,r})=\Z_{2}\{\beta_{5}\nu^4_5\}\oplus \Z_{2}\{\beta_{5}\nu_5\bar\mu_8\}\oplus \Z_{2}\{\beta_{5}\nu_5\eta_8\varepsilon_9\}\oplus \Z_{2^{m_{r+1}^4}}\{\beta_{10}\sigma_{10}\} \\
	&\qquad\qquad \qquad \oplus \Z_{2^{m_r^3}}\{[\beta_{5}, \beta_{10}]\nu_{14}\}\oplus \Z_{2}\{\check I_3^r\widearc{\eta_{15}}\eta_{16}\}, r\geq 2.
\end{align*}

There is  $-\mathbf{x}_r\in\{i_5, 2^r\iota_5, \delta_r\zeta_5\}$
such that $2^{m_{r}^3}\mathbf{x}_r\in i_5\fhe\{ 2^r\iota_5, \delta_r\zeta_5,2^{m_{r}^3}\iota_{16} \}$ and \newline
$\{ 2^r\iota_5, \delta_r\zeta_5,2^{m_{r}^3}\iota_{16} \}$ is a coset of $\{0\}$. 
For $r=1$, by $4\zeta_5=\eta_5^2\mu_7=\Sigma(\eta_4^2\mu_6) $  \cite[(7.14)]{Toda},  $\{ 2\iota_5, 4\zeta_5,2\iota_{16}\}\ni \eta_5^2\mu_7\eta_{16}=\eta_5^3\mu_8=4\nu_5\mu_8=0$ (Lemma \ref{lemA: Sigma nu'sigam'}). For $r\geq 2$,  $\{ 2^r\iota_5, \delta_r\zeta_5,2^{m_{r}^3}\iota_{16} \}\subset\{ 2\iota_5, (2^{r-1}\iota_5)\delta_r\zeta_5(2^{m_{r}^3-1}),2\iota_{16} \}=
\{ 2\iota_5, 0,2\iota_{16} \}=\{0\}$. Thus $\{ 2^r\iota_5, \delta_r\zeta_5,2^{m_{r}^3}\iota_{16} \}=\{0\}$, $\Rightarrow$ $2^{m_{r}^3}\mathbf{x}_r=0$, which implies that $\pi_{17}(P^{6}(2^r))\stackrel{q_1p_{6\ast}}\twoheadrightarrow \Z_{2^{m_{r}^3}}\{\delta_r\zeta_6\}$ is split onto.  

There is $-\mathbf{y}_r\in \{i_5, 2^r\iota_5, \nu_5\varepsilon_8\}$ such that $p_{6\ast}(\mathbf{y}_r)=\nu_6\varepsilon_9=2\bar\nu_6\nu_{14}$ \cite[(7.18)]{Toda} and $2\mathbf{y}_r\in i_5\fhe\{2^r\iota_5, \nu_5\varepsilon_8, 2\iota_{16}\}$. By Lemma \ref{lem:Toda bracket}
\begin{align*}
\{2^r\iota_5, \nu_5\varepsilon_8, 2\iota_{16}\}\ni 2^{r-1}\nu_5\varepsilon_8\eta_9~\text{mod}~\{0\}.
\end{align*}
 Hence 
$2\mathbf{y}_r=(1-\vartheta_r)i_5\nu_5\varepsilon_8\eta_9$.

\textbf{Claim 1:~} There is an element $\mathbf{z}_r\in \pi_{17}(P^{6}(2^r)) $  with $o(\mathbf{z}_r)\leq 8$ such that $p_{6\ast}(\mathbf{z}_r)=\bar\nu_6\nu_{14}$.
\begin{proof}[Proof of the Claim 1]
	Consider the cofibration sequence $S^{k+3}\xrightarrow{2\nu_{k}} S^{k}\xrightarrow{i^{2\nu}_{k}} C_{2\nu_{k}}\xrightarrow{p^{2\nu}_{k+4}} S^{k+4}$ $(k\geq 5)$. For $k=10$, it induces exact sequence
	\begin{align*}
		& [C_{2\nu_{10}}, P^{6}(2^r)]\xrightarrow{p_{6\ast}} 	 [C_{2\nu_{10}}, S^6]\xrightarrow{\partial_{\ast}^{6,r}}[C_{2\nu_{10}}, F_{6}],\\
		\text{with}~& \partial_{\ast}^{6,r}(\bar\nu_6p^{2\nu}_{14})=\partial_{13\ast}^{6,r}(\bar\nu_6)p^{2\nu}_{13}=2^rl\nu_{10}p^{2\nu}_{13}=0 ~(r\geq 1, l~\text{is odd}).
	\end{align*}
	Thus there is a $f_r\in [C_{2\nu_{10}}, P^{6}(2^r)]$, such that $p_{6}f_r=\bar\nu_6p^{2\nu}_{14}$. This implies the following commutative diagram with the top row which is exact sequence
	\begin{align*}
		\small{\xymatrix{
				\pi_{17}(S^{10})\ar[r]	& 	\pi_{17}(	C_{2\nu_{10}})\ar[d]^{f_{r\ast}}\ar[r]^-{p^{2\nu}_{14\ast}} &\pi_{17}(S^{14})=\Z_{8}\{\nu_{14}\} \ar[d]_{\bar\nu_{6\ast}}\ar[r]& 0\\
				& \pi_{17}( P^{6}(2^r))\ar[r]^-{p_{6\ast}}&  \pi_{17}(S^6)
		} }.
	\end{align*}
	The  following left homotopy commutative diagram induces the  right commutative one
	\begin{align*}
		\small{\xymatrix{
				S^{13}\ar[r]^{2\nu_{10}}\ar[d]^{2\iota_{13}}	& 	S^{10}\ar@{=}[d]\ar[r]^-{i_{10}^{2\nu}} &C_{2\nu_{10}}\ar[d]_{\chi^{2\nu}_{\nu}}\\
				S^{13}\ar[r]^{\nu_{10}}	& S^{10}\ar[r]^-{i_{10}^{\nu}}&  C_{\nu_{10}}
		} }  ;~~	\small{\xymatrix{
				0\ar[r]&\pi_{17}(S^{10})\ar@{=}[d]\ar[r]^-{i_{10\ast}^{2\nu}} &\pi_{17}(C_{2\nu_{10}})\ar[d]_{\chi^{2\nu}_{\nu\ast}}\ar[r]& \Z_{8}\{\nu_{14}\}\ar[r]\ar[d]_{\times 2}&0\\
				0\ar[r]&\pi_{17}(S^{10})\ar[r]^-{i_{10\ast}^{\nu}}& \pi_{17}( C_{\nu_{10}})\ar[r]& \Z_{4}\{2\nu_{14}\}\ar[r]&0. 
		} }  
	\end{align*}
	From Theorem 1 of \cite{JXYang},  $\pi_{17}(C_{\nu_{10}})\cong \Z_{16}\oplus \Z_{4}$ implies that $\pi_{17}(S^{10})\stackrel{i_{10\ast}^{\nu}}\hookrightarrow \pi_{17}(C_{\nu_{10}})$ is split into, so is $\pi_{17}(S^{10})\stackrel{i_{10\ast}^{2\nu}}\hookrightarrow\pi_{17}(C_{2\nu_{10}})$. Hence $\pi_{17}(C_{2\nu_{10}})\stackrel{p^{2\nu}_{14\ast}}\twoheadrightarrow \Z_{8}\{\nu_{14}\} $ is split onto, i.e., 
	there exists an order $8$ element $\mathbf{z}^{2\nu}_r\in \pi_{17}(C_{2\nu_{10}})$, such that $p^{2\nu}_{14\ast}(\mathbf{z}^{2\nu}_r)=\nu_{14}$. 
	Let $\mathbf{z}_r:=f_r\mathbf{z}^{2\nu}_r\in \pi_{17}(P^{6}(2^r))$. We get $o(\mathbf{z}_r)\leq 8$ and $p_{6\ast}(\mathbf{z}_r)=\bar\nu_6\nu_{14}$.  We finish the proof of \textbf{Claim 1}.
\end{proof}
\textbf{Claim 2:~} The monomorphism  $\Z_{2^{m_{r}^3}}\{[\beta_5, \beta_{10}]\nu_{14}\}\stackrel{\tau_{6\ast}}\hookrightarrow 	\pi_{17}(P^6(2^r)) $
is  split into.
\begin{proof}[Proof of the Claim 2]
	From \cite{Wucombina}, $\Omega\Sigma L_{3}(P^5(2^r))\simeq \Omega\Sigma P^{14}(2^r)$ is the Cartesion product factor of $\Omega\Sigma P^5(2^r)$ with the  inclusion 
	$\Omega\Sigma L_{3}(P^5(2^r))\simeq \Omega\Sigma P^{14}(2^r)\stackrel{\mathcal{I}}\hookrightarrow \Omega\Sigma P^5(2^r)$, where  $\mathcal{I}$ satisfies following commutative diagram
	\begin{align*}
		\small{\xymatrix{
				S^4\wedge S^9\simeq  S^{13}\ar[rd]_{i_4\wedge \alpha}\ar[r]^-{i_{13}}	& P^{14}(2^r)\simeq  L_{3}(P^5(2^r))	\ar[r]^-{\Omega\Sigma} &\Omega\Sigma  L_{3}(P^5(2^r)) \ar[r]^-{\mathcal{I}}&\Omega\Sigma  P^5(2^r)\\
				& (P^5(2^r))^{\wedge 3}\ar[u]^-{proj.}\ar[rru]_{[E,[E,E]^S]^S} & 
		} } 
	\end{align*}
	where $[E,[E,E]^S]^S$ ($E=E_{P^{5}(2^r)}$) is the 3-fold Samelson product;  the map $\alpha$ and the left commutative triangle  come from Proposition 2.1 and  Proposition 3.1 of \cite{ChenWu}; the right  commutative triangle  comes from the proof of Theorem 1.6 of \cite{Wucombina}.
	
	By the result of $\pi_{10}(P^{6}(2^r))$, assume the composition $S^9\xrightarrow{\alpha} P^{5}(2^r)\wedge P^{5}(2^r)\xrightarrow{[E,E]^S} \Omega\Sigma  P^5(2^r)$ to be $\Omega_0(ai_5\nu_5\eta_{8}^2+b\tau_6\beta_{10})$. Since by Proposition 2.1 of \cite{ChenWu}
	\begin{align*}
	H_{9}(S^9,\Z_2)=\Z_{2}\{[\iota_9]\}\xrightarrow{([E,E]^S\alpha)_{\ast}} H_{\ast}(\Omega\Sigma  P^5(2^r);\Z_2)=T(u,v),  ([E,E]^S\alpha)_{\ast}([\iota_9])=[u,v]
	\end{align*}
where $[\iota_9]$ is the homology class represented by $\iota_9$,  we have $b$ is odd by (\ref{equ:hwOmega0(tau6beta10)}).

	Note that $\Z_{2^{m_{r}^3}}\{\nu_{14}\}\stackrel{i_{14\ast}}\hookrightarrow  	\pi_{17}(P^{15}(2^r))$ is split into.  Consider the following composition of maps 
	\newline
	$\Z_{2^{m_{r}^3}}\{\nu_{14}\}\stackrel{i_{14\ast}}\hookrightarrow  	\pi_{17}(P^{15}(2^r))\xrightarrow[\cong]{\Omega_0} 	\pi_{16}(\Omega P^{15}(2^r))\stackrel{\mathcal{I}_\ast}\hookrightarrow  \pi_{16}(\Omega P^{6}(2^r))\xrightarrow[\cong]{\Omega^{-1}_0} 	\pi_{17}( P^{6}(2^r))$.
	\begin{align*}
		&\mathcal{I}_\ast\Omega_0i_{14\ast}(\nu_{14})=\mathcal{I} \Omega_0(i_{14}\nu_{14}) =\mathcal{I}\Omega\Sigma(i_{13}\nu_{13})E_{S^{16}}=\mathcal{I}\Omega\Sigma(i_{13})E_{S^{13}}\nu_{13}\\
		=&[E,[E,E]^S]^S (i_4\wedge\alpha )\nu_{13}=[\Omega_0(i_5), \Omega_0(ai_5\nu_5\eta_{8}^2+b\tau_6\beta_{10})]^S\nu_{13}\\
		=&(\Omega_0[i_5, ai_5\nu_5\eta_{8}^2+b\tau_6\beta_{10}]) \nu_{13}=b( \Omega_0[i_5,\tau_6\beta_{10}]) \nu_{13}.
	\end{align*}
	where the last equation holds because $[i_5, ai_5\nu_5\eta_{8}^2]=ai_5[\iota_5,\iota_5]\nu_9\eta_{12}^2=0$. 
	\begin{align*}
		\Omega_0^{-1}\mathcal{I}_\ast\Omega_0i_{14\ast}(\nu_{14})=\Omega_0^{-1}(b( \Omega_0[i_5,\tau_6\beta_{10}]) \nu_{13})=b[i_5,\tau_6\beta_{10}] \nu_{14}.
	\end{align*}
	Since   $\Omega_0^{-1}\mathcal{I}_\ast\Omega_0i_{14\ast}: \Z_{2^{m_{r}^3}}\{\nu_{14}\}\hookrightarrow  	\pi_{17}( P^{6}(2^r))$  is  split into, so is the monomorphism in \textbf{Claim 2}.
\end{proof}

For $\mathbf{z}_r$ in \textbf{Claim 1},  $p_{6\ast}(2\mathbf{z}_r-\mathbf{y}_r)=0$, i.e. $2\mathbf{z}_r-\mathbf{y}_r\in \tau_{6\ast}(Coker(\partial_{17\ast}^{6,r}))$, which implies $4\mathbf{z}_r-\epsilon_ri_5\nu_5\varepsilon_8\eta_9=2a_r\tau_6\beta_{10}\sigma_{10}+2b_r\tau_6[\beta_{5}, \beta_{10}]\nu_{14}$, $a_1\in \Z_8$, $a_r\in \Z_{2^{m_{r+1}^4}}(r\geq 2)$, $b_r\in \Z_{2^{m_r^3}}$. 

For $r=1$, $4\mathbf{z}_1=i_5\nu_5\varepsilon_8\eta_9+2a_1\tau_6\beta_{10}\sigma_{10}$.
Since $o(\mathbf{z}_1)\leq 8$, $2a_1=4a'_1$ for some $a_1'\in \Z$. Let 
$\mathbf{z}'_1= \mathbf{z}'_1-  a'_1\tau_6\beta_{10}\sigma_{10}$, then $o(\mathbf{z}'_1)=8$ and $4\mathbf{z}'_1=i_5\nu_5\varepsilon_8\eta_9$. Hence 
\begin{align*}
	&\pi_{17}(P^6(2))=\Z_{2}\{i_{5}\nu^4_5\}\oplus \Z_{2}\{i_{5}\nu_5\bar\mu_8\}\oplus  \Z_{8}\{\tau_6\beta_{10}\sigma_{10}\}\oplus \Z_{2}\{[i_{5}, \tau_6\beta_{10}]\nu_{14}\}\\
	&\qquad\quad \oplus \Z_{2}\{\tau_6\check I_3^r\widearc{\eta_{15}}\eta_{16}\}\oplus \Z_2\{\widetilde{4\zeta_6}\}\oplus \Z_8\{\mathbf{z}'_1\},~4\mathbf{z}'_1=i_5\nu_5\varepsilon_8\eta_9.\\
	\Rightarrow~~ & \qquad\quad 	\pi_{17}(P^6(2))\cong \Z_{2}^{\oplus 5}\oplus  \Z_8^{\oplus 2}.
\end{align*}

For $r\geq 2$, $2\mathbf{y}_r=0$. $4\mathbf{z}_r=2a_r\tau_6\beta_{10}\sigma_{10}+2b_r\tau_6[\beta_{5}, \beta_{10}]\nu_{14}$.  $o(\mathbf{z}_r)\leq 8$ implies $a_r=2a_r'$, $a_r'\in \Z$. Then $2b_r\tau_6[\beta_{5}, \beta_{10}]\nu_{14}=4(\mathbf{z}_r-a_r'\tau_6\beta_{10}\sigma_{10})$ By \textbf{Claim 2}, $b_r=2b_r'$, $b_r'\in \Z$. Take $\mathbf{z}'_r=\mathbf{z}_r-a_r'\tau_6\beta_{10}\sigma_{10}-b_r'\tau_6[\beta_{5}, \beta_{10}]\nu_{14}$, which is an order $4$ element such that $p_{6\ast}(\mathbf{z}'_r)=\bar\nu_6\nu_{14}$. So
\begin{align*}
	\pi_{17}(P^6(2^r))\cong \Z_{2}^{\oplus 4}\oplus \Z_4\oplus \Z_{2^{m_r^3}}^{\oplus 2}\oplus \Z_{2^{m_{r+1}^4}}.
\end{align*}

$\bullet~~\pi_{18}(P^{6}(2^r))$. 

Consider the commutative diagram (\ref{exact:hgps piPk+1(2r)}) for $k=5, m=18, r\geq 1$.
\begin{align*}
	\partial_{18\ast}^{6,r}(\nu_6\sigma_{9}\nu_{16})=\beta_{5\ast}(2^r\iota_6)_{\ast}(\nu_5\sigma_{8}\nu_{15})=0.
\end{align*}
We have the short exact sequence 
\begin{align}
	&	0\rightarrow Coker(\partial_{18\ast}^{6,r})\xrightarrow{\tau_{6\ast}}\pi_{18}(P^{6}(2^r))\xrightarrow{p_{6\ast}}\Z_{2^{m_{r+1}^4}}\{\delta_{r}\Delta(\sigma_{13})\}\rightarrow 0, \label{exact:short pi18P6(2r)}\\
	&Coker(\partial_{18\ast}^{6,r})=\Z_{2}\{\beta_{5}\nu_5\sigma_8\nu_{15}\}\oplus \Z_{2}\{\beta_{5}\nu_5\eta_8\mu_{9}\}\oplus  \Z_{2}\{\beta_{10}\bar\nu_{10}\}\oplus  \Z_{2}\{\beta_{10}\varepsilon_{10}\} \nonumber\\
	& \quad\qquad \qquad \oplus \Z_{2^r}\{[\beta_{5}, [\beta_{5},\beta_{10}]]\}\oplus \Z_{2^{m_r^3}}\{\check I_3^r\widearc{\delta_r\nu_{15}}\}. \nonumber
\end{align}

For $r=1$,  by Lemma \ref{lem split},  there is $-\mathbf{x}_1\in \{i_5, 2\iota_5, \nu_5\mu_8 \}$ such that $p_{6\ast}(\mathbf{x}_1)=\nu_6\mu_9=8\Delta(\nu_{13})$ \cite[(7.25)]{Toda} and 
$2\mathbf{x}_1\in i_5\fhe\{2\iota_5, \nu_5\mu_8, 2\iota_{17}\}$.  $\{2\iota_5, \nu_5\mu_8, 2\iota_{17}\}\ni \nu_5\mu_8\eta_{17}=\nu_5\eta_8\mu_9$ mod $\{0\}$ by Lemma \ref{lemA: Sigma nu'sigam'} (ii) and \cite[Corollary 3.7]{Toda}.  Hence $2\mathbf{x}_1=i_5\nu_5\eta_8\mu_9\neq 0$, which implies that $o(\mathbf{x}_1)=4$. Assume that the exact sequence $(\ref{exact:short pi18P6(2r)})$ is split,  then $\pi_{18}(P^6(2))\cong  \Z_2^{\oplus 6}\oplus \Z_4$ has no order $4$ element taken to $8\Delta(\sigma_{13})\in \Z_{4}\{4\Delta(\sigma_{13})\}$ by $p_{6\ast}$. This contradicts to existence of $\mathbf{x}_1$. So $(\ref{exact:short pi18P6(2r)})$ is not split for $r=1$. We have 
\begin{align*}
	\pi_{18}(P^6(2))\cong  \Z_2^{\oplus 5}\oplus \Z_8.
\end{align*}

For $r\geq 2$, consider the following exact sequence 
\begin{align*}
	& [P^{11}(2^r), P^{6}(2^r)]\xrightarrow{p_{6\ast}} 	 [P^{11}(2^r), S^6]\xrightarrow{\partial_{\ast}^{6,r}}[P^{11}(2^r), F_{6}],\\
	\text{with}~& \partial_{\ast}^{6,r}(l\Delta(\iota_{13})p_{11})=l\partial_{10\ast}^{6,r}(\Delta(\iota_{13}))p_{10}=2^{r+1}l\beta_{10}p_{10}=l\beta_{10}(2^{r+1}p_{10})=0,
\end{align*}
where the odd integer $l$ is given by $\Delta(\sigma_{13})=l\Delta(\iota_{13})\sigma_{11}$.

Thus there is an $f_r\in [P^{11}(2^r), P^{6}(2^r)]$, such that $p_{6}f_r=l\Delta(\iota_{13})p_{11}$. This implies the following commutative diagram of exact sequences
\begin{align*}
	\small{\xymatrix{
			0\ar[r]&  \Z_2^{\oplus 2}	\cong \pi_{18}(S^{10})\ar[d]\ar[r]	& 	\pi_{18}(	P^{11}(2^r))\ar[d]^{f_{r\ast}}\ar[r]^-{p_{11\ast}} &\Z_{2^{m_r^{4}}}\{\delta_r^{16}\sigma_{11}\} \ar[d]_{(l\Delta(\iota_{13}))_{\ast}}\ar[r]& 0\\
			0\ar[r]&  Coker(\partial_{18\ast}^{6,r})\ar[r]	& \pi_{18}( P^{6}(2^r))\ar[r]^-{p_{6\ast}}& \Z_{2^{m_{r+1}^4}}\{\delta_{r}\Delta(\nu_{13})\}\ar[r]& 0
	} }.
\end{align*}
where $\delta_r^{16}=2\delta_r$ for $r\leq 3$ and $\delta_r^{16}=1$ for $r\geq 4$.

There is $-\mathbf{x}_r\in\{i_{10}, 2^r\iota_{10}, \delta_r^{16}\sigma_{10}\}$ such that $p_{11\ast}(\mathbf{x}_r)=\delta_r^{16}\sigma_{11}$ and $2^{m_r^{4}}\mathbf{x}_r\in i_{10}\{2^r\iota_{10}, \delta_r^{16}\sigma_{10},  2^{m_r^{4}}\iota_{17}\}$. 
$$\{2^r\iota_{10}, \delta_r^{16}\sigma_{10},  2^{m_r^{4}}\iota_{17}\}\subset \{2\iota_{10}, (2^{r-1}\iota_{10})(\delta_r^{16}\sigma_{10})(2^{m_r^{4}-1}\iota_{17}),  2\iota_{17}\}=\{2\iota_{10}, 0, 2\iota_{17}\}=\{0\}.$$
Thus $2^{m_r^{4}}\mathbf{x}_r=0$. We get that  the top short exact sequence in the above diagram is split, so is the bottom one for $2\leq r\leq 4$ by $2^{m_r^4}\cdot  Coker(\partial_{18\ast}^{6,r})=0$ with \cite[Lemma 2.9]{ZP23} and for $r\geq 4$ by the isomorphism of right map $(l\Delta(\iota_{13}))_{\ast}$.
Thus 
\begin{align*}
	\pi_{18}( P^{6}(2^r))\cong \Z_{2}^{4}\oplus \Z_{2^r}\oplus \Z_{2^{m_r^3}}\oplus \Z_{2^{m_{r+1}^4}}, r\geq 2. 
\end{align*}

\section{Homotopy groups of $7$ and $8$  dimensional mod $2^r$ Moore spaces}
\label{sec:htyp Moore78}

In this section we will give the unstable homotopy groups $\pi_{i}(P^{7}(2^r))$ for $i\leq 19$ and $\pi_{i}(P^{8}(2^r))$ for $i\leq 21$.

\subsection{Homotopy groups of $P^{7}(2^r)$}
\label{subsec: P7(2^r)}

There are cofibration and fibration sequences 
\begin{align*}
	&	S^6\xrightarrow{2^{r}\iota_6} S^{6}\xrightarrow{i_{6}} P^{7}(2^r)\xrightarrow{p_{7}}  S^{7};
	~~\Omega S^7\xrightarrow{\partial^{7}}F_{7}\xrightarrow{\tau_7} P^{7}(2^r)\xrightarrow{p_{7}}  S^{7}\\
	& Sk_{17}(F_{7})\simeq J_{2,6}^{r}\simeq S^6\cup_{\gamma^r_2=2^r\Delta(\iota_{13})} CS^{11}(r\geq 1);~~Sk_{23}(F_{7})\simeq J_{3,6}^{r}\simeq J_{2,6}^{r}\cup_{\gamma^r_3}CS^{17}. 
\end{align*}

\begin{lemma}\label{lem pi(J2(2rl6))} For $r\geq 1$ and $i\leq 16$, $\pi_{i}(F_{7})\cong \pi_{i}(J_{2,6}^{r})$ and   
	\begin{align*}
		(1)~&\pi_{11}(J_{2,6}^{r})=\Z_{2^r}\{\beta_6\Delta(\iota_{13})\}.\\
		(2)~&\pi_{12}(J_{2,6}^{r})=	\Z_2\{\beta_6\nu_6^2\}.\\
		(3)~&\pi_{13}(J_{2,6}^{r})=	\Z_4\{\beta_6\sigma''\}\oplus \Z_2\{\widehat{\eta_{12}}\}.\\
		(4)~&\pi_{14}(J_{2,6}^{r})=
		\Z_{2^{m_{r+1}^3}}\{\beta_6\bar\nu_6\}\oplus\Z_2\{\beta_{6}\varepsilon_{6}\}\oplus\Z_2\{\widehat{\eta_{12}}\eta_{13}\}.\\
		(5)~&\pi_{15}(J_{2,6}^{r})=
		\Z_{2}\{\beta_6\nu^3_6\}\!\oplus\!\Z_2\{\beta_{6}\mu_{6}\}\!\oplus\!\Z_2\{\beta_{6}\eta_{6}\varepsilon_7\}\!\oplus\! \Z_{2^{m_{r+1}^3}}\{\widehat{\delta_{r+1}\nu_{12}}\}.\\
		(6)~&\pi_{16}(J_{2,6}^{r})=\Z_8\{\beta_6\nu_6\sigma_9\}\oplus \Z_2\{\beta_6\eta_6\mu_7\}.\\
		(7)~&\pi_{17}(J_{2,6}^{r})=\Z_8\{\beta_6\zeta_6\}\oplus \Z_4\{\beta_6\bar\nu_6\nu_{14}\}\oplus \Z_{(2)}\{\beta_{17}\}\\
		  &\pi_{17}(J_{2,6}^0)=\Z_8\{\beta^0_6\zeta_6\}\oplus \Z_2\{\beta_6^0\bar\nu_6\nu_{14}\}\oplus \Z_{(2)}\{\beta^0_{17}\}.\\
		 (8)~&\pi_{18}(J_{2,6}^{r})=\Z_{2^{m_{r}^4}}\{\beta_6\Delta(\sigma_{13})\}\oplus \Z_{2}\{\beta_{17}\eta_{17}\}\oplus \Z_{2}\{\widehat{\nu_{12}^2}\}.\\
		 (9)~&\pi_{19}(J_{2,6}^{r})=\Z_{2}\{\beta_6\nu_6\sigma_9\nu_{16}\}\oplus \Z_{2}\{\beta_{17}\eta_{17}^2\}\oplus \Z_{2^{m_{r}^4}}\{\widehat{\delta_{r}^{16}\sigma_{12}}\}.
	\end{align*}
\end{lemma}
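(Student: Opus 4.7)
The plan is to apply Lemma~\ref{lem: exact seq pi_m(J2(2rl4))} with $k=6$. For every $m\in\{11,\dots,19\}$, which lies in the admissible range $[11,4k-3]=[11,21]$, it supplies the short exact sequence
\[
0\to \frac{\bar\beta_{6\ast}\pi_m(S^6)}{\bar\beta_{6\ast}\gamma^r_{2\ast}\pi_m(S^{11})}\oplus G_m^6\xrightarrow{\bar\tau_{12\ast}}\pi_m(J_{2,6}^r)\xrightarrow{\bar p_{12\ast}}\Sigma\bigl(Ker(\gamma^r_{2\ast})\bigr)\to 0,
\]
where $G_m^6=0$ for $m\le 16$, $G_m^6=\bar\beta_{17\ast}\pi_m(S^{17})$ for $17\le m\le 19$, and $\gamma_2^r=2^r[\iota_6,\iota_6]=\pm 2^r\Delta(\iota_{13})$. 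The stated isomorphism $\pi_i(F_7)\cong\pi_i(J_{2,6}^r)$ for $i\le 16$ is immediate from the Gray cellular filtration, since $J_{n,6}^r/J_{n-1,6}^r\simeq S^{6n}$ is $(6n-1)$-connected, so the inclusion $J_{2,6}^r\hookrightarrow F_7$ is a $17$-equivalence.

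For each $m$ I would read the $2$-primary parts of $\pi_m(S^6)$, $\pi_m(S^{11})$ and (when $m\ge 17$) $\pi_m(S^{17})$ from Toda's tables, then compute $\gamma^r_{2\ast}$ using the Hopf--Hilton distributivity law \cite[Proposition 2.10]{N.Oda} together with the identities $[\iota_6,\iota_6]=\pm\Delta(\iota_{13})$, $\Delta(\eta_{13})=\Delta(\iota_{13})\eta_{11}$, and $\Delta(\sigma_{13})=l\,\Delta(\iota_{13})\sigma_{11}$ for odd $l$ \cite[Proposition 2.5]{Toda}. In items~(1), (2) and~(6) the kernel piece $\Sigma(Ker\gamma^r_{2\ast})$ vanishes, so only the cokernel is needed and the result is read off directly. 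In~(7) the free summand $\Z_{(2)}\{\beta_{17}\}$ comes from the $G_{17}^6$ contribution and splits off as a torsion-free summand; the jump from $\Z_2\{\beta_6^0\bar\nu_6\nu_{14}\}$ at $r=0$ to $\Z_4\{\beta_6\bar\nu_6\nu_{14}\}$ for $r\ge 1$ records how $\gamma^r_{2\ast}$ kills the relation $2\bar\nu=\pm\Sigma\varepsilon'$ pulled back from $\pi_{14}(S^{11})$. The drop of order of $\beta_6\Delta(\sigma_{13})$ to $2^{m_r^4}$ in~(8) is forced by $\gamma^r_{2\ast}(\sigma_{11})=\pm l^{-1}2^r\Delta(\sigma_{13})$ and the relation $16\,\Delta(\sigma_{13})=0$.

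The main obstacle is splitting the sequences whenever $\Sigma(Ker\gamma^r_{2\ast})\ne 0$, namely in items~(3)--(5) and~(8)--(9). I would handle each by the Toda-bracket technique of Lemma~\ref{lem split}: pick a lift $-\mathbf{x}\in\{\bar\beta_6,\gamma_2^r,\alpha\}$ of the top generator $\Sigma\alpha$, then pin down $o(\mathbf{x})$ from
\[
2^s\mathbf{x}\in\bar\beta_6\circ\{\gamma_2^r,\alpha,2^s\iota\}
\]
by evaluating the right-hand secondary composition with Lemma~\ref{lem:Toda bracket} modulo the indeterminacy described in Lemma~\ref{lemma for [a,b,c]}. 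This follows the template executed in Lemmas~\ref{lem pi(J2(2rl4))} and~\ref{lem: pi_{14}(J2(2r))} for $P^5$; the most delicate bookkeeping is expected in~(8) and~(9), where the base element $\alpha$ ($\nu_{11}^2$ or $\delta_r^{16}\sigma_{11}$) has higher dimension and richer secondary structure, so that one must cut down the non-trivial indeterminacy $\bar\beta_{6}\pi_m(S^6)\cdot(2^s\iota)+(2^r\Delta(\iota_{13}))\pi_m(S^{11})$ carefully before identifying $2^s\mathbf{x}$ exactly.
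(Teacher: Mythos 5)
Your proposal follows essentially the same route as the paper: apply Lemma~\ref{lem: exact seq pi_m(J2(2rl4))} with $k=6$ to get the short exact sequences over $F_{\bar p_{12}}$ with $Sk_{27}(F_{\bar p_{12}})\simeq S^6\vee S^{17}$, read off the cokernels of $\bar\partial^{12,r}_{m\ast}$ from $\gamma^r_{2\ast}=2^r\Delta(\iota_{13})_\ast$ and Toda's tables, and resolve the extensions in items (3)--(5), (8), (9) by choosing lifts in the Toda brackets $\{\beta_6,\gamma_2^r,\alpha\}$ and bounding $2^s\mathbf{x}$ via Lemma~\ref{lem:Toda bracket}, exactly as the paper does. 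The only imprecision is your explanation of the $r=0$ versus $r\geq 1$ discrepancy in (7): the operative computation is $\gamma^r_{2\ast}(\nu_{11}^2)=2^r\Delta(\nu_{13})\nu_{14}=\pm 2^{r+1}\bar\nu_6\nu_{14}$ on $\pi_{17}(S^{11})$, not a relation pulled back from $\pi_{14}(S^{11})$, but this does not affect the argument.
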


\begin{proof}
	
	There are fibration sequence $F_{\bar{p}_{12}}\xrightarrow{\bar{\tau}_{12}}J_{2,6}^{r}\xrightarrow{\bar{p}_{12}} S^{12}$, with $Sk_{27}(F_{\bar{p}_{12}})\simeq S^6\vee S^{17}$.
	Consider the Lemma \ref{lem: exact seq pi_m(J2(2rl4))} for $k=6, 11\leq m\leq 21$.
	
   For  $(1)$, $(2)$, $(6)$ and $(7)$ with $r\geq 1$ of Lemma \ref{lem pi(J2(2rl6))}, they  are easily obtained by the following isomorphism
	\begin{align*}
    \pi_{i}(F_{\bar{p}_{12}})\xrightarrow[\cong]{\bar{\tau}_{12\ast}}  \pi_{i}(J_{2,6}^{r}), i=11,12,16,17.
	\end{align*}
	
      For $\pi_{17}(J_{2,6}^0)$ of $(7)$, by \cite[Lemma 6.2]{Toda}, $\Delta(\iota_{13})\nu_{11}=\pm \Delta(\nu_{13})=\pm 2\bar\nu_6$,  it is get by
      \begin{align*}
      	\bar\partial^{12,0}_{17\ast}(\nu_{12}^2)=\bar\beta^0_{6\ast}(\Delta(\iota_{13}))_{\ast}(\nu_{11}^2)=\bar\beta^0_{6}(\pm \Delta(\nu_{13}))\nu_{14}=2\bar\beta^0_{6}\bar\nu_6\nu_{14}.
      \end{align*}

	 For $(3)$ of Lemma \ref{lem pi(J2(2rl6))}, we have the following exact sequence
	 \begin{align*}
	 	0\rightarrow   \Z_4\{\bar\beta_{6}\sigma''\}=\pi_{13}(F_{\bar{p}_{12}})\xrightarrow{\bar{\tau}_{12\ast}}  \pi_{13}(J_{2,6}^{r})\xrightarrow{\bar p_{12\ast}}\pi_{13}(S^{12})=\Z_2\{\eta_{12}\}\rightarrow 0. 
 	 \end{align*}
There is $-\mathbf{x}\in \{\beta_6, \gamma_2, \eta_{11}\}$ such that $\bar p_{12\ast}(\mathbf{x})=\eta_{12}$ and $2\mathbf{x}\in \beta_6\{2^r\Delta(\iota_{13}), \eta_{11}, 2\iota_{12}\}$. By \cite[Page 47]{Toda}, $\Delta(\iota_{13})\eta_{11}=\Delta(\eta_{13})=0$. By Lemma \ref{lem:Toda bracket}
\begin{align*}
	\{2^r\Delta(\iota_{13}), \eta_{11}, 2\iota_{12}\}\ni  (2^{r-1}\Delta(\iota_{13}))\eta_{11}^2=0~\text{mod}~\{0\}. 
\end{align*}
Hence $2\mathbf{x}=0$, which implies $\pi_{13}(J_{2,6}^{r})\stackrel{\bar p_{12\ast}}\twoheadrightarrow \Z_2\{\eta_{12}\}$ is split onto. 

 For $(4)$ of Lemma \ref{lem pi(J2(2rl6))}, since $\gamma_{2\ast}(\nu_{11})=2^r\Delta(\iota_{13})\nu_{11}=2^r\Delta(\nu_{13})=2^{r+1}\bar\nu_6$ by \cite[Lemma 6.2]{Toda},
  we have 
\begin{align*}
	0\rightarrow   \Z_{2^{m_{r+1}^3}}\{\bar\beta_6\bar\nu_6\}\oplus \Z_2\{\beta_6\varepsilon_6\}=Coker(\bar\partial_{14\ast}^{12,r})\xrightarrow{\bar{\tau}_{12\ast}}  \pi_{14}(J_{2,6}^{r})\xrightarrow{\bar p_{12\ast}}\Z_2\{\eta^2_{12}\}\rightarrow 0. 
\end{align*}
Since the order $2$ element $\widehat{\eta_{12}}\eta_{13}\in  \pi_{14}(J_{2,6}^{r})$ is the lift of $\eta_{12}^2$ by $\bar{p}_{12}$, where $\widehat{\eta_{12}}\in  \pi_{13}(J_{2,6}^{r})$ is the lift of $\eta_{12}$ by $\bar{p}_{12}$, we get  $\pi_{14}(J_{2,6}^{r})\stackrel{\bar p_{12\ast}}\twoheadrightarrow \Z_2\{\eta_{12}^2\}$ is split onto. 

For $(5)$ of  Lemma \ref{lem pi(J2(2rl6))},  we have the following exact sequence
\begin{align*}
	0\!\!\rightarrow \!\!   \Z_{2}\{\bar\beta_6\nu^3_6\}\!\oplus\!\Z_{2}\{\bar\beta_6\mu_6\}\!\oplus\! \Z_2\{\beta_6\eta_6\varepsilon_7\}\!=\!\pi_{15}(F_{\bar{p}_{12}})\!\xrightarrow{\bar{\tau}_{12\ast}} \! \pi_{15}(J_{2,6}^{r})\!\xrightarrow{\bar p_{12\ast}}\!\Z_{2^{m_{r+1}^3}}\!\{\delta_{r+1}\nu_{12}\}\!\!\rightarrow\!\! 0. 
\end{align*}
$\exists -\mathbf{x}_r\in \{\beta_6, \gamma_2, \delta_{r+1}\nu_{11}\}$ such that  $\bar p_{12}\mathbf{x}_r=\delta_{r+1}\nu_{12}$ and $2^{m_{r+1}^3}\mathbf{x}_r\in \beta_6\{2^r\Delta(\iota_{13}),\delta_{r+1}\nu_{11},  $ $2^{m_{r+1}^3}\iota_{14}\}$,  where 
$\{2^r\Delta(\iota_{13}),\delta_{r+1}\nu_{11},  $ $2^{m_{r+1}^3}\iota_{14}\}\subset \{\Delta(\iota_{13}),(2^r\iota_{11})\delta_{r+1}\nu_{11}(2^{m_{r+1}^3-1}\iota_{14}), 2\iota_{14}\}$ $=\{\Delta(\iota_{13}),0, 2\iota_{14}\}\ni 0$ mod $\{0\}$. Hence $\{2^r\Delta(\iota_{13}),\delta_{r+1}\nu_{11}, 2^{m_{r+1}^3}\iota_{14}\}=\{0\}$ implies that $2^{m_{r+1}^3}\mathbf{x}_r=0$. So  $\pi_{15}(J_{2,6}^{r})\stackrel{\bar p_{12\ast}}\twoheadrightarrow\Z_{2^{m_{r+1}^3}}\{\delta_{r+1}\nu_{12}\}$ is split onto. 
	
For $(8)$ of  Lemma \ref{lem pi(J2(2rl6))}, 	 since $\gamma_{2\ast}(\sigma_{11})=2^r\Delta(\iota_{13})\sigma_{11}=2^rl'\Delta(\sigma_{13})$ where $l'$ is the inverse of the odd integer $l$ in $\Z_{16}$
 which is given by $\Delta(\sigma_{13})=l\Delta(\iota_{13})\sigma_{11}$.
we have
\begin{align*}
	0\!\!\rightarrow \!\!   \Z_{2^{m_{r}^4}}\{\bar\beta_6\Delta(\sigma_{13})\}\!\oplus\!\Z_{2}\{\bar\beta_{17}\eta_{17}\}=\!\pi_{18}(F_{\bar{p}_{12}})\!\xrightarrow{\bar{\tau}_{12\ast}} \! \pi_{18}(J_{2,6}^{r})\!\xrightarrow{\bar p_{12\ast}}\!\Z_{2}\!\{\nu^2_{12}\}\!\!\rightarrow\!\! 0. 
\end{align*}	
$\exists -\mathbf{x}_r\in\{\beta_6,\gamma_2,\nu^2_{11}\}$ such that  $\bar p_{12}\mathbf{x}_r=\nu^2_{12}$ and $2\mathbf{x}_r\in \beta_6\{2^r\Delta(\iota_{13}),\nu^2_{11},2\iota_{17}\}$,  where 
$\{2^r\Delta(\iota_{13}),\nu^2_{11},2\iota_{17}\}\ni 2^{r-1}\Delta(\iota_{13})\nu_{11}^2\eta_{17}=0$ mod $\lr{2\Delta(\sigma_{13})}$. Hence $2\mathbf{x}_r=2t_r\beta_6\Delta(\sigma_{13}), t_r\in \Z$, which implies  $\pi_{18}(J_{2,6}^{r})\stackrel{\bar p_{12\ast}}\twoheadrightarrow\Z_{2}\{\nu^2_{12}\}$ is split onto. 
	
For $(9)$ of  Lemma \ref{lem pi(J2(2rl6))}, we have 
\begin{align*}
	0\rightarrow \Z_{2}\{\bar\beta_6\nu_6\sigma_9\nu_{16}\}\oplus \Z_{2}\{\bar\beta_{17}\eta_{17}^2\}=\!\pi_{19}(F_{\bar{p}_{12}})\!\xrightarrow{\bar{\tau}_{12\ast}} \! \pi_{19}(J_{2,6}^{r})\!\xrightarrow{\bar p_{12\ast}}\!\Z_{2^{m_{r}^4}}\{\delta_{r}^{16}\sigma_{12}\}\!\!\rightarrow\!\! 0. 
\end{align*}
	$\exists -\mathbf{x}_r\in\{\beta_6,\gamma_2, \delta_{r}^{16}\sigma_{11}\}$ such that  $\bar p_{12}\mathbf{x}_r=\delta_{r}^{16}\sigma_{12}$ and $2^{m_{r}^4}\mathbf{x}_r\in \beta_6\{2^r\Delta(\iota_{13}),\delta_{r}^{16}\sigma_{11},2^{m_{r}^4}\iota_{18}\}$,  where 
$\{2^r\Delta(\iota_{13}),\delta_{r}^{16}\sigma_{11},2^{m_{r}^4}\iota_{18}\}$  is a coset of subgroup $(2^r\Delta(\iota_{13}))\fhe\pi_{19}(S^{11})+\pi_{19}(S^{6})\fhe(2^{m_{r}^4}\iota_{19})$ $=\{0\}$. For $r=1$, $\{2\Delta(\iota_{13})\iota_{11},8\sigma_{11},2\iota_{18}\}\ni  \Delta(\iota_{13})(8\sigma_{11})\eta_{18}=0$;
\newline
 For $r\geq 2$,  $\{2^r\Delta(\iota_{13}),\delta_{r}^{16}\sigma_{11},2^{m_{r}^4}\iota_{18}\}\subset \{2\Delta(\iota_{13}),0,2\iota_{18}\}=\{0\}$. So 
 $2^{m_{r}^4}\mathbf{x}_r=0, r\geq 1$,
which implies  $\pi_{19}(J_{2,6}^{r})\stackrel{\bar p_{12\ast}}\twoheadrightarrow\Z_{2^{m_{r}^4}}\{\delta_{r}^{16}\sigma_{12}\}$ is split onto.

\end{proof}

\begin{lemma}\label{lem: gamma3 for P7}
	For $r\geq 0$, 
$ \gamma^r_3=2^ra_0\beta_{17}+2^rb_r\beta_6\zeta_6+2^rc_r\beta_6\bar\nu_6\nu_{14}$, where $a_0$ is an odd integer, $b_r\in \Z_8$, $c_r\in \Z_4$.
\end{lemma}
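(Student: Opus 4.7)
By Lemma \ref{lem pi(J2(2rl6))}(7), the element $\gamma_3^r \in \pi_{17}(J_{2,6}^r)$ admits a decomposition $\gamma_3^r = a_r \beta_{17} + A_r \beta_6 \zeta_6 + C_r \beta_6 \bar\nu_6 \nu_{14}$ with $a_r \in \Z_{(2)}$, $A_r \in \Z_8$, and $C_r \in \Z_4$ (while $C_r \in \Z_2$ when $r=0$). First I would handle the case $r=0$: since $P^7(1)$ is contractible, the fibration sequence collapses to $F_7^0 \simeq \Omega S^7$, whence $\pi_{18}(S^7)_{(2)} \cong \pi_{17}(J_{3,6}^0) \cong \pi_{17}(J_{2,6}^0)/\lr{\gamma_3^0}$. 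Since $S^7$ is odd-dimensional, $\pi_{18}(S^7)$ is finite, so the quotient on the right must also be finite; this forces $\gamma_3^0$ to kill the torsion-free summand $\Z_{(2)}\{\beta_{17}^0\}$ of $\pi_{17}(J_{2,6}^0)$, which happens if and only if $a_0$ is a unit in $\Z_{(2)}$, i.e., odd.

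For $r \geq 1$ the key input will be the naturality identity $\bar g_0^r \gamma_3^r = 2^{2r}\gamma_3^0$ from (\ref{Equ:gamma30,3r}). Under the induced map $\bar g_0^r\colon J_{2,6}^r \to J_{2,6}^0$, the behaviour on each summand of $\pi_{17}(J_{2,6}^r)$ is readily determined: the first relation in (\ref{Equ:gamma30,3r}) gives $\bar g_0^r \beta_6 = \beta_6^0$, so $\bar g_0^r(\beta_6 \zeta_6) = \beta_6^0 \zeta_6$ and $\bar g_0^r(\beta_6 \bar\nu_6 \nu_{14}) = \beta_6^0 \bar\nu_6 \nu_{14}$, while (\ref{equ1 g^s_t}) and (\ref{equ2 g^s_t}) applied to $\beta_{17}^r = \bar\tau_{12}^r \bar\beta_{17}^r$ yield $\bar g_0^r \beta_{17}^r = 2^r \beta_{17}^0 + \beta_6^0 \theta_r$ for some correction class $\theta_r \in \pi_{17}(S^6)$ arising from the $j_1^6 \theta q_2^{17}$ summand of $(g_0^r)|_{J_2}$. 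Matching coefficients in $\pi_{17}(J_{2,6}^0) = \Z_{(2)}\{\beta_{17}^0\} \oplus \Z_8\{\beta_6^0\zeta_6\} \oplus \Z_2\{\beta_6^0\bar\nu_6\nu_{14}\}$, the free component gives $2^r a_r = 2^{2r} a_0$, hence $a_r = 2^r a_0$; the $\Z_8$-component gives $A_r = 2^{2r}b_0 - 2^r a_0 \theta_r^{(1)} \in 2^r \Z_8$, so $A_r = 2^r b_r$ with $b_r \in \Z_8$; and mod-$2$ reduction in the last summand forces $C_r \equiv 0 \pmod 2$, which already settles the case $r=1$.

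The main obstacle will be strengthening $C_r \in 2\Z_4$ to $C_r \in 2^r\Z_4$ for $r \geq 2$, since the single map $\bar g_0^r$ reduces the $\Z_4$-summand modulo $2$ and thereby loses exactly the mod-$4$ information we need. I expect to resolve this by induction on $r$ using the naturality for $\bar g_{r-1}^r$: equation (\ref{Equ:gamma30,3r}) with $s=r$, $t=r-1$ and $r \geq 2$ yields $\bar g_{r-1}^r \gamma_3^r = 4\gamma_3^{r-1}$, and $\bar g_{r-1}^r$ restricts to an isomorphism $\Z_4 \to \Z_4$ on the $\beta_6\bar\nu_6\nu_{14}$-summand. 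Substituting the inductive form of $\gamma_3^{r-1}$ and tracking the contribution of $\bar g_{r-1}^r \beta_{17}^r = 2\beta_{17}^{r-1} + \beta_6^{r-1}\theta_r'$ to the $\bar\nu_6\nu_{14}$-component, the right-hand side contributes $4\cdot 2^{r-1}c_{r-1} \equiv 0 \pmod 4$ while the left-hand side contributes $C_r + 2 a_0 \cdot (\text{$\bar\nu_6\nu_{14}$-component of }\theta_r')$; since $a_0$ is odd, the difference forces $C_r \equiv 0 \pmod 4$, giving $C_r \in 2^r\Z_4$ and so $C_r = 2^r c_r$ with $c_r \in \Z_4$, which completes the proof.
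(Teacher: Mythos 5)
Your overall strategy is the same as the paper's: pin down $a_0$ by identifying $\pi_{17}(J_{2,6}^0)/\lr{\gamma_3^0}$ with $\pi_{18}(S^7)$, then feed the naturality relation $\bar g_t^s\gamma_3^s=2^{M^{t-s}_{2(s-t)}}\gamma_3^t$ of (\ref{Equ:gamma30,3r}), together with the formula $(g^s_t)|_{J_2}=2^{M^0_{t-s}}j_1^6q_1^6+2^{M^{s-t}_{2(t-s)}}j_2^{17}q_2^{17}+j_1^6\theta q_2^{17}$, into a coefficient comparison on $\pi_{17}(J_{2,6}^t)$. The only real divergence is in the last step: you extract the mod-$4$ information on the $\bar\nu_6\nu_{14}$-coefficient inductively from $\bar g_{r-1}^r\gamma_3^r=4\gamma_3^{r-1}$, whereas the paper uses the single map $\bar g_1^r$ with target $J_{2,6}^1$. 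Both work, and in fact no induction is needed in your version either, since $4\gamma_3^{r-1}\equiv 0$ in the $\Z_4$-summand regardless of what $\gamma_3^{r-1}$ is.

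Two of your justifications, however, do not hold as written. First, in the $r=0$ case, finiteness of $\pi_{17}(J_{2,6}^0)/\lr{\gamma_3^0}$ only forces $a_0\neq 0$: writing $a_0=2^{t_0}a_0'$ with $a_0'$ odd, the quotient is finite of order $16\cdot 2^{t_0}$ for every $t_0<\infty$, so ``finite $\Rightarrow$ $a_0$ is a unit'' is a non sequitur. You must actually use that the quotient is isomorphic to $\pi_{18}(S^7)\cong\Z_8\oplus\Z_2$ of order $16$ (equivalently, that it surjects onto $\Z_{2^{t_0}}\{\beta_{17}^0\}$ with kernel already of order $16$), which forces $t_0=0$. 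Second, in the step for $r\geq 2$, the contribution of $a_r\beta_{17}^r$ to the $\bar\nu_6\nu_{14}$-coefficient of $\bar g_{r-1}^r\gamma_3^r$ is $a_r\cdot c_{\theta'}=2^ra_0c_{\theta'}$, not $2a_0c_{\theta'}$, and the inference ``$C_r+2a_0x\equiv 0\pmod 4$ with $a_0$ odd implies $C_r\equiv 0\pmod 4$'' is false: if the unknown component $x$ of $\theta_r'$ were odd you would only get $C_r\equiv 2\pmod 4$. With the correct coefficient $2^ra_0$ the unknown term vanishes mod $4$ for $r\geq 2$ and $C_r\equiv 0\pmod 4$ follows immediately, with no appeal to the parity of $a_0$. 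Both gaps are reparable from ingredients you already have on the table, but as stated each of these two steps fails.
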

\begin{proof} 
	By (7) of Lemma \ref{lem pi(J2(2rl6))}, assume $\gamma_3^r=a_r\beta_{17}+b'_r\beta_6\zeta_6+c'_r\beta_6\bar\nu_6\nu_{14}\in \pi_{17}(J_{2,6}^{r})$, where $a_r\in \Z\subset \Z_{(2)}$, $b_r'\in\Z_8$, $c_0'\in \Z_2$, $c_r'\in \Z_4, r'\geq 1$. There is the following isomorphism
\begin{align*}
  &	\Z_8\oplus \Z_2\cong \pi_{18}(S^7)\xrightarrow[\cong ]{\bar\partial_{17\ast}^{12,0}} \pi_{17}(F_{p_{7}^0})\cong 	\pi_{17}(J_{3,6}^{0})=\pi_{17}(J_{2,6}^{0})/\lr{\gamma_3^0}\\
 = &\frac{\Z_8\{\beta^0_6\zeta_6\}\oplus \Z_2\{\beta_6^0\bar\nu_6\nu_{14}\}\oplus \Z_{(2)}\{\beta^0_{17}\}}{\lr{a_0\beta^0_{17}+b'_0\beta^0_6\zeta_6+c'_0\beta^0_6\bar\nu_6\nu_{14}}}.
\end{align*}
If $a_0=2^{t_0}a_0', a'_0$ is odd,$0\leq t_0\in \Z$  , then we get the short exact sequence 
\begin{align*}
	0\rightarrow \Z_8\{\beta^0_6\zeta_6\}\oplus \Z_2\{\beta_6^0\bar\nu_6\nu_{14}\}\xrightarrow{I}\pi_{17}(J_{2,6}^{0})/\lr{\gamma_3^0}\xrightarrow{P} \Z_{2^{t_0}}\{\beta^0_{17}\}\rightarrow 0     
\end{align*}
where the definition of inclusion $I$ and quotient $P$ are obvious. So $t_0=0$, i.e., $a_0$ is odd. By the equations (\ref{Equ:gamma30,3r}), (\ref{equ2  g^s_t}) and diagram  (\ref{diam big}) for $k=6, s=r\geq 1, t=0$, we get 
\begin{align*}
&	\bar g_0^r(a_r\beta^r_{17}+b'_r\beta^r_6\zeta_6+c'_r\beta^r_6\bar\nu_6\nu_{14})=2^{2r}(a_0\beta^0_{17}+b'_0\beta^0_6\zeta_6+c'_0\beta^0_6\bar\nu_6\nu_{14}), ~\text{with}~g_0^r\beta^r_6=\beta^0_6.\\
&( g^r_0)|_{J_2}=j_1^6q_1^6+2^rj_2^{17}q_2^{17}+j_1^6\theta q_2^{17}, ~\theta=b_{\theta}\zeta_6+ c_{\theta}\bar\nu_6\nu_{14}\in\pi_{17}(S^6)=\Z_8\{\zeta_6\}\oplus \Z_4\{\bar\nu_6\nu_{14}\}.\\
&
\xymatrix{
		S^{17}\ar@{^{(}->}[r]^-{j_2^{17}}&S^6\vee S^{17}\simeq J_{2,6}^{\gamma,r}\ar[r]^-{I_2^{\gamma,r}}\ar[d]^{( g^r_0)|_{J_2}}&F_{\bar p_{12}^r}\ar[d]^{ g^r_0}\ar[r]^-{\bar\tau_{12}^r}&J_{2,6}^r\ar[d]^{\bar g^r_0}\\
			S^{17}\ar@{^{(}->}[r]^-{j_2^{17}}& S^6\vee S^{17}\simeq	J_{2,6}^{\gamma,0}\ar[r]^-{I_2^{\gamma,0}} &F_{\bar p_{12}^0}\ar[r]^-{\bar\tau_{12}^0}&J_{2,6}^0.
	},    \\
& 	\bar g_0^r\beta^r_{17}\!=\!	\bar g_0^r(\bar\tau_{12}^rI_2^{\gamma,r}\!j_2^{17})\!=\!\bar\tau_{12}^0I_2^{\gamma,0}( g^r_0)|_{J_2}j_2^{17}\!=\!\bar\tau_{12}^0I_2^{\gamma,0}(2^rj_2^{17}\!+\!j_1^6\theta)\!=\!2^r\beta_{17}^0\!+\!b_{\theta}\beta^0_6\zeta_6\!+\! c_{\theta}\beta^0_6\bar\nu_6\nu_{14}.\\
&\Rightarrow~ 2^ra_r\beta_{17}^0+(a_rb_{\theta}+b_r')\beta^0_6\zeta_6+(a_rc_{\theta}+c_r')\beta^0_6\bar\nu_6\nu_{14}=2^{2r}a_0\beta^0_{17}+2^{2r}b'_0\beta^0_6\zeta_6.
\end{align*} 
Comparing the coefficients of the torsion free element $\beta_{17}^0$, we get $a_r=2^ra_0$. Then $2^ra_0b_{\theta}+b_r'=2^{2r}b_0'\in \Z_8$ and  $2^ra_0c_{\theta}+c_r'=0\in \Z_2$ imply $b'_r=2^rb_r$, $c'_r=2c''_r$. Hence 
\begin{align*}
	\gamma_3^r=2^ra_0\beta_{17}+2^rb_r\beta_6\zeta_6+2c''_r\beta_6\bar\nu_6\nu_{14}.
\end{align*}
By the equations (\ref{Equ:gamma30,3r}) and  (\ref{equ2  g^s_t})  for $k=6, s=r\geq 2, t=1$, we get 
\begin{align*}
&	\bar g_1^r(2^ra_0\beta^r_{17}+2^rb_r\beta^r_6\zeta_6+2c''_r\beta^r_6\bar\nu_6\nu_{14})=2^{2(r-1)}(2a_0\beta^1_{17}+2b_1\beta^1_6\zeta_6+2c''_1\beta^1_6\bar\nu_6\nu_{14}).
\end{align*}
	Comparing the coefficients of $\beta^r_6\bar\nu_6\nu_{14}$ on both sides of above equation, $2c''_r=0 (r\geq 2)$. 
\end{proof}
\begin{lemma}\label{lem: pi_{17}(J3P7)}
	\begin{align*}
		&	\pi_{17}(J_{3,6}^r)=\left\{
		\begin{array}{ll}
			\Z_2\{a_0\check{\beta}_{17}+b_1\check{\beta}_{6}\zeta_6+c_1\check{\beta}_{6}\bar\nu_6\nu_{14}\}\oplus	\Z_8\{\check{\beta}_6\zeta_6\}\oplus \Z_4\{\check{\beta}_6\bar\nu_6\nu_{14}\}, & \hbox{$r=1$;} \\
			\Z_4\{a_0\check{\beta}_{17}+b_2\check{\beta}_{6}\zeta_6\}\oplus	\Z_8\{\check{\beta}_6\zeta_6\}\oplus \Z_4\{\check{\beta}_6\bar\nu_6\nu_{14}\}, & \hbox{$r=2$;} \\
			\Z_{2^r}\{a_0\check{\beta}_{17}\}\oplus	\Z_8\{\check{\beta}_6\zeta_6\}\oplus \Z_4\{\check{\beta}_6\bar\nu_6\nu_{14}\},& \hbox{$r\geq 3$.}\\
		\end{array}
		\right.\\
		 & 	\pi_{18}(J_{3,6}^r)=\Z_{2^{m_{r}^4}}\{\check\beta_6\Delta(\sigma_{13})\}\oplus \Z_{2}\{\check\beta_{17}\eta_{17}\}\oplus \Z_{2}\{I_2^r\widehat{\nu_{12}^2}\}.\\
		 &\pi_{19}(J_{3,6}^{r})=\left\{
		 \begin{array}{ll}
			 \Z_{2}\{\check\beta_6\nu_6\sigma_9\nu_{16}\}\oplus  \Z_{2}\{I_2^1\widehat{8\sigma_{12}}\}\oplus \Z_4\{\widearc{\eta_{18}}\},~~ 2\widearc{\eta_{18}}=\check\beta^1_{17}\eta_{17}^2 & \hbox{$r=1$;} \\
		 \Z_{2}\{\check\beta_6\nu_6\sigma_9\nu_{16}\}\oplus \Z_{2}\{\check\beta_{17}\eta_{17}^2\}\oplus \Z_{2^{m_{r}^4}}\{I_2^r\widehat{\delta_{r}^{16}\sigma_{12}}\}\oplus \Z_2\{\widearc{\eta_{18}}\}, & \hbox{$r\geq 2$.} \\
		 	\end{array}
		 \right.
	\end{align*}
\end{lemma}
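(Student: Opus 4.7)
\textbf{Proposal for the proof of Lemma \ref{lem: pi_{17}(J3P7)}.} My plan is to apply Lemma \ref{lem: exact seq pi_m(J3(2rl4))} with $k=6$ to each of $m=17,18,19$, combined with the explicit formula $\gamma_3^r=2^ra_0\beta_{17}+2^rb_r\beta_6\zeta_6+2^rc_r\beta_6\bar\nu_6\nu_{14}$ from Lemma \ref{lem: gamma3 for P7} and the description of $\pi_m(J_{2,6}^r)$ in parts $(7)$--$(9)$ of Lemma \ref{lem pi(J2(2rl6))}. The recurring structural fact is that $a_0$ is a unit in $\Z_{(2)}$, so $\gamma_3^r$ is detected faithfully on the torsion-free $\Z_{(2)}\{\beta_{17}\}$ summand.

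For $\pi_{17}(J_{3,6}^r)$, the sequence collapses to $\pi_{17}(J_{3,6}^r)\cong \pi_{17}(J_{2,6}^r)/\lr{\gamma_3^r}$, since $\pi_{17}(S^{18})=0$. I will perform the change of $\Z_{(2)}$-basis $\beta_{17}':=a_0\beta_{17}+b_r\beta_6\zeta_6+c_r\beta_6\bar\nu_6\nu_{14}$, so that the relation becomes simply $2^r\beta_{17}'=0$, and then split into the cases $r=1$, $r=2$, and $r\geq 3$. The distinction between these cases is controlled by the orders $o(\beta_6\zeta_6)=8$ and $o(\beta_6\bar\nu_6\nu_{14})=4$: for $r=1$ only $2\beta_{17}'=0$ survives; for $r=2$ the $\beta_6\bar\nu_6\nu_{14}$ contribution is already killed by its order, yielding the $\Z_4$ summand generated by $a_0\check\beta_{17}+b_2\check\beta_{6}\zeta_6$; for $r\geq 3$ both extra contributions are trivial and the quotient is $\Z_{2^r}\{a_0\check\beta_{17}\}$, plus the unaffected torsion summands.

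For $\pi_{18}(J_{3,6}^r)$ and $\pi_{19}(J_{3,6}^r)$ I first establish that $\gamma_3^r\eta_{17}=0$ for all $r\geq 1$. Each summand of $\gamma_3^r\eta_{17}$ has the form $2^r\alpha\circ\eta_{17}=\alpha\circ(2^r\eta_{17})$, and since $2\eta_{17}=0$ all such products vanish. Since $\gamma_{3*}^r:\pi_{17}(S^{17})=\Z_{(2)}\to\pi_{17}(J_{2,6}^r)$ is injective (as $a_0\in\Z_{(2)}^{\times}$), the case $m=18$ collapses immediately to the claimed isomorphism $\pi_{18}(J_{3,6}^r)\cong \pi_{18}(J_{2,6}^r)$ via $I_{2*}^r$. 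For $m=19$, the same vanishing and the observation that $\gamma_3^r\eta_{17}^2=(\gamma_3^r\eta_{17})\eta_{18}=0$ produce the short exact sequence
\[
0\to \pi_{19}(J_{2,6}^r)\xrightarrow{I_{2*}^r} \pi_{19}(J_{3,6}^r)\xrightarrow{\check{p}_{18*}^r}\Z_{2}\{\eta_{18}\}\to 0.
\]
Lemma \ref{lem split} provides a Toda-bracket lift $-\mathbf{x}_r\in\{I_2^r,\gamma_3^r,\eta_{17}\}$ with $\check{p}_{18*}^r(\mathbf{x}_r)=\eta_{18}$, and $2\mathbf{x}_r\in I_{2*}^r\{\gamma_3^r,\eta_{17},2\iota_{18}\}$. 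To resolve the extension I evaluate this bracket using Lemma \ref{lem:Toda bracket} applied to $\gamma_3^r=2^r\alpha_r$: for $r=1$ the bracket contains $\alpha_1\eta_{17}^2$, and after the $\beta_6\zeta_6$- and $\beta_6\bar\nu_6\nu_{14}$-components are shown to vanish in $\pi_{19}(J_{2,6}^1)$, we are left with $2\mathbf{x}_1=\check\beta_{17}^1\eta_{17}^2\neq 0$, giving a $\Z_4$ summand; for $r\geq 2$ Lemma \ref{lem:Toda bracket} together with the computation $2\pi_{19}(J_{2,6}^r)=\lr{2\widehat{\delta_r^{16}\sigma_{12}}}$ forces $2\mathbf{x}_r$ to lie in a subgroup where it can be cancelled by an appropriate multiple of $I_2^r\widehat{\delta_r^{16}\sigma_{12}}$, exhibiting a genuine order-$2$ lift and splitting the sequence.

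The main obstacle will be the precise identification of the Toda bracket $\{\gamma_3^1,\eta_{17},2\iota_{18}\}$ as $\{\check\beta_{17}^1\eta_{17}^2\}$ with trivial indeterminacy. The indeterminacy itself is easy ($\gamma_3^1\circ\pi_{19}(S^{17})=0$ and $2\pi_{19}(J_{2,6}^1)=0$ because every summand is $\Z_2$), so the real work consists of verifying that the contributions $b_1\beta_6\zeta_6\eta_{17}^2$ and $c_1\beta_6\bar\nu_6\nu_{14}\eta_{17}^2$ vanish in the three-fold $\Z_2$ decomposition of $\pi_{19}(J_{2,6}^1)$. I expect to do this by checking that $\zeta_6\eta_{17}^2$ and $\bar\nu_6\nu_{14}\eta_{17}^2$ lie in summands of $\pi_{19}(S^6)$ whose image in $\pi_{19}(J_{2,6}^1)$ under $\bar\tau_{12}\bar\beta_6$ is absorbed by existing generators, so that applying the generalized Lemma \ref{lem:Toda bracket} summand-by-summand to $\gamma_3^1=2(a_0\beta_{17}+b_1\beta_6\zeta_6+c_1\beta_6\bar\nu_6\nu_{14})$ reduces the bracket to the single surviving term $a_0\check\beta_{17}^1\eta_{17}^2$.
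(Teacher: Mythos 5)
Your plan is correct and follows essentially the same route as the paper: the quotient $\pi_{17}(J_{2,6}^r)/\lr{\gamma_3^r}$ for $m=17$, the isomorphism $I_{2\ast}^r$ for $m=18$ via $\gamma_3^r\eta_{17}=0$, and the Toda-bracket analysis of $\{I_2^r,\gamma_3^r,\eta_{17}\}$ with Lemma \ref{lem:Toda bracket} to settle the extension for $m=19$. The only step you leave open --- the vanishing of the $\beta_6\zeta_6\eta_{17}^2$ and $\beta_6\bar\nu_6\nu_{14}\eta_{17}^2$ contributions --- is exactly what the paper supplies via $\zeta_6\eta_{17}\in\lr{8\Delta(\sigma_{13})}$ and $\nu_{14}\eta_{17}=0$, so your outline closes in the intended way.
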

\begin{proof}
	Consider the Lemma \ref{lem: exact seq pi_m(J3(2rl4))} for $m=17,18,19$, $k=6$. 
	
$\pi_{m}(J_{3,6}^r)$ for $m=17,18$ are easily obtained from the isomorphisms  
	$$	\frac{\pi_{17}(J_{2,6}^r)}{\lr{\gamma^r_{3}}}\xrightarrow[\cong]{I_{2\ast}^r }\pi_{17}(J_{3,6}^r); ~~\pi_{18}(J_{2,6}^r)\xrightarrow[\cong]{I_{2\ast}^r}\pi_{18}(J_{3,6}^r).$$
For $\pi_{19}(J_{3,6}^r)$, we get $	0\rightarrow \pi_{19}(J_{2,6}^r)\xrightarrow{I_{2\ast}^r}\pi_{19}(J_{3,6}^r)\xrightarrow{\check p_{18\ast}} \pi_{19}(S^{18})=\Z_2\{\eta_{18}\}\rightarrow 0.$
\newline
 $ \exists -\mathbf{x}_r\in \{I^r_2,\gamma_3^r,\eta_{17}\}\subset \pi_{19}(J_{3,6}^{r})$, such that $\check p_{12\ast}(\mathbf{x}_r)=\eta_{18}$ and $2\mathbf{x}_r\in I^r_2\{\gamma_3^r,\eta_{17},2\iota_{18}\}$. 

	From Lemma \ref{lem:Toda bracket}, Lemma \ref{lem: gamma3 for P7} and by  $\zeta_6\eta_{17}\in \lr{8\Delta(\sigma_{13})}\subset \pi_{18}(S^6)=\Z_{16}\{\Delta(\sigma_{13})\}$, 
	\begin{align*}
		&\{\gamma_3^r,\eta_{17},2\iota_{18}\}\ni 2^{r-1}(a_0\beta_{17}+b_r\beta_6\zeta_6+c_r\beta_6\bar\nu_6\nu_{14})\eta_{17}^2=2^{r-1}\beta_{17}\eta_{17}^2 ~\text{mod}\lr{2\pi_{19}(J_{2,6}^r)}.
	\end{align*}
Hence $2\mathbf{x}_r=2^{r-1}\check\beta_{17}\eta_{17}^2+2I_2^r\theta_r$ for some $\theta_r\in \pi_{19}(J_{2,6}^r)$. Take $\mathbf{x}'_r:=\mathbf{x}_r-I_2^r\theta_r$, then $o(\mathbf{x}'_1)=4$ with $2\mathbf{x}'_1=\check\beta_{17}\eta_{17}^2$ and $o(\mathbf{x}'_r)=2$ for $r\geq 2$. Thus we get the $\pi_{19}(J_{3,6}^{r})$. 
	
\end{proof}

\begin{remark}
   In fact,	by our calculation we find that the groups $\pi_{i}(J_{3,6}^r)$ and $\pi_{i}(P^{7}(2^r))$ for $i\leq 22$, under isomorphism,  have nothing to do with	the undetermined coefficients $b_r,c_r$ in Lemma \ref{lem: gamma3 for P7} .
\end{remark}	
	
$\bullet~~\pi_{m}(P^{7}(2^r)), m=11,12$. 

Consider diagram (\ref{exact:hgps piPk+1(2r)}) for $k=6, m=11,12$, we get
the isomorphism $\pi_{m}(J_{2,6}^r)\cong \pi_{i}(F_7)\xrightarrow[\cong]{\tau_{7\ast}} \pi_{m}(P^{7}(2^r))$. Hence (1) (2) of Lemma \ref{lem pi(J2(2rl6))} implies
\begin{align*}
	&\pi_{11}(P^{7}(2^r))\cong \Z_{2^r}, r\geq 1; ~~\pi_{12}(P^{7}(2^r))\cong \Z_{2}, r\geq 1. 
\end{align*}

$\bullet~~\pi_{13}(P^{7}(2^r))$. 

There is the exact sequence 
\begin{align*}
	\Z_8\{\sigma'\}=\pi_{14}(S^7)\xrightarrow{\partial_{13,\ast}^{7,r}} \pi_{13}(F_7)\xrightarrow{\tau_{7\ast}}\pi_{13}(P^7(2^{r}))\xrightarrow{p_{7\ast}} \Z_2\{\nu_{7}^2\}\rightarrow 0. 
\end{align*}
By (3) of 	Lemma \ref{lem pi(J2(2rl6))}, assume that $\partial_{13,\ast}^{7,r}(\sigma')=x_r\beta_6\sigma''+y_r\widehat{\eta_{12}}$, $x_r\in \Z_4$, $y_r\in \Z_2$.
	\newline
  Lemma \ref{Lem: compute H2} and Corollary \ref{cor: suspen Fp Moore} implie the commutative diagrams
	\begin{align*}
			&\xymatrix{
				\pi_{14}(S^{7})\ar[d]_{H_2} \ar[r]^-{\partial^{7,r}_{13\ast}} & \pi_{13}(F_{7})\ar[d]_{H'_2 }&\pi_{13}(J_{2,6}^r)\!\ar[l]_-{\check I^r_{2\ast}}^-{\cong}\ar[r]^-{\bar p_{12\ast}}&\!\!\pi_{13}(S^{12})\ar[d]_{\Sigma}\\
				\pi_{14}(S^{13})\ar[r]^-{(2^r\iota_{13})_{\ast}} & \pi_{14}(S^{13})\ar@{=}[rr]&&\! \pi_{14}(S^{13})};~	\xymatrix{
			\pi_{14}(S^{7}) \ar[r]^-{\partial^{7,r}_{13\ast}}\ar[d]_{\Sigma^{\infty} } & \pi_{13}(F_7) \ar[d]^{E^{\infty}}\\
			\Z_{16}\{\sigma\}\!=\!\pi^s_{13}(S^{6})\ar[r]^-{(2^r\iota)_{\ast}} & 	\pi^s_{13}(S^{6})} \\
				&0=H_{2}'\partial^{7,r}_{13\ast}(\sigma')=H_{2}'( x_r\beta_6\sigma''+y_r\widehat{\eta_{12}})=y_r\eta_{13}~~\Rightarrow~y_r=0.
	\end{align*}
	$(2^r\iota)_{\ast}\Sigma^{\infty}(\sigma')=2^{r+1}\sigma~\text{\cite[Lemma 5.14]{Toda}}$ and the commutative triangle in Corollary \ref{cor: suspen Fp Moore} implies  $E^{\infty}\partial^{7,r}_{13\ast}(\sigma')= E^{\infty}(x_r\beta_6\sigma'')=k_rx_r\Sigma^{\infty}(\sigma'')=4x_rk_r \sigma$ where $k_r$ is odd. So $4x_rk_r=2^{r+1}\in \Z_{16}$, i.e., $x_r=\pm 2^{r-1}\in \Z_4$. We get $\partial_{13,\ast}^{7,r}(\sigma')=\pm 2^{r-1}\beta_6\sigma''$. So
	\begin{align*}
		&0\rightarrow \Z_{2^{m_{r-1}^2}}\{\beta_6\sigma''\}\oplus \Z_2\{\widehat{\eta_{12}}\}\xrightarrow{\tau_{7\ast}}\pi_{13}(P^7(2^{r}))\xrightarrow{p_{7\ast}} \Z_2\{\nu_{7}^2\}\rightarrow 0. 
	\end{align*}
	There is $-\mathbf{x}\in \{i_6, 2^r\iota_6, \nu_6^2\}$ such that $p_{7\ast}(\mathbf{x})=\nu_7^2$ and $2\mathbf{x}\in i_6\{ 2^r\iota_6, \nu_6^2, 2\iota_{12}\}$. Note that $(2^r\iota_6)\sigma''=2^r\sigma''\pm \binom{2^{r}}{2}[\iota_6,\iota_6]H_2(\sigma'')=2^r\sigma''\pm \binom{2^{r}}{2}\Delta(\iota_{13})\eta_{11}^2=2^r\sigma''$. Then 
	\begin{align*}
		\{ 2^r\iota_6, \nu_6^2, 2\iota_{12}\}\ni 2^{r-1}\nu_6^2\eta_{12}=0~ \text{mod}~ \lr{2\sigma''}.
	\end{align*}
So $2\mathbf{x}=2ti_6\sigma''$. This implies $\pi_{13}(P^7(2^{r}))\stackrel{p_{7\ast}}\twoheadrightarrow \Z_2\{\nu_{7}^2\}$ is split onto. Thus 
\begin{align*}
	\pi_{13}(P^7(2^{r}))\cong \Z_2^{\oplus 2}\oplus \Z_{2^{m_{r-1}^2}}, r\geq 1.
\end{align*}
	
	$\bullet~~\pi_{14}(P^{7}(2^r))$. 
	
Consider diagram (\ref{exact:hgps piPk+1(2r)}) for $k=6, m=14$ implies the exact sequence 
\begin{align*}
	\Z_2\{\sigma'\eta_{14}\}\!\oplus \!	\Z_2\{\bar{\nu}_7\}\!\oplus\! \Z_2\{\varepsilon_7\} \!=\!\pi_{15}(S^7)\!\xrightarrow{\partial_{14\ast}^{7,r}}\! \pi_{14}(F_7)\!\xrightarrow{\tau_{7\ast}}\!\pi_{14}(P^7(2^{r}))\!\xrightarrow{p_{7\ast}}\! \Z_{2^{m_r^3}}\{\delta_r\sigma'\}\!\rightarrow\! 0. 
\end{align*}
$\partial_{14\ast}^{7,r}(\varepsilon_7)=0$ and $ \partial_{14\ast}^{7,r}(\bar{\nu}_7)=\beta_{6}(2^r\iota_6)\bar{\nu}_6=2^{2r}\beta_{6}\bar{\nu}_6=0$ (Lemma \ref{lemA: Sigma nu'sigam'}).
By Lemma \ref{lem: partial(asigamb)}, 
\begin{align*}
&	\partial_{14\ast}^{7,r}(\sigma'\eta_{14})=\partial_{13\ast}^{7,r}(\sigma')\eta_{13}=\pm 2^{r-1}\beta_6\sigma''\eta_{13}=2^{r+1}\beta_6\bar\nu_6=0~~ (\sigma''\eta_{13}=4\bar\nu_6~\text{by \cite[(7.4)]{Toda}}).\\
&\Rightarrow ~ Coker(\partial_{14\ast}^{7,r})=\pi_{14}(F_7)=\Z_{2^{m_{r+1}^3}}\{\beta_6\bar\nu_6\}\oplus\Z_2\{\beta_{6}\varepsilon_{6}\}\oplus\Z_2\{\widehat{\eta_{12}}\eta_{13}\}.
\end{align*}
For $r\leq 2$, $\delta_r\sigma'=2^{2-r}\Sigma\sigma''$. There is $-\mathbf{x}_r\in\{i_6, 2^r\iota_6, 2^{2-r}\sigma''\}$ such that $p_{7\ast}(\mathbf{x}_r)=\delta_r\sigma'$ and $2^r\mathbf{x}_r\in i_6\{2^r\iota_6, 2^{2-r}\sigma'', 2^r\iota_{13}\}$. By (vii) of  Lemma  \ref{lemA: Sigma nu'sigam'}, $2^r\mathbf{x}_r=2^rt_ri_6\bar\nu_6$ for some integer $t_r$. Then 
$\mathbf{x}_r-t_ri_6\bar\nu_6$ is the order $2^r$ lift of $\delta_r\sigma'$ by $p_7$, i.e., $\pi_{14}(P^7(2^{r}))\stackrel{p_{7\ast}}\twoheadrightarrow \Z_{2^{m_r^3}}\{\delta_r\sigma'\}$ is split onto for $r\leq 2$.

For $r\geq 3$,  the diagram (\ref{diam big}) for $k=6,s=r,t=2$ induces the following  diagram
\begin{align*}
	\small{\xymatrix{
			\Z_8\{\beta_6^r\bar\nu_6\}\ar@{^{(}->}[r]\ar@{=}[d]&  	\pi_{14}(J_{2,6}^r) \cong 	\pi_{14}(F_{7}^r)\ar[d]^{\chi_{2\ast}^{r}}\ar[r]^-{\tau_{7\ast}^r} &\pi_{14}(P^{7}(2^r)) \ar[d]_{\bar \chi^r_{2\ast}}\\
				\Z_8\{\beta_6^2\bar\nu_6\}\ar@{^{(}->}[r]& \pi_{14}(J_{2,6}^2) \cong \pi_{14}(	F_{7}^2)\ar[r]^-{\tau_{7\ast}^2}&  \pi_{14}(P^{7}(4))
	} }.
\end{align*}
Since $(\tau_{7\ast}^2)|_{res}:\Z_8\{\beta_6^2\bar\nu_6\}\hookrightarrow  \pi_{14}(P^{7}(4)) $ is split into, so is $(\tau_{7\ast}^r)|_{res}:\Z_8\{\beta_6^r\bar\nu_6\}\hookrightarrow  \pi_{14}(P^{7}(2^r)) $, i.e., there is an epimorphism  $\pi_{14}(P^{7}(2^r))\stackrel{P_r}\twoheadrightarrow 	\Z_8\{\beta_6^r\bar\nu_6\}$ such that 
$P_r(\tau_{7\ast}^r)|_{res}=id$, i.e., $P_r(i_6^r\bar\nu_6)=\beta_6^r\bar\nu_6$.  Let 
$\mathbf{y}_r\in \pi_{14}(P^{7}(2^r))$ such that $p_{7\ast}(\mathbf{y}_r)=\sigma'$. Note that the order $4$ element $\bar\chi_r^2\mathbf{x}_2$ satisfies  $p_{7\ast}(\bar\chi_r^2\mathbf{x}_2)=2\sigma'$, where $\mathbf{x}_2\in \pi_{14}(P^{7}(4))$ has been given before which satisfies $p_{7\ast}(\mathbf{x}_2)=2\sigma'$. Thus $p_{7\ast}(2\mathbf{y}_r-\bar\chi_r^2\mathbf{x}_2)=0$, i.e., $Ker(p_{7\ast})\ni  2\mathbf{y}_r-\bar\chi_r^2\mathbf{x}_2=\tau_{6\ast}(a\beta_6\bar\nu_6+b\beta_{6}\varepsilon_{6}+c\widehat{\eta_{12}}\eta_{13})=ai_6\bar\nu_6+bi_{6}\varepsilon_{6}+c\tau_{6}\widehat{\eta_{12}}\eta_{13}$, $a\in \Z_8, b,c\in \Z_2$. Hence $8\mathbf{y}_r=4ai_6\bar\nu_6$. So $4a\beta_6^r\bar\nu_6= P_r(4ai_6\bar\nu_6)=8P_r(\mathbf{y}_r)=0\in 	\Z_8\{\beta_6^r\bar\nu_6\}$. We get $4a=0\in \Z_8$, hence $8\mathbf{y}_r=0$, which implies $o(\mathbf{y}_r)=8$. Thus $\pi_{14}(P^7(2^{r}))\stackrel{p_{7\ast}}\twoheadrightarrow \Z_{2^{m_r^3}}\{\delta_r\sigma'\}$ is also split onto for $r\geq 3$. Now we have 
\begin{align*}
	\pi_{14}(P^7(2^{r}))\cong \Z_2^{\oplus 2}\oplus \Z_{2^{m_{r+1}^3}}\oplus \Z_{2^{m_{r}^3}} (r\geq 1).
\end{align*}

	$\bullet~~\pi_{15}(P^{7}(2^r))$. 

Consider diagram (\ref{exact:hgps piPk+1(2r)}) for $k=6, m=15$ implies the exact sequence 
\begin{align}
&\pi_{16}(S^7)\!\xrightarrow{\partial_{15\ast}^{7,r}}\! \pi_{15}(F_7)\!\xrightarrow{\tau_{7\ast}}\!\pi_{15}(P^7(2^{r}))\!\xrightarrow{p_{7\ast}}\! \Z_2\{\sigma'\eta_{14}\}\!\oplus \!	\Z_2\{\bar{\nu}_7\}\!\oplus\! \Z_2\{\varepsilon_7\}\!\rightarrow\! 0, \nonumber\\
&\text{where}~\pi_{16}(S^7)=\Z_2\{\sigma'\eta^2_{14}\}\!\oplus \!	\Z_2\{\nu^3_7\}\!\oplus\! \Z_2\{\mu_7\}\oplus \Z_2\{\eta_7\varepsilon_8\}~\text{and}~\nonumber\\
& \partial_{15\ast}^{7,r}(\nu^3_7)=\partial_{15\ast}^{7,r}(\mu_7)=\partial_{15\ast}^{7,r}(\eta_7\varepsilon_8)=0; \partial_{15\ast}^{7,r}(\sigma'\eta^2_{14})=\partial_{14\ast}^{7,r}(\sigma'\eta_{14})\eta_{14}=0.\nonumber\\
&\Rightarrow ~0 \rightarrow \pi_{15}(F_7)\xrightarrow{\tau_{7\ast}}\!\pi_{15}(P^7(2^{r}))\!\xrightarrow{p_{7\ast}}\! \Z_2\{\sigma'\eta_{14}\}\!\oplus \!	\Z_2\{\bar{\nu}_7\}\!\oplus\! \Z_2\{\varepsilon_7\}\!\rightarrow\! 0 . \label{exact: pi15(P7)}
\end{align}

For $r=1$, $\pi_{15}(P^7(2))\cong \Z_2\oplus \Z_4^{\oplus 2}\oplus \Z_8$ \cite[Theorem 5.13]{WJ Proj plane}.

Corollary \ref{cor: suspen Fp Moore} implies the commutative diagram 
\begin{align}
	&\!\!\!\!\!\!\xymatrix{
		\pi_{15}(J_{2,6}^r) \cong  \pi_{15}(F_7) \ar[d]^{E^{\infty}}\ar@{^{(}->}[r]^{\tau_{7\ast}}&\pi_{15}(P^7(2^{r}))\ar[d]^{\Sigma^{\infty}}\ar@{->>}[r]^-{p_{7\ast}} & \pi_{15}(S^{7})\ar[d]^{\Sigma^{\infty}}\\
		\Z_2\{\nu_3\}\!\oplus\! \Z_2\{\mu\}\!\oplus\! \Z_2\{\eta\varepsilon\}\!=\!\pi^s_{15}(S^{6})\ar@{^{(}->}[r]^-{i_{\ast}}&\!\pi^s_{15}(P^7(2^{r}))\ar@{->>}[r]^-{p_{\ast}}& \pi^s_{15}(S^{7})\!=\!\Z_2\{\nu\}\!\oplus\!\Z_2\{\varepsilon\}} \label{diam: P15(P7) to stable}
\end{align}
For $r\geq 2$, 
By Lemma \ref{lem split}, it is easy to get the bottom sequence is split. By the right commutative square in Lemma \ref{lemm Suspen Fp}, we get  $\Z_{2}\{\beta_6\nu^3_6\}\!\oplus\!\Z_2\{\beta_{6}\mu_{6}\}\!\oplus\!\Z_2\{\beta_{6}\eta_{6}\varepsilon_6\}\xrightarrow{(\tau_{7\ast})|_{res}} \pi_{15}(P^7(2^{r}))$ is split into. 
By the first proposition given in \cite{Barratt}, we get $2^{r+1}\pi_{15}(P^7(2^{r}))=0 (r\geq 2)$. Thus for $r=2$, $8\pi_{15}(P^7(4))=0$ implies that $\Z_{8}\{\widehat{\delta_{r+1}\nu_{12}}\}
\xrightarrow{(\tau_{7\ast})|_{res}} \pi_{15}(P^7(2^{r}))$ is split into. So the sequence  (\ref{exact: pi15(P7)}) is split.  For $r\geq 3$, the diagram (\ref{diam Moore s to t}) for $s=r,t=2$ induces the following commutative diagram
\begin{align*}
	\small{\xymatrix{
		0\ar[r]& \pi_{15}(F_{7}^2)\ar[d]^{\chi_{r\ast}^{2}}\ar[r]^-{\tau_{7\ast}^2} &\pi_{15}(P^{7}(4)) \ar[d]_{\bar \chi^2_{r\ast}}\ar[r]& \pi_{15}(S^7)\ar@{=}[d]\ar[r]& 0\\
				0\ar[r] &\pi_{15}(F_{7}^r)\ar[r]^-{\tau_{7\ast}^r}&  \pi_{15}(P^{7}(2^r))\ar[r]& \pi_{15}(S^7)\ar[r]& 0
	} }.
\end{align*}
Since the top sequence in the above diagram is split, so is the bottom one. Now we have 
\begin{align*}
	\pi_{15}(P^{7}(2^r))\cong \Z_2^{\oplus 6}\oplus \Z_{2^{m_{r+1}^3}}, r\geq 2. 
\end{align*}

$\bullet~~\pi_{16}(P^{7}(2^r))$.

Consider diagram (\ref{exact:hgps piPk+1(2r)}) for $k=6, m=16$ implies the exact sequence 
\begin{align}
	&0\rightarrow Coker(\partial_{16\ast}^{7,r})\!\xrightarrow{\tau_{7\ast}}\!\pi_{16}(P^7(2^{r}))\!\xrightarrow{p_{7\ast}}\! \Z_2\{\sigma'\eta_{14}^2\}\!\oplus \!	\Z_2\{\nu_7^3\}\!\oplus\!	\Z_2\{\mu_7\}\!\oplus\! \Z_2\{\eta_7\varepsilon_8\}\!\rightarrow\! 0 \nonumber \\
	&Coker(\partial_{16\ast}^{7,r})=\Z_{2^{m_r^3}}\{\beta_6\nu_6\sigma_9\}\oplus \Z_2\{\beta_6\eta_6\mu_7\}.\nonumber
\end{align}
Since $\pi_{15}(P^7(2^{r}))\!\xrightarrow{p_{7\ast}}\! \Z_2\{\sigma'\eta_{14}\}\!\oplus \!	\Z_2\{\bar{\nu}_7\}\!\oplus\! \Z_2\{\varepsilon_7\}$ is an epimorphism, let $\mathbf{x}, \mathbf{y}$ and $\mathbf{z}\in \pi_{15}(P^7(2^{r}))$  be the lifts of $\sigma'\eta_{14}$, $\bar{\nu}_7$ and $\varepsilon_7$ by $p_7$ respectively. By $\bar\nu_7\eta_{15}=\nu_7^3$ and $\varepsilon_7\eta_{15}=\eta_7\varepsilon_8$ \cite[(7.3), (7.5)]{Toda}, $\mathbf{x}\eta_{15}, \mathbf{y}\eta_{15}$ and $\mathbf{z}\eta_{15}\in \pi_{15}(P^7(2^{r}))$ are the lifts of 
  $\sigma'\eta^2_{14}$, $\nu_7^3$ and $\eta_7\varepsilon_8$ respectively.  For $\mu_7$, there is $-\mathbf{x}_r^{\mu}\in \{i_6, 2^r\iota_6, \mu_6\}$ such that $p_{7\ast}(\mathbf{x}_r^{\mu})=\mu_7$ and $2\mathbf{x}_r^{\mu}\in i_6\{2^r\iota_6, \mu_6, 2\iota_{15}\}$, where $\{2^r\iota_6, \mu_6, 2\iota_{15}\}\ni 2^{r-1}\mu_6\eta_{15}=2^{r-1}\eta_{6}\mu_7$ mod $(2^r\iota_{6})\fhe \pi_{16}(S^6)+\pi_{16}(S^6)\fhe (2\iota_{15})=\lr{2\nu_6\sigma_9}$. Hence there are integers $t_r$ such that  $2\mathbf{x}_1^{\mu}=i_6\eta_{6}\mu_7+2t_1i_6\nu_6\sigma_9$ and $2\mathbf{x}_r^{\mu}=2t_ri_6\nu_6\sigma_9, r\geq 2$.  Then  $\mathbf{y}_r^{\mu}:=\mathbf{x}_r^{\mu}-t_ri_6\nu_6\sigma_9$ is a lift of $\mu_7$ by $p_7$ with $o(\mathbf{y}_1^{\mu})=4$ and $o(\mathbf{y}_r^{\mu})=2 (r\geq 2)$. Hence 
  \begin{align*}
  	\pi_{16}(P^7(2^{r}))\cong \left\{
  	\begin{array}{ll}
  		\Z_2^{\oplus 4}\oplus \Z_4, & \hbox{$r=1$;} \\
  		\Z_{2}^{\oplus 5}\oplus \Z_{2^{m_r^3}}, & \hbox{$r\geq 2$.}
  	\end{array}
  	\right.
  \end{align*}

$\bullet~~\pi_{17}(P^{7}(2^r))$.

By diagram (\ref{exact:hgps piPk+1(2r)}) for $k=6, m=17$ and $\pi_{18}(S^7)=\Z_8\{\zeta_7\}\oplus \Z_{2}\{\bar\nu_7\nu_{15}\}$
\begin{align*}
	&\partial_{16\ast}^{7,r}(\zeta_7)=\beta_{6}(2^r\iota_6)(\zeta_6)=2^r\beta_{6}\zeta_6;\\
	&\partial_{16\ast}^{7,r}(\bar\nu_7\nu_{15})=\beta_{6}(2^r\iota_6)(\bar\nu_6\nu_{14})=\beta_{6}(2^{2r}\bar\nu_6)\nu_{14}=0  ~~\text{by Lemma \ref{lemA: Sigma nu'sigam'}}.\\
 \Rightarrow~	&0\rightarrow Coker(\partial_{17\ast}^{7,r})\!\xrightarrow{\tau_{7\ast}}\!\pi_{17}(P^7(2^{r}))\!\xrightarrow{p_{7\ast}}\! \Z_{2^{m_{r}^3}}\{\delta_r\nu_7\sigma_{10}\}\!\oplus \!	\Z_2\{\eta_7\mu_8\}\rightarrow\! 0  \\
\text{where}~ &Coker(\partial_{17\ast}^{7,r})\cong \Z_{2^{m_r^3}}\oplus \Z_{2^{m_r^3}}\oplus \Z_4,  ~\text{by Corollary \ref{lem: pi_{17}(J3P7)}}. 
\end{align*}
Since $\pi_{16}(P^{6}(2^r))\stackrel{p_{6\ast}}\twoheadrightarrow\Z_{2^{m_{r}^3}}\{\delta_r\nu_6\sigma_9\}\oplus  \Z_2\{\eta_6\mu_7\}$  is split onto, so is the epimorphism  $\pi_{17}(P^{7}(2^r))\stackrel{p_{7\ast}}\twoheadrightarrow\Z_{2^{m_{r}^3}}\{\delta_r\nu_7\sigma_{10}\}\oplus  \Z_2\{\eta_7\mu_8\}$ by  Lemma \ref{lem: split Suspen Cf}.  Hence 
\begin{align*}
		\pi_{17}(P^7(2^{r}))\cong  \Z_2\oplus \Z_4 \oplus \Z_{2^{m_r^3}}^{\oplus 3}, r\geq 1. 
\end{align*}

$\bullet~~\pi_{18}(P^{7}(2^r))$.

By diagram (\ref{exact:hgps piPk+1(2r)}) for $k=6, m=18$ and $\pi_{19}(S^7)=0$, we get 
\begin{align*}
	0\rightarrow \pi_{18}(J_{3,6}^r)\cong \pi_{18}(F_7)\!\xrightarrow{\tau_{7\ast}}\!\pi_{17}(P^7(2^{r}))\!\xrightarrow{p_{7\ast}}\! \Z_{2^{m_{r}^3}}\{\zeta_7\}\!\oplus \!	\Z_2\{\nu_7\nu_{15}\}\rightarrow\! 0  \\
\end{align*}
Since $\pi_{17}(P^{6}(2^r))\stackrel{q_1p_{6\ast}}\twoheadrightarrow \Z_{2^{m_{r}^3}}\{\delta_r\zeta_6\}$ is split onto, there is  $\widetilde{\delta_r\zeta_6}\in \pi_{17}(P^{6}(2^r))$ with $o(\widetilde{\delta_r\zeta_6})=2^{m_r^3}$ such that $p_{6\ast}(\widetilde{\delta_r\zeta_6})=\delta_r\zeta_6$. Then $\Sigma \widetilde{\delta_r\zeta_6}$ is an order $2^{m_r^3}$ element such that 
$p_{7\ast}(\Sigma\widetilde{\delta_r\zeta_6})=\delta_r\zeta_7$. 

For $r=1$, consider commutative diagram 
(\ref{diam: P15(P7) to stable}) for $r=1$. 

In the bottom exact sequence of (\ref{diam: P15(P7) to stable}), by the Toda bracket in stable case \cite[Page 32]{Toda}, $\exists \widetilde{\bar\nu}\in <i, 2\iota, \bar\nu>$ such that $p_{\ast}(\widetilde{\bar\nu})=\bar\nu\in \pi^s_{15}(S^{7})$ and $2\widetilde{\bar\nu}\in i<2\iota, \bar\nu, 2\iota>$. $<2\iota, \bar\nu, 2\iota>\ni \bar\nu\eta=i\nu^3$ mod $2\pi^s_{9}(S^0)=\{0\}$. Thus $2\widetilde{\bar\nu}=\nu^3$ and $o(\widetilde{\bar\nu})=4$.  Since $\pi_{15}(P^{7}(2))\cong \Z_2\oplus \Z_4^{\oplus 2}\oplus \Z_8$ and  $\sigma'\eta_{14}\in \pi_{15}(S^7)$ has an order $8$ lift in $\pi_{15}(P^{7}(2)) $by $p_7$ \cite[Page 82]{WJ Proj plane},  there is an order $4$ lift $\widetilde{\bar\nu_7}\in \pi_{15}(P^{7}(2))$ of $\bar\nu_7$ by $p_7$. Hence $p_{\ast}(\Sigma^{\infty}\widetilde{\bar\nu_7})=\bar\nu$, which implies that $\Sigma^{\infty}\widetilde{\bar\nu_7}-\widetilde{\bar\nu}\in Ker (p_{\ast})=Im(i_{\ast})\cong \Z_2^{\oplus 3}$. So $2\Sigma^{\infty}\widetilde{\bar\nu_7}=2\widetilde{\bar\nu}=i\nu^3$. 
Since  $2\widetilde{\bar\nu_7}\in Ker(p_{7\ast})=Im(\tau_{7\ast})$, assume 
\begin{align*}
	&2\widetilde{\bar\nu_7}=a_1\tau_7\beta_6\nu^3_6+a_2\tau_7\beta_{6}\mu_{6}+a_3\tau_7\beta_{6}\eta_{6}\varepsilon_7+2b\tau_7 \widehat{\delta_{r+1}\nu_{12}}, a_1,a_2,a_3,b\in \Z_2.\\
	\text{Then}~&i\nu^3=\Sigma^{\infty}(2\widetilde{\bar\nu_7})=\Sigma^{\infty}(a_1\tau_7\beta_6\nu^3_6+a_2\tau_7\beta_{6}\mu_{6}+a_3\tau_7\beta_{6}\eta_{6}\varepsilon_7+2b\tau_7 \widehat{\delta_{r+1}\nu_{12}})\\
	&=a_1i\nu^3+a_2i\mu+a_3i\eta\varepsilon~\text{(by Corollary \ref{cor: suspen Fp Moore})}~. \\
	\Rightarrow ~& a_1=1, a_2=a_3=0. \Rightarrow  2\widetilde{\bar\nu_7}=i_6\nu^3_6+2b\tau_7 \widehat{\delta_{r+1}\nu_{12}}.
\end{align*}
Let  $\mathbf{v}=\widetilde{\bar\nu_7}-b\tau_7 \widehat{\delta_{r+1}\nu_{12}}$. Then $2\mathbf{v}=i_6\nu^3_6$ and $p_{7\ast}(\mathbf{v})=\bar\nu_7$. Then $p_{7\ast}(\mathbf{v}\nu_{15})=\bar\nu_7\nu_{15}$ and $2\mathbf{v}\nu_{15}=(2\mathbf{v})\nu_{15}=i_6\nu^4_6=i_6\bar\nu_6\eta_{14}\nu_{15}=0$ \cite[Lemma 6.3, Proposition 5.8]{Toda}.

For $r\geq 2$, from the sequence $(\ref{exact: pi15(P7)})$, there is an order $2$ element $\widetilde{\bar\nu_7}\in \pi_{15}(P^7(2^r))$ such that $p_{7\ast}(\widetilde{\bar\nu_7})=\bar\nu_7$. Then the order $2$ element $\widetilde{\bar\nu_7}\nu_{15}\in \pi_{18}(P^7(2^r))$ satisfies $p_{7\ast}(\widetilde{\bar\nu_7}\nu_{15})=\bar\nu_7\nu_{15}$.

 Hence $\pi_{17}(P^7(2^{r}))\stackrel{p_{7\ast}}\twoheadrightarrow \Z_{2^{m_{r}^3}}\{\zeta_7\}\oplus\Z_2\{\nu_7\nu_{15}\}$ is split onto for $r\geq 1$. 
$$ \pi_{18}(P^7(2^r))\cong \Z_2^{\oplus 3}\oplus \Z_{2^{m_{r}^3}}\oplus \Z_{2^{m_{r}^4}}(r\geq 1).$$

$\bullet~~\pi_{19}(P^{7}(2^r))$.

By diagram (\ref{exact:hgps piPk+1(2r)}) for $k=6, m=19$ and $\pi_{20}(S^7)=\Z_2\{\nu_7\sigma_{10}\nu_{17}\}$, we get 
$$\pi_{19}(P^{7}(2^r))\cong \pi_{19}(F_7)\cong \pi_{19}(J_{3,6}^r)\cong \left\{
\begin{array}{ll}
	\Z_{2}^{\oplus 2}\oplus \Z_4, & \hbox{$r=1$;} \\
	\Z_{2}^{\oplus 3} \oplus\Z_{2^{m_{r}^4}}, & \hbox{$r\geq 2$.} \\
\end{array}
\right.$$

\subsection{Homotopy groups of $P^{8}(2^r)$}
\label{subsec: P8(2^r)}

There are cofibration and fibration sequences 
\begin{align*}
	&	S^7\xrightarrow{2^{r}\iota_7} S^{7}\xrightarrow{i_{7}} P^{8}(2^r)\xrightarrow{p_{8}}  S^{8};
	~~\Omega S^8\xrightarrow{\partial^{8}}F_{8}\xrightarrow{\tau_8} P^{8}(2^r)\xrightarrow{p_{8}}  S^{8}
\end{align*}
From  Lemma \ref{lem:gamma3 k odd}, $Sk_{20}(F_{8})\simeq J_{2,7}^{r}\simeq S^7\vee S^{14}$, hence here $\beta_7=j_1^7$ and $\beta_{14}=j_{2}^{14}$.
\begin{align*}
	 ~~Sk_{27}(F_{8})\simeq J_{3,7}^{r}\simeq (S^7\vee S^{14})\cup_{\gamma^r_3=\pm 2^r[j_1^7, j_{2}^{14}]}CS^{20}. 
\end{align*}

\begin{lemma}\label{lem:pi(J3 P8)}
	\begin{align*}
		(1)~& \pi_{20}(J_{3,7}^{r})=\Z_2\{\check\beta_7\nu_7\sigma_{10}\nu_{17}\}\oplus\Z_{2}\{\check\beta_{14}\nu_{14}^2\}\oplus \Z_{2^r}\{[\check\beta_7,\check\beta_{14}]\}.\\
		(2)~& \pi_{21}(J_{3,7}^{r})=\Z_8\{\check\beta_7\sigma'\sigma_{14}\}\oplus \Z_4\{\check\beta_7\kappa_7\}\oplus\Z_{16}\{\check\beta_{14}\sigma_{14}\}\oplus \Z_2\{[\check\beta_7,\check\beta_{14}]\eta_{20}\}.
	\end{align*}
\end{lemma}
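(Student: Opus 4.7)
The plan is to apply the short exact sequence of Lemma \ref{lem: exact seq pi_m(J3(2rl4))} specialized to $k=7$, namely
\begin{align*}
0\to \frac{\pi_m(J_{2,7}^r)}{\gamma_{3\ast}^r(\pi_m(S^{20}))}\xrightarrow{I_{2\ast}^r} \pi_m(J_{3,7}^r)\xrightarrow{\check p_{21\ast}^r} \Sigma(Ker(\gamma_{3\ast}^r))\to 0
\end{align*}
for $m=20,21$. Since $m\le 4\cdot 7-3=25$, the correction term $L_m^7$ vanishes, so this is exactly the required sequence. The attaching map is already identified by Lemma \ref{lem:gamma3 k odd}(iii): $\gamma_3^r=\pm 2^r[j_1^7,j_2^{14}]$ (since $7$ is odd and $[\iota_7,\iota_7]=0$, so the case $k=7$ of part (iii) applies). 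This removes all the Toda-bracket wrestling that was necessary in the $P^6$ case.

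First I would compute $\pi_m(J_{2,7}^r)\cong \pi_m(S^7\vee S^{14})$ for $m=20,21$ via the Hilton--Milnor theorem. The Hilton basis in the relevant range consists of $j_1^7$, $j_2^{14}$, and the single Whitehead product $[j_1^7,j_2^{14}]$, since the next basic product $[j_1^7,[j_1^7,j_2^{14}]]$ originates from $S^{27}$ and so does not contribute to $\pi_{20}$ or $\pi_{21}$. Reading off Toda's tables (2-primary) gives $\pi_{20}(S^7)=\Z_2\{\nu_7\sigma_{10}\nu_{17}\}$, $\pi_{20}(S^{14})=\Z_2\{\nu_{14}^2\}$, $\pi_{20}(S^{20})=\Z$, and $\pi_{21}(S^7)=\Z_8\{\sigma'\sigma_{14}\}\oplus\Z_4\{\kappa_7\}$, $\pi_{21}(S^{14})=\Z_{16}\{\sigma_{14}\}$, $\pi_{21}(S^{20})=\Z_2\{\eta_{20}\}$. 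Thus the Hilton summand contributes a free $\Z\{[j_1^7,j_2^{14}]\}$ at $m=20$ and $\Z_2\{[j_1^7,j_2^{14}]\eta_{20}\}$ at $m=21$.

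Second I would compute the action of $\gamma_{3\ast}^r$: at $\pi_{20}(S^{20})=\Z$, one has $\iota_{20}\mapsto \pm 2^r[j_1^7,j_2^{14}]$, so the image is exactly $2^r\Z\cdot[j_1^7,j_2^{14}]$ (torsion-free target), collapsing the $\Z$-summand to $\Z_{2^r}\{[j_1^7,j_2^{14}]\}$ in the cokernel. At $\pi_{21}(S^{20})=\Z_2\{\eta_{20}\}$, the image $\pm 2^r[j_1^7,j_2^{14}]\eta_{20}$ vanishes for every $r\ge 1$ because the target is 2-torsion. For $m=20$ the connecting side $\Sigma(Ker\gamma_{3\ast}^r)$ is computed on $\pi_{19}(S^{20})=0$ and therefore is zero; for $m=21$ it is computed on $\pi_{20}(S^{20})=\Z$, and since $[j_1^7,j_2^{14}]$ is torsion-free, multiplication by $\pm 2^r$ is injective and $Ker\gamma_{3\ast}^r=0$. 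Hence in both cases $\pi_m(J_{3,7}^r)$ coincides (under $I_{2\ast}^r$) with the cokernel computed in the previous step, yielding the three-summand description at $m=20$ and the four-summand description at $m=21$.

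The main obstacle I anticipate is essentially bookkeeping: one must verify that the listed Toda generators of $\pi_{20}(S^7)$ and $\pi_{21}(S^7)$ indeed exhaust the 2-primary part in these metastable degrees (invoking Toda's tables plus the EHP-sequence information already used elsewhere in the paper), and one must be sure that the composite $[j_1^7,j_2^{14}]\eta_{20}$ is genuinely a new generator and is not absorbed into the $\pi_{21}(S^7)$ summand via a hidden Hilton relation. Both issues are routine because Hilton's theorem splits $\pi_\ast(S^7\vee S^{14})$ as a direct sum at the level of abelian groups, so no extension problem arises, and both sequences split automatically, matching the direct-sum form of the statement.
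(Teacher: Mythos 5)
Your proposal is correct and follows essentially the same route as the paper: both apply Lemma \ref{lem: exact seq pi_m(J3(2rl4))} with $k=7$ (where $L_m^7=0$ for $m=20,21$), use $\gamma_3^r=\pm 2^r[j_1^7,j_2^{14}]$ from Lemma \ref{lem:gamma3 k odd}(iii), and read off $\pi_m(S^7\vee S^{14})$ from the Hilton--Milnor decomposition, the quotient by $\langle\gamma_3^r\rangle$ giving (1) and the isomorphism $\pi_{21}(J_{2,7}^r)\cong\pi_{21}(J_{3,7}^r)$ giving (2). The paper merely states these two isomorphisms tersely; your write-up supplies the routine verifications.
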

\begin{proof}
	$(1)$ comes from the isomorphism
	 $\pi_{20}(J_{2,7}^{r})/\lr{\gamma_3}\xrightarrow[\cong]{I_{2\ast}^r}\pi_{20}(J_{3,7}^{r})$.
	 
   $(2)$ comes from the following  isomorphism  by Lemma \ref{lem: exact seq pi_m(J3(2rl4))} for $m=21$, $k=7$
	$$\pi_{21}(S^7\vee S^{14})\cong \pi_{21}(J_{2,7}^{r})\xrightarrow[\cong]{I_{2\ast}^r}\pi_{21}(J_{3,7}^{r}).$$
\end{proof}

$\bullet~~\pi_{13}(P^{8}(2^r))=\Z_2\{i_7\nu_7^2\}$, which is easily obtained by  the diagram (\ref{exact:hgps piPk+1(2r)}) for $k=7, m=13$.

$\bullet~~\pi_{14}(P^{8}(2^r))$.

Consider the diagram (\ref{exact:hgps piPk+1(2r)}) for $k=7, m=14$.
\begin{align*}
	&\Z_{(2)}\{\sigma_8\}\oplus \Z_8\{\Sigma\sigma'\}=\pi_{15}(S^8)\!\xrightarrow{\partial_{14\ast}^{8,r}}\! \pi_{14}(F_8)\!\xrightarrow{\tau_{8\ast}}\!\pi_{14}(P^8(2^{r}))\!\xrightarrow{p_{8\ast}}\! 	\Z_2\{\nu^2_8\}\rightarrow\! 0\\
	&\pi_{14}(J_{2,7}^r)\cong \pi_{14}(F_8)=\Z_{(2)}\{\beta_{14}\}\oplus\Z_8\{\beta_7\sigma'\},  \partial_{14\ast}^{8,r}(\Sigma\sigma')=\beta_7(2^r\iota_7)\sigma'=2^r\beta_7\sigma'.
\end{align*}
By Lemma \ref{lem:partial^{8,s}_{14}(sigma8)}, $\partial_{14\ast}^{8,r}(\sigma_8)=2^r\beta_{14}+y_r\beta_7\sigma'$, where  $\small{\begin{tabular}{r|c|c|c|c|}
		\cline{2-5}
		& $ r=1$&$r=2$&$r=3$& $r\geq 4$ \\
		\cline{2-5}
		$\Z_8\ni y_r=$  &  odd & $\pm 2$ &$4$ & $0$\\
		\cline{2-5}
\end{tabular}}$. 
Thus $Coker(\partial_{14\ast}^{8,r})=\left\{
\begin{array}{ll}
	\Z_{2^{r+1}}\{\beta_{14}\}\oplus 	\Z_{2^{r-1}}\{2\beta_{14}+\beta_7\sigma'\}, & \hbox{$r\leq 3$;} \\
	\Z_{2^{r}}\{\beta_{14}\}\oplus 	\Z_{8}\{\beta_7\sigma'\}, & \hbox{$r\geq 4$.}
\end{array}
\right.$

Since $\pi_{13}(P^7(2^{r}))\stackrel{p_{7\ast}}\twoheadrightarrow \Z_2\{\nu^2_7\}$ is split onto, so is the epimorphism $\pi_{14}(P^8(2^{r}))\stackrel{p_{8\ast}}\twoheadrightarrow \Z_2\{\nu^2_8\}$ by Lemma \ref{lem: split Suspen Cf}. Hence 
\begin{align*}
		\pi_{14}(P^{8}(2^r))\cong \left\{
	\begin{array}{ll}
		\Z_2\oplus \Z_{2^{r-1}}\oplus \Z_{2^{r+1}}, & \hbox{$r\leq 3$;} \\
		\Z_2\oplus \Z_{8}\oplus \Z_{2^{r}}, & \hbox{$r\geq 4$.}
	\end{array}
	\right.
\end{align*}

$\bullet~~\pi_{15}(P^{8}(2^r))$.

The diagram (\ref{exact:hgps piPk+1(2r)}) for $k=7, m=15$ implies the exact sequence 
\begin{align*}
&\pi_{16}(S^8)\!\xrightarrow{\partial_{15\ast}^{8,r}}\!\pi_{15}(F_8)\!\xrightarrow{\tau_{8\ast}}\!\pi_{15}(P^8(2^{r}))\!\xrightarrow{p_{8\ast}}	\Z_{2^{m_{r}^3}}\{\delta_{r}\Sigma\sigma'\} \rightarrow 0\\
\text{where}~&\pi_{16}(S^8)=\Z_2\{\sigma_8\eta_{15}\}\!\oplus \!	\Z_2\{\Sigma\sigma'\eta_{15}\}\!\oplus \!	\Z_2\{\bar{\nu}_8\}\!\oplus\! \Z_2\{\varepsilon_8\},\\
&\pi_{15}(F_8)=\Z_2\{\beta_7\sigma'\eta_{14}\}\oplus \Z_2\{\beta_7\bar\nu_7\}\oplus \Z_2\{\beta_7\varepsilon_7\}\oplus \Z_2\{\beta_{14}\eta_{14}\},\\
&\partial_{15\ast}^{8,r}(\Sigma\sigma'\eta_{15})=\partial_{15\ast}^{8,r}(\bar\nu_8)	=\partial_{15\ast}^{8,r}(\varepsilon_8)=0, \\
&\partial_{15\ast}^{8,r}(\sigma_8\eta_{15})=\partial_{14\ast}^{8,r}(\sigma_8)\eta_{14}=(2^r\beta_{14}+y_r\beta_7\sigma')\eta_{14}
=2^{r-1}\beta_7\sigma'\eta_{14}. \\
\Rightarrow~& Coker(\partial_{15\ast}^{8,r})=\vartheta_r\Z_2\{\beta_7\sigma'\eta_{14}\}\oplus 	\Z_2\{\beta_7\bar\nu_7\}\oplus  \Z_2\{\beta_7\varepsilon_7\}\oplus \Z_2\{\beta_{14}\eta_{14}\}.
\end{align*}
Since $\pi_{14}(P^7(2^{r}))\stackrel{p_{7\ast}}\twoheadrightarrow \Z_{2^{m_{r}^3}}\{\delta_{r}\sigma'\}$ is split onto, so is the epimorphism $\pi_{15}(P^8(2^{r}))\stackrel{p_{8\ast}}\twoheadrightarrow \Z_{2^{m_{r}^3}}\{\delta_{r}\Sigma\sigma'\}$ by Lemma \ref{lem: split Suspen Cf}. Hence 
\begin{align*}
	\pi_{15}(P^{8}(2^r))\cong \vartheta_r\Z_2\oplus  \Z_2^{\oplus 3}\oplus \Z_{2^{m_{r}^3}}, r\geq 1.
\end{align*}

$\bullet~~\pi_{16}(P^{8}(2^r))$.

Consider the diagram (\ref{exact:hgps piPk+1(2r)}) for $k=7, m=16$.
\begin{align*}
	&\pi_{17}(S^8)\!\xrightarrow{\partial_{16\ast}^{8,r}}\! \pi_{16}(F_8)\!\xrightarrow{\tau_{8\ast}}\!\pi_{16}(P^8(2^{r}))\!\xrightarrow{p_{8\ast}}\! \vartheta_r\Z_2\{\sigma_8\eta_{15}\}\!\oplus \!	\Z_2\{\Sigma\sigma'\eta_{15}\}\!\oplus \!	\Z_2\{\bar{\nu}_8\}\!\oplus\! \Z_2\{\varepsilon_8\}\!\rightarrow\! 0\\
	& \pi_{17}(S^8)=\Z_2\{\sigma_8\eta_{15}^2\}\!\oplus \!	\Z_2\{\Sigma\sigma'\eta_{15}^2\}\oplus 	\Z_2\{\nu_8^3\}\oplus 	\Z_2\{\mu_8\}\oplus \Z_2\{\eta_8\varepsilon_9\},\\
	&\pi_{16}(J_{2,7}^r)\cong \pi_{16}(F_8)=\Z_2\{\beta_7\sigma'\eta^2_{14}\}\oplus 	\Z_2\{\beta_7\nu_7^3\}\oplus \Z_2\{\beta_7\mu_7\}\oplus \Z_2\{\beta_7\eta_7\varepsilon_8\}\oplus \Z_2\{\beta_{14}\eta_{14}^2\}\\
	&\partial_{16\ast}^{8,r}(\Sigma\sigma'\eta_{15}^2)=\partial_{16\ast}^{8,r}(\nu_8^3)=\partial_{16\ast}^{8,r}(\mu_8)	=\partial_{16\ast}^{8,r}(\eta_8\varepsilon_9)=0,\\
	&\partial_{16\ast}^{8,r}(\sigma_8\eta^2_{15})=\partial_{15\ast}^{8,r}(\sigma_8\eta_{15})\eta_{15}=2^{r-1}\beta_7\sigma'\eta^2_{14}. \\
	&\Rightarrow~ Coker(\partial_{16\ast}^{8,r})=\vartheta_r\Z_2\{\beta_7\sigma'\eta^2_{14}\}\oplus 	\Z_2\{\beta_7\nu_7^3\}\oplus \Z_2\{\beta_7\mu_7\}\oplus \Z_2\{\beta_7\eta_7\varepsilon_8\}\oplus \Z_2\{\beta_{14}\eta_{14}^2\}.
\end{align*}

For $\alpha=\sigma'\eta_{14}, \bar\nu_7, \varepsilon_7$, there is $-\mathbf{x}\in\{i_7,2^r\iota_7, \alpha\}$ such that $p_{8\ast}(\mathbf{x})=\Sigma \alpha$ and 
$2\mathbf{x}\in\{2^r\iota_7, \alpha, 2\iota_{15}\}$ which is a coset of subgroup $(2^r\iota_7)\fhe\pi_{16}(S^7)+\pi_{16}(S^7)\fhe (2^r\iota_{16})=\{0\}$. 

For $\alpha=\sigma'\eta_{14}$, since $i_7\sigma'\eta_{14}^2=0$ for $r=1$, by Lemma \ref{lemA: Todabraket } $(viii)$, $2\mathbf{x}=0$ for $r\geq 1$. 

For $\alpha=\bar\nu_7$, since  $\pi_{17}(S^8)\cong \Z_2^{\oplus 5}$, by \cite[Proposition 1.3, Lemma 6.3]{Toda} and Lemma \ref{lem:Toda bracket}
\newline
 $\Sigma \{2^r\iota_7, \bar\nu_7, 2\iota_{15}\}\!=\!-\Sigma \{2^r\iota_7, \bar\nu_7, 2\iota_{15}\}\!\supset\! \{2^r\iota_8, \bar\nu_8, 2\iota_{16}\}_1\!\ni\! \left\{
 \!\!\begin{array}{ll}
 \bar\nu_8\eta_{16}=\nu_8^3~\text{mod}\lr{\Sigma\sigma'\eta_{15}^2}, \!&\! \hbox{$r=1$;} \\
 0 ~\text{mod}~\{0\}, \!&\! \hbox{$r\geq 2$.}
 \end{array}
 \right.$
The injection $\Sigma:\pi_{16}(S^7)\rightarrow \pi_{17}(S^8)$  implies that $2\mathbf{x}=i_7\nu_7^3$ for $r=1$ and $2\mathbf{x}=0$ for $r\geq 2$. 

For $\alpha=\varepsilon_7$,  $\{2^r\iota_7, \varepsilon_7, 2\iota_{15}\}\ni 2^{r-1}\eta_7\varepsilon_8$ mod $\{0\}$. Hence $2\mathbf{x}=i_7\eta_7\varepsilon_8$ for $r=1$ and $2\mathbf{x}=0$ for $r\geq 2$. 

For $\sigma_8\eta_{15}\in Ker(\partial_{15\ast}^{8,r})$ with $r\geq 2$, by the same proof as that of Lemma \ref{lem:P7(2r-1),P4(2r)}, we have the following lemma 
\begin{lemma}\label{lem:exist tidle sigma8}
	There is a $\tilde{\sigma}_8\in[P^{15}(2^{r-1}), P^{8}(2^{r})]$ such that 
	the following square commutes 
		$$	\footnotesize{\xymatrix{
		P^{15}(2^{r-1})\ar[r]^{p_{15}^{r-1}}\ar[d]^{\tilde{\sigma}_8} & S^{15} \ar[d]^{\sigma_8}\\
	P^{8}(2^r)	\ar[r]^{p_{8}^r} & S^8 ~~.} }
	$$
	Moreover,	$p_{8\ast}(\tilde{\sigma}_8\widetilde{\eta_{15}})=\sigma_8\eta_{15}$
	where $\widetilde{\eta_{15}}\in \pi_{16}(P^{15}(2^{r-1}))$ is a lift of $\eta_{15}$ by $p^{r-1}_{15}$ and $o(\tilde{\sigma}_8\widetilde{\eta_{15}})=4$ and $2$  for $r=2$ and $r\geq 3$ respectively. 
\end{lemma}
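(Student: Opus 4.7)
The proof will run entirely parallel to that of Lemma \ref{lem:P7(2r-1),P4(2r)}, with the quadruple $(\nu_4, \eta_7, P^4(2^r), P^7(2^{r-1}))$ replaced by $(\sigma_8, \eta_{15}, P^8(2^r), P^{15}(2^{r-1}))$. First I extract $\widetilde{\eta_{15}} \in \pi_{16}(P^{15}(2^{r-1}))$ from Baues's computation \cite{Bau n-1n+3}: for $r = 2$ one has $\pi_{16}(P^{15}(2)) \supset \Z_4\{\widetilde{\eta_{15}}\}$ with $2\widetilde{\eta_{15}} = i_{14}\eta_{14}^2$, while for $r \geq 3$ the group $\pi_{16}(P^{15}(2^{r-1}))$ contains a direct summand $\Z_2\{\widetilde{\eta_{15}}\}$.

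To construct $\tilde{\sigma}_8$, I will use the exact sequence
\[
[P^{15}(2^{r-1}), P^8(2^r)] \xrightarrow{p_{8\ast}^r} [P^{15}(2^{r-1}), S^8] \xrightarrow{\partial^{8,r}_{\ast}} [P^{14}(2^{r-1}), F_8^r]
\]
and show that $\partial^{8,r}_{\ast}(\sigma_8 p_{15}^{r-1}) = 0$. Writing $p_{15}^{r-1} = \Sigma p_{14}^{r-1}$, the naturality argument of Lemma \ref{lem: partial(asigamb)} extends verbatim (the input map $\Sigma\beta$ in that lemma may be replaced by any suspension-type precomposition) and yields
\[
\partial^{8,r}_{\ast}(\sigma_8 p_{15}^{r-1}) = \partial^{8,r}_{14\ast}(\sigma_8) \circ p_{14}^{r-1} = \bigl(2^r\beta_{14} + y_r\beta_7\sigma'\bigr)\, p_{14}^{r-1},
\]
with $y_r$ as in Lemma \ref{lem:partial^{8,s}_{14}(sigma8)}. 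The term $2^r\beta_{14}\, p_{14}^{r-1}$ vanishes since $p_{14}^{r-1}$ has order $2^{r-1}$. For the remaining term I check the cases: for $r=2$, $y_2 = \pm 2$ and $2p_{14}^1 = 0$; for $r=3$, $y_3 = 4$ and $4p_{14}^2 = 0$; for $r \geq 4$, $y_r = 0$. This produces $\tilde{\sigma}_8$ with $p_8^r\tilde{\sigma}_8 = \sigma_8 p_{15}^{r-1}$, and then $p_{8\ast}^r(\tilde{\sigma}_8\widetilde{\eta_{15}}) = \sigma_8\eta_{15}$ is automatic.

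For the order of $\tilde{\sigma}_8\widetilde{\eta_{15}}$, non-vanishing of $\sigma_8\eta_{15}$ forces the order to be at least $2$, which together with $o(\widetilde{\eta_{15}}) = 2$ settles the case $r \geq 3$. The case $r = 2$ requires proving $2\tilde{\sigma}_8\widetilde{\eta_{15}} = \tilde{\sigma}_8 i_{14}\eta_{14}^2 \neq 0$. Since $\tilde{\sigma}_8 i_{14} \in \ker p_{8\ast}^r = \mathrm{im}\, i_{7\ast}$, I write $\tilde{\sigma}_8 i_{14} = i_7\alpha$; to pin down $\alpha$ I rely on the identity $(4\iota_8)\sigma_8 = (8\sigma_8 + \Sigma\sigma')(2\iota_{15})$ (proved from $[\iota_8,\iota_8] = \pm(2\sigma_8 + \Sigma\sigma')$ by a routine Whitehead product calculation, using that $H_2(2\iota_{15}) = 0$), which extends the defining square of $\tilde{\sigma}_8$ to a ladder of cofibration sequences whose rightmost column gives $\Sigma\tilde{\sigma}_8\, i_{15} = i_8(8\sigma_8 + \Sigma\sigma')$. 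Suspending yields $\Sigma(\tilde{\sigma}_8 i_{14}\eta_{14}^2) = i_8\Sigma\sigma'\eta_{15}^2$ (using $8\sigma_8\eta_{15}^2 = 0$), and it remains to verify that $i_8\Sigma\sigma'\eta_{15}^2 \neq 0$ in $\pi_{17}(P^9(4))$. This last non-vanishing verification is where I expect the main obstacle: while the algebraic identity and the cofibration ladder are straightforward, detecting $i_8\Sigma\sigma'\eta_{15}^2$ requires an auxiliary calculation in $\pi_{17}(P^9(4))$ analogous to the use of $(\ref{equ2: pi9(P5) Coker})$ at the end of Lemma \ref{lem:P7(2r-1),P4(2r)}; everything else is bookkeeping via naturality of $\partial^{8,r}$ and the known values of $y_r$.
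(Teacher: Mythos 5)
Your proposal is correct and follows the paper's intended argument exactly: the paper proves this lemma simply by declaring it ``the same proof as that of Lemma \ref{lem:P7(2r-1),P4(2r)}'', and your adaptation --- constructing $\tilde{\sigma}_8$ from the exact sequence via $\partial^{8,r}_{14\ast}(\sigma_8)=2^r\beta_{14}+y_r\beta_7\sigma'$ together with $o(p_{14}^{r-1})=2^{r-1}$, and settling the $r=2$ order through the identity $(4\iota_8)\sigma_8=(8\sigma_8+\Sigma\sigma')(2\iota_{15})$ and the induced ladder of cofibration sequences --- is the faithful translation of that proof. The single step you flag, the non-vanishing of $i_8\Sigma\sigma'\eta_{15}^2$ in $\pi_{17}(P^{9}(4))$ (the analog of the appeal to (\ref{equ2: pi9(P5) Coker}) in Lemma \ref{lem:P7(2r-1),P4(2r)}), is indeed the only external input required, and the paper leaves it equally implicit since $\pi_{17}(P^{9}(2^r))$ is only stated without proof in Theorem \ref{thm: pi(P9,10,11)}.
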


Now we get 
\begin{align*}
	\pi_{16}(P^{8}(2^r))\cong \left\{
	\begin{array}{ll}
		\Z_2^{\oplus 3}\oplus \Z_4^{\oplus 2}, & \hbox{$r=1$;} \\
	\Z_{2}^{\oplus 7}\oplus \Z_{4}, & \hbox{$r=2$;}\\
				\Z_{2}^{\oplus 9}, & \hbox{$r\geq 3$;}\\
	\end{array}
	\right.
\end{align*}

$\bullet~~\pi_{17}(P^{8}(2^r))$.

The diagram (\ref{exact:hgps piPk+1(2r)}) for $k=7, m=17$ implies the exact sequence 
\newline
$\pi_{18}(S^8)\!\xrightarrow{\partial_{17\ast}^{8,r}}\!\pi_{17}(F_8)\!\xrightarrow{\tau_{8\ast}}\!\pi_{17}(P^8(2^{r}))\!\xrightarrow{p_{8\ast}}	Ker(\partial_{16\ast}^{8,r}) \rightarrow 0$ where 
\newline
$Ker(\partial_{16\ast}^{8,r})=\vartheta_r\Z_2\{\sigma_8\eta_{15}^2\}\!\oplus \!	\Z_2\{\Sigma\sigma'\eta_{15}^2\}\oplus 	\Z_2\{\nu_8^3\}\oplus 	\Z_2\{\mu_8\}\oplus \Z_2\{\eta_8\varepsilon_9\}$;
\newline
$\pi_{18}(S^8)\!=\!\Z_8\{\sigma_8\nu_{15}\}\!\oplus \!	\Z_8\{\nu_8\sigma_{11}\}\!\oplus \!	\Z_2\{\eta_8\mu_9\}$; 
\newline
$\pi_{17}(F_8)\!=\!\Z_8\{\beta_7\nu_7\sigma_{10}\}\!\oplus\! \Z_2\{\beta_7\eta_7\mu_8\}\!\oplus\! \Z_8\{\beta_{14}\nu_{14}\}$.
\newline $\partial_{17\ast}^{8,r}(\nu_8\sigma_{11})=2^r\beta_7\nu_7\sigma_{10},~\partial_{17\ast}^{8,r}(\eta_8\mu_9)=0$.

By \cite[(7.19)]{Toda}, $\sigma'\nu_{14}=x\nu_7\sigma_{10}$, $x$ is odd integer. 
\begin{align*}
	&\partial_{17\ast}^{8,r}(\sigma_8\nu_{15})=\partial_{14\ast}^{8,r}(\sigma_8)\nu_{14}=(2^r\beta_{14}+y_r\beta_7\sigma')\nu_{14}
	=2^{r}\beta_{14}\nu_{14}+xy_r\beta_7\nu_7\sigma_{10}.\\
	\Rightarrow~& Coker(\partial_{17\ast}^{8,r})= \left\{
	\begin{array}{ll}
 \Z_2\{\beta_7\eta_7\mu_8\}	\oplus 	\Z_4\{\beta_{14}\nu_{14}\}, & \hbox{$r=1$;} \\
 \Z_2\{\beta_7\nu_7\sigma_{10}+2\beta_{14}\nu_{14}\}\oplus	 \Z_2\{\beta_7\eta_7\mu_8\}	\oplus 	\Z_8\{\beta_{14}\nu_{14}\}, & \hbox{$r=2$;}\\ \Z_{2^{m_{r-1}^3}}\{\beta_7\nu_7\sigma_{10}\}\oplus	 \Z_2\{\beta_7\eta_7\mu_8\}	\oplus 	\Z_8\{\beta_{14}\nu_{14}\}, & \hbox{$r\geq 3$.}
	\end{array}
	\right.\\
 \text{i.e.}~	&  Coker(\partial_{17\ast}^{8,r})\cong \Z_{2^{m_{r-1}^3}}\oplus	 \Z_2	\oplus 	\Z_{2^{m_{r+1}^3}}, r\geq 1.
 \end{align*}
 Since $\pi_{16}(P^8(2^{r}))\!\xrightarrow{p_{8\ast}}\! \vartheta_r\Z_2\{\sigma_8\eta_{15}\}\!\oplus \!	\Z_2\{\Sigma\sigma'\eta_{15}\}\!\oplus \!	\Z_2\{\bar{\nu}_8\}\!\oplus\! \Z_2\{\varepsilon_8\}$ is  epimorphic,  let $\widetilde{\alpha}$ be the lift of $\alpha=\sigma_8\eta_{15} (r\geq 2), \Sigma\sigma'\eta_{15}, \bar{\nu}_8, \varepsilon_8$ by $p_8$. Then the order $2$ element $\widetilde{\alpha}\eta_{16}$ is the lift of $\sigma_8\eta^2_{15}, \Sigma\sigma'\eta^2_{15}, \nu^3_8, \eta_8\varepsilon_9$ by $p_8$.
 
  For $\mu_7$, there is $-\mathbf{x}_r^{\mu}\in \{i_7, 2^r\iota_7, \mu_7\}$ such that $p_{8\ast}(\mathbf{x}_r^{\mu})=\mu_8$ and $2\mathbf{x}_r^{\mu}\in i_7\{2^r\iota_7, \mu_7, 2\iota_{16}\}$, where $\{2^r\iota_7, \mu_7, 2\iota_{17}\}\ni 2^{r-1}\mu_7\eta_{16}=2^{r-1}\eta_{7}\mu_8$ mod $(2^r\iota_{7})\fhe \pi_{17}(S^7)+\pi_{17}(S^7)\fhe (2\iota_{16})=\lr{2\nu_7\sigma_{10}}$. Hence there are integers $t_r$ such that  $2\mathbf{x}_1^{\mu}=i_7\eta_{7}\mu_8+2t_1i_7\nu_7\sigma_{10}$ and $2\mathbf{x}_r^{\mu}=2t_ri_7\nu_7\sigma_{10}, r\geq 2$.  Then  $\mathbf{y}_r^{\mu}:=\mathbf{x}_r^{\mu}\mathbf{x}_r^{\mu}-t_ri_7\nu_7\sigma_{10}$ is a lift of $\mu_8$ by $p_8$ with $o(\mathbf{y}_1^{\mu})=4$ and $o(\mathbf{y}_r^{\mu})=2 (r\geq 2)$. Hence 
\begin{align*}
	\pi_{17}(P^{8}(2^r))\cong \left\{
	\begin{array}{ll}
		\Z_2^{\oplus 3}\oplus \Z_4^{\oplus 2}, & \hbox{$r=1$;} \\
		\Z_{2}^{\oplus 6} \oplus \Z_{2^{m_{r-1}^3}}	\oplus 	\Z_{8}, & \hbox{$r\geq 2$.}
	\end{array}
	\right.
\end{align*}

$\bullet~~\pi_{18}(P^{8}(2^r))$.

The diagram (\ref{exact:hgps piPk+1(2r)}) for $k=7, m=18$ implies the exact sequence 
\newline
$\Z_8\{\zeta_8\}\oplus \Z_2\{\bar\nu_8\nu_{16}\}=\pi_{19}(S^8)\!\xrightarrow{\partial_{18\ast}^{8,r}}\!\pi_{18}(F_8)\!\xrightarrow{\tau_{8\ast}}\!\pi_{18}(P^8(2^{r}))\!\xrightarrow{p_{8\ast}}	Ker(\partial_{17\ast}^{8,r}) \rightarrow 0$ where 
\newline
$Ker(\partial_{17\ast}^{8,r})=\left\{
\begin{array}{ll}
\Z_4\{4\sigma_8\nu_{15}+2\nu_8\sigma_{11}\}\oplus \Z_2\{\eta_8\mu_9\}, & \hbox{$r=1$;} \\
	\Z_8\{2\sigma_8\nu_{15}+\nu_8\sigma_{11}\}\oplus\Z_2\{4\sigma_8\nu_{15}\}\oplus \Z_2\{\eta_8\mu_9\}, & \hbox{$r=2$;}\\
	\Z_8\{\nu_8\sigma_{11}\}\oplus\Z_4\{2\sigma_8\nu_{15}\}\oplus \Z_2\{\eta_8\mu_9\}, & \hbox{$r=3$;}\\	
	\Z_8\{\nu_8\sigma_{11}\}\oplus\Z_8\{\sigma_8\nu_{15}\}\oplus \Z_2\{\eta_8\mu_9\}, & \hbox{$r\geq 4$.}
\end{array}
\right.$
\newline
$\pi_{18}(F_8)=\Z_8\{\beta_7\zeta_7\}\oplus \Z_2\{\beta_7\bar\nu_7\nu_{15}\}$.
 $\partial_{18\ast}^{8,r}(\zeta_8)=2^r\beta_7\zeta_7,~\partial_{18\ast}^{8,r}(\bar\nu_8\nu_{16})=0$.
 \newline
$\Rightarrow$ $0\rightarrow Coker(\partial_{18\ast}^{8,r})=\Z_{2^{m_r^3}}\{\beta_7\zeta_7\}\oplus \Z_2\{\beta_7\bar\nu_7\nu_{15}\}\xrightarrow{\tau_{8\ast}}\!\pi_{18}(P^8(2^{r}))\!\xrightarrow{p_{8\ast}}	Ker(\partial_{17\ast}^{8,r}) \rightarrow 0$. 

In order to solve the extension problem of the above  short exact sequence, we firstly compute the $\pi_{20}(P^{10}(2^r))$. 

From  $(i)$ of Lemma \ref{lem:gamma3 k odd}, 	$Sk_{26}(F_{10})\simeq J_{2,9}^{r}\simeq S^9\vee S^{18}$. Then by the diagram (\ref{exact:hgps piPk+1(2r)}) for $k=9, m=20$, we get the short exact sequence 
\begin{align*}
 \Z_{2^{m_{r}^3}}\{\beta_9\zeta_9\}\!\oplus\! \Z_2\{\beta_9\bar\nu_9\nu_{17}\}\!\oplus\! \Z_2\{\beta_{18}\eta_{18}^2\}\stackrel{\tau_{{10}\ast}}\hookrightarrow\!\pi_{20}(P^{10}(2^{r}))\!\stackrel{p_{10\ast}}\twoheadrightarrow	\Z_{2^{r-1}}\{\delta_r\sigma_{10}\nu_{11}\}\!\oplus\! \Z_2\{\eta_{10}\mu_{11}\}.
\end{align*}
We have the following commutative squares
\begin{align*}
	\small{\xymatrix{
		 \Z_2\{\beta^r_9\bar\nu_9\nu_{17}\}\ar@{^{(}->}[r]^-{(\tau^r_{{10}\ast})|_{res}}\ar@{=}[d]&  \pi_{20}(P^{10}(2^r))  \ar[d]^{\bar\chi_{1\ast}^{r}}\\
		 \Z_2\{\beta^1_9\bar\nu_9\nu_{17}\}\ar@{^{(}->}[r]^-{(\tau^1_{{10}\ast})|_{res}}\ar@{^{(}->}[r]&  \pi_{20}(P^{10}(2))
	} };~~	\small{\xymatrix{
 \Z_{2^{m_{r}^3}}\{\beta_9^r\zeta_9\}\ar@{^{(}->}[r]^-{(\tau^r_{{10}\ast})|_{res}}\ar[d]^{\cong}&  \pi_{20}(P^{10}(2^r))  \ar[d]^{\Sigma^{\infty}}\\
 \Z_{2^{m_{r}^3}}\{\zeta\}\ar@{^{(}->}[r]^-{i_{\ast}}\ar@{^{(}->}[r]&  \pi_{20}^s(P^{10}(2^r))\ar@{->>}[r]^-{p_{\ast}}& \Z_{2}\{\eta\mu\}.
} }.
\end{align*}
where the left commutative square is induced by the  diagram (\ref{diam Moore s to t}) for $s=r\geq 2$, $t=1$ and the right commutative square is from Corollary \ref{cor: suspen Fp Moore}. 

For $\eta_8\mu_9\in Ker(\partial_{17\ast}^{8,r})$, there is an order $2$ element $\widetilde{\mu_8}\eta_{17}\in \pi_{18}(P^8(2^{r}))$ such that $p_{8\ast}(\widetilde{\mu_8}\eta_{17})=\mu_8\eta_{17}=\eta_8\mu_{9}$, where $\widetilde{\mu_8}\in \pi_{17}(P^8(2^{r}))$ is the lift of $\mu_8$ by $p_{8}$. Then the order $2$ elements 
$\Sigma^2(\widetilde{\mu_8}\eta_{17})$ and $\Sigma^{\infty}(\widetilde{\mu_8}\eta_{17})$ are the lifts of $\eta_{10}\mu_{11}$ and $\eta\mu$ by $p_{10}$ and $p$ (in stable case) respectively. 
Hence $\pi_{20}(P^{10}(2))\stackrel{p_{10\ast}}\twoheadrightarrow \Z_2\{\eta_{10}\mu_{11}\}$ and $\pi_{20}^s(P^{10}(2^r))\stackrel{p_{\ast}}\twoheadrightarrow \Z_2\{\eta\mu\}$ are split onto, which imply the monomorphisms  $  \Z_2\{\beta_9\bar\nu_9\nu_{17}\}\stackrel{(\tau^1_{{10}\ast})|_{res}}\hookrightarrow\pi_{20}(P^{10}(2))$ and $ \Z_{2^{m_{r}^3}}\{\zeta\}\stackrel{i_{\ast}}\hookrightarrow \pi_{20}^s(P^{10}(2^r))$  are split into. Hence  the monomorphisms \newline
$\Z_2\{\beta^r_9\bar\nu_9\nu_{17}\}\stackrel{(\tau^r_{{10}\ast})|_{res}}\hookrightarrow\pi_{20}(P^{10}(2^r))$ and $ \Z_{2^{m_{r}^3}}\{\beta_9\zeta_9\}\stackrel{(\tau^r_{{10}\ast})|_{res}}\hookrightarrow\pi_{20}(P^{10}(2^r))$ are also split into.  Consider the following commutative diagram where the left map is isomorphic by Lemma \ref{lemm Suspen Fp}
\begin{align*}
	\small{\xymatrix{
			\Z_{2^{m_{r}^3}}\{\beta_7\zeta_7\}\oplus \Z_2\{\beta_7\bar\nu_7\nu_{15}\}\ar@{^{(}->}[r]^-{\tau_{8\ast}}\ar[d]^{E^{2}~\cong}&  \pi_{18}(P^{8}(2^r))  \ar[d]^{\Sigma^{2}}\\
				\Z_{2^{m_{r}^3}}\{\beta_9\zeta_9\}\oplus\Z_2\{\beta_9\bar\nu_9\nu_{17}\}\ar@{^{(}->}[r]^-{(\tau_{{10}\ast})|_{res}}\ar@{^{(}->}[r]&  \pi_{20}(P^{10}(2^r))
	} }.
\end{align*}
Since the bottom monomorphism is split into, so is the top one.  Hence 
\begin{align*}
	\pi_{18}(P^{8}(2^r))\cong \Z_2^{\oplus 2}\oplus \Z_{2^{m_{r-1}^3}}\oplus\Z_{2^{m_{r}^3}}\oplus \Z_{2^{m_{r+1}^3}}.
\end{align*}

$\bullet~~\pi_{19}(P^{8}(2^r))\cong Ker(\partial_{18\ast}^{8,r})\cong \Z_{2^{m_{r}^3}}\oplus \Z_2$, which is easily obtained by $\pi_{19}(F_{8})\cong \pi_{19}(S^7\vee S^{14})=0$. 

$\bullet~~\pi_{20}(P^{8}(2^r))$

The diagram (\ref{exact:hgps piPk+1(2r)}) for $k=7, m=20$ implies the exact sequence 
\newline
$\Z_2\{\sigma_8\nu_{15}^2\}\oplus \Z_2\{\nu_8\sigma_{11}\nu_{18}\}=\pi_{21}(S^8)\!\xrightarrow{\partial_{20\ast}^{8,r}}\!\pi_{20}(F_8)\!\xrightarrow{\tau_{8\ast}}\!\pi_{20}(P^8(2^{r}))\!\xrightarrow{p_{8\ast}}\pi_{20}(S^8)=0$ where 
$\pi_{20}(F_8)=\Z_2\{\beta_7\nu_7\sigma_{10}\nu_{17}\}\oplus\Z_{2}\{\beta_{14}\nu_{14}^2\}\oplus \Z_{2^r}\{[\beta_7,\beta_{14}]\}$;
\newline
$\partial_{20\ast}^{8,r}(\sigma_8\nu_{15}^2)=\partial_{17\ast}^{8,r}(\sigma_8\nu_{15})\nu_{17}=(2^r\beta_{14}\nu_{14}+xy_r\beta_7\nu_7\sigma_{10})\nu_{17}=(1-\vartheta_r)\beta_7\nu_7\sigma_{10}\nu_{17}$, with
\newline    integers $x$ and $y_r$ from the previous computation of $\pi_{14}(P^{8}(2^r))$ and $\pi_{17}(P^{8}(2^r))$ respectively;
$\partial_{20\ast}^{8,r}(\nu_8\sigma_{11}\nu_{18})=\beta_7(2^r\iota_7)(\nu_7\sigma_{10}\nu_{17})=0$. Hence
\begin{align*}
	\pi_{20}(P^{8}(2^r))\cong Coker(\partial_{20\ast}^{8,r})\cong \vartheta_r\Z_2\oplus \Z_2\oplus \Z_{2^r}.
\end{align*}

$\bullet~~\pi_{21}(P^{8}(2^r))$

The diagram (\ref{exact:hgps piPk+1(2r)}) for $k=7, m=21$ implies the exact sequence 
\newline
$\Z_{16}\{\sigma^2_8\}\!\oplus \!\Z_8\{\Sigma\sigma'\!\sigma_{15}\}\!\oplus\! \Z_4\{\kappa_8\}\!=\!\pi_{22}(S^8)\!\xrightarrow{\partial_{21\ast}^{8,r}}\!\pi_{21}(F_8)\!\xrightarrow{\tau_{8\ast}}\!\pi_{21}(P^8(2^{r}))\!\!\xrightarrow{p_{8\ast}}\! Ker(\partial_{20\ast}^{8,r})\!\!\rightarrow\! \!0$
\newline
where $Ker(\partial_{20\ast}^{8,r})=\vartheta_r\Z_2\{\sigma_8\nu_{15}^2\}\oplus \Z_2\{\nu_8\sigma_{11}\nu_{18}\}$;
\newline
$\pi_{21}(J_{3,7}^{r})\cong \pi_{21}(F_8)=\Z_8\{\beta_7\sigma'\sigma_{14}\}\oplus \Z_4\{\beta_7\kappa_7\}\oplus\Z_{16}\{\beta_{14}\sigma_{14}\}\oplus \Z_2\{[\beta_7,\beta_{14}]\eta_{20}\}$;
\newline
$\partial_{21\ast}^{8,r}(\sigma_8^2)=\partial_{14\ast}^{8,r}(\sigma_8)\sigma_{14}=(2^r\beta_{14}+y_r\beta_7\sigma')\sigma_{14}=2^r\beta_{14}\sigma_{14}+y_r\beta_7\sigma'\sigma_{14}$;
\newline
$\partial_{21\ast}^{8,r}(\Sigma\sigma'\sigma_{15})=\beta_7(2^r\iota_7)(\sigma'\sigma_{14})=2^r\beta_7\sigma'\sigma_{14}$;~~ $\partial_{21\ast}^{8,r}(\kappa_8)=\beta_7(2^r\iota_7)(\kappa_7)=2^r\beta_7\kappa_7$. 
\newline
Hence $Coker(\partial_{21\ast}^{8,r})=A_r\oplus \Z_{2^{m_{r}^2}}\{\beta_7\kappa_7\}\oplus \Z_2\{[\beta_7,\beta_{14}]\eta_{20}\}$ with

 $\Z_{2^{m^4_{r+1}}}\oplus \Z_{2^{m^3_{r-1}}}\cong A_r=\left\{
\begin{array}{ll}
	\Z_4\{\beta_{14}\sigma_{14}\}, & \hbox{$r=1$;} \\
	\Z_8\{\beta_{14}\sigma_{14}\}\oplus\Z_2\{2\beta_{14}\sigma_{14}+\beta_7\sigma'\sigma_{14}\}, & \hbox{$r=2$;}\\
	\Z_{16}\{\beta_{14}\sigma_{14}\}\oplus\Z_4\{2\beta_{14}\sigma_{14}+\beta_7\sigma'\sigma_{14}\}, & \hbox{$r=3$;}\\	
	\Z_{16}\{\beta_{14}\sigma_{14}\}\oplus\Z_8\{2\beta_{14}\sigma_{14}\}, & \hbox{$r\geq 4$.}
\end{array}
\right.$

For $\nu_8\sigma_{11}\nu_{18}$, there is $-\mathbf{x}\in \{i_7, 2^r\iota_7, \nu_7\sigma_{10}\nu_{17}\}$ such that $p_{8\ast}(\mathbf{x})=\nu_8\sigma_{11}\nu_{18}$ and $2\mathbf{x}\in i_7\{2^r\iota_7, \nu_7\sigma_{10}\nu_{17}, 2\iota_{20}\}$, where $\{2^r\iota_7, \nu_7\sigma_{10}\nu_{17}, 2\iota_{20}\}\ni 2^{r-1} \nu_7\sigma_{10}\nu_{17}\eta_{20}=0$ mod $(2^r\iota_{7})\fhe \pi_{21}(S^7)+\pi_{21}(S^7)\fhe (2\iota_{21})=2\pi_{21}(S^7)$ by Lemma \ref{lem:Toda bracket}. Hence  $2\mathbf{x}=2i_7\theta$ for some $\theta\in \pi_{21}(S^7)$. Let  $\mathbf{x}'=\mathbf{x}-i_7\theta$. Then $\mathbf{x}'$ is a lift of $\nu_8\sigma_{11}\nu_{18}$ by $p_8$ with $o(\mathbf{x}')=2$. 

For $r\geq 2$ and $\sigma_8\nu_{15}^2\in Ker(\partial_{20\ast}^{8,r})$, the commutative square in Lemma \ref{lem:exist tidle sigma8} induces the following commutative square
\begin{align*}
	\footnotesize{\xymatrix{
		\pi_{21}	(P^{15}(2^{r-1}))\ar[r]^-{p_{15\ast}^{r-1}}\ar[d]^{\tilde{\sigma}_{8\ast}} & \pi_{21}(S^{15})=\Z_2\{\nu_{15}^2\} \ar[d]^{\sigma_{8\ast}}\ar[r]&0\\
			\pi_{21}(P^{8}(2^r))	\ar[r]^-{p_{8\ast}^r} & 	\pi_{21}(S^8) ~~.} }
\end{align*}

 Since $\pi_{13}(P^7(2^{r-1}))\stackrel{p_{7\ast}}\twoheadrightarrow \Z_2\{\nu^2_7\}$ is split onto, so is the epimorphism $\pi_{21}(P^{15}(2^{r-1}))\stackrel{p^{r-1}_{15\ast}}\twoheadrightarrow \Z_2\{\nu^2_{15}\}$ by Lemma \ref{lem: split Suspen Cf}, i.e., $\exists$ order $2$ element $\mathbf{y}\in \pi_{21}(P^{15}(2^{r-1}))$ such that $p^{r-1}_{15\ast}(\mathbf{y})=\nu^2_{15}$. Hence  $\tilde{\sigma}_8\mathbf{y}\in \pi_{21}(P^{8}(2^{r}))$  is the lift of $\sigma_8\nu_{15}^2$ by $p^r_8$ with $o(\tilde{\sigma}_8\mathbf{y})=2$.
 
 Hence 
 \begin{align*}
 	\pi_{21}(P^{8}(2^r))\cong \Z_2^{\oplus 3}\oplus \Z_{2^{m_{r}^2}}\oplus\Z_{2^{m^3_{r-1}}}\oplus \Z_{2^{m^4_{r+1}}}.
 \end{align*}
 
Because of the limitation of length, we stop the computation of homotopy groups of Moore spaces here. In the next section we show that we are able to compute $\gamma_3$ for some other $2$-cell complexes.

\section{Some remarks}
\label{sec:some remarks}
In the vast majority of cases, the generators of the above homotopy groups of Moore spaces are easily obtained  according to the calculation process. 

For example, the generators of $\pi_{14}(P^{6}(2^r))$ are given as follows
$$\pi_{14}(P^{6}(2))=\Z_{2}\{i_{5}\nu_5^3\}\oplus \Z_{2}\{i_{5}\mu_5\}\oplus \Z_{2^r}\{[i_{5}, \tau_6\beta_{10}]\}\oplus \Z_{2}\{\widetilde{4\bar\nu_6}\}\oplus \Z_{4}\{\widetilde{\varepsilon_6}\}$$
with $2\widetilde{\varepsilon_6}=i_{5}\eta_5\varepsilon_6$. For $r\geq 2$
$$\pi_{14}(P^{6}(2^r))=\Z_{2}\{i_{5}\nu_5^3\}\oplus \Z_{2}\{i_{5}\mu_5\}\oplus\Z_2\{i_{5}\eta_5\varepsilon_6\}\oplus \Z_{2^r}\{[i_{5}, \tau_6\beta_{10}]\}\oplus \Z_{2^{m_r^3}}\{\widetilde{\delta_r\bar\nu_6}\}\oplus \Z_{2}\{\widetilde{\varepsilon_6}\}.$$

The following theorem generalizes the results of  $\pi_{i}(P^{n}(2))$ for $n=9,10,11$ in \cite{WJ Proj plane} to  $\pi_{i}(P^{n}(2^r))$ for any $r\geq 1$. The calculation  is not difficult since it only needs the homotopy of $J_2(M_{2^r\iota_{n-1}}, S^{n-1})$ and we omit the proof here.
\begin{theorem}\label{thm: pi(P9,10,11)}
	\begin{align*}
		&\pi_{15}(P^{9}(2^r))\cong \Z_2\oplus \Z_{2^{m^3_{r-1}}}\oplus \Z_{2^{r+1}}.\\
		&\pi_{16}(P^{9}(2^r))\cong \vartheta_r\Z_2\oplus \Z_2^{\oplus 3} \oplus \Z_{2^{m^4_{r}}}.\\
		&\pi_{17}(P^{9}(2^r))\cong  \left\{
		\begin{array}{ll}
			\Z_2^{\oplus 3}\oplus \Z_4^{\oplus 2}, & \hbox{$r=1$;} \\
			\Z_{2}^{\oplus 7} \oplus \Z_{4}, & \hbox{$r= 2$;}\\
			\Z_{2}^{\oplus 9}  & \hbox{$r\geq 3$.}
		\end{array}
		\right.\\	
			&\pi_{17}(P^{10}(2^r))\cong \Z_2^{\oplus 3}\oplus \Z_{2^{m^4_{r}}}.\\	
			&\pi_{18}(P^{10}(2^r))\cong  \left\{
		\begin{array}{ll}
			\Z_2\oplus \Z_4^{\oplus 2}\oplus \Z_8, & \hbox{$r=1$;} \\
			\Z_{2}^{\oplus 6} \oplus \Z_{2^{r+1}} & \hbox{$r\geq 2$.}
		\end{array}
		\right.\\	
		&\pi_{19}(P^{11}(2^r))\cong  \left\{
	\begin{array}{ll}
		\Z_2^{\oplus 2}\oplus \Z_4^{\oplus 2}, & \hbox{$r=1$;} \\
		\Z_{2}^{\oplus 5} \oplus \Z_{2^{r}} & \hbox{$r\geq 2$.}
	\end{array}
	\right.
	\end{align*}
\end{theorem}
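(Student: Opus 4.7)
The plan is to mimic the scheme developed in Sections \ref{sec:htyp Moore 456} and \ref{sec:htyp Moore78}, but to stop at the second filtration $J_{2,n-1}^r$ of the homotopy fibre $F_n$ of $p_n\colon P^n(2^r)\to S^n$. Since all target dimensions $i$ in the statement satisfy $i\leq 3(n-1)-1$, the inclusion $J_{2,n-1}^r\hookrightarrow F_n$ is $i$-connected, so $\pi_i(F_n)\cong\pi_i(J_{2,n-1}^r)$ and the third-order attaching map $\gamma_3^r$ plays no role here. For $n=9,11$ (odd) Lemma \ref{lem:gamma3 k odd} does not apply, but $J_{2,n-1}^r\simeq S^{n-1}\cup_{2^r[\iota_{n-1},\iota_{n-1}]}CS^{2n-3}$ is enough; for $n=10$ (even) we use the splitting computed from the diagrams (\ref{diam 1: J2(2slk) to J2(2tlk)})–(\ref{diam 2: J2(2slk) to J2(2tlk)}) as in Section \ref{subsec: P5(2^r)}. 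In either case $\pi_i(J_{2,n-1}^r)$ is read off from the short exact sequence
\[0\to\operatorname{Coker}(\bar\partial^{2n-2,r}_{i\ast})\xrightarrow{\bar\tau_{2n-2\ast}}\pi_i(J_{2,n-1}^r)\xrightarrow{\bar p_{2n-2\ast}}\operatorname{Ker}(\bar\partial^{2n-2,r}_{i-1\ast})\to 0,\]
with $\bar\partial^{2n-2,r}_{\ast}$ determined on generators by $\gamma_2^r\fhe$ plus the James–Hopf identification of Lemma \ref{Lem: compute H2}.

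Next I would feed $\pi_i(J_{2,n-1}^r)$ into the long exact sequence (\ref{exact:hgps piPk+1(2r)}) for $k=n-1$. By Lemma \ref{partial calculate1} the boundary $\partial^{n,r}_{\ast}$ factors as $\beta_{n-1\ast}(2^r\iota_{n-1})_\ast$ acting on a suspension, which reduces every value to a standard Toda-bracket calculation: one expands $(2^r\iota_{n-1})\alpha=2^r\alpha\pm\binom{2^r}{2}[\iota_{n-1},\iota_{n-1}]H_2(\alpha)$ and reads off the answer from \cite{Toda}. The single delicate boundary value is $\partial^{n,r}_{\ast}(\sigma_n)$ for $n=9,11$, which requires the Hopf-invariant trick of Lemma \ref{lem:partial^{8,s}_{14}(sigma8)}: combine the right commutative square of diagram \eqref{diam: H2H2' P6}-type for $k=n-1$ with a $\phi\colon F_n^r\to\Omega S^n$ factorisation to pin down both the $2^r\beta_{2n-2}$ term and the lower filtration correction. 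This reproduces the $2^{m_{r}^4}$ and $2^{m_{r-1}^3}$ invariants appearing in the tables.

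To close the extension problems I would lift each generator $\alpha\in\operatorname{Ker}(\partial^{n,r}_{*})\subset\pi_*(S^n)$ by Lemma \ref{lem split} as $-\mathbf{x}\in\{i_{n-1},2^r\iota_{n-1},\alpha'\}$ with $\Sigma\alpha'=\alpha$, and compute $2^{m}\mathbf{x}\in i_{n-1}\{2^r\iota_{n-1},\alpha',2^m\iota\}$ via Lemma \ref{lem:Toda bracket}. The two non-split instances ($r=1$ for $\pi_{18}(P^{10}(2^r))$ and $\pi_{17}(P^9(2^r))$) will be handled exactly as in the proofs of the analogous facts for $P^4(4)$ and $P^8(2)$: suspend into a range where the same short exact sequence degenerates (via Lemma \ref{lem: split Suspen Cf} applied to a $L_3$-summand from Lemma \ref{lem L3(X)split}), compare with the known $\pi_*(P^n(2))$ of \cite{WJ Proj plane} to fix the extension, and use the naturality square \eqref{diam Moore s to t} under $\chi_r^1$ to propagate the answer from $r=1$ to $r\geq 2$. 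For the remaining split cases it suffices to exhibit an order-matched explicit lift, usually of the form $\widetilde{\eta}\cdot\beta$ or $\tilde\sigma_n\cdot\widetilde{\eta}$ as in Lemmas \ref{lem:P7(2r-1),P4(2r)} and \ref{lem:exist tidle sigma8}.

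The main obstacle will be the extension problem for $\pi_{18}(P^{10}(2))$, where one needs to see that a lift of $\Delta(\iota_{21})$-type element has order $8$ (not $4$); as in \textbf{Claim 1}–\textbf{Claim 2} of the proof of $\pi_{17}(P^6(2^r))$, this requires constructing an auxiliary map from a mapping cone $C_{2\nu}$ into $P^{10}(2^r)$ whose $H_2$-image is detected in $H_{\ast}(\Omega P^{10}(2^r);\Z_2)$ as a Lie bracket $[u,v]$. Everything else is a straightforward, if lengthy, bookkeeping of Toda brackets and stable-suspension comparisons through Corollary \ref{cor: suspen Fp Moore}.
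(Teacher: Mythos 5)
The paper deliberately omits a proof of this theorem, recording only (in Section~\ref{sec:some remarks}) that the computation ``only needs the homotopy of $J_2(M_{2^r\iota_{n-1}}, S^{n-1})$''; your plan is exactly that, and it reuses the same toolkit as Sections~\ref{sec:htyp Moore 456} and~\ref{sec:htyp Moore78} (the fibration over $S^{2k}$, Lemma~\ref{lem: exact seq pi_m(J2(2rl4))}, the exact sequence (\ref{exact:hgps piPk+1(2r)}), and Toda-bracket lifts), so it is essentially the paper's intended approach and the dimension count $i\leq 3(n-1)-2$ does make $\gamma_3$ irrelevant here. Two small corrections that do not affect the argument: the splitting $J_{2,9}^r\simeq S^9\vee S^{18}$ for $P^{10}(2^r)$ comes from Lemma~\ref{lem:gamma3 k odd}(i) (odd $k=9$), not from the diagrams of Section~\ref{subsec: P5(2^r)} (where $k=4$ is even and $J_{2,4}^r$ does not split), and since $\sigma_9=\Sigma\sigma_8$ the value $\partial^{9,r}_{15\ast}(\sigma_9)$ follows directly from Lemma~\ref{partial calculate1} together with the distributivity formula $(2^r\iota_8)\sigma_8=2^{2r}\sigma_8-2^{r-1}(2^r-1)\Sigma\sigma'$, so the heavier $\phi$-factorisation of Lemma~\ref{lem:partial^{8,s}_{14}(sigma8)} (needed there because $\sigma_8$ is not a suspension) is not required.
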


At last,  I would like to mention that our methods are also useful to calculate higher dimensional  homotopy groups of other $2$-cell complexes.
For the suspended complex   projective
plane  $\Sigma \mathbb{C}P^2$ and  quaternionic projective
plane  $\Sigma \mathbb{H}P^2$, we have the cofibration sequences 
\begin{align*}
	S^{4}\xrightarrow{\eta_3}S^3\xrightarrow{i_3^{\eta}} \Sigma \mathbb{C}P^2\xrightarrow{p_{5}^{\eta}}S^5;~~	S^{8}\xrightarrow{\nu_5}S^5\xrightarrow{i_5^{\nu}} \Sigma \mathbb{H}P^2\xrightarrow{p_{9}^{\nu}}S^9
\end{align*}
$J_2(M_{\eta_3}, S^4)\simeq S^3\cup_{\gamma_{2}^{\eta}=[\iota_3, \eta_3]}e^{7}\simeq S^3\vee S^7$; $J_2(M_{\nu_5}, S^8)\simeq S^5\cup_{\gamma_{2}^{\nu}=[\iota_5, \nu_5]}e^{13}\simeq S^5\vee S^{13}$;
\newline
By the following lemma we can get the homotopy types of $Sk_{14}(F_{p_{5}^{\eta}})$ and $Sk_{28}(F_{p_{9}^{\nu}})$
\begin{align*}
i.e.,~~	J_3(M_{\eta_3}, S^4)\simeq (S^3\vee S^7)\cup_{\gamma_{3}^{\eta}}e^{11}~~\text{and}~~J_3(M_{\nu_5}, S^4)\simeq (S^5\vee S^{13})\cup_{\gamma_{3}^{\nu}}e^{21}
\end{align*}
which enables us to calculate the $\pi_{i}(\Sigma \mathbb{C}P^2)$ and $\pi_{i}(\Sigma \mathbb{H}P^2)$ for $i\leq $ $13$ and $27$ respectively if the relevant homotopy groups of spheres are known. 
\begin{lemma}
	$\gamma_{3}^{\eta}=[j_1^3, j_2^7]\eta_{9}$; 	$\gamma_{3}^{\nu}=a[j_1^5, j_2^{13}]\nu_{17}$, where $a$ is an odd integer.
\end{lemma}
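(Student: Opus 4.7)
The strategy mirrors the earlier $\gamma_3$-identifications in Lemma \ref{lem:gamma3 k odd}(iii), Lemma \ref{lem gamma3 P5}, and Lemma \ref{lem: gamma_3^r P6}. I would first apply Lemma \ref{lem: J3 alpha} to the cofibrations $S^4\xrightarrow{\eta_3}S^3$ (with $(n,k)=(3,1)$) and $S^8\xrightarrow{\nu_5}S^5$ (with $(n,k)=(5,3)$). The hypothesis $[\iota_n,[\iota_n,\alpha]]=0$ is automatic here, since $[\iota_n,\alpha]=0$ is precisely what witnesses the given splittings $J_2(M_\alpha,S^{n+k})\simeq S^n\vee S^{2n+k}$ in each case. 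The lemma then gives
\[\gamma_3^\eta=[j_1^3,j_2^7]\theta_\eta+j_1^3\lambda_\eta,\qquad \gamma_3^\nu=[j_1^5,j_2^{13}]\theta_\nu+j_1^5\lambda_\nu,\]
with $\theta_\eta\in\pi_{10}(S^9)=\Z_2\{\eta_9\}$, $\theta_\nu\in\pi_{20}(S^{17})=\Z_{24}\{\nu_{17}\}$, $\lambda_\eta\in\pi_{10}(S^3)$, and $\lambda_\nu\in\pi_{20}(S^5)$.

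I next kill the $\lambda$-summand via Remark \ref{remark gamma3}. The relation $\Sigma\gamma_3=0$, coming from Theorem \ref{Theorem of ZhuJin} and the vanishing of suspended higher order Whitehead products, gives $j_1^{n+1}\Sigma\lambda=0$; combined with the injectivity of the suspensions $\Sigma\colon\pi_{10}(S^3)\to\pi_{11}(S^4)$ and $\Sigma\colon\pi_{20}(S^5)\to\pi_{21}(S^6)$ on the relevant subgroups (readable from Toda's tables together with the Freudenthal theorem) this forces $\lambda_\eta=\lambda_\nu=0$. So $\gamma_3^\eta\in\{0,[j_1^3,j_2^7]\eta_9\}$ and $\gamma_3^\nu=a[j_1^5,j_2^{13}]\nu_{17}$ for some $a\in\Z_{24}$, and the problem reduces to showing $\gamma_3^\eta\neq 0$ and $a$ odd.

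For the last step I would combine Theorem \ref{Theorem of ZhuJin} with Corollary \ref{Cor:proj gamma3} and the Hopf-invariant formula of Lemma \ref{lem:H_2-SmSn}. Theorem \ref{Theorem of ZhuJin} places $\gamma_3\in[j_Y^2,j_Y^2\alpha,j_Y^2\alpha]$, so naturality of higher order Whitehead products (Proposition \ref{Prop: Naturality HOWP}) pushes $p_Y\gamma_3$ into the third order Whitehead product set $[\iota_n,\alpha,\alpha]\subset\pi_{3n+2k-1}(S^n)$, which is a concrete coset by Lemma \ref{lemma for [a,b,c]}. On the other hand Corollary \ref{Cor:proj gamma3} gives $p_Y\gamma_3=[p_Y,\mu p_{Y\wedge X}]H_2(\gamma_3)$ for some $\mu\in\pi_{2n+k}(S^n)$, and Lemma \ref{lem:H_2-SmSn} evaluates $H_2([j_1^n,j_2^{2n+k}])=\Sigma j_1^{n-1}\wedge j_2^{2n+k-1}-\Sigma j_2^{2n+k-1}\wedge j_1^{n-1}$. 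Equating the two expressions for $p_Y\gamma_3$ reduces the identification of $\theta$ to the classical computation of the triple Whitehead products $[\iota_3,\eta_3,\eta_3]$ and $[\iota_5,\nu_5,\nu_5]$, which by the results of Hardie and Porter carry $2$-primary unit coefficient on the $\eta_9$- and $\nu_{17}$-type generators respectively.

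The main obstacle is making this last matching precise, especially for $\gamma_3^\nu$: since $\pi_{20}(S^{17})=\Z_{24}$ has both $2$- and $3$-primary parts, the Hopf-invariant argument only detects the $2$-primary piece, so one must separately certify that no odd-primary correction kills the class. If that direct route proves delicate, a cleaner back-up is to substitute the ansatz $\gamma_3^\nu=a[j_1^5,j_2^{13}]\nu_{17}$ into the fibration exact sequence of $\Omega S^9\to F_{p_9^\nu}\to\Sigma\mathbb{H}P^2$ and to match the resulting groups against Mukai's known homotopy groups of $\Sigma\mathbb{H}P^2$ in dimensions $\leq 27$, which determines $a$ mod $2$ unambiguously; the parallel argument with $\Sigma\mathbb{C}P^2$ in place of $\Sigma\mathbb{H}P^2$ handles the $\eta_3$ case.
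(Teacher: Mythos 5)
Your reduction to the ansatz $\gamma_3^\eta=[j_1^3,j_2^7]\theta_\eta$, $\gamma_3^\nu=a[j_1^5,j_2^{13}]\nu_{17}$ via Lemma \ref{lem: J3 alpha} and Remark \ref{remark gamma3} is sound and essentially equivalent to the first half of the paper's argument (the paper reaches the same ansatz by computing the indeterminacy subgroup of the third order Whitehead product $[j_1^5,j_1^5\nu_5,j_1^5\nu_5]$ via Lemma \ref{lemma for [a,b,c]}). The gap is in the last, decisive step: neither of your two routes determines the coefficient. The Hopf-invariant route only constrains the $j_1^n\lambda$-component of $\gamma_3$: Corollary \ref{Cor:proj gamma3} is an identity for $p_Y\gamma_3\in\pi_\ast(S^n)$, and the triple products $[\iota_3,\eta_3,\eta_3]$, $[\iota_5,\nu_5,\nu_5]$ likewise live in $\pi_\ast(S^n)$ --- in fact the paper shows $j_1^5[\iota_5,\nu_5,\nu_5]=\{0\}$ because its suspension vanishes and $\Sigma\colon\pi_{20}(S^5)\to\pi_{21}(S^6)$ is injective. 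The summand $a[j_1^5,j_2^{13}]\nu_{17}$ is killed by $p_Y$ and lies entirely inside the indeterminacy subgroup of $[j_1^5,j_1^5\nu_5,j_1^5\nu_5]$, so the Whitehead-product formalism leaves $a$ completely undetermined. Your back-up route is circular in the $\mathbb{H}P^2$ case: it needs $\pi_{20}$ and $\pi_{21}$ of $\Sigma\mathbb{H}P^2$ as external input, but the published range (\cite{JXYang}, $i\le k+15$) stops far short of degree $20$, and the stated purpose of this lemma is precisely to make those groups computable.

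What is missing is the paper's comparison with the trivial attaching map. One maps the cofibration $S^8\xrightarrow{\iota_8}S^8\to\ast\to S^9$ to $S^8\xrightarrow{\nu_5}S^5\to\Sigma\mathbb{H}P^2\to S^9$ and uses naturality of $\gamma_3$ (Lemmas \ref{Lem: natural J(Mf,X)} and \ref{lem: Natural gamma_n}) to obtain $\bar g^0_\nu\gamma_3^0\simeq\gamma_3^\nu\nu_{20}$. In the trivial case $F_{p_9^0}\simeq\Omega S^9$, so $\pi_{23}(J_{2,8}^0)/\langle\gamma_3^0\rangle\cong\pi_{24}(S^9)$, and comparing orders forces the coefficient $a_0$ of $\beta_{23}$ in $\gamma_3^0=\beta_8\theta_0+a_0\beta_{23}$ to be odd; pushing forward along $\bar g^0_\nu$ and reading off the $[j_1^5,j_2^{13}]\nu_{17}^2$-component then gives $a\equiv\pm a_0$, hence $a$ odd. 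Without this step (or some other genuinely external detection of $a$ modulo $2$) the proof is incomplete, and the same objection applies to showing $\gamma_3^\eta\neq 0$.
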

\begin{proof}
	We only prove the second conclusion of this lemma, while the first is easier to obtain by the similar proof.  
	
	Consider the following commutative diagrams of cofbirations and fibrations
	\begin{align*}
		\small{\xymatrix{
			S^8\ar@{=}[d] \ar[r]^-{\iota_8}&S^8 \ar[d]^{\nu_5}\ar[r] & \ast \ar[d]_{\bar{\chi}^{0}_{\nu}}\ar[r]^{p^{0}_{9}}& S^{9}\ar@{=}[d] \\
			S^8 \ar[r]^-{\nu_5}&S^5 \ar[r]& \Sigma\mathbb{H}P^2\ar[r]^-{p_9^{\nu}}& S^{9}
	} };~~	\small{\xymatrix{
	\Omega S^9\ar@{=}[d] \ar[r]^-{\partial^{9,0}}&F_9^0 \ar[d]^{\chi^{0}_{\nu}}\ar[r] & \ast \ar[d]_{\bar{\chi}^{0}_{\nu}}\ar[r]^{p^{0}_{9}}& S^{9}\ar@{=}[d] \\
\Omega S^9 \ar[r]^-{\partial^{9,\nu}}&F_{p_9^{\nu}} \ar[r]& \Sigma\mathbb{H}P^2\ar[r]^-{p_9^{\nu}}& S^{9}
} }.~~
	\end{align*}
where $\bar{\chi}^{0}_{\nu}$ is induced by the left commutative square of the first diagram and the $\chi^{0}_{\nu}$ is induced by the right commutative square of the second diagram.

Let $\bar g^{0}_{\nu}:=(\chi^{0}_{\nu})|_{J_2}$, $J_{2,\nu}:=J_2(M_{\nu_5}, S^8)$, from Lemma \ref{Lem: natural J(Mf,X)} and Lemma \ref{lem: Natural gamma_n}, we get
\begin{align}
	&\small{\xymatrix{
		S^8\ar[d]^{\nu_5}\ar[r]^-{\beta_8} & J_{2,8}^0\simeq S^8\cup_{2\sigma_{8}-\Sigma\sigma'}e^{16} \ar[d]_-{\bar{g}^{0}_{\nu}}\ar[r]^-{\bar p^{0}_{16}}& S^{16}\ar[d]^{\nu_{13}} \\
		S^{5} \ar[r]^-{j_1^5}&  J_{2,\nu}\simeq S^5\vee S^{13}\ar[r]^-{p_9^{\nu}}& S^{9}
	} };~~ \small{\xymatrix{
	S^{23}\ar[d]^{\nu_{20}}\ar[r]^{\gamma_3^0} & J_{2,8}^0 \ar[d]_{\bar{g}^{0}_{\nu}} \\
	S^{20} \ar[r]^{\gamma_3^\nu}&  J_{2,\nu}
} }. \label{diam: g^{0}_{nu}}\\
&\Rightarrow~~ \bar{g}^{0}_{\nu}\beta_8\simeq j_1^5\nu_5~~\text{and}~~\bar{g}^{0}_{\nu}\gamma_3^0\simeq \gamma_3^\nu\nu_{20}. \label{equ: g^{0}_{nu}}
\end{align}
From Theorem \ref{Theorem of ZhuJin} and Lemma \ref{lemma for [a,b,c]}, $\gamma^{\nu}_{3}\in [j_1^5, j_1^5\nu_5, j_1^5\nu_5]$ which is a coset of the subgroup
\begin{align*}
A^{\nu}_{3}:=	[\pi_{16}(S^5\vee S^{13}), j_1^5]+[\pi_{13}(S^5\vee S^{13}), j_1^5\nu_5]+[\pi_{13}(S^5\vee S^{13}), j_1^5\nu_5]
\end{align*}
By $\pi_{16}(S^5\vee S^{13})=\Z_2\{j_1^5\zeta_5\}\oplus \Z_2\{j_1^5\nu_5\bar\nu_8\}\oplus \Z_2\{j_1^5\nu_5\bar\varepsilon_8\}\oplus \Z_8\{j_2^{13}\nu_{13}\}$ and $\pi_{13}(S^5\vee S^{13})=\Z_2\{j_1^5\varepsilon_5\}\oplus \Z_{(2)}\{j_2^{13}\}$, we have 
$A^{\nu}_{3}=\lr{[j_1^5,j_2^{13}]\nu_{17}}$. Since $\Sigma [\iota_5,\nu_5, \nu_5]=0$ and $\Sigma: \pi_{20}(S^5)\rightarrow \pi_{21}(S^6)$ is an injection \cite[(10.14), (10.15)]{Toda}, by Proposition \ref{Prop: Naturality HOWP},
\begin{align}
 &	[j_1^5, j_1^5\nu_5, j_1^5\nu_5]\supset j_1^5[\iota_5,\nu_5, \nu_5]=\{0\}~\Rightarrow~[j_1^5, j_1^5\nu_5, j_1^5\nu_5]=\lr{[j_1^5,j_2^{13}]\nu_{17}}.\nonumber\\
 & \Rightarrow~\gamma^{\nu}_{3}=a[j_1^5,j_2^{13}]\nu_{17}, ~\text{for some}~a\in \Z_8. \label{equ:gamma3nu}
\end{align}
There is  the cofibration sequence $S^{15}\xrightarrow{2\sigma_{8}-\Sigma\sigma'}S^8\xrightarrow{\beta_8} J_{2,8}^0\xrightarrow{\bar p_{16}^0} S^{16}$. From (\ref{equ: Sk_5k-3 barFp2k}),  we get  $Sk_{24}(F_{ p_{16}^0})\simeq J_{2,8}^{\gamma,0}\simeq S^8\vee S^{23}$. Then Lemma \ref{lem: exact seq pi_m(J3(2rl4))} for $k=8,r=0,m=23$ gives 
\begin{align*}
&	0\rightarrow Coker(\bar\partial_{23\ast}^{16,0})\xrightarrow{\bar\tau_{16}^0}\pi_{23}(J_{2,8}^0)\xrightarrow{\bar p_{16\ast}^0} Ker(\bar\partial_{22\ast}^{16,0})=\Z_2\{8\sigma_{16}\}\rightarrow 0, \\
& Coker(\bar\partial_{23\ast}^{16,0})=\Z_2\{\bar\beta_8\sigma_8\bar\nu_{15}\}\oplus\Z_2\{\bar\beta_8\sigma_8\varepsilon_{15}\}\oplus \Z_8\{\bar\beta_8\Sigma\rho''\}\oplus \Z_2\{\bar\beta_8\bar\varepsilon_8\}\oplus\Z_{(2)}\{\bar\beta_{23}\}
\end{align*}
where $\sigma_8\bar\nu_{15}$, $\sigma_8\varepsilon_{15}$, $\Sigma\rho''$ and $\bar\varepsilon_8$ are the generators of $\pi_{23}(S^8)$. 

 From Lemma \ref{lem:pinch gamma3}, $\bar p_{16}^0\gamma_3^0\simeq 0$. Hence $\gamma_3^0\in Ker(\bar p_{16\ast}^0)$ implies
 \begin{align*}
 	\gamma_3^0=\bar\tau_{16}^0(\bar\beta_8\theta_0+a_0\bar\beta_{23})=\beta_8\theta_0+a_0\beta_{23} ~\text{for}~\theta_0\in\pi_{23}(S^8)  ~\text{and some integer}~a_0\in \Z_{(2)}. 
 \end{align*}
Moreover, $\pi_{23}(J_{2,8}^{0})/\lr{\gamma_3^0}\cong \pi_{23}(J_{3,8}^{0})\cong \pi_{23}(F_9^{0})\cong \pi_{24}(S^9)\cong \Z_{16}\oplus \Z_{2}^{\oplus 3}$. 
\newline Let $a_0=2^{t_0}a_0'$ where integer $t_0\geq 0$ and integer $a_0'$ is odd. By comparing the order of the groups of the first and last two items
in above isomorphisms, we get $2^{t_0+7}=2^{7}$. Hence $t_0=0$, i.e.,
$a_0$ is odd. 

The first diagram in (\ref{diam: g^{0}_{nu}}) induces the following diagram
\begin{align*}
	&\footnotesize{\xymatrix{
			S^{23}\ar[r]^-{j_2^{23}}&J_{2,8}^{\gamma,0}\ar[r]^-{I_2^{\gamma,0}}\ar[d]^{( g^{0}_{\nu})|_{J_2}}&F_{\bar p_{16}^0}\ar[d]^{ g^s_t}\ar[r]^-{\bar\tau_{16}^0}&J_{2,8}^0\ar[d]^{\bar g^0_{\nu}}\ar[r]^-{\bar{p}_{16}^0}& S^{16}\ar[d]^{\nu_{13}}\\
				S^{17}\ar[r]^-{j_2^{17}}&J_{2}(M_{\gamma_{2}^{\nu}},S^8)\simeq S^5\vee S^{17}\ar[r]^-{I_2^{\gamma,\nu}} &F_{q_2^{13}}\ar[r]^-{\bar\tau_{13}^{\nu}}&J_{2,\nu} \ar[r]^-{q_2^{13}}& S^{13}
	} } \\
\text{with} ~&  ( g^{0}_{\nu})|_{J_2}=j_1^5\nu_5q_1^8+j_2^{17}\nu_{17}^2q_2^{23}+j_1^5\theta_1q_2^{23}, ~\theta_1\in \pi_{23}(S^5),
\end{align*}
where $I_2^{\gamma,\nu}$ and $\bar\tau_{13}^{\nu}$ are canonical inclusions. By (\ref{equ:taujp=i}) and the definition of $\beta_{23}$ in the Section \ref{sec:attaching maps}, we  have 
\begin{align*}
    \bar\tau_{13}^{\nu}I_2^{\gamma,\nu}j_1^5\simeq j_1^5 ~~\text{and}~~	\beta_{23}=\bar\tau_{16}^0I_2^{\gamma,0}j_2^{23}. 
\end{align*}
Since $\pi_{17}(J_{2,\nu})=\pi_{17}(S^5\vee S^{13})=\Z\{[j_1^5, j_2^{13}]\}\oplus j_1^5\fhe\pi_{17}(S^5)$ and on the other hand  $\pi_{17}(J_{2,\nu})=\Z\{I_2^{\gamma,\nu}\bar\tau_{13}^{\nu}j_2^{17}\}\oplus j_1^5\fhe\pi_{17}(S^5)$ which is obtained  by the exact sequence of $\pi_{17}(-)$ induced by the fibration sequence  $\Omega S^{13}\rightarrow F_{q_2^{13}}\xrightarrow{\bar\tau_{13}^{\nu}} J_{2,\nu} \xrightarrow{q_2^{13}} S^{13}$, 
  we get 
\begin{align*}
	I_2^{\gamma,\nu}\bar\tau_{13}^{\nu}j_2^{17}=\pm [j_1^5, j_2^{13}]+j_1^5\varrho, \varrho\in \pi_{17}(S^5). 
\end{align*}
So
\begin{align*}
	& \bar g^{0}_{\nu}\beta_{23}\simeq  \bar g^{0}_{\nu}\bar\tau_{16}^0I_2^{\gamma,0}j_2^{23}\simeq 	I_2^{\gamma,\nu}\bar\tau_{13}^{\nu}( g^{0}_{\nu})|_{J_2}j_2^{23}\simeq I_2^{\gamma,\nu}\bar\tau_{13}^{\nu}( j_1^5\nu_5q_1^8+j_2^{17}\nu_{17}^2q_2^{23}+j_1^5\theta_1q_2^{23})j_2^{23}\\
	& \simeq I_2^{\gamma,\nu}\bar\tau_{13}^{\nu}j_2^{17}\nu_{17}^2+ I_2^{\gamma,\nu}\bar\tau_{13}^{\nu}j_1^5\theta_1 \simeq \pm [j_1^5, j_2^{13}]\nu_{17}^2+j_1^5\theta'_1, ~\text{where}~ \theta'_1=\varrho\nu_{17}^2+\theta_1.\\
	& \Rightarrow~ \bar{g}^{0}_{\nu}\gamma_3^0=\bar{g}^{0}_{\nu}(\beta_8\theta_0+a_0\beta_{23})\simeq j_1^5\nu_5\theta_0+a_0(\pm [j_1^5, j_2^{13}]\nu_{17}^2+j_1^5\theta'_1),  ~\text{by (\ref{equ: g^{0}_{nu}})}\\
	&\simeq \pm a_0 [j_1^5, j_2^{13}]\nu_{17}^2+j_1^{5}\theta_2\in \pi_{23}(J_{2,\nu}), ~\theta_2=\theta'_1+\nu_5\theta_0\in \pi_{23}(S^5). 
\end{align*}
Since $a_0$ is odd and $\pi_{23}(J_{2,\nu})=\pi_{23}(S^5\vee S^{13})=j_{1}^5\fhe \pi_{23}(S^5)\oplus j_{2}^{13}\fhe \pi_{23}(S^{13})\oplus \Z_8\{[j_1^5, j_2^{13}]\nu_{17}^2\}$, $\simeq \bar{g}^{0}_{\nu}\gamma_3^0\neq 0$.  From $(\ref{equ: g^{0}_{nu}})$ and $(\ref{equ:gamma3nu})$, 
\begin{align*}
0\neq \bar{g}^{0}_{\nu}\gamma_3^0\simeq \gamma_3^\nu\nu_{20}=a[j_1^5,j_2^{13}]\nu_{17}~~\Rightarrow~a~\text{is odd}. 
\end{align*}
We complete the proof of this  lemma. 
\end{proof}

\qquad

\qquad

\qquad

\noindent
{\bf Acknowledgement.}  The author was partially supported by National Natural Science Foundation of China (Grant No. 11701430).

\bibliographystyle{amsplain}

	\section{Appendix}
	\label{sec:Appendix}
	
	\begin{lemma}\label{lemA: Sigma nu'sigam'}
		$(i)~\Sigma \nu'\sigma'=2\Sigma \varepsilon'$.~~ $(ii)~\mu_4\eta_{13}=\eta_4\mu_5$.~~
		
		$(iii)~\nu_5\eta_8^2\sigma_{10}=\nu_5^4+\nu_5\eta_8\varepsilon_9$.   $(iv)~(2^{r}\iota_5)\nu_5\sigma_8=2^r\nu_5\sigma_8$.
		
		$(v)~(2^{r}\iota_6)\bar\nu_6=2^{2r}\bar\nu_6$.
	\end{lemma}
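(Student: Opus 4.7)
The plan is to establish each of the five identities by direct composition computations in Toda's framework, relying almost entirely on the tables and formulas of~\cite{Toda} together with the general distributivity formula $(k\iota_n)\circ\alpha = k\alpha + \binom{k}{2}[\iota_n,\iota_n]\circ H_2(\alpha)$ (Proposition 2.10 of~\cite{Toda}, also in~\cite{N.Oda}) and the basic Whitehead-product relations $[\iota_4,\iota_4]=2\nu_4-\Sigma\nu'$ and $[\iota_6,\iota_6]=\Delta(\iota_{13})$.

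For (i), I would substitute $\Sigma\nu'=2\nu_4-[\iota_4,\iota_4]$ and compose with $\sigma'$, producing $\Sigma\nu'\sigma'=2\nu_4\sigma'-[\iota_4,\iota_4]\circ\sigma'$. The second term can be rewritten as $[\iota_4,\sigma_3]$ using the naturality identity $[\iota_n,\iota_n]\circ\Sigma\gamma=[\iota_n,\gamma]$, and then computed via the EHP sequence $\pi_{15}(S^9)\xrightarrow{\Delta}\pi_{13}(S^3)\xrightarrow{\Sigma}\pi_{14}(S^4)$ in terms of the $\pi_{14}(S^4)=\Z_8\{\nu_4\sigma'\}\oplus\Z_4\{\Sigma\varepsilon'\}\oplus\Z_2\{\eta_4\mu_5\}$ basis; together with the known relation $2\nu_5\sigma_8=\pm\Sigma^2\varepsilon'$ (Toda (7.10)) one pins down the coefficient $2$ and the sign. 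For (ii), both $\mu_4\eta_{13}$ and $\eta_4\mu_5$ lie in the stable range of $\pi_{14}(S^4)$, where $\mu$ and $\eta$ are stable classes; the stable identity $\mu\eta=\eta\mu$ (graded-commutativity of the stable ring in degrees $9$ and $1$) lifts because the suspension is injective on the $\Z_2\{\eta_4\mu_5\}$ summand containing the difference. For (iii), I would simply pre-compose $\nu_5$ with the standard Toda identity $\eta_n^2\sigma_{n+2}=\nu_n^3+\eta_n\varepsilon_{n+1}$ (cf.~\cite[(7.5)]{Toda}).

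Items (iv) and (v) are the cleanest: both apply the expansion formula. In (iv), for $\alpha=\nu_5\sigma_8$ the relevant Hopf invariant is $H_2(\nu_5\sigma_8)=0$, because $H_2(\sigma_8)=0$ (the Hopf invariant-one class $\sigma_7\in\pi_{14}(S^7)$ becomes Hopf-invariant-zero after one suspension) and the cross-terms from the Hilton--Milnor decomposition land in vanishing groups; hence the correction disappears and $(2^r\iota_5)\nu_5\sigma_8=2^r\nu_5\sigma_8$. In (v), Toda's formula $H_2(\bar\nu_6)=\nu_{11}$ combined with $[\iota_6,\iota_6]\circ\nu_{11}=\Delta(\nu_{13})=2\bar\nu_6$ (Lemma 6.2 of~\cite{Toda}) gives
\[(2^r\iota_6)\bar\nu_6=2^r\bar\nu_6+\tbinom{2^r}{2}\cdot 2\bar\nu_6=2^r\bigl[1+(2^r-1)\bigr]\bar\nu_6=2^{2r}\bar\nu_6.\]

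The main obstacle I expect is the sign- and summand-tracking in (i): $\pi_{14}(S^4)$ has three direct summands and the candidate identity is an equality modulo $4$, so the Whitehead product $[\iota_4,\iota_4]\sigma'$ must be identified precisely rather than up to torsion. The other four parts are essentially bookkeeping once the Hopf invariant computations for $\nu_5\sigma_8$ and $\bar\nu_6$ are in place.
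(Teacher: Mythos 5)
Your toolkit (Toda's composition calculus, the expansion formula $(k\iota_n)\alpha=k\alpha+\binom{k}{2}[\iota_n,\iota_n]H_2(\alpha)$, suspension and Hopf-invariant comparisons) is the same as the paper's, and part (iii) is essentially the paper's argument. But several of the individual steps are wrong or incomplete. In (i), the opening identity $\Sigma\nu'\sigma'=2\nu_4\sigma'-[\iota_4,\iota_4]\sigma'$ uses left distributivity of composition over a sum with $\sigma'$ on the right; this fails because $\sigma'$ is not a suspension ($H(\sigma')=\eta_{13}\neq 0$), so a correction term $\pm[2\nu_4,[\iota_4,\iota_4]]\circ H_2(\sigma')$ must be carried along. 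The proposed rewriting of $[\iota_4,\iota_4]\circ\sigma'$ as a Whitehead product $[\iota_4,-]$ fails for the same reason: $\sigma'$ does not desuspend (only $2\sigma'=\Sigma\sigma''$ does). And the real content of (i) --- determining the coefficient of $\nu_4\sigma'$ exactly in $\Z_8$ --- is deferred to an unspecified EHP computation; the paper settles it by computing $H_2(2\Sigma\nu'\sigma')$ and $H_2(\Sigma\nu'\sigma')=(\Sigma\nu'\wedge\nu')H_2(\sigma')$ explicitly. In (ii) the argument is circular: you assume the difference $\mu_4\eta_{13}-\eta_4\mu_5$ already lies in the summand $\Z_2\{\eta_4\mu_5\}$. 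Since $\Sigma(4\nu_4\sigma')=8\nu_5\sigma_8=0$, suspension is not injective on the $2$-torsion of $\pi_{14}(S^4)$, so "the difference suspends to zero" still allows the value $4\nu_4\sigma'$; the paper rules this out via $H(\mu_4\eta_{13})=0$ versus $H(4\nu_4\sigma')=4\sigma'\neq 0$.

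In (iv) the assertion $H_2(\sigma_8)=0$ is false: $\sigma_8\in\pi_{15}(S^8)$ is itself the Hopf-invariant-one class, $H(\sigma_8)=\iota_{15}$ (there is no $\sigma_7$ desuspending it), and consequently $H_2(\nu_5\sigma_8)=\Sigma(\nu_4\wedge\nu_4)\circ H(\sigma_8)=\nu_9^2\neq 0$. The identity $(2^r\iota_5)\nu_5\sigma_8=2^r\nu_5\sigma_8$ is still true, but for a different reason --- either because $[\iota_5,\iota_5]\circ\nu_9^2=\nu_5(\eta_8\nu_9)\nu_{12}=0$, or, as in the paper, by first writing $(2^r\iota_5)\nu_5=\nu_5(2^r\iota_8)$ and expanding $(2^r\iota_8)\sigma_8=2^{2r}\sigma_8-2^{r-1}(2^r-1)\Sigma\sigma'$. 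In (v) your computation agrees with the paper's except that you take the sign of $\binom{2^r}{2}[\iota_6,\iota_6]H_2(\bar\nu_6)$ to be $+$ without justification; the two sign choices give $2^{2r}\bar\nu_6$ versus $(2^{r+1}-2^{2r})\bar\nu_6$, which already differ for $r=1$ ($4\bar\nu_6$ versus $0$ in $\Z_8\{\bar\nu_6\}$), so the sign must be pinned down --- the paper does so by comparing $H_2$ of both candidate answers with $H_2((2^r\iota_6)\bar\nu_6)=2^{2r}H_2(\bar\nu_6)$.
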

	\begin{proof} (i)~~
		By $\Sigma \nu'\sigma'\in \pi_{14}(S^4)=\Z_8\{\nu_4\sigma'\}\oplus \Z_4\{\Sigma \varepsilon'\}\oplus \Z_2\{\eta_4\mu_5\} $, assume that
		\begin{align}
			\Sigma \nu'\sigma'=x\nu_4\sigma'+y\Sigma \varepsilon'+z\eta_4\mu_5, ~~x\in \Z_8, y\in \Z_4, z\in \Z_2. \label{equ: appendix assume}
		\end{align}
		Then $2\nu_5\Sigma \sigma'=\Sigma(\Sigma \nu'\sigma')=x\nu_5\Sigma \sigma'+y\Sigma^2\varepsilon'+z\eta_5\mu_6$. From (7.10), (7.16) of \cite{Toda}, $2\nu_5\sigma_8=\nu_5\Sigma\sigma'=\pm \Sigma^2\varepsilon'$ implies that $4\nu_5\sigma_8=(2x\pm 2y)\nu_5\sigma_8+z\eta_5\mu_6\in \pi_{15}(S^5)=\Z_8\{\nu_5\sigma_8\}\oplus \Z_2\{\eta_5\mu_6\}$. Hence $z=0$, $2(x\pm y)=4$. Multiply both sides of equation (\ref{equ: appendix assume}) by $2$, 
		\begin{align}
			2\Sigma \nu'\sigma'=2x\nu_4\sigma'+2y\Sigma \varepsilon'. \label{equ:2times appendix assume}
		\end{align}
		$2\Sigma \nu'\sigma'=\Sigma \nu'(2\sigma')=\Sigma(\nu'\sigma'')$ since $2\sigma'=\Sigma\sigma''$ by Lemma 5.14 of \cite{Toda}. Consider Hopf invariant $H_2$ on both sides of (\ref{equ:2times appendix assume}),  $0=H_2(2\Sigma \nu'\sigma')=H_2(2x\nu_4\sigma')=xH_2(\nu_4\Sigma\sigma'')=xH_2(\nu_4)\Sigma\sigma''=x\Sigma\sigma''=2x\sigma'\in \pi_{14}(S^7)=\Z_8\{\sigma'\}$. So $x=4t (t=0,1)$ implies $y=2$ and (\ref{equ: appendix assume}) becomes $\Sigma \nu'\sigma'=4t\nu_4\sigma'+2\Sigma \varepsilon'=2t\nu_4\Sigma\sigma''+2\Sigma \varepsilon'$. By Proposition 2.2, Lemma 3.1 and Lemma 5.14 of \cite{Toda},
		$H_2(\Sigma \nu'\sigma')=(\Sigma \nu'\wedge \nu')H_{2}(\sigma')=\Sigma(-\Sigma^3\nu'\Sigma^6\nu')\eta_{13}=-4\nu_7^2\eta_{13}=0$. On the other hand, $H_2(\Sigma \nu'\sigma')=2tH_2(\nu_4\Sigma\sigma'')=2tH_2(\nu_4)\Sigma\sigma''=2t\Sigma\sigma''=4t\sigma'$.
		Thus $4t\sigma'=0$ implies $t=2t'$ and $x=8t'=0\in \Z_8$. So $\Sigma \nu'\sigma'=2\Sigma \varepsilon'$.
		
		(ii)~Let $\mu_4\eta_{13}=4x'\nu_4\sigma'+2y'\Sigma \varepsilon'+z'\eta_4\mu_5, ~~x', y', z'\in \Z$.  By page.66 of \cite{Toda}, 
		$\eta_5\mu_6=\Sigma \mu_4\eta_{13}=\pm 4y' \nu_5\sigma_8+z'\eta_5\mu_6$, implies $2|y'$, $z'=1$. Hence $\mu_4\eta_{13}=4x'\nu_4\sigma'+\eta_4\mu_5$. By
		$0=H(\mu_4\eta_{13})=H_2(4x'\nu_4\sigma'+\eta_4\mu_5)=4x'\sigma'$, we get $2|x'$. 
		So $\mu_4\eta_{13}=\eta_4\mu_5$. 
		
		(iii)~$\nu_5\eta_8^2\sigma_{10}=\nu_5\eta_8((\Sigma^2\sigma')\eta_{16}+\bar\nu_9+ \varepsilon_9)=\nu_5\eta_8(\bar\nu_9+\varepsilon_9)=\nu_5^3+\nu_5\eta_8\varepsilon_9$.
		where the first, second and the last equations are from (7.4), Lemma 5.14 and  (7.3) of \cite{Toda} respectively.
		
		(iv)~ By the same proof as that of  Lemma A.1 of \cite{ZP23}, $(2^{r}\iota_8)\sigma_8=2^{2r}\sigma_8-2^{r-1}(2^r-1)\Sigma\sigma'$.

		$(2^{r}\iota_5)\nu_5\sigma_8=\nu_5((2^{r}\iota_8)\sigma_8)=2^{2r}\nu_5\sigma_8-2^{r-1}(2^r-1)\nu_5\Sigma\sigma'=2^{2r}\nu_5\sigma_8-2^{r}(2^r-1)\nu_5\sigma_8=2^r\nu_5\sigma_8$. 
		
		(v)~By \cite[Proposition 2.10]{N.Oda}, $H_2(\bar\nu_6)=l\nu_{11}$, $l$ is odd, and $\Delta(\nu_{13})=\pm (\iota_{13})\nu_{11}=\pm 2\bar\nu_6$ \cite[Lemma 6.2]{Toda}, we have 
		\newline
		$(2^{r}\iota_6)\bar\nu_6=2^{r}\bar\nu_6\pm \binom{2^{r}}{2}[\iota_6,\iota_6]H_2(\bar\nu_6)=2^{r}\bar\nu_6\pm \binom{2^{r}}{2}l\Delta(\iota_{13})\nu_{11}=2^{r}\bar\nu_6\pm \binom{2^{r}}{2}l\Delta(\nu_{13})=2^{r}\bar\nu_6\pm 2^r(2^r-1)\bar\nu_6=2^{2r}\bar\nu_6$ or $(2^{r+1}-2^{2r})\bar\nu_6$. Moreover $H_2((2^{r}\iota_6)\bar\nu_6)=\Sigma(2^r\iota_5\wedge 2^r\iota_5)H_{2}(\bar\nu_6)=2^{2r}\nu_{11}=H_{2}(2^{2r}\bar\nu_6)$. We get $(2^{r}\iota_6)\bar\nu_6=2^{2r}\bar\nu_6$. 
	\end{proof}

	\begin{lemma}\label{lemA: Todabraket }
		\begin{align*}
				(i)~~&	\{2\iota_3, \nu'\eta_6,  2\iota_7\}=\{\nu'\eta_6^2\};~~ 	\{2^r\iota_3, \nu'\eta_6,  2\iota_7\}=\{0\} (r\geq 2).\\
			(ii)~~& 	\{4\iota_4, \nu_4\eta_7, 2\iota_8\}=\{\Sigma \nu'\eta_7^2\};~~	\{2^r\iota_4, \nu_4\eta_7, 2\iota_8\}=\{0\} (r\geq 3).\\
			(iii)~~& \{2\iota_4, \varepsilon_4, 2\iota_{12}\}=\{\eta_4\varepsilon_5\};~~\{2^r\iota_4, \varepsilon_4, 2\iota_{12}\}=\{0\} (r\geq 2).\\
			(iv)~~& \{2\iota_4, \mu_4, 2\iota_{13}\}= \eta_4\mu_5+\lr{2\nu_4\sigma', 2\Sigma\varepsilon'}; \{2^r\iota_4, \mu_4, 2\iota_{13}\}= \lr{2\nu_4\sigma', 2\Sigma\varepsilon'}(r\geq 2).\\
					(v)~~&	\{2\iota_5, \sigma''', 2\iota_{12}\}=\{0\}.\\
			(vi)~~& 	\{2\iota_5, \varepsilon_5, 2\iota_{13}\}=\{\eta_5\varepsilon_6\}; ~~	\{2^r\iota_5, \varepsilon_5, 2\iota_{13}\}=\{0\}, r\geq 2.\\
		    (vii)~~& \{2\iota_6, 2\sigma'', 2\iota_{13}\}=\lr{2\bar\nu_6};~~\{4\iota_6, \sigma'', 4\iota_{13}\}=\lr{4\bar\nu_6}.\\
		    (viii)~~& \{2\iota_7, \sigma'\eta_{14}, 2\iota_{15}\}=\{\sigma'\eta^2_{14}\}; \{2^r\iota_7, \sigma'\eta_{14}, 2\iota_{15}\}=\{0\} (r\geq 2);
		\end{align*}
	\end{lemma}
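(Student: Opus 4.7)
The lemma collects eight Toda-bracket identities of the common form $\{a\iota_n, \alpha, b\iota_m\}$ with $a, b \in \{2, 4, 2^r\}$ and $\alpha$ a specific 2-primary element satisfying $a\alpha = 0 = \alpha b$. My plan is to handle all eight cases uniformly via a three-step template, drawing on Toda's general bracket-juggling formulas and his product tables for the 2-primary stems.

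The unifying input is Corollary 3.7 of \cite{Toda}: whenever $2\alpha = 0$ one has $\alpha\eta_m \in \{2\iota_n, \alpha, 2\iota_m\}$ modulo the standard indeterminacy $2\pi_{m+1}(S^n) + \pi_{n+m+1}(S^n)\circ 2\iota_{n+m+1}$. Combining this with the juggling inclusion $\{2^r\iota_n, \alpha, 2\iota_m\} \supset 2^{r-1}\{2\iota_n, \alpha, 2\iota_m\}$ (Proposition 1.4 of \cite{Toda}) identifies $2^{r-1}\alpha\eta_m$ as a canonical element of each bracket. For case (ii), where the first entry is $4\iota_4$, I will use $4\iota_4 = 2\iota_4\circ 2\iota_4$ together with the Whitehead-product decomposition $4\iota_4\circ\nu_4 = 16\nu_4 + 2\Sigma\nu'$ (obtained from $H_2(\nu_4) = \iota_7$ and $[\iota_4,\iota_4] = 2\nu_4 - \Sigma\nu'$) to extract $\Sigma\nu'\eta_7^2$ inside the bracket; case (vii) with its symmetric $4$'s is handled by a double application of the same decomposition combined with $\sigma''\eta_{13} = 4\bar\nu_6$ from \cite[(7.4)]{Toda}.

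In each case the remaining work is (a) to identify the canonical element $\alpha\eta_m$ using Toda's tables: $\nu'\eta_6^2$ generates $\pi_8(S^3)$ (item (i)); $\varepsilon_4\eta_{12} = \eta_4\varepsilon_5$ and $\varepsilon_5\eta_{13} = \eta_5\varepsilon_6$ (items (iii), (vi)); $\mu_4\eta_{13} = \eta_4\mu_5$ from Lemma \ref{lemA: Sigma nu'sigam'}(ii) (item (iv)); $\sigma'''\eta_{12} = 0$ because $\Sigma\sigma''' = 2\sigma''$ gives $\Sigma(\sigma'''\eta_{12}) = 2\sigma''\eta_{13} = 8\bar\nu_6 = 0$ and the suspension is injective in this range (item (v)); $\sigma'\eta_{14}^2$ generates the relevant summand of $\pi_{16}(S^7)$ (item (viii)); and (b) to determine the indeterminacy subgroup $a\pi_{m+1}(S^n) + \pi_{n+m+1}(S^n)\circ b\iota_{n+m+1}$, which for items (i), (iii), (v), (vi), (viii) turns out to be trivial, making the bracket a singleton. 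The $r\geq 2$ parts in these items then follow immediately because $2^{r-1}\alpha\eta_m$ vanishes in the ambient $\Z_2$-summand.

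The main obstacle is item (iv), where $\pi_{14}(S^4) = \Z_8\{\nu_4\sigma'\}\oplus\Z_4\{\Sigma\varepsilon'\}\oplus\Z_2\{\eta_4\mu_5\}$ has nontrivial $2$-divisible subgroup, forcing the bracket to be an actual coset rather than a single element. I will pin this coset as $\eta_4\mu_5 + \lr{2\nu_4\sigma', 2\Sigma\varepsilon'}$ by checking that (a) $\eta_4\mu_5$ sits in the bracket via the template, (b) the indeterminacy is precisely $2\pi_{14}(S^4) = \lr{2\nu_4\sigma', 2\Sigma\varepsilon'}$ (using that $\mu_4$ is a suspension so no Whitehead-product correction arises in $2\iota_4\circ\mu_4 = 2\mu_4$), and (c) via a Hopf-invariant check paralleling Lemma \ref{lemA: Sigma nu'sigam'}(i) that no representative outside this coset lies in the bracket. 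Case (vii) is the secondary difficulty: here both entries are divisible by $4$, so one must iterate the juggling formula, apply $\sigma''\eta_{13} = 4\bar\nu_6$, and use $2$-primary Hopf-invariant considerations to confirm the bracket is the subgroup $\lr{4\bar\nu_6}\subset \pi_{14}(S^6)$ rather than a larger coset.
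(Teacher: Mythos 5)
Your proposal is correct and follows essentially the same route as the paper: compute the indeterminacy subgroup, exhibit $2^{r-1}\alpha\eta$ in each bracket via Toda's juggling inclusions and Corollary 3.7, evaluate the resulting products from Toda's tables, and handle (ii), (iv), (vii) with the degree-map decompositions $(2^r\iota_n)\circ\alpha=2^r\alpha\pm\binom{2^r}{2}[\iota_n,\iota_n]H(\alpha)$ exactly as the paper does. The only cosmetic differences are that in (v) you kill $\sigma'''\eta_{12}$ directly by suspension injectivity where the paper suspends the whole bracket and invokes Toda's Theorem 7.1, and your step (c) in (iv) is redundant since the bracket is automatically the full coset of the indeterminacy once $\eta_4\mu_5$ is shown to lie in it.
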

	\begin{proof}
		$(i)$~
		For $r\geq 1$, 	$\{2^r\iota_3, \nu'\eta_6,  2\iota_7\}$ is a coset of subgroup $(2^r\iota_3)\fhe \pi_{8}(S^3)+\pi_{8}(S^3)\fhe(2\iota_8)=\{0\}$, implies that $\{2^r\iota_3, \nu'\eta_6,  2\iota_7\}$ has only one element. Then (i) of this lemma comes from the following arguments
		\begin{align*}
			&\{2^r\iota_3, \nu'\eta_6,  2\iota_7\}\supset \{2^r\nu', \eta_6,  2\iota_7\}\supset	(2^{r-1}\nu')\{2\iota_6, \eta_6,  2\iota_7\} ~\text{by \cite[Prop. 1.2]{Toda} }\\
			\supset &(2^{r-1}\nu')\{2\iota_6, \eta_6,  2\iota_7\}_1\ni 2^{r-1}\nu'\eta_6^2, ~\text{by \cite[(1.15) and Corollary 3.7 ]{Toda}}.
		\end{align*}
	
		$(ii)$~ For $r\geq 2$, 	$\{2^r\iota_4, \nu_4\eta_7, 2\iota_8\}$ is a coset of subgroup $\pi_9(S^4)\fhe(2\iota_9)+(2^r\iota_4)\fhe\pi_9(S^4)=\{0\}$ which implies that $\{2^r\iota_4, \nu_4\eta_7, 2\iota_8\}$ has only one element.
		\begin{align*}
			& \{2^r\iota_4, \nu_4\eta_7, 2\iota_8\}\supset  \{(2^r\iota_4)\nu_4, \eta_7, 2\iota_8\}= \{2^{2r}\nu_4-2^{r-1}(2^r-1)\Sigma\nu', \eta_7, 2\iota_8\}\\
			\subset& \{2^{2r}\nu_4, \eta_7, 2\iota_8\}+\{2^{r-1}(2^r-1)\Sigma\nu', \eta_7, 2\iota_8\}=\{2^{r-1}\Sigma\nu', \eta_7, 2\iota_8\}.
		\end{align*}
		Thus $(ii)$ of Lemma \ref{lemA: Todabraket } is obtained by $(ii)$ of this lemma.
		\newline
		$(iii)$~ By Lemma \ref{lem:Toda bracket} and \cite[(7.5)]{Toda}, $\{2^r\iota_4,\varepsilon_4, 2\iota_{12}\}\ni 2^{r-1}\varepsilon_4\eta_{12}=2^{r-1}\eta_4\varepsilon_5$ mod  $\pi_{13}(S^4)\fhe (2\iota_{13})+(2^r\iota_4)\fhe \pi_{13}(S^4)=\{0\}$ which implies the $(iii)$ of Lemma \ref{lemA: Todabraket }.
		\newline
		$(iv)$~$\{2^r\iota_4, \mu_4, 2\iota_{13}\}$ is a coset of subgroup 
		$\pi_{14}(S^4)\fhe (2\iota_{14})+(2^r\iota_4)\fhe \pi_{14}(S^4)=
		\lr{2\nu_4\sigma', 2\Sigma\varepsilon'}$ $\subset\pi_{14}(S^4)$. 
		For $r=1$, by Corollary 3.7 of \cite{Toda} and (ii) of Lemma \ref{lemA: Sigma nu'sigam'}, $\mu_4\eta_{13}=\eta_4\mu_5\in \{2\iota_4, \mu_4, 2\iota_{13}\}$. Hence $\{2\iota_4, \mu_4, 2\iota_{13}\}= \eta_4\mu_5+\lr{2\nu_4\sigma', 2\Sigma\varepsilon'}$.  For $r\geq 2$, $\{2^r\iota_4, \mu_4, 2\iota_{13}\}\supset
		(2^{r-1}\iota_4)\fhe \{2\iota_4, \mu_4, 2\iota_{13}\}\ni (2^{r-1}\iota_4)\eta_4\mu_5=0$ implies $\{2^r\iota_4, \mu_4, 2\iota_{13}\}=	\lr{2\nu_4\sigma', 2\Sigma\varepsilon'}$.
		\newline
			$(v)$~$\{2\iota_5, \sigma''', 2\iota_{12}\}$ is a coset of subgroup
		$(2\iota_{5})\fhe \pi_{13}(S^5)+ \pi_{13}(S^5)\fhe(2\iota_{13})=\{0\}$.
		\newline $-\Sigma\{2\iota_5, \sigma''', 2\iota_{12}\}\subset \{2\iota_6, \Sigma\sigma''', 2\iota_{13}\}_1\ni \Sigma\sigma'''\eta_{13}=(2\sigma'')\eta_{13}=0 $ mod $\lr{2\bar\nu_6}$. 	\newline
		Thus $\Sigma\{2\iota_5, \sigma''', 2\iota_{12}\}\subset \lr{2\bar\nu_6}$ implies $\{2\iota_5, \sigma''', 2\iota_{12}\}=0$ by \cite[Theorem 7.1]{Toda}.
			\newline
		$(vi)$~$\{2^r\iota_5, \varepsilon_5, 2\iota_{13}\}$ is a coset of $(2^r\iota_5)\fhe \pi_{14}(S^5)+ \pi_{14}(S^5)\fhe( 2\iota_{14})=\{0\}$. Moreover $\{2^r\iota_5, \varepsilon_5, 2\iota_{13}\}\ni (2^{r-1}\iota_5)\varepsilon_5\eta_{13}=2^{r-1}\eta_5\varepsilon_6$ by Lemma \ref{lem:Toda bracket}. This implies (vi) of this lemma.
		\newline
		$(vii)$~ For $r\leq 2$, $\{2^r\iota_6, 2^{2-r}\sigma'', 2^r\iota_{13}\}\subset \{2\iota_6, (2^{r-1}\iota_6)( 2^{2-r}\sigma''), 2^r\iota_{13}\}=\{2\iota_6, 2\sigma'', 2^r\iota_{13}\}=\{2\iota_6, \Sigma\sigma''', 2^r\iota_{13}\}\supset \{2\iota_6, \Sigma\sigma''', 2\iota_{13}\}(2^{r-1}\iota_{14})\ni 2^{r-1} \Sigma\sigma'''\eta_{13}=0$.  
		$\{2\iota_6, \Sigma\sigma''', 2^r\iota_{13}\}$ is a coset of subgroup
	 $(2\iota_6)\fhe\pi_{14}(S^6)+\pi_{14}(S^6)\fhe (2^r\iota_{14})=\lr{2^r\bar\nu_6} (r\leq 2)$  by (v) of Lemma \ref{lemA: Sigma nu'sigam'} and so is $\{2^r\iota_6, 2^{2-r}\sigma'', 2^r\iota_{13}\}$. Hence $\{2^r\iota_6, 2^{2-r}\sigma'', 2^r\iota_{13}\}=\lr{2^r\bar\nu_6}$.
	 	\newline
	 $(viii)$  	$\{2^r\iota_7, \sigma'\eta_{14}, 2\iota_{15}\}$ is a coset of subgroup $(2^r\iota_7)\fhe\pi_{16}(S^7)+\pi_{16}(S^7)\fhe (2^r\iota_{16})=\{0\}$ for $r\geq 1$.
	 Then from \cite[Propostion 1.2]{Toda} and Lemma \ref{lem:Toda bracket}, we get
		$\{2^r\iota_7, \sigma'\eta_{14}, 2\iota_{15}\}\supset \{2^r\sigma', \eta_{14}, 2\iota_{15}\}\ni 2^{r-1}\sigma'\eta^2_{14}$, which implies $(viii)$ of this lemma. 
	
	\end{proof}

\end{document}